\documentclass[11pt,a4paper]{amsart}
\usepackage[centering, margin=1in]{geometry}
\usepackage[latin1]{inputenc}
\usepackage[english]{babel}
\usepackage[T1]{fontenc}
\usepackage{amsmath}
\usepackage{amssymb}
\usepackage{amsthm}
\usepackage{array,booktabs}
\usepackage{graphicx}
\usepackage[OT2,T1]{fontenc}
\DeclareSymbolFont{cyrletters}{OT2}{wncyr}{m}{n}
\DeclareMathSymbol{\Sha}{\mathalpha}{cyrletters}{"58}
\usepackage{hyperref}
\usepackage{euflag}
\usepackage{lmodern}
\usepackage{tcolorbox}
\usepackage{quoting}
\usepackage{enumerate}
\usepackage{mathtools}
\usepackage{csquotes}
\usepackage{stmaryrd}
\usepackage{cite}
\usepackage{tikz}
\usepackage{todonotes}
\usepackage{tikz-cd}
\usetikzlibrary{matrix,arrows,decorations.pathmorphing}
\usepackage{wrapfig}
\mathchardef\mhyphen="2D
\renewcommand{\Re}{\operatorname{Re}}

\def\R{\ensuremath\mathbb{R}}
\def\C{\ensuremath\mathbb{C}}

\def\Z{\ensuremath\mathbb{Z}}
\def\Q{\ensuremath\mathbb{Q}}
\def\Oo{\ensuremath\mathcal{O}}
\def\N{\ensuremath\mathbb{N}}

\def\F{\ensuremath\mathbb{F}}

\def\Frob{\ensuremath\mathrm{Frob}}

\usepackage{hyperref}
\usepackage[final]{pdfpages}
\usepackage{verbatim}
\newtheorem{theorem}{Theorem}[section]
\newtheorem{definition}[theorem]{Definition}
\newtheorem{corollary}[theorem]{Corollary}
\newtheorem{lemma}[theorem]{Lemma}

\newtheorem{proposition}[theorem]{Proposition}

\newtheorem*{proposition*}{Proposition}
\theoremstyle{remark}
\newtheorem{remark}{Remark}[section]
\newtheorem{example}{Example}[section]

\def\eps{\ensuremath\varepsilon}
\def\rank{\text{\rm rank}}

\def\im {\text{\rm im}}

\def\Gal{\text{\rm Gal}}

\def\SL{\mathrm{SL}}

\def\GL{\mathrm{GL}}

\def\modulo{\text{ \rm mod }}

\def\1{\mathbf{1}}
\def\cond{\text{\rm cons}}
\def\ord{\text{\rm ord}}

\DeclareMathOperator{\disc}{disc}

\DeclareMathOperator{\Kl}{Kl}

\DeclareFontFamily{U}{wncy}{}
    \DeclareFontShape{U}{wncy}{m}{n}{<->wncyr10}{}
    \DeclareSymbolFont{mcy}{U}{wncy}{m}{n}
    \DeclareMathSymbol{\Sh}{\mathord}{mcy}{"58} 
\def\modulo{\text{ \rm mod }}

\def\cond{\text{\rm cond}}
\def\ord{\text{\rm ord}}
\def\pmod{\text{ \rm mod }}
\numberwithin{equation}{section}

\numberwithin{equation}{section}

\begin{document}
\title{Diophantine rank stability and non-vanishing of $L$-functions}
\author{Marius Fischer and Asbj\o rn Christian Nordentoft}

\address{Department of Mathematics, Aarhus University, 1530-432, 8000 Aarhus C, Denmark}

\email{\href{mailto:marius.fischer@math.au.dk}{marius.fischer@math.au.dk}}

\address{University of  Copenhagen, Universitetsparken 5, 2100
Copenhagen \O, Denmark}

\email{\href{mailto:nordentoft@math.ku.dk}{nordentoft@math.ku.dk}}

\date{\today}

\thanks{We are grateful to David Burns, Ian Kiming and Paul Nelson for useful conversations. We also thank Daniel Kriz, Anwesh Ray and Ariel Weiss for their helpful comments on earlier drafts of this paper. The first author is supported by grant VIL54509 from Villum Fonden. Suggestions from artificial intelligence (Gemini 3.1 Pro) inspired the counting argument in the proof of Theorem \ref{thm:non-van} and the proof of Lemma \ref{lem:orderly-unipotent}.}




\subjclass[2010]{11F67(primary)}
\begin{abstract}
Let $A/\mathbb{Q}$ be a modular abelian variety of analytic rank $0$. If $G$ is a non-trivial finite abelian group such that all prime factors of $\lvert G \rvert$ are sufficiently large in terms of $A$, we show that there are infinitely many $G$-extensions $F/\mathbb{Q}$ such that $A(F)$ is finite. When $A$ is a rational elliptic curve of analytic rank zero with no exceptional primes, or the product of two such curves, the same conclusion holds without any assumptions on $|G|$. Our proof relies on new simultaneous non-vanishing results for twisted central $L$-values of even-weight holomorphic newforms. These results are obtained via novel constructions related to horizontal $p$-adic $L$-functions and are of independent interest.
\end{abstract}
\maketitle
\section{Introduction}\label{arithmetic-applications}
An abelian variety $A$  over $\Q$ is \emph{Diophantine stable} for a field extension $F/\Q$ if $A$ does not acquire new points over $F$, i.e.\ $A(F)=A(\Q)$. The study of \emph{Diophantine stability} for abelian varieties was initiated by Mazur and Rubin \cite{mazur-rubin-larsen2018} and asks to find (many) number fields of a certain kind (e.g.\ fixed degree, fixed Galois group) over which the abelian variety does not acquire new points. This paper is concerned with \emph{Diophantine rank stability} which asks for the weaker condition $\rank_\Z A(F)=\rank_\Z A(\Q)$. Gaining quantitative understanding on the frequency of when this happens question is a central topic in arithmetic statistics \cite{Golfeld5050,Smith1,Smith2}, for example through the congruent number problem \cite{ChaoLi20}, and rank stability has also played an important role in the recent resolution of Hilbert's tenth problem over number fields \cite{AlpBhargaveWeiShnidman,KoymansPaganoHilbert10}. Our approach to this problem is via non-vanishing of $L$-functions, and the results we obtain apply to general newforms of even weight, vastly generalizing the results of Fearnley, Kisilevsky and Kuwata \cite{FeKiKu} on a conjecture of David, Fearnley and Kisilevsky \cite{DaFeKi07}, see Section \ref{sec:nonvanishing} below. 
 
Recall that  an abelian variety $A/\Q$ is of \emph{$\GL_2$-type} if there  exists a $\Q$-algebra embedding $K\subset \mathrm{End}_\Q(A)\otimes_\Z \Q$ where $K$ is a field of degree equal to the dimension of $A$. It follows from the results in \cite{Ribet04} and \cite{KhareWintenberger2009a,KhareWintenberger2009b} that an abelian variety $A/\Q$ is isogenous to a product of $\GL_2$-type abelian varieties exactly if $A$ is \emph{modular}, meaning there exists a surjective morphism $J_1(N)\rightarrow A$ from the Jacobian of the modular curve $X_1(N)$. A key example is when $A=E_1\times \cdots \times E_n$ is a product of rational elliptic curves. The (Hasse--Weil) $L$-function  $L(A,s)$ of such an $A/\Q$ satisfies analytic continuation, and we define the \emph{analytic rank} as the order of vanishing at the central point $s=1$: 
$$r_\mathrm{an}(A):=\ord_{s=1} L(A,s).$$
We are interested in the case where $r_\mathrm{an}(A)=0$. It then follows from a result of Kato \cite{Kato} (see Corollary \ref{cor:Kato} below) that $A(\Q)$ is finite. The principal goal of this paper is to construct many abelian extension $F/\Q$ with specified Galois group for which $A(F)$ remains finite. To quantify this  we define for $G$ a finite abelian group and $X\geq 1$  the following family of fields:
\begin{equation}
    \mathcal{F}_{G}(X):=\{  F/\Q \text{ \rm Galois}\mid \Gal(F/\Q)\cong G, |\disc(F)|\leq X \}.
\end{equation} 
It is a result of Wright \cite{Wright} that 
\begin{equation}
    \label{eq:wright} |\mathcal{F}_G(X)|=(c(G)+o(1))X^{a(G)}(\log X)^{b(G)},\quad \text{as }X\rightarrow \infty,
\end{equation} 
for a positive constant $c(G)>0$, and
\begin{equation}
    a(G):=\frac{p}{|G|(p-1)},\quad b(G):= \frac{p^m-1}{p-1}, 
\end{equation}
where $p$ is the smallest prime number dividing $|G|$, and $p^m$ is the largest $p$-power order cyclic factor of $G$. 
We obtain the following rank stability result.
    \begin{theorem}\label{thm:abelianvar}
    Let $A/\Q$ be a modular abelian variety with analytic rank $r_\mathrm{an}(A)=0$. Let $G$ be a finite abelian group such that any prime divisor of $|G|$ is  sufficiently large (depending on $A$). Then there exists $\kappa<1$ such that  
    \begin{align}\label{eq:conclusioncorabelian}
|\{ F\in \mathcal{F}_G(X)\mid A(F)\text{ \rm is finite} \}| \gg \frac{X^{a(G)}}{(\log X)^{\kappa}},\quad X\rightarrow \infty.
\end{align}  
Furthermore, if $A=E_1\times \cdots \times E_n$ is a product of rational elliptic curves of analytic rank $0$ then it suffices that all prime divisors of $|G|$ are at least $\max\{167,3n+2\}$. If, in addition, $E_1,\ldots,E _n$ are all semi-stable, the lower bound $\max\{11,n+1\}$ is sufficient.
\end{theorem}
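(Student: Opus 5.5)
The plan is to reduce finiteness of $A(F)$ to the non-vanishing of twisted central $L$-values, and then to supply those non-vanishing results by counting characters.

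\textbf{Reduction to $L$-values.} By the structure results recalled above, $A$ is isogenous to $\prod_i A_{f_i}$, where the $A_{f_i}$ are $\GL_2$-type varieties attached to weight-$2$ newforms $f_i$. Finiteness of $A(F)$ is an isogeny invariant, so we may work with this product. For $F/\Q$ abelian with group $G$, Artin formalism gives
\[
L(A/F,s)=\prod_{\chi\in\widehat{G}}\prod_i\prod_{\sigma} L(f_i^\sigma\otimes\chi,s),
\]
where $\chi$ runs over the Dirichlet characters cutting out $F$. Kato's theorem, in its twisted form (Corollary \ref{cor:Kato}), states that $L(f\otimes\chi,1)\neq 0$ forces the $\chi$-isotypic part of $A_f(F)\otimes\C$ to vanish. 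Hence $A(F)$ is finite as soon as $L(f_i\otimes\chi,1)\neq0$ for every $i$ and every $\chi\in\widehat{G}$. The trivial character is covered by the hypothesis $r_\mathrm{an}(A)=0$. By Galois equivariance of the algebraic parts, non-vanishing for one Galois conjugate $\chi$ gives it for all conjugates of $\chi$, and likewise for $f_i^\sigma$. So it is enough to treat one character from each Galois orbit of cyclic subquotients of $\widehat G$.

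\textbf{Non-vanishing input and counting.} I would use the simultaneous non-vanishing theorem of Section \ref{sec:nonvanishing}. Its expected form is the following: for primes $\ell$ with $\ell\equiv1 \pmod{d}$, outside an exceptional set, there is a character $\chi$ of order $d$ and conductor $\ell$ with $L(f_i\otimes\chi,1)\neq0$ for all $i$ simultaneously. The condition that the prime divisors of $|G|$ be large enters here, since it makes the mod-$\mathfrak p$ residual Galois representations large or irreducible and allows the horizontal $p$-adic $L$-function argument to work. Write $G\cong\prod_j \Z/d_j$. Fields in $\mathcal{F}_G(X)$ with the maximal exponent $a(G)$ arise from compositum-type constructions over tuples of primes $\ell_1,\dots,\ell_r$ with $\ell_k\equiv 1\pmod p$, where $p$ is the smallest prime dividing $|G|$. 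So I would count fields $F$ whose characters are products of characters $\chi_{\ell_k}$ of conductor $\ell_k$, requiring good non-vanishing for every character in $\widehat G$. Since these characters are products of prime-conductor characters, I expect the non-vanishing theorem to be stated for products of characters of prime conductor as well.

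\textbf{Main obstacle and the exponent $\kappa$.} The density loss is the main difficulty. The set of good primes in each residue class has relative density $\delta<1$ and need not be all primes $\equiv1\pmod p$. A Wirsing/Selberg–Delange count over products of primes from a set of relative density $\delta$ then produces $X^{a(G)}(\log X)^{\delta b(G)-1}$ instead of $X^{a(G)}(\log X)^{b(G)-1}$. This is bounded below by $X^{a(G)}(\log X)^{-\kappa}$ with $\kappa<1$ provided $\delta>0$. The key step is therefore to prove that the good primes have positive density, uniformly enough that the simultaneous condition across all characters of $\widehat G$ costs only a constant factor in the exponent. I would get this from Chebotarev applied to the mod-$\mathfrak p$ representations of the $f_i$ in the horizontal construction: good primes are those whose Frobenius avoids a proper closed subset.

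\textbf{Explicit bounds for elliptic curves.} For $A=E_1\times\cdots\times E_n$, the constants come from two sources. The bound $167$ comes from Mazur-type bounds on non-surjective primes. The bound $3n+2$ comes from a pigeonhole argument ensuring that some Frobenius class avoids the bad sets for all $n$ curves at once. For semistable curves, Mazur's theorem gives irreducibility for $p\ge11$, and the weaker pigeonhole condition $p\geq n+1$ suffices.
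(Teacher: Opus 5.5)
Your reduction to twisted $L$-values via Corollary~\ref{cor:Kato} is the same as the paper's, but the non-vanishing step has a genuine gap.

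\textbf{What is missing.} Finiteness of $A(F)$ needs $L(f_i^\sigma,\chi,1)\neq0$ for \emph{every} $\chi\in\widehat{\Gal(F/\Q)}$. Galois conjugation only relates $\chi$ to $\chi^j$ with $j$ prime to the order of $\chi$. The powers $\chi^p,\chi^{p^2},\dots$ inside a $\Z/p^m$ factor are genuinely different characters. So are the mixed characters $\chi_1\chi_2$ coming from different primary factors. All of them must be non-vanishing simultaneously. Theorem~\ref{thm:nonvanishing} (via Theorem~\ref{thm:propag}) controls one character of order $d$ at a time, so it does not supply this. Your sentence ``I expect the non-vanishing theorem to be stated for products \dots'' is exactly the missing argument.

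\textbf{Why the proposed mechanism fails.} Your mechanism, that good primes form a Chebotarev set, conflates orderly primes with non-vanishing. Being orderly is a Frobenius condition that only guarantees the horizontal $p$-adic $L$-function \emph{exists}. Non-vanishing is a Frobenius condition only in the Fearnley--Kisilevsky--Kuwata regime, via $L^{\mathrm{alg}}(f,\chi,1)\equiv(\text{Euler factor at }\ell)\,L^{\mathrm{alg}}(f,1)$ modulo a prime above $p$. That congruence is useful only when $L^{\mathrm{alg}}(f,1)$ is a $p$-unit. This is unavailable for the explicit bounds $167$, $3n+2$, $11$, $n+1$, which do not see the $L$-value. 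Worse, it breaks down for any $G$ with two distinct primes $p_1\neq p_2$. Reducing modulo a prime above $p_2$ kills $\chi_2$ but leaves $L^{\mathrm{alg}}(f,\chi_1,1)$, whose $p_2$-adic valuation is uncontrolled as $\chi_1$ varies.

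\textbf{How the paper closes the gap.} It uses two ingredients you would need.
\begin{enumerate}
\item The norm of a measure, $\nu^{\mathrm{Nr}}=\prod_{n=1}^{p^N}\nu^{(n)}$ with $\nu^{(n)}(\chi)=\nu(\chi^n)$ and $p^N$ the exponent of the horizontal group, is non-zero at $\1$. Applying Theorem~\ref{thm:structure} to it yields Theorem~\ref{thm:structurenew}: for every $\chi$ there is $\chi_0\in M_\nu$ with $\nu((\chi\chi_0)^n)\neq0$ for all $n$ at once.
\item $G$ is built recursively along its primary decomposition. Given $F_0$ with group $G_0$ and discriminant prime to $|G|N_1\cdots N_n$, one forms the product of the horizontal $p$-adic $L$-functions of all twists $f_i\otimes\chi'$, $\chi'\in\widehat{\Gal(F_0/\Q)}$. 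Its value at $\1$ is non-zero, though possibly divisible by $p$, and that is all the structure theorem requires. Its existence rests on Lemma~\ref{lem:Galoislemma}, which transfers orderly primes from $\rho$ to $\bigoplus_{\chi'}\rho\otimes\overline{\chi}'$ using disjoint ramification.
\end{enumerate}
The hypothesis ``$p$ large'' enters only through positive density of orderly primes of order $m$ for $\rho_1\oplus\cdots\oplus\rho_n$ (Theorem~\ref{thm:non-van}, resp.\ Theorem~\ref{thm:ordrlyellipticcurves}). There, $167$ comes from Zywina's exceptional set $S$, and $3n+2$, $n+1$ come from a union bound on unipotent proportions in the derived subgroups. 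Finally, the smallest prime dividing $|G|$ is treated last, so that Lemma~\ref{lem:taub} gives the exponent $a(G)$.
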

In the case where $A$ is a product of two elliptic curves we have better control on the ``sufficiently large'' part, and, as a consequence, we can resolve the existence of Diophantine rank stability for $G$-extensions for \emph{all} finite abelian groups $G$ in many situations. 
\begin{corollary}\label{cor:E1E2}
    Let $E_1,E_2/\Q$ be two elliptic curves of analytic rank $0$ for which $E_1[p]$ and $E_2[p]$ are irreducible $\Gal(\overline{\Q}/\Q)$-representations for all primes $p$. Let $G$ be a finite abelian group. Then there exists $\kappa<1$ such that 
    \begin{align}\label{eq:conclusioncor2}
|\{  F\in \mathcal{F}_G(X)\mid   E_1(F),E_2(F)\text{ \rm are finite} \}| \gg \frac{X^{a(G)}}{(\log X)^{\kappa}},\quad X\rightarrow \infty.
\end{align}  
\end{corollary}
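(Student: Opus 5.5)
We reduce Corollary \ref{cor:E1E2} to a simultaneous non-vanishing statement for twisted central $L$-values, and then use Kato's theorem over abelian extensions. For $F\in\mathcal{F}_G(X)$, Artin formalism factors the Hasse--Weil $L$-function as
\begin{equation*}
L(E_i/F,s)=\prod_{\chi} L(E_i\otimes\chi,s),
\end{equation*}
where $\chi$ ranges over the Dirichlet characters of $\Gal(F/\Q)\cong G$. By Kato (Corollary \ref{cor:Kato} and its twisted version), if $L(E_i\otimes\chi,1)\neq 0$ for all $\chi$, then each $\chi$-isotypic part of $E_i(F)\otimes\C$ vanishes. Hence $E_i(F)$ is finite. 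It therefore suffices to produce $\gg X^{a(G)}(\log X)^{-\kappa}$ fields $F$ for which $L(f_1\otimes\chi,1)L(f_2\otimes\chi,1)\neq 0$ for every character $\chi$ of $F$, where $f_1,f_2$ are the weight-$2$ newforms attached to $E_1,E_2$. The trivial character is handled by the hypothesis $r_{\rm an}(E_i)=0$.

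\textbf{Parametrizing the fields.} I would restrict to $G$-extensions built as composita of cyclic extensions of prime-power conductor. Write $G\cong\prod_j \Z/p_j^{m_j}$. Choose primes $\ell_j\equiv 1 \bmod p_j^{m_j}$ and characters $\psi_j$ of exact order $p_j^{m_j}$ and conductor $\ell_j$, and take $F$ to be the field cut out by $\langle\psi_1,\dots,\psi_r\rangle$. The characters of $F$ are then the products $\prod_j\psi_j^{a_j}$. The number of such $F$ with discriminant up to $X$, with the $\ell_j$ restricted to a subset of primes of positive relative density, is $\gg X^{a(G)}(\log X)^{-\kappa}$ for some $\kappa<1$ by a Wright-type count. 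The point is that restricting to primes in residue classes where local conditions hold (for instance $a_{\ell}(f_i)\not\equiv$ certain values modulo $p$) costs only a power of $\log$ strictly less than the full count.

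\textbf{Non-vanishing input.} I would invoke the paper's simultaneous non-vanishing theorem (Theorem \ref{thm:non-van}) for twists of $f_1,f_2$ by characters of $p$-power order. The mechanism is the horizontal $p$-adic $L$-function: modulo a prime above $p$ in $\Z[\mu_{p^m}]$, the algebraic twisted value $L(f\otimes\chi,1)/\Omega$ is congruent to a sum of modular symbols that depends only on local data at the $\ell_j$. This congruence reduces non-vanishing of all twists to non-vanishing of an explicit quantity modulo $p$, for example $\prod_j(\ell_j+1-a_{\ell_j}(f_i))\cdot L(f_i,1)/\Omega_i$ up to units. The irreducibility of $E_i[p]$ for every $p$ enters here. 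It guarantees, via Ihara's lemma or Mazur's results, that the mod-$p$ reductions are non-degenerate. It also makes the Chebotarev conditions on $\ell_j$ (chosen so that $\ell_j$ avoids $a_{\ell_j}\equiv \ell_j+1 \bmod p$ for both curves at once) have positive density, since the images of both mod-$p$ representations are large enough. Small primes $p=2,3,5,7,\dots$ have to be treated by this argument uniformly, without the large-prime assumption of Theorem \ref{thm:abelianvar}. Irreducibility for all $p$ is what replaces that assumption, together with $L(E_i,1)/\Omega_i$ being a $p$-unit or controlling its valuation. The mixed-prime factors of $G$ are handled by applying the criterion prime by prime, using that a character of composite order is non-vanishing when its $p$-part twist is controlled by a congruence modulo a prime above $p$.

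\textbf{Main obstacle.} The hardest step will be the primes $p$ dividing $L(E_i,1)/\Omega_i$, and the simultaneous condition for two curves. For a single curve one may choose auxiliary primes to fix the mod-$p$ value. For two curves the Chebotarev condition must hold jointly in $\Gal(\Q(E_1[p],E_2[p])/\Q)$. If $E_1[p]\cong E_2[p]$, or the two are twists of each other, the conditions could be incompatible. I would first try to show, using irreducibility and Goursat's lemma, that an element avoiding both bad eigenvalue conditions always exists. The case where $p$ divides the algebraic $L$-value I would handle by adding an extra auxiliary prime whose Euler factor cancels that divisibility in the horizontal congruence.
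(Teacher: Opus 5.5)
Your reduction to simultaneous non-vanishing via Kato (Corollary \ref{cor:Kato}) is correct, but the non-vanishing step has a genuine gap. The mechanism you describe is the Fearnley--Kisilevsky--Kuwata congruence. It says that $\tau(\overline{\chi})L(f_i,\chi,1)/\Omega_i$ is congruent, modulo a prime above $p$, to a product of local factors at the $\ell_j$ times $L(f_i,1)/\Omega_i$. This proves non-vanishing only if $L(f_i,1)/\Omega_i$ is a $p$-adic unit. Here $G$ is arbitrary, so $p$ runs over small primes such as $2$ and $3$. For such $p$, $L(E_i,1)/\Omega_i$ can be divisible by $p$ (through Tamagawa numbers or Tate--Shafarevich groups) even when every $E_i[p]$ is irreducible; irreducibility says nothing about this valuation. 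Your proposed repair cannot work. At orderly primes the local factors $P_{M_{f_i},\ell_j}(1)$ are $p$-adic units by definition, and at non-orderly primes they are divisible by $p$. So no choice of auxiliary prime cancels the divisibility, and the congruence just degenerates to $0\equiv 0$. For composite $|G|$ the problem is worse. Treating $\psi_1^a\psi_2^b$ by a congruence modulo a prime above $p_2$ requires $L(f_i,\psi_1^a,1)$ (suitably normalized) to be a $p_2$-unit, and nothing in your argument provides this. This is exactly the obstruction to the congruence method that the paper points out.

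The missing ingredient is the structure theorem for horizontal $p$-adic measures, Theorem \ref{thm:structure}, together with its refinement Theorem \ref{thm:structurenew} via the norm measure $\nu^{\mathrm{Nr}}=\prod_{n\le p^N}\nu^{(n)}$. It says that if merely $\nu(\1)\neq 0$, there is a fixed finite group $M_\nu$ such that every $\chi$ admits some $\chi_0\in M_\nu$ with $\nu((\chi\chi_0)^n)\neq 0$ for all $n$. So the correct form of your ``auxiliary'' idea is twisting by a fixed auxiliary character $\chi_0$, and this is a genuine theorem rather than a formal consequence of the congruence.

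Composite $G$ is then handled inductively, as in Theorem \ref{thm:propagG}. Given $F_0$ for $G_0$ with all $L(f_i,\chi',1)\neq 0$, one applies the structure theorem to the product of the horizontal $p$-adic $L$-functions of all twists $f_i\otimes\chi'$, $\chi'\in\widehat{\Gal(F_0/\Q)}$. This product is nonzero at $\1$ by the inductive hypothesis alone, and Lemma \ref{lem:Galoislemma} supplies the joint orderly primes for these twists.

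Irreducibility enters only to guarantee joint orderly primes of order $m$ for $\overline{\rho}_{E_1,p}\oplus\overline{\rho}_{E_2,p}$ for every $p$. This comes from Proposition \ref{first_criteria}, Lemma \ref{lem:prod} and Proposition \ref{prop:two-non-van}, with a separate argument at $p=2$, where irreducible does not imply absolutely irreducible. Theorem \ref{thm:non-van} is a large-$p$ statement and is not the right input here.

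Finally, characters of prime conductor $\ell_j$ give at best $\kappa=1$. To get $\kappa<1$, the single varying factor (the one for the smallest prime dividing $|G|$) must have conductors that are products of many primes from a positive-density set, as in Lemma \ref{lem:taub}, while the other factors stay fixed.
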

\noindent
When ordered by height, a positive proportion of all rational elliptic curves have analytic rank $0$ and only irreducible mod $p$ representations. Indeed, it is a result of Duke \cite{duke1997} that 100 \% of all rational elliptic curves have no exceptional primes, i.e. all mod $p$ representations are surjective. Moreover, by work of Skinner--Urban \cite{skinner-urban2014} and Bhargava--Shankar \cite{bhargava-shankar2015}, a positive proportion of rational elliptic curves have analytic rank $0$. A concrete example of a pair $(E_1,E_2)$ that satisfies the conditions in Corollary \ref{cor:E1E2} is
\begin{equation*}
    E_1\,:\, y^2+xy=x^3-x^2-8x-7\quad\textrm{and}\quad E_2\,:\, y^2+xy=x^3+x^2-27x-67
\end{equation*}
with LMFDB-labels $109.a1$ and $106.b1$, respectively. Many more examples can be easily found in the LMFDB-database, for example by searching for elliptic curves of rank $0$ whose isogeny class has size $1$.

It is possible to relax the conditions on $E_1$ and $E_2$ in Corollary \ref{cor:E1E2}. For example, if the mod $p$ representation attached to $E_1$ is reducible for some prime $p\geq 5$, it suffices that $E_1$ has a prime of additive reduction different from $p$. In this way, we can also give examples of CM-elliptic curves for which the conclusion of Corollary \ref{cor:E1E2} still holds, see Section \ref{sec:ellipticcurves} for further details and concrete examples. We remark that Diophantine rank stability results for CM-elliptic curves seems to be rare.\\



\noindent
Theorem \ref{thm:abelianvar} and Corollary \ref{cor:E1E2} above are the first Diophantine rank stability results that apply to arbitrary abelian extensions. Prior to our work, Diophantine rank stability had mainly been studied over extensions of prime power degree \cite{mazur-rubin-larsen2018,RayWeston,ray-pathak26}, and in special classes of non-abelian extensions \cite{ShnidmanWeiss,PathakRay}. Furthermore, very little was known about Diophantine rank stability for non-simple varieties (e.g.\ when $A$ is a product of elliptic curves), as there are well-known complications that arise in this case \cite[Remark 10.4]{mazur-rubin-larsen2018}. The few existing results in this direction \cite[Theorem 1.1]{munshi2012} apply only to quadratic extensions or to cyclic extension, but under restrictive conditions\footnote{In \cite[Theorem 1.2]{KrizNordentoft23} simultaneous non-vanishing results were obtained by Kriz and the second named author under the assumption of the existence of ``joint orderly primes''. In this paper we show that this happens for $p$ sufficiently large, see Theorem \ref{thm:non-van}.} \cite[Corollary 1.3]{KrizNordentoft23}.


Most approaches to Diophantine rank stability follow \cite{mazur-rubin-larsen2018}, being arithmetic in nature, and proceed by bounding Selmer groups. Our approach takes an alternative route, namely via the study of non-vanishing of $L$-functions using $p$-adic methods (for a similar approach using archimedean methods/analytic number theory, see \cite[Theorem 1.6]{MV02}).   
\subsection{Non-vanishing of $L$-functions}
The most general approach to the non-vanishing problem seems to come from analytic number theory and consists in embedding the $L$-function in a family and calculating the average using analytic tools, see e.g. \cite{MurtyMurty91,BlFoKoMiMiSa18}. In the case where the central value is \emph{critical} in the sense of Deligne \cite{Deligne79}, a different route opens up by applying $p$-adic methods (including here mod $p$ methods). Interestingly, analytic tools are often needed as an input in the $p$-adic theory \cite{Rohrlich84,CornutVatsal07,RadziwillYang23}. However, there are examples where the algebraic methods yield non-vanishing results directly \cite{Merel09,DiJaRa20}. 
 A beautiful example of this approach is the work of Fearnley--Kisilevsky--Kuwata \cite{FeKiKu} concerned with the non-vanishing problem for characters of \emph{fixed} prime order motivated by the conjectures of  David--Fearnley--Kisilevsky \cite{DaFeKi07}. To quantify the problem and state their results, we put\footnote{Note that when $d$ is not a prime the structure of the family $\mathcal{F}_d(X)$ is quite different from  $\mathcal{F}_G(X)$ with $G=\Z/d$ since the latter sees characters of order \emph{dividing} $d$. The families $\mathcal{F}_d(X)$ are however more natural  from the point of view of analytic number theory \cite{ChoPark}.}
\begin{equation}\label{eq:Kd}\mathcal{K}_d(X):=\{ \chi\modulo D\mid \text{primitive of order $d$},D\leq X\},\quad X\geq 1,\end{equation}
which we note satisfies the asymptotic $| \mathcal{K}_d(X)|=(c_d+o(1)) (\log X)^{\sigma_0(d)-2}X$ as $X\rightarrow \infty$ for some constant $c_d>0$ 
where $\sigma_0(d)$ denotes the number of divisors of $d$ \cite[Corollary 5.8]{KrizNordentoft23}. 
Using mod $p$ methods, the authors prove in \cite{FeKiKu} that for an elliptic curve $E/\Q$ with analytic rank equal to $0$ (i.e.\ $L(E,1)\neq 0$) and a prime $p$ sufficiently large (depending on $E$), it holds that
\begin{align}\label{eq:conclusion}
|\{ \chi\in \mathcal{K}_p(X)\mid   L(E, \chi,1) \neq0\}| \gg \frac{X}{(\log X)^{\kappa}},\quad \text{as }X\rightarrow \infty,
\end{align}
for some $0<\kappa<1$ (depending on $E$ and $p$). Again by the work of Kato, this implies Theorem \ref{thm:abelianvar} in the case where $A$ is an elliptic curve $E/\Q$ and $G=\Z/p$ (and generalizes easily to the case $G=\Z/p^m)$. 
In \cite{KrizNordentoft23} the second named author together with Daniel Kriz obtained a wide range of generalizations of the above results via the study of \emph{horizontal $p$-adic $L$-functions}, which are certain integral $p$-adic measures  interpolating twists of $p$-power order and conductor prime to $p$. In this paper we obtain, in particular, new results on the existence of horizontal $p$-adic $L$-functions which imply the following generalization of the non-vanishing result (\ref{eq:conclusion}).

\begin{theorem}\label{thm:nonvanishing}
Let $f_1,\ldots, f_n$ be holomorphic newforms of even weights $k_1,\ldots, k_n$. Assume that $L(f_i,k_i/2)\neq0$ for all $i=1,\ldots, n$. Then there exists  $M\geq 1$ (depending on $f_1,\ldots, f_n$) such that if $d\geq 2$ is an integer with a prime divisor greater than or equal to $M$ then the following holds: there exists a constant $\kappa<1$ such that 
\begin{align}\label{eq:conclusionthm}
|\{ \chi\in \mathcal{K}_d(X)\mid   L(f_1, \chi,k_1/2)\cdots L(f_n, \chi,k_n/2) \neq0\}| \gg \frac{X}{(\log X)^{\kappa}},\quad \text{as }X\rightarrow \infty.
\end{align}
Furthermore, for $n=2$ the above conclusion holds as long as $d$ is divisible by a prime $p$ satisfying the following precise condition: for $i=1,2$ there exists a prime $\mathfrak{p}_i$ above $p$ in the Hecke field of $f_i$ such that the corresponding residual representation mod $\mathfrak{p}_i $ is absolutely irreducible.
\end{theorem}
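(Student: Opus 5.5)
The plan follows the mod $p$ strategy of Fearnley--Kisilevsky--Kuwata as developed in \cite{KrizNordentoft23}: construct horizontal $p$-adic $L$-functions and combine them with a counting argument over ``orderly'' primes.

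Let $p\mid d$ be a large prime. For each $i$ choose a prime $\mathfrak{p}_i\mid p$ of the Hecke field $K_i$ of $f_i$. Use modular symbols (Eichler--Shimura) normalized by periods $\Omega_{f_i}^\pm$ so that the algebraic parts
$$L^{\mathrm{alg}}(f_i,\chi,k_i/2)=\tau(\bar\chi)\frac{L(f_i,\chi,k_i/2)}{(2\pi i)^{k_i/2}\Omega^{\pm}_{f_i}}$$
are $\mathfrak{p}_i$-integral and equal finite sums $\sum_{a\bmod D}\bar\chi(a)\,\langle\text{modular symbol at } a/D\rangle$. For $\chi$ of conductor $D=\ell_1\cdots\ell_r$, with $\ell_j\equiv 1\bmod p$ and $\chi$ of $p$-power order, expand by Birch's formula. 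This gives a \emph{horizontal} measure: the twisted values are interpolated by an element of $\Oo_{\mathfrak{p}_i}[\prod_j(\Z/\ell_j)^\times\otimes\Z_p]$. Reducing mod the augmentation ideal (all $\chi\equiv 1$ mod $\mathfrak{P}$) shows
$$L^{\mathrm{alg}}(f_i,\chi,k_i/2)\equiv \prod_j(\text{local factor at }\ell_j)\cdot L^{\mathrm{alg}}(f_i,k_i/2)\pmod{\mathfrak{P}}.$$
The local factor is roughly $(\ell_j+1-a_{\ell_j}(f_i)\ell_j^{1-k_i/2}\ldots)$, i.e. it is $\equiv 2-a_{\ell_j}(f_i)$ mod $\mathfrak{p}_i$ up to units. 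To get a clean congruence one must restrict to characters of exact order $p$ and work with sums over Galois conjugates, so the relevant congruence is really to the $r$-th power of the augmentation ideal. The requirement is a prime $\ell\equiv 1\bmod p$ at which the relevant ``derivative'' term is nonzero mod $\mathfrak{p}_i$ \emph{simultaneously} for all $i$. Following \cite{KrizNordentoft23}, I call such primes \emph{jointly orderly}. For $p$ larger than $M$, take $M$ larger than all primes dividing the nonzero $L^{\mathrm{alg}}(f_i,k_i/2)$ norms. Then $L^{\mathrm{alg}}(f_i,k_i/2)\not\equiv0\bmod\mathfrak{p}_i$.

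The main step, and the expected obstacle, is showing that jointly orderly primes have positive Chebotarev density. The orderliness condition at $\ell$ is a condition on $\rho_{f_i,\mathfrak{p}_i}(\mathrm{Frob}_\ell)$ inside the image of the product representation $\bar\rho=\prod_i\bar\rho_{f_i,\mathfrak{p}_i}\times\bar\chi_{\mathrm{cyc}}$, restricted to $\ell\equiv1\bmod p$. I would argue as follows. For $p$ large, each $\bar\rho_i$ has image containing $\SL_2$ of the residue field (Ribet, Momose for non-CM; the CM case is handled via Cartan normalizers). Goursat's lemma describes the joint image up to twist-equivalences between the $f_i$. One then needs an element with trivial determinant-twist (to force $\ell\equiv1\bmod p$) whose components each avoid a ``bad'' subset, namely those with trace $\equiv 2$, i.e. the unipotent-type elements. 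The bad set in each factor has proportion $O(1/p)$, so a union bound over $n$ factors leaves a positive proportion once $p\gg n$. This counting step is where the size of $M$ enters. For $n=2$, absolute irreducibility of the residual representations alone should suffice, by a direct case analysis of subgroups of $\GL_2(\F)\times\GL_2(\F)$ (Dickson's classification). The point is to show that one can find an element whose components are both non-unipotent while the determinant condition holds. Even when the images are related by an isomorphism, an element with trace $\neq 2$ in both factors exists, since trace $\neq2$ is preserved under the identification up to a character twist; choosing the twist carefully avoids the bad set.

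Finally, take $D$ a product of $r$ jointly orderly primes together with the primes needed to realize order exactly $d$ (choose $\chi=\chi_p\chi'$ with $\chi'$ of order dividing $d$ prime-to-$p$ part, handled by using the measure over $\Oo[\mu_{d'}]$ and a mod-$\mathfrak{P}$ argument applied after fixing $\chi'$ with $L^{\mathrm{alg}}$ nonvanishing, which the same congruence provides). The congruence then forces, for each $D$, some $\chi$ of conductor $D$ and order $d$ with all $n$ values nonzero. The number of such $D\le X$ composed of primes in a Chebotarev set of density $\delta<1$ is $\gg X/(\log X)^{1-\delta}$ by a Landau--Wirsing/Selberg--Delange estimate, which gives \eqref{eq:conclusionthm} with $\kappa=1-\delta'$.
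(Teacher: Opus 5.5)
Your route is the mod $p$ congruence method of \cite{FeKiKu}. For $\chi$ of $p$-power order whose conductor is a product of orderly primes, you get $L^{\mathrm{alg}}(f_i,\chi,k_i/2)\equiv u\cdot L^{\mathrm{alg}}(f_i,k_i/2)\pmod{\mathfrak{P}}$ with $u$ a unit. You then need $\mathfrak{p}_i\nmid L^{\mathrm{alg}}(f_i,k_i/2)$. (Incidentally, with orderly primes this congruence holds modulo $\mathfrak{P}$ itself for every $p$-power order character; no ``derivative'' or higher power of the augmentation ideal enters.)

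Enlarging $M$ to avoid primes dividing the algebraic central values is harmless for the first assertion. It is fatal for the second. For $n=2$ the statement allows \emph{every} prime $p\mid d$ at which both residual representations are absolutely irreducible, including primes dividing the algebraic part of $L(f_i,k_i/2)$, and for those your congruence reads $0\equiv0$.

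What is missing is the genuinely $p$-adic input. The orderly condition makes the local Euler-factor terms in the horizontal norm relations invertible in the group ring (Lemma \ref{lem:fudgeinv}). So the theta elements glue to an integral measure $\nu$ on $\prod_n\Z/p^{m_n}$ with $\nu(\mathbf{1})\neq0$, though possibly not a unit. The structure theorem (Theorem \ref{thm:structure}) then gives, in every finite subgroup of characters, some $\chi\chi_0$ with $\chi_0$ in a fixed finite group $M_\nu$ and $|\nu(\chi\chi_0)|_p=\lVert\hat\nu\rVert_p>0$. This is what Theorem \ref{thm:propag} (from \cite{KrizNordentoft23}) packages. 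The paper's proof is just that theorem plus the existence of orderly primes of order $m$ (Theorem \ref{thm:non-van}, resp.\ Proposition \ref{prop:two-non-van}). Removing the condition $p\nmid L^{\mathrm{alg}}$ is exactly the improvement over \cite{FeKiKu} that the paper advertises.

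Your treatment of composite $d=p^md'$ also fails as written. A congruence modulo $\mathfrak{P}\mid p$ only compares characters differing by a character of $p$-power order. So it cannot ``provide'' a $\chi'$ of order $d'$ prime to $p$ with $L(f_i\otimes\chi',k_i/2)\neq0$ for all $i$. Even given such a $\chi'$, demanding $\mathfrak{P}\nmid L^{\mathrm{alg}}(f_i\otimes\chi')$ would make the admissible $p$ depend on $d$, whereas $M$ may depend only on $f_1,\ldots,f_n$. Here too you should simply invoke Theorem \ref{thm:propag}, whose only hypothesis at $p$ is the orderly condition of order $m$.

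On the orderly side your union bound is in the right spirit, with two corrections. You need $\ell\equiv1\bmod p^m$, not just mod $p$. And when the nebentypus is non-trivial, the bad set is ``eigenvalue $1$'', not ``trace $2$''. The paper sidesteps both by working in the derived subgroup $G'$ of the joint image:
\begin{enumerate}
\item its elements act trivially on $\Q(\zeta_{p^m})$ and lie in $\SL_2$ in each coordinate, where eigenvalue $1$ means unipotent (Proposition \ref{first_criteria});
\item $G'$ surjects onto each $G_i'$;
\item Proposition \ref{far-from-unipotent} bounds the proportion of unipotents in each $G_i'$ by $c_f/(p-1)$.
\end{enumerate}
For $n=2$ no Dickson case analysis is needed. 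Lemma \ref{unipotent} shows each $G_i'$ is not a $p$-group, and the $p$-power trick of Lemma \ref{lem:prod} then produces an element of $G'$ that is non-unipotent in both coordinates.
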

\begin{remark}
If $f$ is a holomorphic newform of weight at least $2$, then the mod $p$ representations attached to $f$ are absolutely irreducible for all $p$ sufficiently large, see Lemma \ref{CM-case} and Lemma \ref{non-CM-case}. If $f_1,\ldots, f_n$ are all of weight $2$ corresponding to elliptic curves of analytic rank 0, then, as explained in Section \ref{sec:ellipticcurves}, one can take $M=\max\{167, 3n+2\}$  in Theorem \ref{thm:nonvanishing}, and $M=\max\{11,n+1\}$ if the curves are semi-stable. 
\end{remark}
\begin{remark}
    The above results are quite astonishing from the point of view of analytic number theory since calculating the first moment of the corresponding twist family is out of range of analytic techniques due to the appearance of (the unwieldy) higher order Gauss sums in the dual sum of the approximate functional equation. For the state of the art, see \cite{Sound00,XiannanLi2024} for results on quadratic characters and  \cite{BaierYoung,DaviddeFaveriETC24} for cubic characters.
\end{remark}
As alluded to earlier, the above theorem generalizes Theorem A in \cite{FeKiKu} which corresponds to the case $n=1$, $f_1=f_E$ is of weight 2 associated to an elliptic curve $E/\Q$, and $d=p$ is a sufficiently large prime. 
Finally, in case of a single elliptic curve $E$ and $d=p$ prime, the above theorem holds for all $p\geq 167$. In contrast, the methods of \cite{FeKiKu} rely on $p$ not dividing the algebraic part of the central $L$-value whose prime factors cannot be bounded independently of $E$. 
Comparing to \cite[Theorem 1.2]{KrizNordentoft23} the above theorem  applies to \emph{any} newforms of even weights with non-vanishing central $L$-values  (as opposed to under certain ``joint big image'' assumption, see \cite[Remark 1.1]{KrizNordentoft23}). Finally, the last part of the above theorem in the case $n=2$ generalizes the last bullet of \cite[Theorem 1.1]{KrizNordentoft23} from a single elliptic curve to a general pair of even weight holomorphic cusp forms.

\subsection{Proof strategy}
Our argument proceeds by introducing a number of refinements in the theory of horizontal $p$-adic $L$-functions developed in \cite{KrizNordentoft23}. We now explain the idea behind the proof of Theorem  \ref{thm:abelianvar}  in the case where $A=E_1\times\cdots\times E_k$ is a product of rational elliptic curves of analytic rank $0$. 

Suppose $G$ is a non-trivial finite abelian group. By Kato's explicit reciprocity law (or more precisely its consequences in Corollary \ref{cor:Kato}), it suffices to construct (many) abelian extensions $F/\Q$ with Galois group $G$ such that $ L(E_i,\chi,1)\neq0 $ for all $i=1,\ldots, k$ and all characters $\chi$ of $\Gal(F/\Q)$. In other words, the proof is reduced to a simultaneous non-vanishing problem for $L$-functions with simultaneity in two directions, namely the family of elliptic curves $E_1,\ldots,E_n$ the and characters $\chi$ of $\Gal(F/\Q)$. Consider now the  primary decomposition of $G$:
\begin{equation}
 \label{eq:primary}   G\cong \Z/p_1^{\alpha_1} \times\cdots \times \Z/p_t^{\alpha_t}.
\end{equation}
Our strategy is to construct the sought-after $F$ as the compositum of $\Z/p_i^{\alpha_i}$-extensions for each $i$ with pairwise coprime discriminants obtained recursively using the theory of \emph{horizontal $p$-adic $L$-functions}: For $p=p_1$ as in the primary decomposition (\ref{eq:primary}), we have (under certain assumptions) a $p$-adic measure associated to $A$: 
\begin{equation}
    \label{eq:intromeasure} \nu_{A,p}\in \Z_p\llbracket\prod_{n\in \N} \Z/p^{m_n}\rrbracket:=\varprojlim_{N} \Z_p[\prod_{n\leq N} \Z/p^{m_n}], 
\end{equation} 
where $m_n=v_p(\ell_n-1)$, and $\ell_n$ are so-called \emph{orderly primes} for the  mod $p$ Galois representation associated with $A$ (which we define in Section \ref{subsubsec:orderly} and whose existence is a key contribution of the present paper). This measure is characterized by the following interpolation property: For a  Dirichlet character $\chi$  of $p$-power order and conductor dividing $\prod_{n\in \N}\ell_n$, we can consider it as a $\C_p^\times$-valued continuous character of the profinite group $\prod_{n\in \N} \Z/p^{m_n}$, and it satisfies
$$\nu_{A,p}(\chi)= (..) L(A,\chi,1)/\Omega_A=(..) L(E_1,\chi,1)\cdots L(E_k,\chi,1)/(\Omega^+_{E_1}\cdots \Omega^+_{E_k}),$$
where $(..)$ denotes certain explicit non-vanishing Euler factors at the primes dividing the conductor of $\chi$, and $\Omega_A=\Omega^+_{E_1}\cdots \Omega^+_{E_k}\in \C^\times$ denotes a \emph{period} of $A$ so that the right-hand side is algebraic and $p$-integral (we can pick $\Omega^+_{E_i}$ to be the real N\'{e}ron period of $E_i$, see Theorem \ref{thm:padicL} for details). Note that by assumption of analytic rank $0$, the value at the trivial character $\nu_{A,p}(\mathbf{1})$ is non-zero, and so we can appeal to the general structure theorem for the set of non-vanishing characters obtained by Daniel Kriz and the second-named author in \cite{KrizNordentoft23}. This argument yields the proof of  Theorem \ref{thm:nonvanishing}. 

For the problem of Diophantine rank stability this does not suffice, and new ideas are needed: We require the stronger condition that $\nu_{A,p}(\chi^j)\neq 0$ for all powers of a character $\chi$ of order $m=\alpha_1$ (or in fact many such characters). To achieve this goal, we introduce  the \emph{norm of measure} (\ref{eq:normofmeasure}) which allows us to obtain a refinement of the structure theorem in \emph{op.\ cit.}, see Theorem \ref{thm:structurenew} for the precise statement. This argument suffices for the case when $G$ is cyclic (given the existence of the relevant orderly primes). For general $G$, we pick any such character $\chi_1$ of order $p_1^{\alpha_1}$ and proceed recursively by considering the product of horizontal $p$-adic $L$-functions with  $p=p_2$: 
\begin{equation}\label{eq:p2} \nu_{A\otimes \chi_1\times A\otimes \chi_1^2\times \cdots \times A,p_2}=\prod_{j=1}^{p_1^{\alpha_1}}\nu_{A\otimes \chi_1^j,p_2}\in  \Z_{p_2}(\chi_1)\llbracket G_{p_2}\rrbracket,\end{equation}
where $G_{p_2}$ is a countable product of cyclic groups of $p_2$-power order coming from orderly primes as in (\ref{eq:intromeasure}). Here a key fact for the existence of (\ref{eq:p2}) is that if there are orderly primes for $A$ modulo $p_2$ then the same is the case for the product $A\otimes \chi_1\times A\otimes \chi_1^2\times \cdots \times A$, see Lemma \ref{lem:Galoislemma}. By construction, the measure in (\ref{eq:p2}) is non-vanishing at the trivial character, and so we can apply Theorem \ref{thm:structurenew} a second time. Continuing like this for all the primary factors of $G$ yields abelian $G$-extensions $F$ such that $\rank_\Z A(F)=\rank_\Z A(\Q)=0$ as desired, using here Kato's results alluded to above. Now, to make this argument quantitative, it is most efficient  to let the last prime $p_t$ in the primary decomposition be the smallest prime divisor of $|G|$ so that when applying  Theorem \ref{thm:structurenew} at the last step of the recurrence, we conclude by a standard Tauberian argument the lower bound (\ref{eq:conclusioncorabelian}) (which loses a power of $\log X$ relative to the optimal bound (\ref{eq:wright})).

\subsubsection{On the existence of orderly primes}\label{subsubsec:orderly} The above proof strategy yields a propagation of non-vanishing principle, Theorem \ref{thm:propagG}, which relies crucially on the existence of the relevant (higher rank) horizontal $p$-adic $L$-function for \emph{all} prime divisors $p$ of the finite abelian group $G$. In view of Lemma \ref{lem:Galoislemma} alluded to above, this amounts to the existence of orderly primes which we now define. Let $\mathbb{F}_q$ be a finite field of characteristic $p$, and suppose $\rho: G_{\mathbb{Q}}\rightarrow \mathrm{GL}_n(\overline{\mathbb{F}}_p)$ is a mod $p$ Galois representation. We say that a prime $\ell$ is an \emph{orderly prime of order $m$} for $\rho$ if $\ell \equiv 1 \pmod{p^m}$, $\rho$ is unramified at $\ell$, and the image of Frobenius at $\ell$ does not have $1$ as an eigenvalue, see Definition \ref{def:orderly} below\footnote{This generalizes the notion of orderly primes from  Definitions 4.1 and 4.11 in \cite{KrizNordentoft23} to mod $p$ Galois representations of  arbitrary rank}. In Section \ref{sec:horPadic}, we show how the existence of orderly primes ensures the existence of an integral $p$-adic measure interpolating twists of $p$-power order and conductor prime to $p$ in arbitrary rank (under certain rationality assumptions related to Deligne's conjecture on critical values), which is inductive with respect to direct isobaric sum, see Section \ref{sec:higherrank}.  

Since being orderly is a Frobenius condition, the problem is amenable to group theoretic methods via the Chebotarev density theorem. For our applications we will put special emphasis on the case of products of 2-dimensional representations, but we also treat the general higher rank case (see Section \ref{sec:gln}). In particular, we obtain the following result which is responsible for the condition that ``any prime divisor of $G$ is sufficiently large'' in Theorem \ref{thm:abelianvar}.

\begin{theorem}\label{thm:non-van}
Suppose $f_1 ,\dots, f_n$ are holomorphic newforms of integral weights at least two. Let $p$ be a prime and $\mathfrak{p}_1,\dots, \mathfrak{p}_n$ prime ideals above $p$ in the Hecke fields of $f_1 ,\dots, f_n$ respectively, and let $\rho_{1},\dots, \rho_{n}$ denote the corresponding mod $p$ representations attached to $f_1 ,\dots, f_n$ respectively. When $p$ is sufficiently large, the set of orderly primes of order $m$ for $\rho_1 \oplus \cdots \oplus \rho_n$ has positive density for all $m \geq 1$.
\end{theorem}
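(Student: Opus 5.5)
The plan is to turn the orderly condition into a statement about the image of a single Galois representation and then apply Chebotarev. Write $\rho=\rho_1\oplus\cdots\oplus\rho_n$ and let $L_m$ be the compositum of the field cut out by $\rho$ and $\Q(\zeta_{p^m})$. A prime $\ell\nmid pN_1\cdots N_n$ is orderly of order $m$ exactly when $\Frob_\ell$ lies in the set
$$C_m=\{\sigma\in \Gal(L_m/\Q)\,:\,\sigma|_{\Q(\zeta_{p^m})}=1,\ \rho_i(\sigma)\text{ has no eigenvalue }1\text{ for all }i\}.$$
This set is conjugation stable. By Chebotarev it therefore suffices to show that $C_m\neq\emptyset$ for every $m\geq1$ when $p$ is large.

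Let $H_m=\Gal(L_m/\Q(\zeta_{p^m}))$, viewed via $\rho$ as a subgroup of $\prod_i\GL_2(\overline{\F}_p)$ contained in the image $\Gamma$ of $\rho$. For every $\sigma\in H_m$ we have $\det\rho_i(\sigma)=\chi_p(\sigma)^{k_i-1}\varepsilon_i(\sigma)=\varepsilon_i(\sigma)$, where $\chi_p$ is the mod $p$ cyclotomic character and $\varepsilon_i$ the nebentypus. So $H_m$ lands in the subgroup where each determinant is a nebentypus value. Moreover $H_m$ contains the commutator subgroup $[\Gamma,\Gamma]$, because $\Q(\zeta_{p^m})/\Q$ is abelian. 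It is therefore enough to show that, for $p$ large, $[\Gamma,\Gamma]$ (or a slightly larger explicit subgroup of $H_m$, using the nebentypus to shift determinants) contains an element $g=(g_1,\dots,g_n)$ with no $g_i$ having eigenvalue $1$.

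I split into cases according to Lemmas \ref{CM-case} and \ref{non-CM-case}.

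\emph{Non-CM forms.} By Ribet/Momose, for $p$ large the image of each $\rho_i$ contains a conjugate of $\SL_2(\F_{q_i})$ for some subfield $\F_{q_i}$. By Goursat's lemma, $[\Gamma,\Gamma]$ is a product of copies of $\SL_2(\F_q)$ embedded diagonally up to conjugation and field automorphisms, coming from forms that are twist-related mod $\mathfrak p$. For $p\geq5$ the group $\SL_2$ is perfect, so this holds. In each diagonal factor I pick a split semisimple element $\mathrm{diag}(a,a^{-1})$ with $a\in\F_p^\times$, $a\neq\pm1$ (this needs $p\geq5$). Such an element avoids eigenvalue $1$ under every twisted embedding: twisting by a character changes eigenvalues by the character value, but inside the commutator subgroup the twisted copies are related by field automorphisms and conjugation, which preserve the set $\{a,a^{-1}\}$ up to Frobenius powers. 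If a residual twisting character still introduces a scalar $c$ with $ca^{\pm1}=1$, I choose $a$ to avoid the boundedly many bad values. This works because the number of factors is $n$ while $a$ ranges over $p-3$ values; this count is where a bound like $3n+2$ should come from.

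\emph{CM forms.} The image is contained in the normalizer of a Cartan subgroup, and for $p$ large the Cartan part is as large as possible up to bounded index. Here I take an element in the Cartan with eigenvalues $\psi(\sigma),\psi^c(\sigma)$ trivial in determinant on $H_m$. I need $\psi(\sigma)\neq1$, which holds for a nonempty open set once the Cartan image has order greater than $n$ times the index of the various constraints. A counting argument over the torus, using characters of large order, then produces a simultaneous choice. Mixed CM/non-CM tuples are handled by Goursat again, since for large $p$ a Cartan image has no $\SL_2$ quotient, so the two kinds of factor are essentially independent.

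The main obstacle is the simultaneous condition across all $i$ when the $\rho_i$ are entangled: congruent forms, twists of one another, or CM forms sharing a CM field. The worry is that a single $\sigma$ forcing $\rho_1(\sigma)$ to avoid eigenvalue $1$ might force $\rho_j(\sigma)$ to have it. My first attempt would be the counting approach. Within a suitable abelian subgroup of $H_m$, a maximal split torus $T\cong(\F_p^\times)^r$, each condition ``$\rho_i(t)$ has eigenvalue 1'' cuts out at most two cosets of kernels of nontrivial characters of $T$. Each such coset has proportion at most $1/(p-1)$ of $T$. With at most $2n$ bad cosets and $p-1>2n$, the union misses some $t$. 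The work is in proving that these characters are nontrivial on $T\cap H_m$, which is exactly where the largeness of $p$ and the big-image lemmas enter.
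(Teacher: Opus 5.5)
\textbf{Overall.} Your reduction is the same as the paper's, but the proposal leaves a genuine gap at the simultaneous-choice step. Chebotarev together with $\rho(H_m)\supseteq[\Gamma,\Gamma]$ reduces the theorem to finding one $g\in[\Gamma,\Gamma]$ none of whose components has eigenvalue $1$ (Proposition \ref{first_criteria}). Your Goursat treatment of an all-non-CM tuple can be made to work. Within a linked class, the components of a suitable $g$ are, up to conjugation and a field automorphism fixing $\F_p$, equal to $\pm\mathrm{diag}(a,a^{-1})$, so $a\in\F_p^\times\setminus\{\pm1\}$ suffices.

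\textbf{The gap.} What is missing is the general simultaneous choice. This is exactly the ``main obstacle'' you name and then defer. The torus count you propose fails as written.
\begin{itemize}
\item A nontrivial character of $T\cong(\F_p^\times)^r$ can have kernel of index $2$. So nontriviality does not give density at most $1/(p-1)$; you need each relevant character to have image of size $\gg p$.
\item That is a genuine input. For a CM form it is the content of Lemma \ref{CM-case}, obtained from $\psi((\alpha))=\alpha^{k-1}$ for $\alpha\equiv1\modulo\mathfrak{m}$ and the Chinese remainder theorem. Your ``Cartan part as large as possible up to bounded index'' is only asserted.
\item A split torus is also the wrong object. For a CM form with $p$ inert in the CM field, the elements making $G_i'$ large have order dividing $p+1$, so they lie in no group of the form $(\F_p^\times)^r$.
\item Exhibiting a torus in $H_m$ whose characters are large in every coordinate at once is itself the entanglement problem. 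So your key step is left open rather than reduced.
\end{itemize}

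\textbf{The missing idea.} Count in $G'=[\Gamma,\Gamma]$ itself rather than in a torus. The projections $\pi_i\colon G'\to G_i'$ are surjective, since a surjection carries the derived subgroup onto the derived subgroup. Since $G_i'\subseteq\SL_2$, having eigenvalue $1$ means being unipotent. Let $U_i$ be the set of unipotent elements of $G_i'$. Then the set of $g\in G'$ with $\pi_i(g)\in U_i$ has density exactly $|U_i|/|G_i'|$ in $G'$, however the coordinates are linked. A union bound therefore produces the desired $g$ once $\sum_i|U_i|/|G_i'|<1$.

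Everything then reduces to the single-form estimate $|U_i|/|G_i'|\leq c_{f_i}/(p-1)$ of Proposition \ref{far-from-unipotent}:
\begin{itemize}
\item For CM forms, $G_i'$ is diagonal of determinant one and of order at least $(p-1)/(k-1)$, so only the identity is unipotent.
\item For non-CM forms, $G_i'\supseteq\SL_2(\F_p)$. Every Sylow $p$-subgroup $P$ of $G_i'$ is normalized by a conjugate of $\{\mathrm{diag}(a,a^{-1}):a\in\F_p^\times\}$ meeting it trivially. Counting Sylow subgroups bounds the unipotent proportion by $|P|/|N_{G_i'}(P)|\leq 1/(p-1)$.
\end{itemize}
This handles CM, non-CM and mixed tuples uniformly, with no Goursat analysis and no common torus. (The bound $3n+2$ for elliptic curves comes from this union bound with $c=3$, not from counting admissible values of $a$.)
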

Furthermore, when $n=2$ we show that it suffices that the mod $p$ representations are absolutely irreducible (see Corollary \ref{cor:modular-orderly} and Proposition \ref{prop:two-non-van}) which is responsible for Corollary \ref{cor:E1E2} and the last part of Theorem \ref{thm:nonvanishing}. In an appendix (Section  \ref{sec:appendix}) we show that this is \emph{not} sufficient for $n= 3$. When $n=1,2$, and $f_1,\ldots,f_n$ arise from rational elliptic curves, we give necessary and sufficient conditions for the set of orderly primes (of any order) for $\rho_1\oplus\cdots\oplus\rho_n$ has positive density, see Section \ref{sec:ellipticcurves}. 
\begin{remark}
The theory developed in \cite{KrizNordentoft23}, and refined in the present paper, is a ``horizontal'' analogue of the usual ``vertical'' $p$-adic $L$-function from Iwasawa theory interpolating twists by characters of $p$-power conductor. In the latter case, the existence of such an integral $p$-adic measure associated to a holomorphic newform $f$ of even weight is equivalent to $p$ being an \emph{ordinary} prime for $f$, meaning $p$ does not divide the $p^\mathrm{th}$ Fourier coefficient of $f$. When the weight is $\geq 4$ the existence of ordinary primes is a well-known open conjecture attributed to Serre in the literature, see the discussion in the introduction of \cite{Suh20} and the remark on page 16 of \cite{MazurTateTeitelbaum}. 
\end{remark}

\begin{remark}
   When $G$ is cyclic the arguments in \cite{FeKiKu} are recovered as the mod $p$ reduction of the horizontal $p$-adic theory sketched above, see \cite[Remark 2.1]{KrizNordentoft23}. However, their congruence method is not well-suited to deal with the case of general finite abelian $G$ because then one must ensure that the order $p_1$ twisted $L$-values are $p_2$-units for primes $p_2\neq p_1$, which seems difficult to control. 
\end{remark}

\section{Orderly primes}\label{sec:orderly} 

\subsection{Definitions and first properties}
Let $G_{\mathbb{Q}}:= \mathrm{Gal}(\overline{\mathbb{Q}}/ \mathbb{Q})$ denote the absolute Galois group, and let $\mathbb{F}_q$ denote the finite field with $q$ elements where $q=p^f$ for some prime $p$ and integer $f\geq 1$.

\begin{definition}\label{def:orderly}
Let $\rho: G_{\mathbb{Q}}\rightarrow \mathrm{GL}_n(\mathbb{F}_q)$ be a mod $p$ representation of $G_{\mathbb{Q}}$ and $m$ a positive integer. A prime $\ell$ is called \emph{orderly of order $m$ for $\rho$} if the following holds:
\begin{enumerate}
\item $\ell \equiv 1 \pmod{p^m}$;
\item $\rho$ is unramified at $\ell$;
\item $\rho(\mathrm{Frob}_{\ell})$ does not have $1$ as an eigenvalue.
\end{enumerate}
\end{definition}

\noindent
Here $\mathrm{Frob}_{\ell}$ denotes a Frobenius element over $\ell$ in $G_{\mathbb{Q}}$. When $\rho$ is unramified at $\ell$, any two Frobenius elements over $\ell$ are conjugate so condition (3) does not depend on the choice of $\mathrm{Frob}_{\ell}$. 

The purpose of this section is to investigate when the set of orderly primes of order $m$ for a given $\rho$ has positive density. By the Chebotarev density theorem, this is equivalent to the existence of an element $\tau\in\mathrm{Gal}(\overline{\mathbb{Q}}/\mathbb{Q}(\zeta_{p^m}))$ such that $\rho(\tau)$ does not have $1$ as an eigenvalue.

\begin{example}\label{ex:newform}
Let $f$ be a holomorphic newform of weight $k \geq 2$, level $N$ and central character $\eps_f$. For $\mathfrak{p}$ a prime ideal of the Hecke field $K_f$ generated by the Hecke eigenvalues of $f$, let $\rho_{f, \mathfrak{p}}: G_{\mathbb{Q}} \rightarrow \mathrm{GL}_2(\mathcal{O}_{K_{f, \mathfrak{p}}})$  be the $\mathfrak{p}$-adic Galois representation attached to $f$. Let $p$ denote the rational prime lying under $\mathfrak{p}$, and let
\begin{equation*}
\overline{\rho}_{f,\mathfrak{p}}: G_{\mathbb{Q}} \rightarrow \mathrm{GL}_2(\mathbb{F}_q)
\end{equation*}
denote the residual representation of $\rho_{f, \mathfrak{p}}$ where $q := | \mathcal{O}_{K_f}/ \mathfrak{p}|$. When $\ell$ is a prime such that $\ell \nmid p N$, $\rho_{f, \mathfrak{p}}$ is unramified at $\ell$, and if $\textrm{Frob}_{\ell} \in G_{\mathbb{Q}}$ is a Frobenius element over $\ell$, the characteristic polynomial of $\rho_{f, \mathfrak{p}}(\textrm{Frob}_{\ell})$ is
\begin{equation*}
X^2 - a_{\ell}(f) X + \eps_f(\ell)\ell^{k-1}
\end{equation*}
where $a_{\ell}(f)$ is the $\ell$\textsuperscript{th} Hecke eigenvalue of $f$. We then see that $\ell$ is an orderly prime of order $m$ for $\overline{\rho}_{f,\mathfrak{p}}$ if the following conditions hold
\begin{enumerate}
\item $\ell \equiv 1 \pmod{p^m}$;
\item $\ell$ does not divide $ p N$;
\item $a_{\ell}(f) \not\equiv 1+ \eps_f(\ell) \ell^{k-1}  \pmod{\mathfrak{p}}$.
\end{enumerate}
\noindent 
In \cite[Section 4]{KrizNordentoft23}, the above three conditions are taken as the definition of orderly primes of order $m$ for the newform $f$. However, working with a general representation $\rho: G_{\mathbb{Q}}\rightarrow \mathrm{GL}_n(\mathbb{F}_q)$ and Definition \ref{def:orderly} isolates the conditions on $\rho$ needed to ensure the existence of infinitely many orderly primes. For example, if $n=2$ and $\rho=\overline{\rho}_{f,\mathfrak{p}}$, we find conditions that - unlike those in \cite{KrizNordentoft23} - make no reference to weight $k$, the level $N$, or the character $\varepsilon_f$, but only the irreducibility of $\rho$ (compare for example Corollary \ref{cor:modular-orderly} below to \cite[Corollary 4.10]{KrizNordentoft23}).
\end{example}

\begin{remark}
For Galois representations attached to abelian varieties, orderly primes of order $1$ have already been introduced under the name \emph{silent primes} by Mazur, Rubin, and Larsen in their paper on Diophantine stability \cite[p. 12]{mazur-rubin-larsen2018}. They also consider the notion of \emph{critical primes}, for which the $1$-eigenspace of Frobenius has dimension $1$. For simple abelian varieties whose endomorphisms are all defined over the base field, the appendix by Larsen \cite[Part 3]{mazur-rubin-larsen2018} shows that for a positive proportion of primes, the associated residual Galois representations possess infinitely many silent and critical primes. Our results on orderly primes do not follow from Larsen's work since we do not have any non-CM assumptions, and we consider modular abelian varieties that are not necessarily simple.

Subgroups of $\mathrm{GL}_2(\mathbb{F}_q)$ for which every element has $1$ as an eigenvalue also play an important role in the work of Katz \cite{katz81}. However, these result are difficult to apply in our setting because we impose the condition $\ell\equiv 1\pmod{p}$ in Definition \ref{def:orderly}. 
\end{remark}

The proposition below will be the basis for proving the existence of infinitely many orderly primes. Recall that if $G$ is a group, and $g,h\in G$, we define the commutator of $g$ and $h$ as $[g,h]:=g^{-1}h^{-1}gh$. The \emph{derived subgroup of $G$} is the subgroup generated by all commutators, and we denote it by $G'$. A basic fact is that a normal subgroup of $G$ has abelian quotient if and only if it contains $G'$.  

\begin{proposition}\label{first_criteria}
Let $\rho: G_{\mathbb{Q}} \rightarrow \mathrm{GL}_n(\mathbb{F}_q)$ be a Galois representation with image $G$, and let $G'$ denote the derived subgroup of $G$. If $G'$ contains an element not having $1$ as an eigenvalue, then the set of orderly primes of order $m$ for $\rho$ has positive density for all $m \geq 1$. 
\end{proposition}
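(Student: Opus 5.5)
The plan is to use the Chebotarev reformulation stated just after Definition \ref{def:orderly}. The set of orderly primes of order $m$ has positive density as soon as some $\tau\in\mathrm{Gal}(\overline{\mathbb{Q}}/\mathbb{Q}(\zeta_{p^m}))$ has $\rho(\tau)$ without eigenvalue $1$. So it suffices to show that the image $\rho(\mathrm{Gal}(\overline{\mathbb{Q}}/\mathbb{Q}(\zeta_{p^m})))$ contains the derived subgroup $G'$. The element of $G'$ given by hypothesis will then provide such a $\tau$.

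Let $L$ be the fixed field of $\ker\rho$, so $G\cong \mathrm{Gal}(L/\mathbb{Q})$. Set $H:=\rho(\mathrm{Gal}(\overline{\mathbb{Q}}/\mathbb{Q}(\zeta_{p^m})))$, which corresponds to $\mathrm{Gal}(L/L\cap\mathbb{Q}(\zeta_{p^m}))$.
\begin{enumerate}
\item Since $\mathbb{Q}(\zeta_{p^m})/\mathbb{Q}$ is Galois, $H$ is a normal subgroup of $G$.
\item The quotient $G/H\cong\mathrm{Gal}(L\cap\mathbb{Q}(\zeta_{p^m})/\mathbb{Q})$ is a quotient of $(\mathbb{Z}/p^m)^\times$, hence abelian.
\item By the basic fact recalled before the proposition, a normal subgroup with abelian quotient contains $G'$, so $G'\subseteq H$.
\end{enumerate}

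Now pick $g\in G'$ not having $1$ as an eigenvalue. Since $g\in H$, we can write $g=\rho(\tau)$ with $\tau$ fixing $\mathbb{Q}(\zeta_{p^m})$. Apply Chebotarev to the finite Galois extension $L(\zeta_{p^m})/\mathbb{Q}$, taking the conjugacy class of the restriction of $\tau$. A prime $\ell\nmid p$ unramified in $L$ whose Frobenius lies in this class satisfies three things. First, $\ell\equiv1\pmod{p^m}$, because Frobenius acts trivially on $\zeta_{p^m}$. Second, $\rho$ is unramified at $\ell$. Third, $\rho(\mathrm{Frob}_\ell)$ is conjugate to $g$ and so has no eigenvalue $1$. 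The density of such primes is the size of the conjugacy class divided by $[L(\zeta_{p^m}):\mathbb{Q}]$, which is positive. This holds for every $m\ge1$.

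No step is a real obstacle. The only point needing care is the identification of $H$ as a normal subgroup with abelian quotient, i.e.\ correctly passing between $\mathrm{Gal}(L(\zeta_{p^m})/\mathbb{Q})$ and its images in $G$ and in $(\mathbb{Z}/p^m)^\times$.
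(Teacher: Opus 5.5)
Your proposal is correct and follows essentially the same route as the paper. Your subgroup $H$ is the paper's $\mathrm{Gal}(K/K_0)$ with $K_0=K\cap\mathbb{Q}(\zeta_{p^m})$, which contains $G'$ because $K_0/\mathbb{Q}$ is abelian. Lifting $g$ to some $\tau$ fixing $\mathbb{Q}(\zeta_{p^m})$ is what the paper does via the fiber-product description of $\mathrm{Gal}(K(\zeta_{p^m})/\mathbb{Q})$, and both proofs then finish with Chebotarev.
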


\begin{proof}
Let $\ker \rho = \mathrm{Gal}(\overline{\mathbb{Q}}/K)$ where $K$ is a finite Galois extension of $\mathbb{Q}$ so that $\rho$ defines an isomorphism $\mathrm{Gal}(K/\mathbb{Q}) \xlongrightarrow{\sim} G$. Let $\sigma \in \mathrm{Gal}(K/ \mathbb{Q})$ correspond to an element of $G'$ not having $1$ as an eigenvalue. The field $K_0 := K\cap \mathbb{Q}(\zeta_{p^m})$ is an abelian extension of $\mathbb{Q}$ contained in $K$ which means that $\mathrm{Gal}(K/K_0)$ contains the derived subgroup of $\mathrm{Gal}(K/\mathbb{Q})$, and so $\sigma \in \mathrm{Gal}(K/ K_0)$. By basic Galois theory,
\begin{equation*}
\mathrm{Gal}(K(\zeta_{p^m})/ \mathbb{Q}) \cong \left\{(\sigma_1 , \sigma_2) \in \mathrm{Gal}(K/ \mathbb{Q} ) \times \mathrm{Gal}(\mathbb{Q}(\zeta_{p^m})/ \mathbb{Q})\,:\, \sigma_1 \mid_{K_0} = \sigma_{2} \mid_{K_0} \right\}
\end{equation*}
where the isomorphism sends $ \tau \in \mathrm{Gal}(K(\zeta_{p^m})/\mathbb{Q})$ to $(\tau \mid_{K}, \tau \mid_{\mathbb{Q}(\zeta_{p^m})})$. Letting $\tau$ be the pre-image of $(\sigma, \mathrm{id}_{\mathbb{Q}(\zeta_{p^m})})$, we see that if $\ell$ is a prime unramified in $K(\zeta_{p^m})$ with Frobenius element conjugate to $\tau$, then $\ell$ is an orderly prime of order $m$. By the Chebotarev density theorem, the set of such $\ell$ has positive density.  
\end{proof}

We will also make use of the following general lemma:

\begin{lemma}\label{lem:order-one}
Let $\rho : G_{\mathbb{Q}} \rightarrow \mathrm{GL}_n(\mathbb{F}_q)$ be an $n$-dimensional mod $p$ representation. If the set of orderly primes of order $1$ for $\rho$ has positive density, then the set of orderly primes of order $m$ for $\rho$ has positive density for all $m \geq 1$.  
\end{lemma}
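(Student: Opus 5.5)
The plan is to reduce everything to a statement about a single element of the Galois group, via the Chebotarev criterion noted right after Definition \ref{def:orderly}. By that remark, the set of orderly primes of order $m$ for $\rho$ has positive density if and only if there exists $\tau\in\mathrm{Gal}(\overline{\mathbb{Q}}/\mathbb{Q}(\zeta_{p^m}))$ such that $\rho(\tau)$ does not have $1$ as an eigenvalue. More precisely, one works in the finite Galois extension $K(\zeta_{p^m})/\mathbb{Q}$, where $K$ is the fixed field of $\ker\rho$, exactly as in the proof of Proposition \ref{first_criteria}. So the hypothesis gives an element $\tau\in\mathrm{Gal}(\overline{\mathbb{Q}}/\mathbb{Q}(\zeta_p))$ (coming from the Frobenius at any orderly prime of order $1$ unramified in $K(\zeta_p)$) with $1$ not an eigenvalue of $\rho(\tau)$. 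We must produce such an element fixing $\zeta_{p^m}$.

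The key step is to take $\tau' := \tau^{p^{m-1}}$. Since $\mathrm{Gal}(\mathbb{Q}(\zeta_{p^m})/\mathbb{Q}(\zeta_p))$ is a group of order $p^{m-1}$ (the kernel of $(\Z/p^m)^\times\to(\Z/p)^\times$), the restriction of $\tau'$ to $\mathbb{Q}(\zeta_{p^m})$ is trivial. Hence $\tau'\in\mathrm{Gal}(\overline{\mathbb{Q}}/\mathbb{Q}(\zeta_{p^m}))$.

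It remains to check the eigenvalue condition for $\rho(\tau')=\rho(\tau)^{p^{m-1}}$. Its eigenvalues in $\overline{\mathbb{F}}_p$ are $\lambda^{p^{m-1}}$, where $\lambda$ runs over the eigenvalues of $\rho(\tau)$ (triangularize over $\overline{\mathbb{F}}_p$). Since $x\mapsto x^{p}$ is an injective field endomorphism of $\overline{\mathbb{F}}_p$, we have $\lambda^{p^{m-1}}=1$ if and only if $\lambda=1$. Thus $1$ is not an eigenvalue of $\rho(\tau')$. Applying Chebotarev to the conjugacy class of $\tau'$ in $\mathrm{Gal}(K(\zeta_{p^m})/\mathbb{Q})$ then gives a positive-density set of primes $\ell$ that are unramified in $K(\zeta_{p^m})$, satisfy $\ell\equiv1\pmod{p^m}$, and have $\rho(\mathrm{Frob}_\ell)$ without eigenvalue $1$.

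There is no real obstacle here. The only point needing care is the characteristic-$p$ observation that raising to a $p$-power cannot create the eigenvalue $1$. This is precisely what makes the choice of the exponent $p^{m-1}$ work, as opposed to an exponent prime to $p$.
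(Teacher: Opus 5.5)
Your proposal is correct and follows essentially the same route as the paper: pass from $\tau$ to $\tau^{p^{m-1}}$, which fixes $\zeta_{p^m}$ because $\mathrm{Gal}(\overline{\mathbb{Q}}/\mathbb{Q}(\zeta_{p^m}))$ has index $p^{m-1}$ in $\mathrm{Gal}(\overline{\mathbb{Q}}/\mathbb{Q}(\zeta_p))$, and then apply Chebotarev. The only cosmetic difference is in the eigenvalue check: the paper uses the characteristic-$p$ identity $\rho(\tau)^{p^{m-1}}-I=(\rho(\tau)-I)^{p^{m-1}}$ and takes determinants, whereas you use injectivity of the Frobenius map on the eigenvalues, which is equivalent.
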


\begin{proof}
Let $m \geq 1$ be given. By assumption, we can find $\tau \in \mathrm{Gal}(\overline{\mathbb{Q}} / \mathbb{Q}(\zeta_{p}))$ such that $\rho(\tau)$ does not have $1$ as an eigenvalue. We have $ \tau^{p^{m-1}} \in \mathrm{Gal}(\overline{\mathbb{Q}}/ \mathbb{Q}(\zeta_{p^m}) )$ since this group is normal in $\mathrm{Gal}(\overline{\mathbb{Q}} / \mathbb{Q}(\zeta_p))$ of index $p^{m-1}$. Moreover,
\begin{equation*}
\det (\rho(\tau^{p^{m-1}})-I)=\det ((\rho(\tau)-I)^{p^{m-1}})= \det(\rho(\tau)-I)^{p^{m-1}}\neq 0
\end{equation*}
so $1$ is not an eigenvalue of $\rho(\tau^{p^{m-1}})$. Hence, $\ell$ is orderly of order $m$ for $\rho$ if $\mathrm{Frob}_{\ell}$ is conjugate to $\tau^{p^{m-1}}$, and, by the Chebotarev density theorem, a positive proportion of primes have this property.
\end{proof}

\subsection{Orderly primes in rank 2}

Because any commutator in $\mathrm{GL}_n(\mathbb{F}_q)$ has determinant $1$, Proposition \ref{first_criteria} becomes particularly useful when $n=2$, since an element of $\mathrm{SL}_2(\mathbb{F}_q)$ has $1$ as an eigenvalue if and only if it is unipotent. Hence there are infinitely many orderly primes for a representation $\rho: G_{\mathbb{Q}}\rightarrow \mathrm{GL}_2(\mathbb{F}_q)$ if the derived subgroup of the image does not consist of unipotent matrices. Moreover, since we work in characteristic $p$, a subgroup $U\leq \mathrm{GL}_n(\mathbb{F}_q)$ consists of unipotent matrices if and only if it is a $p$-group.

Recall that a representation $\rho: G_{\mathbb{Q}} \rightarrow \mathrm{GL}_n(\mathbb{F}_q)$ is \emph{absolutely irreducible} if it is irreducible and remains irreducible after base change to $\overline{\mathbb{F}}_q$. When $n=2$, and $\rho$ is absolutely irreducible, we prove that the set of orderly primes of order $m$ has positive density for all $m\geq 1$. As explained above, it is enough to prove the following lemma.

\begin{lemma}\label{unipotent}
Let $n \geq 2$, and suppose $\rho: G_{\mathbb{Q}} \rightarrow \mathrm{GL}_n(\mathbb{F}_q)$ is absolutely irreducible and has image $G$. Then the derived subgroup $G'$ does not consist of unipotent matrices.  
\end{lemma}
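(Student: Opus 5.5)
We argue by contradiction. Suppose $G'$ consists of unipotent matrices. As noted above, in characteristic $p$ this means $G'$ is a $p$-group; it is also a normal subgroup of $G$. The plan is to use normality together with the fixed-point property of $p$-groups to produce a $G$-stable subspace, and then to use absolute irreducibility to show that $G$ is abelian, which is impossible for $n\geq 2$.

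\textbf{Step 1: $G'$ is trivial.} Let $V=\overline{\mathbb{F}}_q^{\,n}$. A $p$-group acting linearly on a nonzero vector space in characteristic $p$ has a nonzero fixed vector. Apply this to $G'$ acting on $V$ (or on $\mathbb{F}_q^n$, where the $p$-group orbit-counting argument works directly), so that $V^{G'}\neq 0$. Since $G'$ is normal in $G$, for $g\in G$, $h\in G'$ and $v\in V^{G'}$ we have $h g v = g (g^{-1}hg) v = g v$. Hence $V^{G'}$ is $G$-stable. By absolute irreducibility, $V^{G'}=V$, so $G'$ acts trivially. Since $G\subseteq \mathrm{GL}_n$ acts faithfully, $G'=\{1\}$, i.e.\ $G$ is abelian.

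\textbf{Step 2: contradiction with $n\geq 2$.} Over the algebraically closed field $\overline{\mathbb{F}}_q$, a commuting family of matrices has a common eigenvector. The line it spans is then a $G$-stable subspace of $V$, of dimension $1<n$. This contradicts absolute irreducibility. (Alternatively, Schur's lemma shows every element of the abelian group $G$ is a scalar, and for $n\geq 2$ every line is then stable.)

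The only point requiring care is Step 1: the fixed-point lemma for $p$-groups, and checking that the fixed space is stable under all of $G$ because $G'$ is normal. Both are standard, so I do not expect a genuine obstacle. The essential inputs are working over $\overline{\mathbb{F}}_q$ and the hypothesis $n\geq 2$.
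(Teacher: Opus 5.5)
Your proof is correct and takes essentially the same route as the paper. In both, the fixed space of the normal $p$-group $G'$ is a nonzero $G$-stable subspace, and this is played off against irreducibility and the fact that an absolutely irreducible representation of dimension $n\geq 2$ cannot have abelian image. The only difference is the order of the steps: the paper first uses non-abelianness to make the fixed space proper, whereas you first conclude $G'=1$ and then prove the non-abelian step explicitly via a common eigenvector, which the paper merely asserts.
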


\begin{proof}
Suppose for the sake of contradiction that $G'$ consists of unipotent matrices. Since $n\geq 2$, and $\rho$ is absolutely irreducible, $G$ cannot be abelian so the derived subgroup $G'$ is non-trivial. Let $V:=(\mathbb{F}_q^n)^{G'}$ be the space of vectors fixed by all elements of $G'$. We have $V\neq \mathbb{F}_q^n$ because $G'\neq\{1\}$. Since $G'$ consists of unipotent matrices, $G'$ is a finite $p$-group so by \cite[Section 8.3, Proposition 26]{serre1977linear} it follows that $V\neq 0$. We claim that $V$ is $G$-stable which would lead to the desired contradiction since $\rho$ is irreducible. Let $v\in V$ and $g\in G$. If $h\in G'$, we have $[h,g]v=v$ where $\left[ h,g \right]= h^{-1} g^{-1}  h g$  is the commutator of $h$ and $g$. Since $hv=v$, this rewrites as $hg v = g v$ so $gv \in V$ since $h\in G'$ was arbitrary. Therefore, $V$ is $G$-stable.
\end{proof}

Hence we deduce the following result.

\begin{corollary}
Suppose $\rho : G_{\mathbb{Q}} \rightarrow \mathrm{GL}_2(\mathbb{F}_q)$ is absolutely irreducible. Then the set of orderly primes of order $m$ for $\rho$ has positive density for all $m \geq 1$. 
\end{corollary}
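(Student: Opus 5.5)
The corollary should follow by combining Lemma \ref{unipotent} with Proposition \ref{first_criteria}, via the observation made just before Lemma \ref{unipotent} that in $\mathrm{SL}_2(\mathbb{F}_q)$ having $1$ as an eigenvalue is the same as being unipotent.

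Let $G$ be the image of $\rho$ and $G'$ its derived subgroup. Since $\rho$ is absolutely irreducible and $n=2$, Lemma \ref{unipotent} gives an element $g\in G'$ that is not unipotent. I then need to show that $g$ does not have $1$ as an eigenvalue.

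Every commutator $[a,b]=a^{-1}b^{-1}ab$ has determinant $1$, and $G'$ is generated by commutators, so $G'\subseteq \mathrm{SL}_2(\mathbb{F}_q)$ and $\det g=1$. Suppose $1$ were an eigenvalue of $g$. The eigenvalues over $\overline{\mathbb{F}}_q$ multiply to $\det g=1$, so the other eigenvalue would also be $1$. The characteristic polynomial would then be $(X-1)^2$, and by Cayley--Hamilton $(g-I)^2=0$, so $g$ would be unipotent. This contradicts the choice of $g$, so $g$ has no eigenvalue equal to $1$.

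Proposition \ref{first_criteria} now applies to $g\in G'$ and shows that the orderly primes of order $m$ for $\rho$ have positive density for every $m\geq 1$. There is no real obstacle here. The only points to be careful about are that eigenvalues are taken over $\overline{\mathbb{F}}_q$, and that the unipotence conclusion of Lemma \ref{unipotent} is about individual elements of $G'$, which matches what Proposition \ref{first_criteria} requires.
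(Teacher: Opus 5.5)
Your proposal is correct and matches the paper's proof: reduce via Proposition \ref{first_criteria} to finding an element of $G'$ without eigenvalue $1$, use $G'\subseteq \mathrm{SL}_2(\mathbb{F}_q)$ to turn this into non-unipotence, and invoke Lemma \ref{unipotent}. Your Cayley--Hamilton argument for why $1$ being an eigenvalue of an element of $\mathrm{SL}_2(\mathbb{F}_q)$ forces it to be unipotent just spells out a step the paper states without proof.
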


\begin{proof}
Let $G$ be the image of $\rho$. By Proposition \ref{first_criteria}, it is enough to show that $G'$ contains an element not having $1$ as an eigenvalue. Since $G' \subset \mathrm{SL}_2(\mathbb{F}_q)$, this is equivalent to $G'$ not consisting of unipotent matrices which we know is the case by Lemma \ref{unipotent}. 
\end{proof}

Recall that for $p\neq 2$, we say that a Galois representation $\rho: G_{\mathbb{Q} } \rightarrow \mathrm{GL}_n(\mathbb{F}_q)$ is \emph{odd} if $\det \rho(c) = -1$ where $c \in G_{\mathbb{Q} }$  denotes a complex conjugation. It is well-known that an odd and irreducible representation $\rho: G_{\mathbb{Q}}\rightarrow \mathrm{GL}_2(\mathbb{F}_q)$ is absolutely irreducible, see for example \cite[p. 198]{Serre1987}. Since the $p$-adic Galois representation attached to a holomorphic newform is odd, we deduce:

\begin{corollary}\label{cor:modular-orderly}
Suppose $\rho = \overline{\rho}_{f, \mathfrak{p}}$ for some holomorphic newform $f$ and odd prime ideal $\mathfrak{p}$ of $\mathcal{O}_{K_f}$. If $\rho$ is irreducible, then the set of orderly primes of order $m$ for $\rho$ has positive density for all $m \geq 1$.
\end{corollary}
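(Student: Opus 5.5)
The statement is Corollary \ref{cor:modular-orderly}. The plan is to reduce it to the preceding corollary for absolutely irreducible two-dimensional representations. That corollary already gives positive density of orderly primes of every order $m\geq 1$, by combining Proposition \ref{first_criteria} with Lemma \ref{unipotent}. So it suffices to show that $\rho=\overline{\rho}_{f,\mathfrak{p}}$ is absolutely irreducible whenever it is irreducible.

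The first step is to check that $\rho$ is odd. Let $\mathfrak{p}$ lie over the odd prime $p$, and let $c$ be a complex conjugation. The $\mathfrak{p}$-adic representation $\rho_{f,\mathfrak{p}}$ has determinant $\eps_f\chi_p^{k-1}$, where $\chi_p$ is the $p$-adic cyclotomic character. This satisfies $\det\rho_{f,\mathfrak{p}}(c)=\eps_f(-1)(-1)^{k-1}=-1$, because $\eps_f(-1)=(-1)^k$ for a nonzero form of weight $k$ and character $\eps_f$. Reducing mod $\mathfrak{p}$ gives $\det\rho(c)=-1$ in $\mathbb{F}_q$, and $-1\neq 1$ there since $p$ is odd. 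Hence $\rho$ is odd in the sense recalled before the statement.

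The second step is the standard fact that an odd irreducible $\rho:G_{\mathbb{Q}}\to\GL_2(\mathbb{F}_q)$ is absolutely irreducible; this is the one point needing an argument, though it is short. The matrix $\rho(c)$ satisfies $\rho(c)^2=1$ and has determinant $-1$. Since $p\neq 2$, it is diagonalizable over $\mathbb{F}_q$ with eigenvalues $1$ and $-1$, and the two eigenlines are defined over $\mathbb{F}_q$.

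Now suppose $\rho\otimes\overline{\mathbb{F}}_q$ had a $G_{\mathbb{Q}}$-stable line $L$. Then $L$ is stable under $\rho(c)$, so it is one of the two eigenlines of $\rho(c)$, and in particular it is defined over $\mathbb{F}_q$. This $\mathbb{F}_q$-rational line would be $G_{\mathbb{Q}}$-stable, contradicting the irreducibility of $\rho$ over $\mathbb{F}_q$. (Alternatively, one can cite \cite[p. 198]{Serre1987}, as the paper does.)

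Applying the preceding corollary to the absolutely irreducible $\rho$ then concludes the proof.
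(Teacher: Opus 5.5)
Your proposal is correct and follows the paper's route: oddness of $\overline{\rho}_{f,\mathfrak{p}}$ together with irreducibility gives absolute irreducibility, and then the preceding corollary (Proposition \ref{first_criteria} combined with Lemma \ref{unipotent}) applies. The only difference is that you prove oddness and the implication from odd and irreducible to absolutely irreducible directly, using the eigenlines of $\rho(c)$, where the paper simply cites \cite[p. 198]{Serre1987}.
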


\subsection{Simultaneous orderly primes}

In applications to simultaneous non-vanishing of $L$-values, we must prove the existence of infinitely many primes that are orderly for multiple representations $\rho_1$, \dots, $\rho_n: G_{\mathbb{Q}}\rightarrow \mathrm{GL}_2(\mathbb{F}_q)$ at the same time. Equivalently, if $\rho= \oplus_{i=1}^n \rho_i$, we must prove the existence of infinitely orderly primes for $\rho$. We show that absolute irreducibility of $\rho_1$ and $\rho_2$ is sufficient for $n=2$, but additional assumptions are needed for $n\geq 3$.\\

We start with case $n=2$ and prove the following lemma:

\begin{lemma}\label{lem:prod}
Let $G_1 , G_2 \leq \mathrm{GL}_2(\mathbb{F}_q)$ be non-unipotent subgroups, and suppose that $G \leq G_1 \times G_2$ is a subgroup that surjects onto each factor. Then $G$ contains an element $(g_1 , g_2)$ such that $g_1$ and $g_2$ are not unipotent.
\end{lemma}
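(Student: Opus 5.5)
The plan is to pick one element of $G$ that is non-unipotent in the first coordinate and one that is non-unipotent in the second, use $p$-th powers to make the other coordinate of each trivial, and then multiply the two.

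\textbf{Step 1 (choosing the elements).} Since $G_1$ is not contained in the set of unipotent matrices and $G\to G_1$ is surjective, there is $a=(a_1,a_2)\in G$ with $a_1$ not unipotent. Symmetrically, there is $b=(b_1,b_2)\in G$ with $b_2$ not unipotent. If $a_2$ is not unipotent, or $b_1$ is not unipotent, we are done immediately. So assume $a_2$ and $b_1$ are both unipotent.

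\textbf{Step 2 (killing the unipotent coordinates).} A unipotent $u\in \mathrm{GL}_2(\mathbb{F}_q)$ satisfies $(u-I)^2=0$, hence
$$u^p-I=(u-I)^p=0$$
since $p\geq 2$ and we are in characteristic $p$; thus $u^p=I$. On the other hand, if $g\in\mathrm{GL}_2(\mathbb{F}_q)$ is not unipotent, it has an eigenvalue $\lambda\neq 1$ over $\overline{\mathbb{F}}_q$. Then $g^p$ has the eigenvalue $\lambda^p$, and $\lambda^p\neq 1$ because $x\mapsto x^p$ is injective on $\overline{\mathbb{F}}_q$. Hence $g^p$ is still not unipotent. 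Applying this to our elements gives
$$a^p=(a_1^p, I), \qquad b^p=(I,b_2^p),$$
with $a_1^p$ and $b_2^p$ both non-unipotent.

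\textbf{Step 3 (combining).} The element $a^pb^p=(a_1^p,b_2^p)$ lies in $G$, and both of its coordinates are non-unipotent, which proves Lemma \ref{lem:prod}.

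The argument is elementary. The only real point is Step 2: raising to the $p$-th power preserves non-unipotence (injectivity of Frobenius on eigenvalues) while trivializing unipotent elements. This is where the hypothesis $n=2$, i.e.\ that we are in $\mathrm{GL}_2$, is used: unipotents have order dividing $p$. For larger $n$ one would instead use the power $p^{\lceil \log_p n\rceil}$, but that is not needed here.
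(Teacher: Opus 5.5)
Your proof is correct and is essentially the paper's argument: choose elements of $G$ that are non-unipotent in one coordinate, raise them to a $p$-power to make the unipotent other coordinate trivial, and multiply. You also make explicit two points the paper leaves implicit, namely that $u^p=I$ for unipotent $u\in\mathrm{GL}_2(\mathbb{F}_q)$, and that $p$-th powers of non-unipotent elements stay non-unipotent because Frobenius is injective on eigenvalues.
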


\begin{proof}
Recall that an element of $\GL_2(\F_q)$ is unipotent if and only if its order is a power of $p$. Choose $g=(g_1 , g_2) \in G$ such that $g_1$ is not unipotent. If $g_2$ is not unipotent, then we are done. Otherwise, we can replace $g$ by $g^{p^r}$ for some $r>0$ and assume that $g_2=1$. Now choose $g ' =(g_1 ', g_2 ') \in G $ such that $g_2 '$ is not unipotent. If $g_1 '$ is not unipotent, we are done. Otherwise, we can, as before, assume that $g_1 '=1$. Then both components of $g g ' =(g_1 , g_2 ')$ are non-unipotent.  
\end{proof}

\noindent
We can prove that the set orderly primes for $\rho_1\oplus\rho_2$ has positive density provided that $\rho_1$ and $\rho_2$ are absolutely irreducible.

\begin{proposition}\label{prop:two-non-van}
Suppose $\rho_1 , \rho_2 : G_{\mathbb{Q}} \rightarrow \mathrm{GL}_2(\mathbb{F}_q)$ are absolutely irreducible, and let $\rho = \rho_1 \oplus \rho_2$. Then the set of orderly prime of order $m$ for $\rho$ has positive density for all $m\geq 1$. 
\end{proposition}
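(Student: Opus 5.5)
We reduce to Proposition \ref{first_criteria}. Let $G\leq \mathrm{GL}_2(\mathbb{F}_q)\times \mathrm{GL}_2(\mathbb{F}_q)$ be the image of $\rho=\rho_1\oplus\rho_2$, and let $G_i$ be the image of $\rho_i$; the two projections give surjections $G\to G_i$. It suffices to find an element of the derived subgroup $G'$ that does not have $1$ as an eigenvalue. An element $(g_1,g_2)$ of the block-diagonal group has $1$ as an eigenvalue exactly when $g_1$ or $g_2$ does.

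Elements of $G'$ have determinant $1$ in each block. An element of $\mathrm{SL}_2(\mathbb{F}_q)$ has $1$ as an eigenvalue if and only if it is unipotent. So we need $(g_1,g_2)\in G'$ with both $g_1$ and $g_2$ non-unipotent.

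Next we check that $G'$ surjects onto each $G_i'$. The image of $G'$ under the projection to the $i$-th factor is generated by commutators of images. Since $G\to G_i$ is surjective, every commutator in $G_i$ lifts, so the projection of $G'$ is exactly $G_i'$. Hence $G'\leq G_1'\times G_2'$ is a subgroup surjecting onto each factor.

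By Lemma \ref{unipotent}, applied to the absolutely irreducible $\rho_i$ with $n=2$, each $G_i'$ does not consist of unipotent matrices. Thus each $G_i'$ is a non-unipotent subgroup of $\mathrm{GL}_2(\mathbb{F}_q)$, and Lemma \ref{lem:prod} applies to $G'\leq G_1'\times G_2'$. It produces $(g_1,g_2)\in G'$ with $g_1$ and $g_2$ both non-unipotent.

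Since $g_1,g_2\in\mathrm{SL}_2(\mathbb{F}_q)$, neither has $1$ as an eigenvalue, so $\rho$ maps $(g_1,g_2)$ to an element without eigenvalue $1$. Proposition \ref{first_criteria} then gives positive density of orderly primes of every order $m\geq1$.

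The only step needing any care is the surjectivity $G'\twoheadrightarrow G_i'$, which is what allows Lemma \ref{lem:prod} to be applied to the derived subgroups rather than to $G$ itself. Applying it to $G$ alone would be insufficient, because non-unipotent elements of $G$ could have eigenvalue $1$ when their determinant is not $1$.
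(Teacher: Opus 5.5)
Your proof is correct and follows essentially the same route as the paper: show that $G'\le G_1'\times G_2'$ surjects onto each factor, use Lemma \ref{unipotent} to see that each $G_i'$ is non-unipotent, apply Lemma \ref{lem:prod} to get an element of $G'$ whose two determinant-one components are both non-unipotent, and conclude with Proposition \ref{first_criteria}. You rightly apply Lemma \ref{lem:prod} to the derived subgroups $G_i'$ (the paper's text says $G_1,G_2$ at that point, but $G_i'$ is what is needed), and you spell out the surjectivity $G'\twoheadrightarrow G_i'$, which the paper only asserts.
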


\begin{proof}
Let $G_i$ denote the image of $\rho_i$ for $i=1,2$, and $G$ be the image of $\rho$. The derived subgroup of $G$ is a subgroup of $G_1'\times G_2'$ that surjects onto each factor. Hence, by Proposition \ref{first_criteria} and Lemma \ref{lem:prod} above, it is enough to show that $G_1$ and $G_2$ are not unipotent. But $\rho_1$ and $\rho_2$ are absolutely irreducible, so this follows from Lemma \ref{unipotent}.  
\end{proof}

We now move on to the case when $n\geq 3$ where the main goal is to prove Theorem \ref{thm:non-van} from the introduction. We start with the following result which shows that Proposition \ref{prop:two-non-van} does not generalize to three or more representations.

\begin{proposition}\label{counter_example}
Suppose $q\equiv 1\pmod{4}$. Then there are odd and absolutely irreducible representations $\rho_1,\rho_2,\rho_3:G_{\mathbb{Q}}\rightarrow \mathrm{GL}_2(\mathbb{F}_q)$ such that for all $m\geq 1$, there are no orderly primes of order $m$ for $\rho_1\oplus\rho_2\oplus\rho_3$.
\end{proposition}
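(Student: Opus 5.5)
The plan is to build $\rho_1,\rho_2,\rho_3$ as \emph{dihedral} representations induced from the three quadratic subfields of a single biquadratic field, and to exploit the fact that every element of the Klein four group $V\cong(\Z/2)^2$ lies in one of its three subgroups of index $2$. By the criterion stated after Definition \ref{def:orderly} (Chebotarev), it suffices to show that for every $m\geq 1$ and every $\tau\in\Gal(\overline{\Q}/\Q(\zeta_{p^m}))$, at least one of $\rho_1(\tau),\rho_2(\tau),\rho_3(\tau)$ has $1$ as an eigenvalue. Since $\Gal(\overline{\Q}/\Q(\zeta_{p^m}))\subset\Gal(\overline{\Q}/\Q(\zeta_p))$, it is enough to prove this for all $\tau$ fixing $\Q(\zeta_p)$.

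\textbf{Construction.} Choose a biquadratic field $K=\Q(\sqrt{d_1},\sqrt{d_2})$, linearly disjoint from $\Q(\zeta_p)$, with quadratic subfields $K_1,K_2,K_3$. On each $K_j$ choose a finite order character $\chi_j:G_{K_j}\to\F_q^\times$ with $\chi_j\neq\chi_j^{c_j}$, where $c_j$ is the nontrivial automorphism of $K_j$; these will be obtained from class field theory. Set $\rho_j:=\mathrm{Ind}_{K_j}^{\Q}\chi_j$. This representation is absolutely irreducible by Mackey's criterion. For $\tau\in G_{K_j}$ we have $\rho_j(\tau)=\mathrm{diag}(\chi_j(\tau),\chi_j^{c_j}(\tau))$. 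For $\tau\notin G_{K_j}$, $\rho_j(\tau)$ is antidiagonal $\left(\begin{smallmatrix}0&a\\ b&0\end{smallmatrix}\right)$, with characteristic polynomial $X^2-ab$. It therefore has eigenvalue $1$ exactly when $\det\rho_j(\tau)=-ab=-1$.

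\textbf{Case analysis.} Let $\tau$ fix $\Q(\zeta_p)$ and let $\bar\tau$ be its image in $V=\Gal(K/\Q)$.
\begin{itemize}
\item If $\bar\tau\neq 1$, then $\bar\tau$ lies outside exactly two of the $G_{K_j}$. It suffices to arrange that $\det\rho_j$ takes the value $-1$ on every element of $\Gal(\overline{\Q}/\Q(\zeta_p))$ lying outside $G_{K_j}$. Recall $\det\rho_j=\omega_{K_j}\cdot(\chi_j\circ\mathrm{ver})$, where $\omega_{K_j}$ is the quadratic character of $K_j$. So the requirement becomes a condition on the restriction $\chi_j|_{\Q}$ via transfer: it should be trivial on $\Gal(\overline{\Q}/\Q(\zeta_p))$. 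This is where $q\equiv1\pmod 4$ enters. The existence of $i\in\F_q$ gives room to twist $\chi_j$ by characters of order $4$, so that $\chi_j\circ\mathrm{ver}$ factors through $\Gal(\Q(\zeta_p)/\Q)$ (or is trivial) while $\chi_j\neq\chi_j^{c_j}$ still holds. Oddness then follows by taking $K_j$ such that complex conjugation acts appropriately, making $\det\rho_j(c)=\omega_{K_j}(c)\chi_j(\mathrm{ver}(c))=-1$; this can be arranged, for instance, with one real and two imaginary $K_j$, adjusted by the order-$4$ twist.
\item If $\bar\tau=1$, then $\tau\in G_K$ and all three $\rho_j(\tau)$ are diagonal. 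I would handle this by choosing the $\chi_j$ so that their restrictions to $G_K\cap G_{\Q(\zeta_p)}$ are linked. For example, take a single character $\psi$ of $G_K$ with $\chi_j|_{G_K}\in\{\psi,\psi^{\sigma}\}$ and $\chi_j^{c_j}|_{G_K}$ equal to a conjugate of $\psi$ for which the product relations force one of the six values $\chi_j(\tau),\chi_j^{c_j}(\tau)$ to be $1$. An alternative is to choose the $\chi_j$ of order $2$ on $G_K\cap G_{\Q(\zeta_p)}$, with $\chi_1\chi_2\chi_3=1$ there, using the three quadratic subextensions of a $(\Z/2)^2$-extension of $K$.
\end{itemize}

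\textbf{Main obstacle.} The hardest step is producing, via class field theory over the $K_j$, characters $\chi_j$ that simultaneously satisfy all of the following:
\begin{itemize}
\item they are non-Galois-invariant, giving irreducibility;
\item they have prescribed transfer to $\Q$, giving the determinant condition and oddness;
\item they have compatible restrictions to $G_K$, handling the case $\bar\tau=1$.
\end{itemize}
I would first try the explicit route: a Galois extension $L/\Q$ with group a central extension of $V$ (a quaternion/dihedral-type group of order $16$ or $32$) realised as $\Gal(L/\Q)$. The $\rho_j$ would then be its three faithful-on-quotient $2$-dimensional representations, written over $\F_q$ using $i\in\F_q$. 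The eigenvalue-$1$ property would be checked by directly listing group elements in the subgroup $\Gal(L/L\cap\Q(\zeta_p))$, reducing everything to a finite computation in that group.
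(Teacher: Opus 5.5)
You have the easy reductions right. It suffices that every element of $\rho(\Gal(\overline{\Q}/\Q(\zeta_p)))$ has eigenvalue $1$ in some block. An antidiagonal block has eigenvalue $1$ exactly when its determinant is $-1$. And $\det\mathrm{Ind}_{K_j}^{\Q}\chi_j=\omega_{K_j}\cdot(\chi_j\circ\mathrm{ver})$.

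The gap is that the proposition is purely an existence statement, and existence is exactly what you leave open. You never produce a biquadratic $K$ and characters $\chi_j$ meeting your conditions, and you call this the main obstacle yourself. It is not routine:

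\begin{enumerate}
\item \textbf{The case $\bar\tau=1$ becomes an embedding problem.} Suppose $\det\rho_j=\omega_{K_j}$ on $G_{\Q(\zeta_p)}$, your device for $\bar\tau\neq 1$. Then for $\tau\in G_{K(\zeta_p)}$ each block is $\rho_j(\tau)=\mathrm{diag}(\chi_j(\tau),\chi_j(\tau)^{-1})$. So the kernels of the $\chi_j|_{G_{K(\zeta_p)}}$ must cover $G_{K(\zeta_p)}$. None of these restrictions can be trivial: otherwise $\chi_j$ factors through the abelian group $\Gal(K(\zeta_p)/K_j)$, forcing $\chi_j=\chi_j^{c_j}$. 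Hence they must be the three nontrivial characters of one biquadratic extension $M/K(\zeta_p)$ whose quadratic subextensions are Galois over $\Q$. You then need such an $M$ for which these characters extend to $G_{K_j}$ with the prescribed transfer, with $\chi_j\neq\chi_j^{c_j}$, and with $\det\rho_j(c)=-1$ for all three $j$. That is a genuine Galois embedding problem, with possible local obstructions, and your proposal does not solve it.

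\item \textbf{The conditions cannot be tuned one at a time.} A character of $G_{K_j}$ trivial on $G_{K(\zeta_p)}$ is the restriction of a character $\mu$ of the abelian group $\Gal(K(\zeta_p)/\Q)$. Its transfer is $\mu^2$, which is trivial on $G_{\Q(\zeta_p)}$ because $\Gal(K(\zeta_p)/\Q(\zeta_p))$ has exponent $2$. So any twist, of order $4$ or otherwise, that changes $\chi_j\circ\mathrm{ver}$ on $G_{\Q(\zeta_p)}$ must also change $\chi_j|_{G_{K(\zeta_p)}}$. That is exactly the data governing the case $\bar\tau=1$.
\end{enumerate}

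The paper avoids all of this with an explicit example.
\begin{itemize}
\item It takes the group $G=\langle a,b,c\rangle$ of order $32$ (GAP ID $[32,33]$), whose centre is $\langle a^2,b^2\rangle$.
\item The quotients $G/\langle a^2\rangle$, $G/\langle b^2\rangle$ and $G/\langle a^2b^2\rangle$ are each isomorphic to the Pauli group. This embeds absolutely irreducibly in $\GL_2(\F_q)$ using a square root $i$ of $-1$ in $\F_q$; this is where $q\equiv 1$ mod $4$ enters.
\item $G$ is realised as $\Gal(K/\Q)$ for an explicit LMFDB field in which $c$ is complex conjugation. Oddness is then read off from $\rho_i(c)=\mathrm{diag}(1,-1)$.
\item A finite check shows that every element of the \emph{whole} image $\rho(G_{\Q})$ has eigenvalue $1$.
\end{itemize}
So neither $\Q(\zeta_p)$ nor any determinant condition enters, and the example works uniformly in $q$. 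Your determinant condition is also not what makes that example work: there $\det\rho_i$ is not the quadratic character of any field from which $\rho_i$ is induced. Your closing suggestion, to realise a suitable $2$-group as a Galois group and check eigenvalues element by element, points the right way. As written, though, the proposal contains no construction and hence no proof.
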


\noindent
The proof is by an explicit construction that we postpone to the appendix (Section \ref{sec:appendix}). The point of choosing $\rho_1$, $\rho_2$ and $\rho_3$ to be odd is that by Serre's modularity conjecture (now a theorem due to Khare and Wintenberger \cite{KhareWintenberger2009a, KhareWintenberger2009b}), they are the residual representations attached to some newforms of weight at least $2$, and this is exactly the case, we are interested in.

However, in the special case where $q=2$, we can actually generalize Proposition \ref{prop:two-non-van} to $n=3$. 

\begin{proposition}\label{n=3p=2}
Let $\rho_1,\rho_2,\rho_3:G_{\mathbb{Q}}\rightarrow \mathrm{GL}_2(\mathbb{F}_2)$ be irreducible representations. Then for all $m\geq 1$, the set of orderly primes of order $m$ for $\rho_1\oplus\rho_2\oplus\rho_3$ has positive density among all primes.
\end{proposition}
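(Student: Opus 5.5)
The plan is to reduce to a statement about the finite group $\mathrm{GL}_2(\mathbb{F}_2)\cong S_3$. By Lemma \ref{lem:order-one} it suffices to treat $m=1$. For $p=2$ the condition $\ell\equiv 1 \pmod 2$ is automatic for odd $\ell$, since $\mathbb{Q}(\zeta_2)=\mathbb{Q}$. So by the Chebotarev density theorem we only need an element $\tau\in G_{\mathbb{Q}}$ such that none of $\rho_1(\tau),\rho_2(\tau),\rho_3(\tau)$ has $1$ as an eigenvalue.

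First I classify the possible images. The subgroups of $\mathrm{GL}_2(\mathbb{F}_2)\cong S_3$ are $1$, the three subgroups of order $2$, $A_3\cong C_3$, and $S_3$. An element of order $2$ is unipotent and fixes a line, so a group of order at most $2$ acts reducibly. Hence irreducibility forces the image $G_i$ of $\rho_i$ to be $C_3$ or $S_3$, and in either case $C_3\le G_i$. The two elements of order $3$ have characteristic polynomial $X^2+X+1$, which has no root in $\mathbb{F}_2$. The remaining elements (identity and transvections) all have eigenvalue $1$. So the problem becomes: find $g=(g_1,g_2,g_3)$ in the image $G\le G_1\times G_2\times G_3$ of $\rho_1\oplus\rho_2\oplus\rho_3$ with every $g_i$ of order $3$. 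Note that $G$ surjects onto each $G_i$.

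Let $H:=G\cap (C_3\times C_3\times C_3)$, an elementary abelian $3$-group, which I view as an $\mathbb{F}_3$-subspace of $\mathbb{F}_3^3$. I claim each coordinate projection $H\to C_3$ is surjective. Pick $g\in G$ whose $i$-th component has order $3$, which is possible since $G$ surjects onto $G_i\supseteq C_3$. Every element of $S_3^3$ has order dividing $6$, so $g^4$ has all components of order dividing $3$, i.e.\ $g^4\in H$. Its $i$-th component is $g_i^4=g_i\neq 1$.

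Finally I need a vector of $H$ with all three coordinates nonzero. The subsets $H_i:=\{h\in H: h_i=0\}$ are proper subspaces of $H$ by the previous step. A vector space over $\mathbb{F}_3$ cannot be the union of three proper subspaces, since covering requires at least $|\mathbb{F}_3|+1=4$ of them. A quick counting argument suffices here: if $\dim H=d$, then the three hyperplanes, each of size $3^{d-1}$ and all containing $0$, cover at most $3\cdot 3^{d-1}-2<3^d$ elements. So some $h\in H$ has every component of order $3$, and any $\tau$ with $\rho(\tau)=h$ works.

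The only delicate step is the avoidance of the three coordinate hyperplanes. It is precisely here that $q=2$ matters: the relevant group is $C_3$, whose field $\mathbb{F}_3$ has enough elements to avoid three subspaces. This is in contrast with the counterexample of Proposition \ref{counter_example}.
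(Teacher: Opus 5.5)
Your proposal is correct and follows the paper's proof essentially step for step. Both arguments intersect the image with $C_3^3$ and view it as an $\mathbb{F}_3$-subspace, show each coordinate projection is surjective by raising an element to a power (you use $g^4$, the paper uses $g^2$ with a short case distinction), and then count to show the three coordinate kernels cannot cover the subspace; your classification of subgroups of $S_3$ and your remark that the congruence condition is vacuous for $p=2$ only make explicit details the paper leaves implicit.
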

\begin{proof}
By irreducibility, the image of each $\rho_i$ contains the unique subgroup 
$H:=\langle \begin{psmallmatrix}
    0& 1\\ 1&1
\end{psmallmatrix}\rangle$ of order $3$ in $\GL_2(\F_2)$. Consider the intersection of  the image $G$ of $\rho_1\oplus\rho_2\oplus\rho_3$ with $H^3$, which we identify with a subspace $V$ of $\F_3^3$. We claim that it is enough to prove that $V$ contains an element $g=(g_1,g_2,g_3)$ with each $g_i$ non-zero. Indeed, if this holds, we can choose $\tau\in G_{\mathbb{Q}}$ such that $(\rho_1\oplus\rho_2\oplus\rho_3)(\tau)=g$. No non-trivial element of $H$ has $1$ as an eigenvalue, so, by the Chebotarev density theorem, the set orderly primes of order $1$ for $\rho_1\oplus\rho_2\oplus \rho_3$ indeed has positive density. By Lemma \ref{lem:order-one}, the same is true for the set of orderly primes of order $m$ for all $m\geq 1$.

To construct the desired element $(g_1,g_2,g_3)$, we first observe that the projection of $V$ onto each factor is surjective. Indeed, suppose $g=(g_1,g_2,g_3)$ lies in the image of $\rho_1\oplus\rho_2\oplus\rho_3$ with $g_1$ a non-trival element of $H$. If $g$ does not lie in $V$, then $g_2$ or $g_3$ have order $2$ so $g^2$ must lie in $V$, and $g_1^2$ is still a non-trivial element of $H$. Hence $V$ surjects onto the first factor, and by an identical argument, it also surjects onto the other factors.

For $i=1,2,3$, let $\pi_i:V\rightarrow\mathbb{F}_3$ denote the projection onto the $i$\textsuperscript{th} component. Since each $\pi_i$ is surjective, its kernel must have size $|V|/3$. Moreover, the kernels of $\pi_1$, $\pi_2$ and $\pi_3$ are not pairwise disjoint since they all contain the zero-vector. Hence $|\cup_i\ker\pi_i|<3\cdot |V|/3=|V|$, so indeed there is an element of $V$ whose coordinates are all non-zero.             
\end{proof}

By Proposition \ref{counter_example}, it follows that Theorem \ref{thm:non-van} cannot be proved only using irreducibility of the mod $p$ representations $\rho_1,\dots,\rho_n$ for $p$ sufficiently large. Instead, we use modularity of these representations to prove that the proportion of unipotent elements in the derived subgroups of their images tends to zero as $p \rightarrow \infty$. Using Proposition \ref{first_criteria}, Theorem \ref{thm:non-van} then follows by a counting argument. The first step in these arguments is the following proposition. 

\begin{proposition}\label{far-from-unipotent}
Let $f$ be a holomorphic newform of weight at least $2$. Suppose $p$ is a prime, and $\mathfrak{p}$ is a prime above $p$ in the Hecke field of $f$. Let $\overline{\rho}_{f, \mathfrak{p}}$ denote the corresponding mod $p$ representation attached to $f$, and write $G$ for the image of $\overline{\rho}_{f, \mathfrak{p}}$ and $G'$ for the derived subgroup of $G$. Then there is a constant $c_f>0$ (depending only on $f$) such that the proportion of unipotent elements in $G'$ is at most $c_f /(p-1)$.
\end{proposition}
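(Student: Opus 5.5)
The plan is to use the known classification of large images of residual representations of modular forms, namely Ribet's big image results in the non-CM case and the CM description. This is presumably what Lemmas \ref{CM-case} and \ref{non-CM-case} record. With that input, the unipotent proportion in $G'$ can be estimated case by case. Throughout, the proportion is measured inside $G'\subseteq \SL_2(\F_q)$. Since a non-trivial unipotent element has order $p$, the unipotent elements are exactly the identity together with the elements of order $p$. For small $p$, below any fixed bound depending on $f$, the claim holds trivially by enlarging $c_f$, because a proportion is always at most $1$. So we may assume $p$ is large.

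\emph{Non-CM case.} By Ribet (and Momose), there is a subfield $\F_{q_0}\subseteq\F_q$ such that, for $p$ large (outside a finite set depending on $f$), $G$ contains a conjugate of $\SL_2(\F_{q_0})$ and is contained in the normalizer of $\F_q^\times\cdot\GL_2(\F_{q_0})$. Then $G'$ equals this conjugate of $\SL_2(\F_{q_0})$, since $\SL_2(\F_{q_0})$ is perfect for $q_0\geq 5$ and the ambient group's derived subgroup lies in $\SL_2(\F_{q_0})$ up to scalars of square-one, which we can handle. In $\SL_2(\F_{q_0})$ the unipotent elements number $q_0^2$, namely the identity plus $q_0^2-1$ regular unipotents. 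The group has order $q_0(q_0^2-1)$. The proportion is therefore $q_0/(q_0^2-1)\le 2/q_0\le 2/p$, which is acceptable.

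\emph{CM case.} Let $f$ have CM by $K$. For $\mathfrak p$ above $p$ unramified in $K$ and $p$ large, the image $G$ lies in the normalizer $N$ of a Cartan subgroup $C$ (split or non-split according to the splitting of $p$ in $K$), with $[G:G\cap C]=2$. Then $G'$ is abelian modulo $C$. Indeed $G'\subseteq C$, because $N/C$ has order $2$ and is abelian. A Cartan subgroup contains no non-trivial unipotents, since it is diagonalizable over $\F_{q^2}$. Hence the only unipotent element of $G'$ is $1$, and the proportion is $1/|G'|$. It remains to show $|G'|\gg_f p-1$. Take $\sigma\in G\setminus C$ and $c\in G\cap C$. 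Since $\sigma$ acts on $C$ by the nontrivial Galois involution, $[\sigma,c]=c^{q-1}$ or $c^{-2}$-type (it equals $\bar c/c$). So $G'$ contains $\{\bar c/c : c\in G\cap C\}$. Using the theory of Hecke characters, $G\cap C$ contains the image of the $\mathfrak p$-adic reduction of $\psi$, where $\psi$ is the Hecke character with $\infty$-type $k-1$. Its image has index bounded in terms of $f$ inside the $(\Oo_K/\mathfrak p)^\times$-type torus. Consequently the map $c\mapsto \bar c/c$ has image of size $\gg (p-1)/c_f$. Ramified $p$ are finitely many and are absorbed into the constant.

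The main obstacle is the CM step: controlling the index of $G\cap C$ in $C$ uniformly in $p$. This needs the explicit description of $\overline\rho_{f,\mathfrak p}|_{G_K}$ as $\overline\psi\oplus\overline\psi^c$ with $\psi$ of infinity type $(k-1,0)$. One must then show that $\bar\psi/\overline{\psi}^c$ has image of size $\gg(p-1)$ in the norm-one torus. I would obtain this from the fact that on units the character acts as $x\mapsto x^{k-1}$ (up to a finite-order character of conductor bounded by $f$), so its image on $(\Oo_K\otimes\F_p)^\times$ has index dividing $\gcd(k-1,\cdot)$ times a bound depending only on $f$. This gives the constant $c_f$.
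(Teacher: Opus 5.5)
Your proposal is correct. In the CM case it is essentially the paper's argument. $G'$ lies in the image of $G_K$, which sits in a Cartan subgroup, so the identity is the only unipotent element of $G'$. The lower bound on $|G'|$ comes from the image of $\overline{\psi}/\overline{\psi}^c$, produced by commutators with an element of $G$ outside the Cartan. You evaluate this character on inertia at $p$ through the infinity type. That is the local counterpart of the paper's use of $\psi((\alpha))=\alpha^{k-1}$ for $\alpha\equiv 1 \bmod \mathfrak{m}$ together with the Chinese remainder theorem, and it gives the same bounds $(p-1)/(k-1)$ and $(p+1)/(k-1)$ in the split and inert cases.

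The genuine difference is in the non-CM case.

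\emph{Your route.} You use the full Ribet--Momose description $\SL_2(\F_{q_0})\subseteq G\subseteq \F_q^\times\GL_2(\F_{q_0})$, up to conjugation. The upper bound holds because $G$ normalizes the image of the normal open subgroup $H$ cut out by the inner twists. This identifies $G'=\SL_2(\F_{q_0})$ exactly. Counting $q_0^2$ unipotents then gives the proportion $q_0/(q_0^2-1)<1/(p-1)$. Your hedge about scalars is unnecessary: scalars are central, so $(\F_q^\times\GL_2(\F_{q_0}))'=\SL_2(\F_{q_0})$ on the nose.

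\emph{The paper's route.} It uses only the lower bound $\SL_2(\F_p)\subseteq G$, so it knows merely $\SL_2(\F_p)\subseteq G'\subseteq \SL_2(\F_q)$. It handles this with a Sylow count. The unipotent proportion is at most $|P|/|N_{G'}(P)|$ for a Sylow $p$-subgroup $P\supseteq\{\left(\begin{smallmatrix}1&x\\0&1\end{smallmatrix}\right):x\in\F_p\}$. Such a $P$ is forced to be upper unitriangular with entries in an $\F_p$-subspace of $\F_q$. The torus $\{\mathrm{diag}(a,a^{-1}):a\in\F_p^\times\}\subseteq G'$ normalizes $P$ and meets it trivially, so $|N_{G'}(P)|\geq (p-1)|P|$.

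\emph{Trade-off.} Your route needs the finer structure theorem but then reduces to an exact count. The paper's route needs less arithmetic input and proves a purely group-theoretic bound valid for any subgroup of $\SL_2(\F_q)$ containing $\SL_2(\F_p)$.
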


\noindent
There will be two distinct cases depending on whether $f$ has complex multiplication. In Lemma \ref{CM-case}, we handle the CM case, and in Lemma \ref{non-CM-case} we handle the non-CM case. Before proving these lemmas, we explain how the above proposition implies Theorem \ref{thm:non-van}.  

\begin{proof}[Proof of Theorem \ref{thm:non-van} assuming Proposition \ref{far-from-unipotent}]
Let $G_i$ denote the image of $\rho_i$ for $i=1,\dots,n$, and let 
\begin{equation*}
G: = \mathrm{Im}(\rho_1 \oplus \cdots \oplus \rho_n) \leq G_1 \times \cdots \times G_n.    
\end{equation*}
The derived subgroup $G'$ is a subgroup of $G_1 ' \times \cdots \times G_n'$ that surjects onto each factor. If $\pi_i: G' \twoheadrightarrow G_i ' $ denotes projection onto the $i$\textsuperscript{th} factor, it is enough to show that $G'$ contains an element $g$ such that $\pi_i(g)$ is not unipotent for all $i=1,\dots,n$ and $p$ sufficiently large. Let $U_i$ denote the set of unipotent elements in $G_i'$.  By Proposition \ref{far-from-unipotent}, we can find a constant $c>0$ (depending only on $f_1 ,\dots, f_n$) such that $\lvert U_i \rvert/ \lvert G_i' \rvert \leq c/(p-1)$ for $i=1,\dots,n$. Since each $\pi_i$ is surjective, it follows that
\begin{equation*}
\frac{\lvert \pi_i^{-1} (U_i) \rvert}{ \lvert G' \rvert} = \frac{\lvert U_i \rvert}{ \lvert G_i ' \rvert} \leq \frac{c}{p-1}
\end{equation*}
for all $i=1,\dots,n$, and hence
\begin{equation*}
\frac{\lvert \cup_i \pi_i^{-1}(U_i) \rvert}{\lvert G' \rvert} \leq \sum_{i=1}^n\frac{|\pi_i^{-1}(U_i)|}{|G'|} \leq\frac{cn}{p-1}<1
\end{equation*}
for all $p$ sufficiently large as desired.
\end{proof}

\begin{lemma}\label{CM-case}
  Let $f$ be a newform of weight $k\geq 2$  with complex multiplication. Let $p$ be a prime, $\mathfrak{p}$ a prime above $p$ in the Hecke field of $f$ and $\overline{\rho}_{f,\mathfrak{p}}$ the corresponding mod $p$ representation. Let $G$ denote the image of $\overline{\rho}_{f,\mathfrak{p}}$. Then for $p$ sufficiently large, $\overline{\rho}_{f,\mathfrak{p}}$ is absolutely irreducible, and the proportion of unipotent elements in the derived subgroup $G'$ is at most $(k-1)/(p-1)$.   
\end{lemma}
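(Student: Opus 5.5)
Since $f$ has CM by an imaginary quadratic field $K$, the $\mathfrak{p}$-adic representation is induced from a Hecke character: $\rho_{f,\mathfrak{p}}\cong\mathrm{Ind}_{G_K}^{G_\mathbb{Q}}\psi$, where $\psi$ is the $\mathfrak{p}$-adic avatar of the Grössencharacter attached to $f$, of infinity type $(k-1,0)$. Reducing, $\overline{\rho}_{f,\mathfrak{p}}\cong\mathrm{Ind}_{G_K}^{G_\mathbb{Q}}\overline\psi$ for some character $\overline\psi:G_K\to\overline{\mathbb{F}}_q^\times$. I will show three things in turn: $G$ sits inside a normalizer of a torus, $G'$ lies in the torus, and the unipotent elements of the torus part of $G'$ form a small set.

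\textbf{Step 1 (absolute irreducibility).} The induced representation is absolutely irreducible iff $\overline\psi\neq\overline\psi^c$, where $c$ is complex conjugation. I take $p$ split in $K$ and unramified in the conductor data; for $p$ inert the argument is analogous. Consider $\overline\psi/\overline\psi^c$ restricted to inertia at a prime $v\mid p$ of $K$. By the infinity type it is $\omega^{\pm(k-1)}$ times a character of bounded order (from the finite-order part of $\psi$), where $\omega$ is the mod $p$ cyclotomic or fundamental character. For $p-1>2(k-1)$ times that bound, this restriction is nontrivial. Hence $\overline\psi\neq\overline\psi^c$ and $\overline\rho$ is absolutely irreducible. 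The same inertia computation will be reused in Step 3.

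\textbf{Step 2 (structure of $G'$).} After extending scalars, $\overline\rho(G_K)$ is diagonal, given by $\mathrm{diag}(\overline\psi,\overline\psi^c)$, and $G=\overline\rho(G_K)\cup\overline\rho(G_K)w$ with $w$ antidiagonal. Hence $G'$ lies in the diagonal torus $T$ and has determinant $1$. Its unipotent elements in $T\cap\mathrm{SL}_2$ are just the identity, so the proportion of unipotent elements is $1/|G'|$. It therefore suffices to prove $|G'|\geq(p-1)/(k-1)$.

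\textbf{Step 3 (lower bound on $|G'|$).} For $t\in\overline\rho(G_K)$ we have $[w,t]=\mathrm{diag}(\chi,\chi^{-1})$ up to inversion, where $\chi=\overline\psi^c/\overline\psi$ evaluated at $t$. So $G'$ contains the image of $\chi=\overline\psi^c/\overline\psi$ on $G_K$, embedded as $\mathrm{diag}(\chi,\chi^{-1})$, and $|G'|\geq|\chi(G_K)|\geq|\chi(I_v)|$. On inertia at $v\mid p$, $\chi$ is essentially $\omega^{k-1}$ times a bounded-order character, with $\omega$ of order $p-1$ (split case) or $p^2-1$ (inert case). Hence $|\chi(I_v)|\geq(p-1)/\gcd(k-1,p-1)\geq(p-1)/(k-1)$.

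The \emph{main obstacle} is pinning down this inertial restriction precisely enough to get exactly the constant $k-1$. That requires showing the finite-order part does not lower the order, either by choosing $p$ coprime to the conductor of $\psi$ so it is unramified at $v$, or by absorbing it into "$p$ sufficiently large". I would handle it by taking $p\nmid N\cdot\mathrm{disc}(K)$, so that the finite part is unramified at $v$. Then Fontaine–Laffaille, or the explicit Lubin–Tate description of $\psi|_{I_v}$, gives $\chi|_{I_v}=\omega_v^{\pm(k-1)}$ exactly.
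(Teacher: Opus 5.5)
Your proof is correct. Its structural part matches the paper: the derived subgroup lies in the diagonal torus intersected with $\mathrm{SL}_2$, so only the identity is unipotent, and the commutator with the antidiagonal element (the paper uses complex conjugation against $\mathrm{Frob}_{\mathfrak{l}}$) puts $\mathrm{diag}(\chi,\chi^{-1})$ into $G'$.

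The difference is in how you show that $\chi=\overline{\psi}^c/\overline{\psi}$ has large image.
\begin{itemize}
\item The paper argues globally. It uses $\psi((\alpha))=\alpha^{k-1}$ for $\alpha\equiv 1 \bmod \mathfrak{m}$, evaluates at Frobenius elements of split primes, and applies the Chinese remainder theorem at the primes above $p$. This shows the mod $\mathfrak{p}$ image of $\psi/\psi^c$ contains $(\mathbb{F}_p^\times)^{k-1}$ when $p$ splits, and $(\mathbb{F}_{p^2}^\times)^{(k-1)(p-1)}$ when $p$ is inert.
\item You argue locally. You restrict to inertia at $v\mid p$ and read off $\chi|_{I_v}$ from the infinity type.
\end{itemize}
Via class field theory these are the same computation: choosing $\alpha\equiv 1 \bmod\mathfrak{m}$ with prescribed residues above $p$ is the global shadow of varying a unit at $v$. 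The paper's version is more elementary. Yours is more conceptual: it shows the whole bound already comes from inertia at $p$, and it makes ``$p$ sufficiently large'' explicit, namely $p\nmid N$ together with $p-1\nmid k-1$ (split case) or $p+1\nmid k-1$ (inert case). Fontaine--Laffaille is not needed; class field theory for the $p$-adic avatar of $\psi$ suffices.

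One correction is needed in the inert case. There both $\psi$ and $\psi^c$ ramify at the unique $v\mid p$, with $\psi|_{I_v}=\omega_2^{\pm(k-1)}$ and $\psi^c|_{I_v}=\omega_2^{\pm p(k-1)}$. Hence $\chi|_{I_v}=\omega_2^{\pm(p-1)(k-1)}$, which has order $(p+1)/\gcd(p+1,k-1)$; it is not $\omega_2^{k-1}$ up to a bounded-order factor. Your intermediate inequality $|\chi(I_v)|\geq (p-1)/\gcd(k-1,p-1)$ then fails, for example when $k=4$ and $p\equiv 2 \bmod 3$. The lemma's bound still holds, since
\[
(p+1)/\gcd(p+1,k-1)\geq (p+1)/(k-1)>(p-1)/(k-1),
\]
in agreement with the paper's count.

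A smaller point: $\overline{\rho}_{f,\mathfrak{p}}\cong\mathrm{Ind}\,\overline{\psi}$ holds a priori only up to semisimplification. It becomes a genuine isomorphism once Step 1 shows $\mathrm{Ind}\,\overline{\psi}$ is irreducible, so your ordering of the steps is the right one.
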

\begin{proof}
By \cite[Proposition 4.4]{Ribet1977} there exists an imaginary quadratic field $F/\Q$ so that the restriction of the associated $\mathfrak{p}$-adic representation $\rho_{f,\mathfrak{p}}$ to $G_F$ is abelian and semisimple. Since $F$ is imaginary quadratic, it follows that complex conjugation $c\in G_\Q$ is a representative for the non-trivial coset of $G_\Q/G_F$, and the image $\rho_{f,\mathfrak{p}}(c)\in GL_2(\mathcal{O}_{K_{f, \mathfrak{p}}})$ is an order two matrix with determinant $-1$. These considerations imply (see the proof of \cite[Theorem 4.5]{Ribet1977}) that there exists an algebraic Hecke character $\psi$ of $F$, so that we have the following isomorphism over $\overline{\mathbb{Q}}_p$:
\begin{equation}
\label{eq:psic} (\rho_{f,\mathfrak{p}})_{|G_F}\cong  \psi \oplus \psi^c  ,
\end{equation}
where $\psi^c(g):=\psi(cgc)$ for $g\in G_F$. Here we are freely identifying $\psi$ with a one-dimensional $\mathfrak{p}$-adic representation of $G_F$ via class field theory. It is however also convenient to think of $\psi$ (classically) as a homomorphism $\mathcal{I}_\mathfrak{m}\rightarrow E^\times$ with domain the group of (proper) fractional ideals of $F$ coprime to an ideal $\mathfrak{m}\subset \mathcal{O}_F$ (the conductor) and taking values in a number field $E/\Q$, see \cite{Weil56}. Note that under this identification it holds that $\psi^c(\mathfrak{a})=\psi(\overline{\mathfrak{a}})$ for $\mathfrak{a}\in \mathcal{I}_\mathfrak{m}$.

Since the quotient $G_\Q/G_F\cong \Z/2$ is abelian, the derived subgroup of $G_\Q$ is contained in $G_F$. In particular, the derived subgroup $G'\leq \GL_2(\F_q)$ of the image of $\overline{\rho}_{f,\mathfrak{p}}$ is contained in the subgroup of diagonal matrices with determinant $1$ (in the basis corresponding to (\ref{eq:psic})). This means that the identity is the only unipotent element in $G'$ and so it suffices for the second part of the lemma to show that $G'$ has size at least $(p-1)/(k-1)$. By (\ref{eq:psic}) the image of $\overline{\rho}_{f,\mathfrak{p}}$, in the basis corresponding to (\ref{eq:psic}), contains 
$$  
\left\{ \begin{pmatrix} \psi(\mathfrak{l})& 0\\ 0& \psi(\overline{\mathfrak{l}}) \end{pmatrix}\modulo \mathfrak{p}: (\ell)=\mathfrak{l}\overline{\mathfrak{l}}\text{ split in $F$} \right\},$$
 since $c\,\Frob_\mathfrak{l}\, c=\Frob_{\overline{\mathfrak{l}}}$. We conclude that for $(\ell)=\mathfrak{l}\overline{\mathfrak{l}}$ a split prime in $F$ and coprime to the conductor $\mathfrak{m}$ the derived subgroup $G'$ contains
\begin{equation}\overline{\rho}_{f,\mathfrak{p}}\left( \Frob_\mathfrak{l} \,c\, (\Frob_\mathfrak{l})^{-1}\, c  \right) = \begin{pmatrix} \psi(\mathfrak{l})& 0\\ 0& \psi(\overline{\mathfrak{l}}) \end{pmatrix} \begin{pmatrix} \psi(\overline{\mathfrak{l}})^{-1} & 0\\ 0& \psi(\mathfrak{l})^{-1} \end{pmatrix} = \begin{pmatrix} \frac{\psi}{\psi^c}(\mathfrak{l})& 0\\ 0& \frac{\psi^c}{\psi}(\mathfrak{l}) \end{pmatrix}\mod \mathfrak{p},
\end{equation}
(note here that $\Frob_\mathfrak{l},\Frob_{\overline{\mathfrak{l}}} \in G_K\leq G_\Q$ are both Frobenius elements for $\ell$). Recall also from basic representation theory \cite[Section  7.2.1]{Serre77} that $\overline{\rho}_{f,\mathfrak{p}}$ is isomorphic to the induction from $G_K$ to $G_\Q$ of $\psi \modulo \mathfrak{p}$  which is irreducible if and only if $\psi \not\equiv \psi^c\modulo \mathfrak{p}$ (see e.g.\ the  discussion in \cite[Section  3.1]{KedlayaMedved}). In conclusion, it suffices for both parts to lower bound the size of the image of the mod $p$ representation associated to $\psi/\psi^c$ (since it is trivial on (Frobenius elements of) inert primes).  

Since $f$ is of weight $k$ we have that $\psi((\alpha))=\alpha^{k-1}$ for $\alpha\in \mathcal{O}_F $ satisfying  $\alpha\equiv 1 \modulo \mathfrak{m}$ (see \cite[p.\ 35]{Ribet1977}). Note that for $(p)=\mathfrak{p}_1\mathfrak{p}_2$ split in $F$ and sufficiently large, the map
\begin{align}
    \left\{  \alpha\in \mathcal{O}_F : \alpha \equiv 1 \modulo \mathfrak{m}, ((\alpha), (pN_f))=1 \right\}\rightarrow \F_p^\times\times \F_p^\times ,\quad  \alpha\mapsto(\alpha\modulo \mathfrak{p}_1,\overline{\alpha}\modulo \mathfrak{p}_1) 
\end{align}
is surjective by the Chinese Reminder Theorem (using here that complex conjugation defines an isomorphism $\Oo_F/\mathfrak{p}_1\cong \Oo_F/\mathfrak{p}_2$). Thus the mod $\mathfrak{p}$ image of $\psi/\psi^c$ contains $(\F_p^\times)^{k-1}$ which gives the wanted conclusion in this case. For $p$ inert in $F$ and sufficiently large we conclude again by Chinese Reminder that 
the image of 
\begin{align}
    \left\{  \alpha\in \mathcal{O}_F : \alpha \equiv 1 \modulo \mathfrak{m}, ((\alpha), (pN_f))=1 \right\}\rightarrow \F_{p^2}^\times ,\quad  \alpha\mapsto (\alpha \modulo p), 
\end{align}
is surjective. Since complex conjugation induces the non-trivial automorphism of $\F_{p^2}$ we conclude that   $\alpha/\overline{\alpha}\equiv \alpha^{1-p}\modulo p$, this implies that the mod $\mathfrak{p}$ image of $\psi/\psi^c$ contains $(\F_{p^2}^\times)^{(k-1)(p-1)} $, which is of size at least $(p+1)/(k-1)$. This yields the wanted conclusion in this case as well.
\end{proof}

\begin{lemma}\label{non-CM-case}
Let $f$ be a newform without complex multiplication of weight at least $2$. Let $p$ be a prime, $\mathfrak{p}$ a prime above $p$ in the Hecke field of $f$ and $\overline{\rho}_{f,\mathfrak{p}}$ the corresponding mod $p$ representation. Let $G$ denote the image of $\overline{\rho}_{f,\mathfrak{p}}$. Then for $p$ sufficiently large, $\overline{\rho}_{f,\mathfrak{p}}$ is absolutely irreducible and the proportion of unipotent elements in the derived subgroup $G'$ is at most $1/(p-1)$. 
\end{lemma}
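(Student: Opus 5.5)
The plan is to invoke the big image theorem for non-CM newforms of Ribet and Momose. Write $\mathcal{O}_{K_f}$ for the ring of integers of the Hecke field. There is a subfield $E\subseteq K_f$, determined by the inner twists of $f$, with the following property. For all but finitely many primes $p$, and every prime $\mathfrak{p}$ above $p$, let $\mathfrak{p}_E=\mathfrak{p}\cap\mathcal{O}_E$ and $k_{\mathfrak{p}}:=\mathcal{O}_E/\mathfrak{p}_E$. Then the image $G$ of $\overline{\rho}_{f,\mathfrak{p}}$ contains a conjugate of $\SL_2(k_{\mathfrak{p}})$. Moreover, after conjugation and scaling by central elements, $G$ sits inside $\{g\in\GL_2(\F_q) : \det g\in \F_p^\times\cdot(\text{a fixed set})\}\cdot k_{\mathfrak p}^\times$, i.e.\ $G\subseteq \F_q^\times\cdot \GL_2(k_{\mathfrak p})$. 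For weight $2$ this is Ribet's theorem; in general it is Momose and Ribet, with the residual statement in the form of Loeffler's ``big image'' paper.

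First, absolute irreducibility for large $p$ follows immediately, since $\SL_2(k_{\mathfrak p})$ acts absolutely irreducibly on $\F_q^2$ when $|k_{\mathfrak p}|\geq 4$.

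Second, I identify the derived subgroup. Since $\SL_2(k_{\mathfrak p})\le G\le \F_q^\times\GL_2(k_{\mathfrak p})$, taking commutators gives
$$\SL_2(k_{\mathfrak p})=\SL_2(k_{\mathfrak p})'\le G'\le (\F_q^\times\GL_2(k_{\mathfrak p}))'=\SL_2(k_{\mathfrak p}).$$
Here we use that $\SL_2(k)$ is perfect for $|k|\ge4$, and that scalars are central and so drop out of commutators. Hence $G'=\SL_2(k_{\mathfrak p})$ for $p$ large.

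Third, I count unipotent elements of $\SL_2(k)$ with $|k|=r$, where $r$ is a power of $p$, so $r\ge p$. They are the identity together with the conjugates of $\begin{psmallmatrix}1&1\\0&1\end{psmallmatrix}$ and $\begin{psmallmatrix}1&\epsilon\\0&1\end{psmallmatrix}$. In total there are $r^2$ of them, which is Steinberg's count $r^{\dim-\mathrm{rank}}$. Since $|\SL_2(k)|=r(r^2-1)$, the proportion is
$$\frac{r^2}{r(r^2-1)}=\frac{r}{r^2-1}=\frac{1}{r-1/r}.$$
This is not obviously at most $1/(p-1)$ in general, so it needs a check. Since $r\ge p$, we have $r-1/r\ge p-1/p> p-1$, so the proportion is at most $1/(p-1)$.

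The main obstacle is the first step, namely citing the correct form of the big image theorem for arbitrary weight $k\ge2$ and for all primes $\mathfrak p$ above $p$. In particular, the containment $G\subseteq \F_q^\times\GL_2(k_{\mathfrak p})$ up to conjugation has to be extracted from the description of the image via inner twists. This holds because the projective image lies in $\mathrm{PGL}_2(k_{\mathfrak p})$, and matrices whose projective image lies in $\mathrm{PGL}_2(k)$ are exactly $\F_q^\times\GL_2(k)$ up to conjugation. If that is delicate, the fallback is to work projectively. The proportion of unipotents in $G'$ is unchanged under passage to $G'\to \mathrm{P}G'$ up to the central $\pm1$, and Dickson's classification of subgroups of $\mathrm{PSL}_2$ containing $\mathrm{PSL}_2(k)$ then gives $G'\in\{\SL_2(k)\}$.
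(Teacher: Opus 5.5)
Your proof is correct, but it follows a different route from the paper's.

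\textbf{Your route.} You use the full residual Momose--Ribet description of the image, $\SL_2(k_{\mathfrak p})\le G\le \F_q^\times\GL_2(k_{\mathfrak p})$ after one simultaneous conjugation. This pins down $G'=\SL_2(k_{\mathfrak p})$ exactly, and the Steinberg count then gives the proportion $r/(r^2-1)<1/(p-1)$.

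\textbf{The paper's route.} It uses only the weaker input $\SL_2(\F_p)\le G$ for $p$ large (Ribet's theorem), hence $\SL_2(\F_p)\le G'$ by perfectness, and it never identifies $G'$. Every unipotent element of $G'$ lies in a conjugate of a Sylow $p$-subgroup $P\supseteq N=\{\begin{psmallmatrix}1&x\\0&1\end{psmallmatrix} : x\in\F_p\}$. So the proportion of unipotents is at most $|P|/|N_{G'}(P)|$. Since $P$ fixes a line, and the only line fixed by $N$ is $\F_q(1,0)$, $P$ consists of matrices $\begin{psmallmatrix}1&v\\0&1\end{psmallmatrix}$ with $v$ in an $\F_p$-subspace of $\F_q$. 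The torus $\{\mathrm{diag}(a,a^{-1}) : a\in\F_p^\times\}\le G'$ normalizes $P$ and meets it trivially, so $|N_{G'}(P)|\ge (p-1)|P|$.

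\textbf{Trade-off.} Your route gives the exact proportion. However, it needs a heavier citation and precisely the upper containment you flag as the main obstacle. The paper's Sylow argument does not care where $G'$ sits between $\SL_2(\F_p)$ and $\SL_2(\F_q)$, so that obstacle never arises.

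\textbf{Getting the upper containment.} If you keep your route, deduce the containment for all of $G$ from the statement for $H$, the open normal subgroup cut out by the inner-twist characters.
\begin{itemize}
\item Up to conjugacy, $\overline{\rho}_{f,\mathfrak p}(H)$ lies between $\SL_2(k_{\mathfrak p})$ and $\GL_2(k_{\mathfrak p})$. So its derived subgroup $\SL_2(k_{\mathfrak p})$ is normal in $G$.
\item The normalizer of $\SL_2(k)$ in $\GL_2(\F_q)$ is $\F_q^\times\GL_2(k)$. Conjugation preserves traces, and traces of $\SL_2(k)$ exhaust $k$, so conjugation induces no field automorphism. Schur's lemma then handles the centralizer.
\end{itemize}

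\textbf{Your Dickson fallback.} As stated, it would not by itself force $G'=\SL_2(k_{\mathfrak p})$. Without the upper containment, the image of $G'$ in $\mathrm{PSL}_2(\F_q)$ could be some $\mathrm{PSL}_2(k')$ or $\mathrm{PGL}_2(k')$ with $k'$ strictly larger than $k_{\mathfrak p}$. The bound $1/(p-1)$ still holds in each such case, but that robustness is exactly what the paper's argument gives for free.
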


\begin{proof}
Let $\mathbb{F}_{q}$ be the residue field of $\mathfrak{p}$ so that $G$ is a subgroup of $\mathrm{GL}_2(\mathbb{F}_q)$. By Ribet's generalization of Serre's open image theorem \cite[Theorem 3.1]{Ribet1985}, it follows that $G$ contains $\mathrm{SL}_2(\mathbb{F}_p)$ for $p$ sufficiently large and so, in particular, $\overline{\rho}_{f,\mathfrak{p}}$ is absolutely irreducible. Furthermore, since $\mathrm{SL}_2(\mathbb{F}_p)$ is perfect for $p\geq 5$ \cite{special}, $G'$ contains $\mathrm{SL}_2(\mathbb{F}_p)$ when $p$ is sufficiently large. In particular $G'$ contains $N:= \{(\begin{smallmatrix}
 1&x\\0&1   
\end{smallmatrix})\,:\,x \in \mathbb{F}_p\}$. By Sylow's theorems, $N$ is contained in a Sylow $p$-subgroup $P$ of $G'$.  Any unipotent element of $G'$ has order a power of $p$ and so is contained in a Sylow $p$-subgroup of $G$. By Sylow's theorems, any Sylow $p$-subgroup of $G'$ is conjugate to $P$ so the total number of Sylow $p$ -subgroups is equal to the index $[G':N_{G'}(P)]$ where $N_{G'}(P)$ is the normalizer of $P$ in $G'$. Hence the proportion of unipotent elements in $G'$ is bounded by 
\begin{equation*}
\frac{[G':N_{G'}(P)]|P|}{|G'|}=\frac{|P|}{|N_{G'}(P)|}.
\end{equation*}
The task is now to show that $|N_{G'}(P)|\geq (p-1)|P|$.

Since $P$ is a $p$-group, there is a non-zero vector in $\mathbb{F}_q^2$ fixed by all elements of $P$ \cite[Section 8.3, Proposition 26]{serre1977linear}. The only vectors fixed by $N$ are scalar multiples of $(1,0)$. It follows that $(1,0)$ is fixed by all elements of $P$ so $P$ must be of the form $\{(\begin{smallmatrix}
1&v\\0&1   
\end{smallmatrix})\,:\, v \in V\}$ where $V$ is an $\mathbb{F}_p$-subspace of $\mathbb{F}_q$ (using that $P\subset \SL_2(\F_q)$). Since $\mathrm{SL}_2(\mathbb{F}_p)\subset G'$, we find that $G'$ contains $A:=\{(\begin{smallmatrix}a&0\\0&a^{-1}\end{smallmatrix})\,:\,a \in \mathbb{F}_p^{\times}\}$. Clearly, $A$ normalizes $P$ and intersects $P$ trivially so $|N_{G'}(P)|\geq |A||P|=(p-1)|P|$ as desired.
\end{proof}

\subsection{The case of elliptic curves}\label{sec:ellipticcurves}

We now consider Galois representations attached to rational elliptic curves, where several refinements of the preceding results are possible. If $E$ is a rational elliptic curve, and $p$ is prime, we start by giving necessary and sufficient conditions for the residual representation $\overline{\rho}_{E,p}: G_{\mathbb{Q}} \rightarrow \mathrm{GL}_2(\mathbb{F}_p)$ to have a positive proportion of orderly primes (of any order). The new input is Grothendieck's $\ell$-adic monodromy theorem \cite{sga7-1} which gives a description of the action of inertia at primes $\ell \neq p$ on the $p$-adic Tate-modulo of $E$ in terms of the reduction of $E$ at $\ell$. Our main result is the following theorem:

\begin{theorem}\label{thm:elliptic-orderly}
Let $E$ be a rational elliptic curve and $p$ a prime. Then the following holds:
\begin{enumerate}[(i)]
\item If $p =2$, then the set of orderly primes of order $m$ for $\overline{\rho}_{E,p}$ has positive density for all $m \geq 1$ if and only if $E$ has no rational point of order $2$.
\item If $p=3$, then the set of orderly primes of order $m$ for $\overline{\rho}_{E,p}$ has positive density for all $m \geq 1$ if and only if $E $ is not isogenous (over $\mathbb{Q}$) to a rational elliptic curve with a rational point of order $3$.
\item If $p \geq 5$, then the set of orderly primes of order $m \geq 1$ for $\overline{\rho}_{E,p}$ has positive density for all $m \geq 1$ if and only if $\overline{\rho}_{E,p}$ is irreducible, or $E$ has a prime $\ell \neq p$ of additive reduction. 
\end{enumerate}
\end{theorem}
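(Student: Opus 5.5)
The plan is to reduce everything to the group $H:=\overline{\rho}_{E,p}(G_{\Q(\zeta_p)})$. Put $\rho=\overline{\rho}_{E,p}$ and let $G$ be its image. Since $\det\rho$ is the mod $p$ cyclotomic character $\omega$, we have $H=G\cap\SL_2(\F_p)$.

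\textbf{Step 1: reduction to order $1$.} By Lemma \ref{lem:order-one}, positive density for order $1$ gives positive density for all orders. Conversely, an orderly prime of order $m$ is in particular orderly of order $1$. So it suffices to treat $m=1$. By Chebotarev, order-$1$ orderly primes have positive density if and only if $H$ contains an element without eigenvalue $1$. Because $H\subset\SL_2(\F_p)$, such an element is exactly a non-unipotent one. So the condition in every case is:
\[
\text{$H$ is not a $p$-group.}
\]

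\textbf{Step 2: case $p\ge 5$.} If $\rho$ is irreducible, it is odd, hence absolutely irreducible. Lemma \ref{unipotent} then shows that $G'\subset H$ is not unipotent, and we are done.

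If $\rho$ is reducible, write $\rho\sim\begin{psmallmatrix}\chi_1&*\\0&\chi_2\end{psmallmatrix}$ with $\chi_1\chi_2=\omega$. Any element of $H$ has eigenvalues $\chi_1,\chi_1^{-1}$, since $\chi_2=\chi_1^{-1}$ on $G_{\Q(\zeta_p)}$. Hence $H$ is a $p$-group if and only if $\chi_1|_{G_{\Q(\zeta_p)}}$ is trivial, i.e.\ if and only if $\chi_1=\omega^i$ for some $i$.

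\emph{Additive prime $\Rightarrow$ density.} Suppose $E$ has additive reduction at some $\ell\ne p$.
\begin{itemize}
\item If the reduction is potentially good, inertia $I_\ell$ acts through a nontrivial finite group of order dividing $24$. For $p\ge5$ this group injects into $\GL_2(\F_p)$ (Serre--Tate), so $\rho|_{I_\ell}$ is nontrivial and semisimple of order prime to $p$.
\item If the reduction is potentially multiplicative, $\rho|_{I_\ell}$ is a ramified quadratic character times a unipotent matrix.
\end{itemize}
In either case $\chi_1$ or $\chi_2$ is ramified at $\ell$. Since $\chi_1\chi_2=\omega$ is unramified at $\ell$, both are ramified there, so $\chi_1\neq\omega^i$.

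\emph{No additive prime $\Rightarrow$ no density.} Suppose there is no additive prime $\ell\ne p$. At good primes, $\chi_1$ and $\chi_2$ are unramified. At multiplicative primes, the Tate curve shows $\rho|_{I_\ell}$ is unipotent, so again $\chi_i$ is unramified. Thus $\chi_1$ is unramified outside $p$ and has order prime to $p$. By Kronecker--Weber it factors through $\Gal(\Q(\zeta_p)/\Q)$, i.e.\ $\chi_1=\omega^i$, and $H$ is a $p$-group.

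\textbf{Step 3: cases $p=2,3$.}

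For $p=2$, $\omega$ is trivial and $\GL_2(\F_2)\cong S_3$. The only elements without eigenvalue $1$ are those of order $3$. So positive density holds if and only if $3\mid|G|$. Proper subgroups of $S_3$ not containing the $3$-cycles have order at most $2$ and fix a nonzero vector. Hence $3\mid|G|$ exactly when $E$ has no rational $2$-torsion point.

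For $p=3$, the irreducible case is handled as above. In the reducible case, $\omega$ is quadratic, so $\chi_1=\omega^i$ means $\{\chi_1,\chi_2\}=\{1,\omega\}$, i.e.\ $\rho^{ss}\cong 1\oplus\omega$. If the trivial character is the sub, $E$ has a rational $3$-point. If $\omega$ is the sub, call it $C\cong\mu_3$; then $E[3]/C\cong\Z/3$ embeds into $E/C$, giving a $3$-isogenous curve with a rational $3$-point.

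Conversely, suppose $E$ is isogenous to some $E_0$ with a rational $3$-point, so $\overline{\rho}_{E_0,3}^{ss}\cong1\oplus\omega$. Isogenous curves have the same Frobenius traces, so by Brauer--Nesbitt $\overline{\rho}_{E,3}^{ss}\cong1\oplus\omega$ as well. This gives reducibility of $\overline{\rho}_{E,3}$ together with $\chi_1\in\{1,\omega\}$, so $H$ is a $3$-group.

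\textbf{Main obstacle.} The delicate point is the local analysis at additive primes for $p\ge5$. One must show that ramification of $\rho$ at $\ell$ genuinely forces $\chi_1$ to be ramified at $\ell$, rather than only the extension class $*$ carrying the ramification. This is exactly where Grothendieck's monodromy theorem and the inertia description for potentially good or potentially multiplicative reduction are needed.
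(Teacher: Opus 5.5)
Your argument is correct. Its global structure matches the paper's: reduction to $m=1$ (Lemma \ref{lem:order-one}), the criterion that $G\cap\SL_2(\F_p)$ be non-unipotent (Lemma \ref{lem:orderly-condition}), and Lemma \ref{unipotent} in the irreducible case. In the reducible case it also uses the paper's equivalence (Lemma \ref{lem:orderly-unipotent}): $G\cap\SL_2(\F_p)$ is a $p$-group exactly when the diagonal characters are unramified away from $p$, hence powers of the mod $p$ cyclotomic character. Your treatment of $p=2,3$ agrees with the paper's. For $p=3$ you also supply the explicit $3$-isogeny $E\to E/C$ and the Brauer--Nesbitt converse, which the paper only asserts.

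The genuine difference is the local step at an additive prime $\ell\neq p$ when $p\ge 5$.
\begin{itemize}
\item The paper argues uniformly from Grothendieck's monodromy theorem. It picks $\sigma\in I_\ell$ acting non-unipotently on $T_p(E)$. Its eigenvalues are roots of unity in a quadratic extension of $\Q_p$, hence of order prime to $p$ once $p>3$. A Jordan-form check then shows $\overline{\rho}_{E,p}(\sigma)$ is not unipotent. In the diagonalizable case this uses torsion-freeness of $\ker(\GL_2(\Z_p)\to\GL_2(\F_p))$; in the Jordan-block case it uses that distinct $(p-1)$-st roots of unity stay distinct mod $p$.
\item You instead split by reduction type. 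In the potentially good case you use the Serre--Tate description of the inertia image: nontrivial, of order dividing $24$, and faithful on $E[p]$. In the potentially multiplicative case you use that $E$ is a ramified quadratic twist of a Tate curve, so $\overline{\rho}_{E,p}|_{I_\ell}$ has semisimplification $\epsilon\oplus\epsilon$ with $\epsilon$ nontrivial mod $p$.
\end{itemize}
Your route leans on more of the local theory of elliptic curves. In exchange, it makes visible why $p\ge 5$ is needed, and it directly settles the worry you flag that the ramification might hide in the extension class. The paper's route is shorter and essentially linear-algebraic, needing only Grothendieck's semistability criterion.
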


\noindent
We recall that $E$ has additive (or unstable) reduction at a prime $\ell$ if the group of non-singular $\overline{\mathbb{F}}_{\ell}$-points on the reduced curve modulo $\ell$ is isomorphic to the additive group of $\overline{\mathbb{F}}_{\ell}$. Additive reduction at $\ell$ is equivalent to $\ell^2$ dividing the conductor of $E$ \cite[Ch. IV, \S 10]{silverman94}.\\

\noindent
The above theorem allows us to give many concrete examples of rational elliptic curves $E$ such that all associated mod $p$ representations have a positive proportion of orderly primes (of any order). Indeed, if a rational elliptic curve $E$ is listed in the LMFDB-database, one can easily read off from its data, whether conditions (i)-(iii) in the above theorem hold.

For example, the LMFDB-database contains more than two million elliptic curves whose isogeny class have size $1$, and these only have irreducible residual representations. More than 740.000 of them have (analytic) rank $0$, so they meet the conditions of Corollary \ref{cor:E1E2}.

We can also give examples of elliptic curves with reducible residual representations that still have a positive proportion of orderly primes for each $p$. For example, if $E$ is the elliptic curve $y^2+y=x^3-66x-212$ with LMFDB-label $1089.g2$, then $E$ has rank $0$ and conductor $3^2 \cdot 11^2$. The mod $p$ representation $\overline{\rho}_{E,p}$ is irreducible for all $p\neq 11$. Since $E$ has additive reduction at $3$, it follows that for all primes $p$, the set of orderly primes (of any order) for $\overline{\rho}_{E,p}$ has positive density. We remark that $E$ has complex multiplication, which is a condition that is usually excluded in Diophantine stability results.\\

We also consider the case of two or more elliptic curves, where we can explicate what "sufficiently large" means in Theorem \ref{thm:non-van}. Here the new input is a classification of the possible images of $\overline{\rho}_{E,p}$ due to Zywina \cite{Zywina2015} and a theorem of Mazur \cite[Theorem 4]{mazur78}. To state our result, we write $j_E$ for the $j$-invariant attached to an elliptic curve $E$, and define the following of ``bad'' values for $(p, j_E)$ where the mod $p$ representation attached to $E$ is reducible \cite{Zywina2015}:
\begin{equation*}
\begin{split}  
  S := \{ & (17,-17 \cdot 373^3/2^{17}),(17,-17^2 \cdot 101^3 /2),(37,-7 \cdot 11^3),(37,-7 \cdot 137^3 \cdot 2083^3),\\
       &(19,-2^{15}\cdot 3^3),(43, -2^{18} \cdot 3^3 \cdot 5^3),(67, - 2^{15} \cdot 3^3 \cdot 5^3 \cdot 11^3),(163, -2^{18}\cdot 3^3 \cdot 5^3 \cdot 23^{3} \cdot 29^3) \}.
\end{split}
\end{equation*}
We then have the following theorem:

\begin{theorem}\label{thm:ordrlyellipticcurves}
Let $E_1,\dots, E_n$ be rational elliptic curves, and $p$ a prime. Then the following holds:
\begin{enumerate}[(i)]
\item If $n =2$, then the set of orderly primes of order $m$ for $\overline{\rho}_{E_1,p} \oplus \overline{\rho}_{E_2,p}$ has positive density for all $m \geq 1$ if and only if the same holds for the individual representations $\overline{\rho}_{E_1 , p}$ and $\overline{\rho}_{E_2 ,p}$.
\item If $(p, j_{E_i} ) \notin S$ for $i=1,\ldots, n$, and $p \geq \max \left\{ 17,3n+2 \right\} $, then the set of orderly primes of order $m$ for $\overline{\rho}_{E_1 , p} \oplus \cdots \oplus \overline{\rho}_{E_n ,p}$ has positive density for all $m \geq 1$. In particular, if $p \geq \max \left\{ 167, 3n +2 \right\}$, this conclusion holds without any conditions on the $j$-invariants of $E_1 ,\ldots, E_n$.  
\item If $E_1 ,\ldots, E_n$ are semi-stable, and $p \geq \max \left\{ 11, n+1 \right\}$, then the set of orderly primes of order $m$ for $\overline{\rho}_{E_1 , p} \oplus \cdots \oplus \overline{\rho}_{E_n ,p}$ has positive density for all $m \geq 1$. 
\end{enumerate}
\end{theorem}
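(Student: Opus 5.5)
Part (i) comes first and is short. The ``only if'' direction is immediate, since an orderly prime for $\overline{\rho}_{E_1,p}\oplus\overline{\rho}_{E_2,p}$ is orderly for each summand. For the converse, let $G_i$ be the image of $\overline{\rho}_{E_i,p}$ and $G$ the image of the sum. By Chebotarev, the hypothesis (say for $m=1$) gives elements of $\mathrm{Gal}(\overline{\Q}/\Q(\zeta_p))$ whose images in $G_i$ do not have $1$ as an eigenvalue. I will work in the image $H\leq G_1\times G_2$ of $\mathrm{Gal}(\overline{\Q}/\Q(\zeta_p))$; it surjects onto $H_i$, the image of that subgroup in $G_i$. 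By the Weil pairing, $\det\overline{\rho}_{E,p}$ is the mod $p$ cyclotomic character, so $H_i\subseteq\SL_2(\F_p)$. In $\SL_2(\F_p)$, ``no eigenvalue $1$'' is the same as ``not unipotent''. So each $H_i$ is a non-unipotent subgroup, and Lemma~\ref{lem:prod} gives an element of $H$ with both components non-unipotent. Chebotarev then produces orderly primes of order $1$, and Lemma~\ref{lem:order-one} upgrades this to all orders $m$.

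For (ii) I will mimic the counting proof of Theorem~\ref{thm:non-van}, with explicit constants, applied to $H\leq\prod H_i$ with $H_i=\overline{\rho}_{E_i,p}(\mathrm{Gal}(\overline{\Q}/\Q(\zeta_p)))=G_i\cap\SL_2(\F_p)$. Since $H$ surjects onto each $H_i$, it suffices to have $|U_i|/|H_i|<1/n$ for each $i$, where $U_i$ is the set of unipotent elements of $H_i$.
\begin{itemize}
\item The hypothesis $(p,j_{E_i})\notin S$ together with $p\geq 17$ and Zywina's classification \cite{Zywina2015} (building on Mazur \cite[Theorem 4]{mazur78}) makes $\overline{\rho}_{E_i,p}$ irreducible, with image either all of $\GL_2(\F_p)$ or contained in the normaliser of a Cartan subgroup. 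The exceptional images ($A_4,S_4,A_5$) do not occur for $p\geq 17$.
\item If the image is $\GL_2(\F_p)$, then $H_i=\SL_2(\F_p)$. It has $p^2$ unipotent elements out of $p(p^2-1)$, so the proportion is $p/(p^2-1)<1/(p-1)$.
\item If $G_i$ lies in the normaliser of a Cartan subgroup $C$, then $H_i\cap C$ has no nontrivial unipotents. Elements outside $C$ have trace $0$, and a unipotent element of $\SL_2$ has trace $2$, so the only unipotent is the identity. Hence the proportion is $1/|H_i|$. Zywina's lists show $|H_i|$ is at least of order $(p-1)/3$ or so (the index-$3$ subgroups of the normaliser appear in his classification). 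This gives a bound of roughly $3/(p-1)$.
\end{itemize}
In all cases the proportion is at most $3/(p-1)$, which is $<1/n$ once $p\geq 3n+2$. The ``in particular'' statement follows because every pair in $S$ has $p\leq 163$.

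For (iii), with $E_i$ semistable and $p\geq 11$, Serre (via Mazur) gives that $\overline{\rho}_{E_i,p}$ is surjective onto $\GL_2(\F_p)$. Then each proportion is $p/(p^2-1)<1/(p-1)\leq 1/n$ once $p\geq n+1$, and the same union bound applies.

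The main obstacle is the explicit Cartan-normaliser analysis in (ii): I must extract from Zywina's tables a uniform lower bound $|H_i|\gtrsim (p-1)/3$ that covers every possible image when $p\geq 17$ and $(p,j)\notin S$. This is what the constant $3n+2$ suggests, and I would check it case by case for split and nonsplit Cartan normalisers.
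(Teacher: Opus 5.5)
Your proposal is correct and follows the paper's route. Part (i) is exactly the paper's argument: restrict to the image of $\mathrm{Gal}(\overline{\Q}/\Q(\zeta_p))$, which lies in $\SL_2(\F_p)^2$ by the Weil pairing, apply Lemma~\ref{lem:prod}, then Chebotarev and Lemma~\ref{lem:order-one}. Parts (ii)--(iii) are the union bound from the proof of Theorem~\ref{thm:non-van}, fed by Zywina's and Mazur's classifications, and the only deviation is that you count unipotents in $G_i\cap\SL_2(\F_p)$ rather than in the derived subgroup $G_i'$ (so you need Lemma~\ref{lem:order-one} instead of Proposition~\ref{first_criteria}); this is harmless and makes your ``main obstacle'' immediate: since $\det$ is surjective, $|G_i\cap\SL_2(\F_p)|=|G_i|/(p-1)\geq 2(p-1)/3$ for each of Zywina's Cartan-normaliser images $N_s,N_{ns},L_s,L_{ns}$, giving proportion at most $3/(2(p-1))$.
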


\noindent
Recall that $E$ is semi-stable if it has no primes of additive reduction, or equivalently if $E$ has square-free conductor. 

\subsubsection{The proof of Theorem \ref{thm:elliptic-orderly}} Let $E$ be a rational elliptic curve, and $p$ a prime. We have already seen that if the mod $p$ representation $\overline{\rho}_{E,p}$ is irreducible, then the set of orderly primes (of any order) for $\rho$ has positive density. Hence it remains to study the case when $\overline{\rho}_{E,p}$ is reducible. By Lemma \ref{lem:order-one}, we only need to determine when $\overline{\rho}_{E,p}$ has a positive proportion of orderly primes of order $1$.

Recall that the determinant of $\overline{\rho}_{E,p}$ is the mod $p$ cyclotomic character $\chi_{\mathrm{cyc},p}: G_{\mathbb{Q}} \rightarrow \mathbb{F}_p^\times$. We start by observing some simple consequences of this fact that reduce the problem to understanding the action of inertia on $E \left[ p \right]$ at primes different from $p$. 

\begin{lemma}\label{lem:orderly-condition}
 Let $\rho : G_{\mathbb{Q}} \rightarrow \mathrm{GL}_2(\mathbb{F}_p)$ be a mod $p$ representation whose determinant is the mod $p$ cyclotomic character. Then the set of orderly primes of order $1$ for $\rho$ has positive density if and only the intersection of the image of $\rho$ with $\mathrm{SL}_2(\mathbb{F}_p)$ is not unipotent.
\end{lemma}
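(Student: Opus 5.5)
Both directions of the equivalence go through the Chebotarev density theorem, as in the proof of Proposition \ref{first_criteria}. Let $K$ be the fixed field of $\ker\rho$, so $\rho$ identifies $\mathrm{Gal}(K/\mathbb{Q})$ with the image $G$. Since $\det\rho=\chi_{\mathrm{cyc},p}$, the kernel of $\det\colon G\to\mathbb{F}_p^\times$ is $G\cap\mathrm{SL}_2(\mathbb{F}_p)$. Under the Galois correspondence this kernel is exactly $\mathrm{Gal}(K/\mathbb{Q}(\zeta_p))$, because $\mathbb{Q}(\zeta_p)\subseteq K$ is the subfield cut out by $\chi_{\mathrm{cyc},p}=\det\circ\rho$.

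Take a prime $\ell\nmid p$ at which $\rho$ is unramified. Then $\ell\equiv 1 \pmod p$ holds if and only if $\chi_{\mathrm{cyc},p}(\mathrm{Frob}_\ell)=1$, that is, if and only if $\det\rho(\mathrm{Frob}_\ell)=1$, that is, if and only if $\rho(\mathrm{Frob}_\ell)\in G\cap \mathrm{SL}_2(\mathbb{F}_p)$. So the orderly primes of order $1$ are, up to finitely many exceptions, the primes whose Frobenius class lies in the conjugation-stable set
\[
C:=\{g\in G\cap\mathrm{SL}_2(\mathbb{F}_p): g \text{ does not have } 1 \text{ as an eigenvalue}\}.
\]
By Chebotarev, this set of primes has positive density exactly when $C\neq\emptyset$; when $C=\emptyset$ there are only finitely many orderly primes, so the density is zero.

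It remains to identify $C$. An element $g\in\mathrm{SL}_2(\mathbb{F}_p)$ has characteristic polynomial $X^2-\mathrm{tr}(g)X+1$. Its eigenvalues are therefore $\lambda,\lambda^{-1}$, so $1$ is an eigenvalue iff both eigenvalues equal $1$, i.e.\ iff $g$ is unipotent. Hence $C$ is the set of non-unipotent elements of $G\cap\mathrm{SL}_2(\mathbb{F}_p)$. It follows that $C\neq\emptyset$ exactly when $G\cap\mathrm{SL}_2(\mathbb{F}_p)$ is not unipotent, which proves the lemma.

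There is no real obstacle here. The one point needing care is the identification $\mathrm{Gal}(K/\mathbb{Q}(\zeta_p))= G\cap\mathrm{SL}_2(\mathbb{F}_p)$, which is what upgrades the sufficient criterion of Proposition \ref{first_criteria} (via $G'$) to an exact criterion. Everything else is bookkeeping with finitely many ramified primes and with $\ell=p$.
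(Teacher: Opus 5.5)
Your proposal is correct and follows the paper's proof: Chebotarev reduces the question to finding $\tau\in\mathrm{Gal}(\overline{\mathbb{Q}}/\mathbb{Q}(\zeta_p))$ with $\rho(\tau)$ not having $1$ as an eigenvalue. The condition $\det\rho=\chi_{\mathrm{cyc},p}$ identifies the image of this subgroup with $G\cap\mathrm{SL}_2(\mathbb{F}_p)$, and in $\mathrm{SL}_2$ having $1$ as an eigenvalue is the same as being unipotent.
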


\begin{proof}
By the Chebotarev density theorem, the set of orderly primes of order $1$ for $\rho$ has positive density if and only if there is an element $\tau \in \mathrm{Gal}(\overline{\mathbb{Q}}/\mathbb{Q}(\zeta_p))$ such that $\rho(\tau)$ does not have $1$ as an eigenvalue. Since $\det \rho = \chi_{\mathrm{cyc},p}$, we have $\tau \in \mathrm{Gal}(\overline{\mathbb{Q}} / \mathbb{Q}(\zeta_p))$ if and only if $\rho(\tau) \in \mathrm{SL}_2(\mathbb{F}_p)$. Now the proof is complete since an element of $\mathrm{SL}_2(\mathbb{F}_p)$ has $1$ as an eigenvalue if and only if it is unipotent.
\end{proof}

\begin{lemma}\label{lem:orderly-unipotent}
Let $\rho: G_{\mathbb{Q}} \rightarrow \mathrm{GL}_2(\mathbb{F}_p)$ be a reducible mod $p$ representation whose determinant is the mod $p$ cyclotomic character. Then the set of orderly primes of order $1$ for $\rho$ has positive density if and only if there is a prime $\ell \neq p$ such that $\rho(I_{\ell})$ is not unipotent for some (hence any) inertia subgroup $I_{\ell} \leq G_{\mathbb{Q}}$ above $\ell$. 
\end{lemma}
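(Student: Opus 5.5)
By Lemma \ref{lem:orderly-condition}, it suffices to show that the subgroup $H:=\rho(G_\Q)\cap \SL_2(\F_p)=\rho(G_{\Q(\zeta_p)})$ is non-unipotent if and only if some inertia image $\rho(I_\ell)$ with $\ell\neq p$ is non-unipotent. Since $\rho$ is reducible, I will choose a basis in which the image lies in the Borel subgroup $B$ of upper triangular matrices. On $B$, the diagonal entries are characters $\chi_1,\chi_2:G_\Q\to\F_p^\times$ with $\chi_1\chi_2=\chi_{\mathrm{cyc},p}$. An element of $B$ is unipotent exactly when both of its diagonal entries equal $1$. So a subgroup of $B$ is unipotent if and only if it lies in the kernel of the character $\chi_1$ and of the character $\chi_2$ restricted to it. For elements of $\SL_2$ we have $\chi_2=\chi_1^{-1}$, so $H$ is unipotent if and only if $\chi_1$ is trivial on $G_{\Q(\zeta_p)}$.

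\textbf{The direction from inertia to orderly primes.} Suppose $\rho(I_\ell)$ is non-unipotent for some $\ell\neq p$. Since $\chi_{\mathrm{cyc},p}$ is unramified at $\ell$, we have $\rho(I_\ell)\subset H$, so $H$ is non-unipotent. This direction is immediate. It also shows the claimed independence of the choice of $I_\ell$, because all inertia groups above $\ell$ are conjugate and unipotence is conjugation-invariant.

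\textbf{The converse.} Suppose every $\rho(I_\ell)$ with $\ell\neq p$ is unipotent, so that $\chi_1$ and $\chi_2$ are unramified outside $p$ and $\infty$. By Kronecker--Weber, a character $G_\Q\to\F_p^\times$ unramified outside $p$ factors through $\Gal(\Q(\zeta_{p^\infty})/\Q)\cong\Z_p^\times$. Since $\F_p^\times$ has order prime to $p$, it in fact factors through $\Gal(\Q(\zeta_p)/\Q)$. Hence $\chi_1$ is trivial on $G_{\Q(\zeta_p)}$, and therefore $H$ is unipotent, as required. Equivalently, $\chi_1=\chi_{\mathrm{cyc},p}^a$ for some $a$. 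The case $p=2$ is harmless: there $\F_2^\times$ is trivial, so $\rho$ is always unipotent inside $\SL_2$, and the argument still goes through.

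\textbf{Main point of care.} The key step is the reduction to the diagonal characters. We must check that unipotence of a subgroup of $B\cap\SL_2$ is detected by $\chi_1$ alone, and that restriction to inertia commutes with taking the semisimplification. Both hold because the diagonal characters of $\rho|_{I_\ell}$ are just the restrictions of $\chi_1$ and $\chi_2$. The only genuinely arithmetic input is Kronecker--Weber together with the prime-to-$p$ order argument.
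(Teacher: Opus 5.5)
Your proof is correct and follows the same skeleton as the paper's. Both put $\rho$ in upper-triangular form, observe that unipotence of $\rho(I_\ell)$ for all $\ell\neq p$ amounts to the diagonal characters $\chi_1,\chi_2$ being unramified outside $p$, and conclude with Kronecker--Weber. The only difference is bookkeeping: you land directly in $\mathrm{Gal}(\Q(\zeta_p)/\Q)$ using that $|\F_p^\times|$ is prime to $p$ and then apply Lemma \ref{lem:orderly-condition} to $\rho(G_{\Q(\zeta_p)})$, whereas the paper works with $\mathrm{Gal}(\Q(\zeta_{p^m})/\Q)$ and orderly primes of order $m$, descending to order $1$ via Lemma \ref{lem:order-one}.
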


\begin{proof}
We may assume that $\rho= \left(
\begin{smallmatrix}
\chi_1&\ast\\
0&\chi_2 \\
\end{smallmatrix}
\right)$ for some characters $\chi_1 , \chi_2 : G_{\mathbb{Q}} \rightarrow \mathbb{F}_p^\times$ with $\chi_1 \chi_2 = \chi_{\mathrm{cyc},p}$. Then $\rho(I_{\ell})$ is unipotent for all $\ell \neq p$ if and only if $\chi_1$ and $\chi_2$ are unramifed at all $\ell \neq p$. This last statement is equivalent to $\chi_1$ and $\chi_2$ factoring through $\mathrm{Gal}(\mathbb{Q}(\zeta_{p^m})/\mathbb{Q})$ for some $m \geq 1$ which in turn is equivalent to $\rho$ not having any orderly primes of order $m$ for some $m \geq 1$. By Lemma \ref{lem:order-one}, this is equivalent to $\rho$ not having any orderly primes of order $1$. 
\end{proof}

\noindent
The above lemma provides the link between orderly primes and the geometry of the elliptic curve $E$. Let
\begin{equation*}
T_p(E):= \lim_{\substack{
\longleftarrow \\ n 
}} E \left[ p^n \right]
\end{equation*}
denote the $p$-adic Tate-module of $E$. By Grothendieck's $\ell$-adic monodromy theorem, for $\ell\neq p$, a finite index subgroup of $I_\ell$ acts unipotently on $T_p(E)$, and $I_{\ell}$ itself acts unipotently if and only if $E$ is semi-stable at $\ell$, see \cite[Expos\'{e} IX, Th\'{e}oreme 3.6 and Corollaire 3.8]{sga7-1}. We can now prove the following proposition which is the main input to Theorem \ref{thm:elliptic-orderly}:

\begin{proposition}\label{prop:orderly-additive}
Let $E$ be a rational elliptic curve, and suppose $p \geq 5$ is a prime such that $\overline{\rho}_{E,p}$ is reducible. Then the set of orderly primes for $\overline{\rho}_{E,p}$ has positive density if and only if $E$ has a prime $\ell\neq p$ of additive reduction.
\end{proposition}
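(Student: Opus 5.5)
By Lemma \ref{lem:order-one} it suffices to treat orderly primes of order $1$. Since $\det\overline{\rho}_{E,p}=\chi_{\mathrm{cyc},p}$ and $\overline{\rho}_{E,p}$ is reducible, Lemma \ref{lem:orderly-unipotent} applies. So the proposition reduces to the following claim: for $p\geq 5$ and $\ell\neq p$, the group $\overline{\rho}_{E,p}(I_\ell)$ is unipotent if and only if $E$ has semi-stable reduction at $\ell$. I will prove the two implications separately, using Grothendieck's monodromy theorem on $T_p(E)$.

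\textbf{Semi-stable $\Rightarrow$ unipotent.} If $E$ is semi-stable at $\ell$, then $I_\ell$ acts unipotently on $T_p(E)$ by \cite[Expos\'{e} IX, Corollaire 3.8]{sga7-1}. Each $\sigma\in I_\ell$ therefore satisfies $(\sigma-1)^2=0$ on $T_p(E)$. Reducing modulo $p$, the same identity holds on $E[p]=T_p(E)/pT_p(E)$, so $\overline{\rho}_{E,p}(I_\ell)$ is unipotent. Consequently, if no $\ell\neq p$ is additive, Lemma \ref{lem:orderly-unipotent} shows there is no positive density of orderly primes.

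\textbf{Additive $\Rightarrow$ not unipotent.} Suppose $E$ has additive reduction at $\ell\neq p$. I would split according to potential reduction type.

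\emph{Potentially good reduction.} Here $I_\ell$ acts on $T_p(E)$ through a finite quotient $\Phi=\rho_{E,p}(I_\ell)$. This group is nontrivial since the reduction is not good, and by the N\'eron--Ogg--Shafarevich criterion its order divides $24$ and involves only the primes $2,3$. Because $p\geq 5$, $\Phi$ has order prime to $p$. The kernel of reduction $\GL_2(\Z_p)\to\GL_2(\F_p)$ is pro-$p$, so it meets $\Phi$ trivially. Hence $\Phi$ injects into $\GL_2(\F_p)$, and its image is a nontrivial group of order prime to $p$. A nontrivial prime-to-$p$ group cannot consist of unipotent elements, since unipotent elements have $p$-power order.

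\emph{Potentially multiplicative reduction.} By Tate curve theory, $E$ is a quadratic twist, by a character $\psi$ ramified at $\ell$, of a curve with split multiplicative reduction over a quadratic extension. Then $\rho_{E,p}|_{I_\ell}\cong \psi\otimes\begin{psmallmatrix}1&\ast\\0&1\end{psmallmatrix}$ with $\psi|_{I_\ell}$ of order $2$. Reduction mod $p$ preserves $\psi$, which takes values $\pm1$ with $-1\neq 1$ because $p$ is odd. So some $\sigma\in I_\ell$ acts on $E[p]$ with eigenvalue $-1$, and its image is not unipotent.

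The main obstacle I expect is making the potentially good case rigorous: I must show that $\Phi$ is nontrivial exactly when the reduction is bad, and that its order is prime to $p\geq 5$. I would cite Serre--Tate for both facts. An alternative for that case is to note that $\rho_{E,p}(I_\ell)$ finite and nontrivial forces some element to have an eigenvalue that is a nontrivial root of unity of order dividing $24$. Such a root of unity remains $\neq 1$ mod $p$, because $p\nmid 24$ means distinct roots of unity of these orders stay distinct after reduction.
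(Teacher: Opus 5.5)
Your proof is correct but organized differently from the paper's. Both reduce, via Lemma \ref{lem:orderly-unipotent}, to showing that $\overline{\rho}_{E,p}(I_\ell)$ is not unipotent when $E$ is additive at $\ell\neq p$; you also spell out the easy converse, which the paper leaves implicit.

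The paper does not split by reduction type. From Grothendieck's monodromy theorem it gets a finite-index $I_0\leq I_\ell$ acting unipotently, and an element $\sigma\in I_\ell$ that does not act unipotently on $T_p(E)$. It then argues on the Jordan form of $\rho_{E,p}(\sigma)$:
\begin{itemize}
\item The eigenvalues are roots of unity in a quadratic extension of $\Q_p$, hence of order prime to $p$ once $p>3$.
\item If $\rho_{E,p}(\sigma)$ is diagonalizable, it has finite prime-to-$p$ order. Its reduction is therefore nontrivial of prime-to-$p$ order, because the kernel of $\GL_2(\Z_p)\to\GL_2(\F_p)$ is torsion-free.
\item If it is a Jordan block, the eigenvalue $\lambda\in\Q_p$ is a $(p-1)$-st root of unity different from $1$, so its reduction is not $1$.
\end{itemize}

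Your split into potentially good reduction (Serre--Tate finite inertia image, nontrivial by N\'eron--Ogg--Shafarevich) and potentially multiplicative reduction (Tate curve twisted by a ramified quadratic character) roughly mirrors the paper's diagonalizable/non-diagonalizable dichotomy. It uses heavier, elliptic-curve-specific inputs. In exchange it gives an explicit description of the inertia action, and it shows that the potentially multiplicative case needs only $p$ odd.

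The bound $|\Phi|\mid 24$ is more than you need. It also comes from Serre--Tate's embedding of $\Phi$ into the automorphism group of the reduced curve, not from N\'eron--Ogg--Shafarevich. As in the paper, any finite subgroup of $\GL_2(\Z_p)$ has order prime to $p$ once $p\geq 5$, since an element of order $p$ would need the eigenvalue $\zeta_p$, which has degree $p-1>2$ over $\Q_p$.
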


\begin{proof}
If $\ell  \neq p$ is prime, the inertia group $I_{\ell}$ acts unipotently on $T_p(E)$ if and only if $E$ is semi-stable at $p$ \cite[Expos\'{e} IX, Corollaire 3.8]{sga7-1}. By Lemma \ref{lem:orderly-unipotent}, it is therefore enough to show that if $E$ has additve reduction at $\ell$, then $I_{\ell}$ does not act unipotently on $E \left[ p \right]$. By the semi-stable reduction theorem \cite[Expos\'{e} IX, Th\'{e}oreme 3.6]{sga7-1}, $E$ becomes semi-stable over a finite extension of $\mathbb{Q}_{\ell}$, so there is a subgroup $I_0 \leq I_{\ell}$ of finite index that acts unipotently on $T_p(E)$. Since $E$ is not semi-stable at $\ell$, there is an element $\sigma \in I_{\ell} \setminus I_0$ that does not act unipotently on $T_p(E)$, and we now show that $\overline{\rho}_{E,p}(\sigma)\in\mathrm{GL}_2(\mathbb{F}_p)$ is not unipotent.

The Jordan normal form of $\rho_{E,p}(\sigma)$ is either diagonal or $\left(
\begin{smallmatrix}
\lambda&1\\
0&\lambda \\
\end{smallmatrix}
\right)$. Because $\left[ I_{\ell} : I_0 \right]$ is finite, $\rho_{E,\rho}(\sigma)^M$ is unipotent for some $M>0$, so the eigenvalues of $\rho_{E,p}(\sigma)$ must be roots of unity. Moreover, they lie in a quadratic extension of $\mathbb{Q}_p$, so, since $\zeta_p$ has degree $p-1$ over $\mathbb{Q}_p$, and $p>3$, they must have order coprime to $p$. Hence, if $\rho_{E,p}(\sigma)$ is diagonalizable, it has finite order coprime to $p$. When $p>2$, the kernel of the reduction map $\mathrm{GL}_2(\mathbb{Z}_p) \twoheadrightarrow \mathrm{GL}_2(\mathbb{F}_p)$ contains no non-trivial elements of finite order \cite[Ch. IV, Exercise 4.38]{silverman94} so it follows that $\overline{\rho}_{E,p}(\sigma) \in \mathrm{GL}_2(\mathbb{F}_p)$ cannot be unipotent since its order is coprime to $p$. If $\rho_{E,p}(\sigma)\in\mathrm{GL}_2(\mathbb{Z}_p)$ has Jordan normal form $\left(
\begin{smallmatrix}
\lambda&1\\
0&\lambda \\
\end{smallmatrix}
\right)$, we have $\lambda = \frac{1}{2} \operatorname{tr} \rho_{E,p}(\sigma)\in\mathbb{Q}_p$. For $p>2$, the only roots of unity in $\mathbb{Q}_p$ are $1,\zeta_{p-1},\ldots,\zeta_{p-1}^{p-2}$, and they have pairwise distinct images in $\mathbb{F}_p^{\times}$ \cite[Ch. II, \S 5]{neukirch}. Since $\rho_{E,p}(\sigma)$ is not unipotent, it follows that $\lambda\not\equiv 1\pmod{p}$ so $\overline{\rho}_{E,p}(\sigma)$ cannot be unipotent.
\end{proof}

\begin{remark}
Proposition \ref{prop:p=2} below shows that Proposition \ref{prop:orderly-additive} is never true when $p=2$. The proposition may or may not hold when $p=3$. For example, suppose $E$ is the curve $y^2=x^3+x^2-18x-43$ with LMFDB-label $92.b 1$. Then $E$ has conductor $2^2 \cdot 23$, but $\overline{\rho}_{E,3}$ is reducible and has semi-simplification $\chi_{\mathrm{cyc},3} \oplus 1$ so there are no orderly primes for this representation. On the other hand, if $E$ is the curve $y^2=x^3-x^2-x$ with LMFDB-label $80.b 3$, $E$ has conductor $2^4 \cdot 5$. The image of $\overline{\rho}_{E,3}$ contains $\left(
\begin{smallmatrix}
2&2\\
0&2 \\
\end{smallmatrix}
\right)$ which is a non-unipotent element of $\mathrm{SL}_2(\mathbb{F}_3)$. By Lemma \ref{lem:orderly-condition}, the orderly primes for $\overline{\rho}_{E,3}$ have positive density.
\end{remark}

We now consider the case $p=2,3$ and prove following two propositions that complete the proof of Theorem \ref{thm:elliptic-orderly}.

\begin{proposition}\label{prop:p=2}
If $p =2$, then the set of orderly primes for $\overline{\rho}_{E,p}$ has positive density if and only if $E$ has no rational point of order $2$.
\end{proposition}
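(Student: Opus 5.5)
For $p=2$ the condition $\ell\equiv 1 \pmod 2$ holds for every odd prime, and $\det\overline{\rho}_{E,2}=\chi_{\mathrm{cyc},2}$ is trivial. So I will work directly inside $\GL_2(\F_2)=\SL_2(\F_2)\cong S_3$. By Lemma \ref{lem:order-one} it suffices to treat order $1$. By the Chebotarev density theorem, orderly primes of order $1$ have positive density if and only if the image $G$ of $\overline{\rho}_{E,2}$ contains an element without eigenvalue $1$, i.e.\ without a nonzero fixed vector in $\F_2^2$. I will keep in mind that $\overline{\rho}_{E,2}$ is the permutation action of $G_\Q$ on the three nonzero points of $E[2]$, namely the roots of the cubic $x^3+ax^2+bx+c$ in a Weierstrass model $y^2=x^3+ax^2+bx+c$ (after completing the square, harmlessly, since we only need the Galois set $E[2]\setminus\{0\}$).

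An element $g\in \GL_2(\F_2)$ has no nonzero fixed vector exactly when it fixes none of the three nonzero vectors. The elements of $S_3$ acting on the three nonzero vectors without fixed points are precisely the $3$-cycles, which are the two elements of order $3$, $\begin{psmallmatrix}0&1\\1&1\end{psmallmatrix}$ and its square. The identity and the transpositions each fix a nonzero vector. Hence orderly primes exist with positive density if and only if $3$ divides $|G|$, i.e.\ $G\supseteq H$, the subgroup of order $3$ used in the proof of Proposition \ref{n=3p=2}.

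It remains to identify this condition with the absence of a rational $2$-torsion point. If $E$ has a rational point $P$ of order $2$, then $P$ is a nonzero vector fixed by all of $G$, so no element of $G$ is fixed-point-free and there are no orderly primes. Conversely, if $E(\Q)[2]=0$, the cubic has no rational root, so it is irreducible over $\Q$. Then $G_\Q$ acts transitively on the three roots, so $3\mid |G|$ and $G$ contains a $3$-cycle. The Chebotarev density theorem then gives positive density of primes whose Frobenius is that $3$-cycle, and Lemma \ref{lem:order-one} upgrades this from order $1$ to order $m$ for every $m\geq 1$.

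There is no real obstacle here. The only point needing care is the small observation that, when $p=2$, condition (1) of Definition \ref{def:orderly} imposes nothing beyond $\ell$ odd, so we need not pass to $\Gal(\overline{\Q}/\Q(\zeta_{2^m}))$ explicitly; Lemma \ref{lem:order-one} handles that step. Alternatively one can note that $\Q(\zeta_2)=\Q$ and apply Lemma \ref{lem:orderly-condition} directly, since $\SL_2(\F_2)=\GL_2(\F_2)$ and the only non-unipotent elements are those of order $3$.
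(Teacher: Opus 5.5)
Your proof is correct and is essentially the paper's argument. A rational $2$-torsion point is a common fixed vector, which the paper phrases as the image lying in $\langle\begin{psmallmatrix}1&1\\0&1\end{psmallmatrix}\rangle$; otherwise the image contains the order-$3$ element $\begin{psmallmatrix}0&1\\1&1\end{psmallmatrix}$, which has no eigenvalue $1$, and the paper concludes via Lemma \ref{lem:orderly-condition} (your stated alternative). Your transitivity argument on the roots of the $2$-division cubic fills in the step ``the image must contain $\begin{psmallmatrix}0&1\\1&1\end{psmallmatrix}$'', which the paper leaves implicit.
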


\begin{proof}
 If $E$ has a rational point order $2$, we can choose a basis for $E \left[ 2 \right]$ such that the image $\overline{\rho}_{E,2}$ is contained in $\left\langle \left(
\begin{smallmatrix}
1&1\\
0&1 \\
\end{smallmatrix}
\right) \right\rangle$, and hence there are no orderly primes for $\overline{\rho}_{E,2}$. Conversely, if $E$ has no rational point of order $2$, we see that the image of $\overline{\rho}_{E,2}$ must contain $\left(
\begin{smallmatrix}
0&1\\
1&1 \\
\end{smallmatrix}
\right)$ which is not unipotent so, by Lemma \ref{lem:orderly-condition}, we are done.
\end{proof}

\begin{proposition}
If $p=3$, then the set of orderly primes for $\overline{\rho}_{E,p}$ has positive density if and only if $E $ is not isogenous (over $\mathbb{Q}$) to a rational elliptic curve with a raitonal point of order $3$.
\end{proposition}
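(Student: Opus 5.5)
We reduce to the reducible case, since if $\overline{\rho}_{E,3}$ is irreducible it is absolutely irreducible (it is odd), so Corollary \ref{cor:modular-orderly} gives orderly primes. An elliptic curve with a rational $3$-torsion point has reducible $\overline{\rho}_{E,3}$, and isogenous curves have isomorphic semisimplifications. So in both directions we may assume $\overline{\rho}_{E,3}=\left(\begin{smallmatrix}\chi_1&\ast\\0&\chi_2\end{smallmatrix}\right)$ with $\chi_1\chi_2=\chi_{\mathrm{cyc},3}$. The characters take values in $\mathbb{F}_3^\times=\{\pm1\}$, so each $\chi_i$ is quadratic. By Lemma \ref{lem:order-one} it suffices to treat orderly primes of order $1$.

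The key step is the analogue of Lemma \ref{lem:orderly-unipotent}, made more precise using that the characters are quadratic. An element $\tau$ with $\chi_{\mathrm{cyc},3}(\tau)=1$ has $\chi_1(\tau)=\chi_2(\tau)$, and $\overline{\rho}_{E,3}(\tau)$ has eigenvalue $1$ exactly when this common value is $1$. Hence orderly primes of order $1$ fail to exist if and only if $\chi_1$ is trivial on $G_{\mathbb{Q}(\zeta_3)}$, i.e. $\chi_1\in\{1,\chi_{\mathrm{cyc},3}\}$. The case is then symmetric, with $\{\chi_1,\chi_2\}=\{1,\chi_{\mathrm{cyc},3}\}$. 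Conversely, if $\chi_1$ is another quadratic character, pick $\tau\in G_{\mathbb{Q}(\zeta_3)}$ with $\chi_1(\tau)=-1$; then $\overline{\rho}_{E,3}(\tau)$ is non-unipotent in $\mathrm{SL}_2(\mathbb{F}_3)$, and we conclude by Lemma \ref{lem:orderly-condition} and Chebotarev.

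It remains to identify the condition $\{\chi_1,\chi_2\}=\{1,\chi_{\mathrm{cyc},3}\}$ with $E$ being isogenous to a curve having a rational $3$-torsion point.

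If $E'\sim E$ has a rational point $P$ of order $3$, then $\overline{\rho}_{E',3}$ has the trivial character as a sub. Its semisimplification, which equals that of $\overline{\rho}_{E,3}$ by Brauer–Nesbitt (traces agree, as isogenous curves have equal $a_\ell$), is therefore $1\oplus\chi_{\mathrm{cyc},3}$.

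Conversely, suppose $\chi_1=1$. Then the line stabilized by $\overline{\rho}_{E,3}$ is generated by a rational point of order $3$, and we take $E'=E$. Now suppose $\chi_1=\chi_{\mathrm{cyc},3}$, $\chi_2=1$. Let $C$ be the $G_{\mathbb{Q}}$-stable line, and consider the $3$-isogeny $\phi:E\to E'=E/C$. The image $\phi(E[3])\cong E[3]/C$ is a subgroup of $E'[3]$ of order $3$ on which Galois acts by $\chi_2=1$. This gives a rational $3$-torsion point on $E'$. (Alternatively, apply the Weil pairing on $E'$: the kernel of the dual isogeny $\hat\phi$ is the image of $E[3]$.)

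The main obstacle I expect is the bookkeeping: checking that the kernel of $\hat{\phi}$ is indeed $\phi(E[3])$ with the trivial action. Everything else is direct from Lemmas \ref{lem:order-one}, \ref{lem:orderly-condition} and Corollary \ref{cor:modular-orderly}.
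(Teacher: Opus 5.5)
Your proof is correct and follows the same route as the paper. You reduce to the reducible case, use Lemma \ref{lem:orderly-condition} to see that orderly primes exist iff some element of the image lying in $\mathrm{SL}_2(\mathbb{F}_3)$ has both eigenvalues $-1$ (equivalently, the semisimplification is not $1\oplus\chi_{\mathrm{cyc},3}$), and then identify this with the isogeny condition; your explicit argument via the $3$-isogeny $E\to E/C$ usefully fills in this last identification, which the paper only asserts.
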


\begin{proof}
If $\overline{\rho}_{E,3}$ is irreducible, we have already seen that the set of orderly primes has positive density, so suppose that $\overline{\rho}_{E,3}$ is reducible. Using Lemma \ref{lem:orderly-condition}, we see that the set of orderly primes has positive density if and only if the image of $\overline{\rho}_{E,3}$ contains an element of the form $\left(
\begin{smallmatrix}
2&*\\
0&2 \\
\end{smallmatrix}
\right)$ which is equivalent to the semi-simplification of $\overline{\rho}_{E,3}$ not being isomorphic to $\chi_{\mathrm{cyc},3} \oplus 1$. This last condition exactly means that $E$ is not isogenous (over $\mathbb{Q}$) to a rational elliptic curve with a rational point of order $3$.
\end{proof}
\begin{remark}
An alternative characterization of when the orderly primes for the mod $p$ representation associated to an elliptic curve $E/\Q$ have positive density is that $E$ is not $\Q(\zeta_p)$-isogenous to an elliptic curve with $\Q(\zeta_p)$-rational $p$-torsion, see the introduction in \cite{katz81} and the exercise on page IV-6 in \cite{Serre68}. We would like to thank Ariel Weiss for pointing this out.  
\end{remark}
\subsubsection{The proof of Theorem \ref{thm:ordrlyellipticcurves}}

We now consider the case of two or more elliptic curves and prove Theorem \ref{thm:ordrlyellipticcurves}. Part (i) of the theorem follows from the following lemma: 

\begin{lemma} 
Let $\rho_1 , \rho_2 : G_{\mathbb{Q}} \rightarrow \mathrm{GL}_2(\mathbb{F}_p)$ be mod $p$ representations, each with determinant equal to the mod $p$ cyclotomic character. If the respective sets of orderly prime of order $1$ for $\rho_1$ and $\rho_2$ have positive density, then the set of orderly primes of order $1$ for $\rho_1 \oplus \rho_2$ also has positive density.
\end{lemma}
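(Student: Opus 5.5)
Reduce to a group-theoretic statement via Lemma \ref{lem:orderly-condition} and then use the unipotent-splitting trick from the proof of Lemma \ref{lem:prod}.

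Let $G\leq \mathrm{GL}_2(\mathbb{F}_p)\times\mathrm{GL}_2(\mathbb{F}_p)$ be the image of $\rho:=\rho_1\oplus\rho_2$, and let $G_i$ be the image of $\rho_i$. Since $\det\rho_1=\det\rho_2=\chi_{\mathrm{cyc},p}$, an element $\tau\in G_{\mathbb{Q}}$ lies in $\mathrm{Gal}(\overline{\mathbb{Q}}/\mathbb{Q}(\zeta_p))$ if and only if $\rho_1(\tau)\in\mathrm{SL}_2(\mathbb{F}_p)$, and this holds if and only if $\rho_2(\tau)\in\mathrm{SL}_2(\mathbb{F}_p)$. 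Set $H:=\rho(\mathrm{Gal}(\overline{\mathbb{Q}}/\mathbb{Q}(\zeta_p)))$ and $H_i:=\rho_i(\mathrm{Gal}(\overline{\mathbb{Q}}/\mathbb{Q}(\zeta_p)))=G_i\cap \mathrm{SL}_2(\mathbb{F}_p)$. Then $H\leq H_1\times H_2$, and $H$ surjects onto each $H_i$, because both are images of the same group $\mathrm{Gal}(\overline{\mathbb{Q}}/\mathbb{Q}(\zeta_p))$.

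By Chebotarev, exactly as in the proof of Lemma \ref{lem:orderly-condition}, the orderly primes of order $1$ for $\rho$ have positive density if and only if some $\tau\in\mathrm{Gal}(\overline{\mathbb{Q}}/\mathbb{Q}(\zeta_p))$ has neither $\rho_1(\tau)$ nor $\rho_2(\tau)$ with eigenvalue $1$. Since both lie in $\mathrm{SL}_2(\mathbb{F}_p)$, this means both are non-unipotent. So we need an element $(h_1,h_2)\in H$ with both coordinates non-unipotent.

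The hypothesis together with Lemma \ref{lem:orderly-condition} says that each $H_i$ contains a non-unipotent element. We have $H\leq H_1\times H_2$ with surjective projections, and $H_1,H_2$ are non-unipotent subgroups of $\mathrm{GL}_2(\mathbb{F}_p)$. This is precisely the setting of Lemma \ref{lem:prod}, which produces the required element. To recall the argument: pick $(g_1,g_2)\in H$ with $g_1$ non-unipotent. If $g_2$ is unipotent, raise the pair to a suitable $p$-power to kill $g_2$; $g_1$ stays non-unipotent, since its semisimple part has order prime to $p$. Do the same for the second coordinate, and multiply the two resulting elements. Then Chebotarev gives positive density.

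There is no serious obstacle. The only point needing care is checking that restricting to $\mathrm{Gal}(\overline{\mathbb{Q}}/\mathbb{Q}(\zeta_p))$ preserves surjectivity onto each factor. This is where the common determinant $\chi_{\mathrm{cyc},p}$ enters: it guarantees that the condition ``$\ell\equiv 1 \pmod p$'' is the same as landing in $\mathrm{SL}_2$ for both factors simultaneously.
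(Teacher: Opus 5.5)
Your proposal is correct and is essentially the paper's proof. Both arguments use Lemma \ref{lem:orderly-condition} to get non-unipotent elements in each $\rho_i(G_{\mathbb{Q}})\cap\mathrm{SL}_2(\mathbb{F}_p)$, then apply Lemma \ref{lem:prod} to the image of $\mathrm{Gal}(\overline{\mathbb{Q}}/\mathbb{Q}(\zeta_p))$, and finish with Chebotarev. The only difference is that you spell out, via the common determinant $\chi_{\mathrm{cyc},p}$, why that image surjects onto each $\rho_i(G_{\mathbb{Q}})\cap\mathrm{SL}_2(\mathbb{F}_p)$; the paper leaves this step implicit.
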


\begin{proof}
By Lemma \ref{lem:orderly-condition}, there are non-unipotent elements in $\rho_i(G_{\mathbb{Q}})\cap \mathrm{SL}_2(\mathbb{F}_p)$ for $i=1,2$. Hence Lemma \ref{lem:prod} implies that we can find $\tau \in G_{\mathbb{Q}}$ such that $\rho_i(\tau) \in \mathrm{SL}_2(\mathbb{F}_p)$, and $\rho_i(\tau)$ is not unipotent for each $i=1,2$. Therefore $(\rho_1 \oplus \rho_2)(\tau)$ does not have $1$ as an eigenvalue, and since $1 = \det \rho_1(\tau)= \chi_{\mathrm{cyc},p}(\tau)$, we also have $\tau \in \mathrm{Gal}(\overline{\mathbb{Q}} / \mathbb{Q}(\zeta_p))$. By the Chebotarev density theorem, it follows that the set of orderly primes for $\rho_1 \oplus \rho_2$ has positive density.
\end{proof}

\noindent
Using the lemma below, part (ii) and (iii) of Theorem \ref{thm:ordrlyellipticcurves} follow by the same argument as in the proof of Theorem \ref{thm:non-van}, by taking $c=3$ in the general case and $c=1$ in the semi-stable case.

\begin{lemma}
Let $E/ \mathbb{Q}$ be an elliptic curve, and $p$ a prime. Let $G \leq \mathrm{GL}_2(\mathbb{F}_p)$ denote the image of $\overline{\rho}_{E,p}$. If $p \geq 17$, and $(p, j_E ) \notin S$, then the proportion of unipotent elements in $G'$ is at most $3/(p-1)$. If $E$ is semi-stable, and $p \geq 11$, then this proportion is at most $1/(p-1)$.
\end{lemma}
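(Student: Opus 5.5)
We split according to whether $\overline{\rho}_{E,p}$ is surjective, and in each case bound the proportion of unipotent elements of $G'$ by a normaliser count, as in the proof of Lemma \ref{non-CM-case}.

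\emph{Surjective image.} If $G=\GL_2(\F_p)$ then $G'=\SL_2(\F_p)$ for $p\geq 5$. The non-identity unipotent elements are covered by the $p+1$ Sylow $p$-subgroups, each of order $p$, and these meet pairwise trivially. Hence the number of unipotent elements is $(p+1)(p-1)+1=p^2$, and the proportion is
\[
\frac{p^2}{p(p^2-1)}=\frac{p}{p^2-1}\leq \frac{1}{p-1}.
\]
This settles both claims whenever the image is surjective.

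\emph{Non-surjective image.} Here we invoke Zywina's classification \cite{Zywina2015}. For $p\geq 17$ with $(p,j_E)\notin S$, it says that $\overline{\rho}_{E,p}$ is irreducible, since the reducible cases for $p\geq 17$ are exactly the $j$-invariants listed in $S$ (via Mazur's isogeny theorem). It also says that $G$ is then either all of $\GL_2(\F_p)$ or contained in the normaliser of a Cartan subgroup. Exceptional images of type $A_4,S_4,A_5$ do not occur for $p\geq 17$, because $\det$ is surjective onto $\F_p^\times$; Zywina shows this explicitly.

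Suppose $G\subseteq N(C)$ with $C$ a split or non-split Cartan. Then $G'\subseteq N(C)'\subseteq C$, since $N(C)/C\cong \Z/2$ is abelian. Every unipotent element of $C$ is trivial, because $C$ has order prime to $p$. It therefore suffices to show $|G'|\geq (p-1)/3$.

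To get this bound, take $g\in G\setminus C$, which exists by irreducibility, and $h\in G\cap C$. The commutator is $[g,h]=h^{\sigma}h^{-1}$ up to inversion, where $\sigma$ is the involution of $C$. So $G'$ contains $\{h^{\sigma-1}: h\in G\cap C\}$. Zywina's results show that $G\cap C$ has index at most $3$ in $C$ when $\det$ is surjective and $p\geq 17$; this is where the constant $3$ comes from, via the index-3 Cartan images.

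In the non-split case $C\cong\F_{p^2}^\times$ and $h^{\sigma-1}=h^{p-1}$. This map has image the norm-one subgroup of order $p+1$. Restricting to a subgroup of index $\leq 3$ leaves an image of size at least $(p+1)/3$. In the split case the map is $(a,b)\mapsto(b/a,a/b)$, whose image has size $p-1$, and restricting to index $\leq 3$ leaves at least $(p-1)/3$. This gives the bound $3/(p-1)$.

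\emph{Semi-stable case.} By Serre's result \cite{Serre68}, if $E$ is semi-stable then $\overline{\rho}_{E,p}$ is either surjective or reducible for $p\geq 11$ (more precisely, irreducibility forces surjectivity). By Mazur, for $p\geq 11$ the reducible semi-stable case does not occur, since a $p$-isogeny with $p\geq 11$ would force $j_E$ into $S$ or $p=11$, and those curves are not semi-stable. So the image is surjective and the first computation gives $1/(p-1)$.

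The main obstacle is the middle step. We must extract from Zywina's tables the precise index bound $[C:G\cap C]\leq 3$, and carefully rule out the Borel and exceptional cases for $p\geq 17$ outside $S$.
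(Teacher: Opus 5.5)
Your argument follows the paper's structure. You use the count $p^2/(p(p^2-1))<1/(p-1)$ for $\SL_2(\F_p)$, Zywina's classification \cite{Zywina2015} to put $G$ inside a Cartan normaliser $N(C)$ when $G\neq\GL_2(\F_p)$, the inclusion $G'\subseteq C$ with $p\nmid|C|$, and Mazur for the semi-stable case. Your bound $|G'|\geq |C^{\sigma-1}|/[C:G\cap C]$ is a uniform version of the paper's explicit computation of $N_s'$, $N_{ns}'$, $L_s'$, $L_{ns}'$. Its input $[C:G\cap C]\le 3$ is read off the lists in Zywina's Propositions 1.13, 1.14 and 1.16, exactly as the paper does.

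Several of the reasons you attach to cited facts are wrong, though, and should be replaced by the citations themselves.

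\textbf{The case $p=13$ in the semi-stable argument.} You claim that a rational $p$-isogeny with $p\geq 11$ forces $j_E\in S$ or $p=11$. This fails for $p=13$: $X_0(13)$ has genus $0$, so infinitely many $j$-invariants carry a rational $13$-isogeny. The conclusion you need is still true, but it does not follow from the list of isogeny $j$-invariants. There are two ways to fix it:
\begin{enumerate}
\item Cite \cite[Theorem 4]{mazur78} directly. It gives surjectivity for all $p\geq 11$ when $E$ is semi-stable, and makes the detour through Serre unnecessary.
\item Argue that for semi-stable $E$ the isogeny character is $1$ or $\chi_{\mathrm{cyc},p}$. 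Then $E$ or an isogenous curve has a rational point of order $p$, contradicting Mazur's torsion theorem.
\end{enumerate}

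\textbf{Surjectivity of $\det$.} It neither excludes exceptional images nor bounds $[C:G\cap C]$. It does rule out projective images $A_4$ and $A_5$, since neither has a quotient of order $2$. It does not rule out $S_4$, which actually occurs for $p=13$. Likewise, for $p=17$ an index-$9$ subgroup of $C_{ns}\cong\F_{289}^\times$ together with a reflection already has surjective determinant. Both facts come from the analysis of inertia at $p$ built into Zywina's propositions, so cite those rather than $\det$.

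\textbf{Existence of $g\in G\setminus C$.} For non-split $C$, irreducibility over $\F_p$ alone does not give this, since abelian subgroups of $C_{ns}$ act irreducibly. Use instead that $\overline{\rho}_{E,p}(c)$ is a non-scalar involution, which cannot lie in $C_{ns}$. Alternatively, read it off Zywina's list.
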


\begin{proof}
We first introduce some notation. Any Cartan subgroup of $\mathrm{GL}_2(\mathbb{F}_p)$ is conjugate over $\mathrm{GL}_2(\mathbb{F}_{p^2})$ to either
\begin{equation*}
C_s := \left\{
  \begin{pmatrix}
a&0\\
0&b \\
\end{pmatrix}\,:\, a,b \in \mathbb{F}_p^\times \right\} \quad \textrm{or} \quad C_{ns}:= \left\{
\begin{pmatrix}
\alpha&0\\
0&\overline{\alpha} \\
\end{pmatrix}\,:\, \alpha \in \mathbb{F}_{p^2}^\times  \right\}
\end{equation*}
where $\overline{\alpha}$ denotes the non-trivial Galois conjugate of an element $\alpha \in \mathbb{F}_{p^2}$. A Cartan subgroup is conjugate to $C_s$ if is split and conjugate to $C_{ns}$ if it is non-split. The normaliser of a Cartan subgroup in $\mathrm{GL}_2(\mathbb{F}_p)$ is conjugate over $\mathrm{GL}_2(\mathbb{F}_{p^2})$ to either
\begin{equation*}
N_{s}:= \left\langle C_s ,
  \begin{pmatrix}
0&1\\
1&0 \\
\end{pmatrix} \right\rangle \quad \textrm{or} \quad N_{ns}:= \left\langle C_{ns},
\begin{pmatrix}
0&1\\
1&0 \\
\end{pmatrix} \right\rangle
\end{equation*}
according to it being split or non-split. We also encounter the following groups:
\begin{equation*}
L_s := \left\{
  \begin{pmatrix}
a&0\\
0&b \\
\end{pmatrix},\,
\begin{pmatrix}
0&a\\
b&0 \\
\end{pmatrix}\,:\, a,b \in \mathbb{F}_p^\times,\, a/b \in (\mathbb{F}_p^\times )^3 \right\} \leq  N_s
\end{equation*}
and 
\begin{equation*}
  L_{ns}:= \left\{g^3,\,
\begin{pmatrix}
0&1\\
1&0 \\
\end{pmatrix}g^3 \,:\, g \in C_{ns} \right\} \leq N_{ns}
\end{equation*}
Because of the relation
\begin{equation*}
\begin{pmatrix}
x&0\\
0&y \\
\end{pmatrix}
\begin{pmatrix}
0&1\\
1&0 \\
\end{pmatrix}
\begin{pmatrix}
x&0\\
0&y \\
\end{pmatrix}^{-1}
\begin{pmatrix}
0&1\\
1&0 \\
\end{pmatrix}^{-1} =
\begin{pmatrix}
y/x&0\\
0&x/y \\
\end{pmatrix}
\end{equation*}
we see that the derived subgroups of $N_s$ and $N_{ns}$ are
\begin{equation*}
N_s ' = \left\{
  \begin{pmatrix}
a&0\\
0&a^{-1} \\
\end{pmatrix}\,:\, a \in \mathbb{F}_p^\times  \right\} \quad \textrm{and}\quad N_{ns} ' = \left\{
\begin{pmatrix}
\alpha&0\\
0&\overline{\alpha} \\
\end{pmatrix}\,:\, \alpha \in \mathbb{F}_{p^2}^\times ,\,\alpha \overline{\alpha}=1 \right\},
\end{equation*}
and that the derived subgroups of $L_{s}$ and $L_{ns}$ are
\begin{equation*}
L_s ' = \left\{
  \begin{pmatrix}
a^3&0\\
0&a^{-3} \\
\end{pmatrix}\,:\, a \in \mathbb{F}_p^\times \right\} \quad \textrm{and} \quad L_{ns}' = \left\{
\begin{pmatrix}
\alpha^3&0\\
0&\alpha^{-3} \\
\end{pmatrix}\,:\, \alpha \in \mathbb{F}_{p^2},\, \alpha \overline{\alpha}=1 \right\}.
\end{equation*}
We now prove the first part of the lemma. If $E$ does not have complex multiplication, $p \geq 17$, and $(p, j_E) \notin S$, it follows by \cite[Proposition 1.13]{Zywina2015} that the image $G$ of $\rho_{E, p}$ is either all of $\mathrm{GL}_2(\mathbb{F}_p)$ or conjugate to $N_{ns}$ or $L_{ns}$. Hence the $G'$ is either $\mathrm{SL}_2(\mathbb{F}_p)$, $N_{ns}'$ or $L_{ns}'$. It is not hard to see that the proportion of unipotent elements in $\mathrm{SL}_2(\mathbb{F}_p)$ is
\begin{equation*}
\frac{p^2}{p(p-1)(p+1)} < \frac{1}{p-1}.
\end{equation*}
The groups $N_{ns}'$ and $L_{ns}'$ only consist of diagonal matrices so the identity matrix is the only unipotent element. Since $N_{ns}'$ is isomorphic to the kernel of the norm map from $\mathbb{F}_{p^2}$ to $\mathbb{F}_p$, one sees that $N_{ns}'$ is cyclic of order $p+1$. $ L_{ns}'$ is the subgroup of cubes in $N_{ns}'$ so it has index $1$ or $3$ in $N_{ns}'$. We conclude that the proportion of unipotent elements in $G'$ is at most $3/(p+1)$.\\

\noindent
If $E$ has complex multiplication, there will be two subcases depending on whether $j_E\neq 0$ or $j_E=0$. If $j_{E}\neq 0$, $p \geq 17$, and $(p,j_E)\notin S$, it follows by \cite[Proposition 1.14]{Zywina2015} that $G$ is conjugate to $N_s$ or $N_{ns}$ so $G'$ is conjugate to $N_s '$ or $N_{ns}'$. If $G$ is conjugate to $N_{ns}$, we have already seen that the proportion of unipotent elements in $G'$ is $1/(p+1)$. If $G$ is conjugate to $N_s$, $G'$ has order $p-1$ and is conjugate to a group consisting of diagonal matrices, so the proportion of unipotent elements in $G'$ is $1/(p-1)$.

If $j_{E}=0$, and $p \geq 17$, it follows by \cite[Proposition 1.16]{Zywina2015}, that $G$ is conjugate to $N_{s}$, $N_{ns}$, $L_{s}$ or $L_{ns}$. The only case that we have not already considered is when $G$ is conjugate to $L_{s}$. The group $L_s '$ only consist of diagonal matrices and is isomorphic to the group of cubes in $\mathbb{F}_p^\times$ so the proportion of unipotent elements in $G'$ is either $1/(p-1)$ or $3/(p-1)$.\\

\noindent
The last part of the lemma follows from a theorem of Mazur \cite[Theorem 4]{mazur78} which states that $\overline{\rho}_{E,p}$ is surjective for all $p \geq 11$ if $E$ is semi-stable. Under these conditions, the derived subgroup of $G$ is $\mathrm{SL}_2(\mathbb{F}_p)$, and we have already seen that the proportion of unipotent elements in this group is at most $1/(p-1)$. 
\end{proof}

\subsection{Orderly primes in higher rank}
\label{sec:gln}
When $n \geq 3$, it is no longer true that absolute irreducibility of $\rho: G_{\mathbb{Q}}\rightarrow \mathrm{GL}_n(\mathbb{F}_q)$ implies the existence of infinitely many orderly primes for $\rho$. For example, if $n\geq 3$ is odd, the group $\mathrm{SO}_n(\mathbb{F}_q)$ acts (absolutely) irreducibly on $\mathbb{F}_q^n$ when $q$ is odd, but every element of $\mathrm{SO}_n(\mathbb{F}_q)$ has $1$ as an eigenvalue. When we impose additional assumptions on the image of $\rho$, we can still find infinitely many orderly primes for $\rho$. 

\begin{proposition}
Let $n \geq 2$, and suppose $\rho: G_{\mathbb{Q}} \rightarrow \mathrm{GL}_n(\mathbb{F}_q)$ is a Galois representation with image $G$. If $G \supset \mathrm{SL}_n(\mathbb{F}_q)$, then the set of orderly primes of order $m$ for $\rho$ has positive density for all $m \geq 1$.
\end{proposition}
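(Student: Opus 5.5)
We reduce everything to Proposition \ref{first_criteria}: it suffices to exhibit an element of the derived subgroup $G'$ that does not have $1$ as an eigenvalue. Since $\mathrm{SL}_n(\mathbb{F}_q)\subseteq G$, we have $\mathrm{SL}_n(\mathbb{F}_q)'\subseteq G'$. Apart from the two exceptional cases $(n,q)=(2,2),(2,3)$, the group $\mathrm{SL}_n(\mathbb{F}_q)$ is perfect, so $G'\supseteq \mathrm{SL}_n(\mathbb{F}_q)$. It then remains to find a matrix of determinant $1$ without eigenvalue $1$.

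For the construction we take the companion matrix of a suitable monic polynomial $P(X)=X^n+a_{n-1}X^{n-1}+\cdots+a_0\in\mathbb{F}_q[X]$. Its determinant is $(-1)^n a_0$, so we want $a_0=(-1)^n$, which makes the determinant $1$. We also want $P(1)\neq 0$, so that $1$ is not an eigenvalue. The coefficients $a_1,\dots,a_{n-1}$ are free, and $n\geq 2$ gives at least one of them. Since $P(1)=1+a_{n-1}+\cdots+a_1+(-1)^n$ is a nonconstant affine function of $a_1$, we may choose $a_1$ with $P(1)\neq0$. (Alternatively, one could use the element $\mathrm{diag}(a,a^{-1},1,\dots)$, but that fixes a vector when $n\ge3$, so the companion matrix is the cleaner choice.) Note that this step needs no perfectness beyond $G'\supseteq\mathrm{SL}_n$.

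The main obstacle is the exceptional cases $n=2$, $q\in\{2,3\}$, where $\mathrm{SL}_2(\mathbb{F}_q)$ is not perfect; I would handle them directly. For $q=2$, $\mathrm{SL}_2(\mathbb{F}_2)=\mathrm{GL}_2(\mathbb{F}_2)\cong S_3$ has derived subgroup $A_3=\langle\begin{psmallmatrix}0&1\\1&1\end{psmallmatrix}\rangle$. Its nontrivial elements have characteristic polynomial $X^2+X+1$, which has no root $1$, so these elements lie in $G'$ and Proposition \ref{first_criteria} applies. For $q=3$, the derived subgroup of $\mathrm{SL}_2(\mathbb{F}_3)$ is the quaternion group $Q_8$, which contains $-I$, and $-I$ has only the eigenvalue $-1\neq 1$. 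A simpler uniform alternative, avoiding perfectness altogether, is to note that $G\supseteq\mathrm{SL}_n(\mathbb{F}_q)$ is itself irreducible, so for $n=2$ Lemma \ref{unipotent} together with the argument of the corollary following it already applies; I would mention this as a backup for $n=2$. Either way, Proposition \ref{first_criteria} yields positive density of orderly primes of every order $m\geq1$.
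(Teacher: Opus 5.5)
Your proof is correct and is essentially the paper's argument: you reduce to Proposition~\ref{first_criteria}, use perfectness of $\mathrm{SL}_n(\mathbb{F}_q)$ away from $(n,q)=(2,2),(2,3)$, and handle those two cases through $A_3$ and $Q_8$, just as the paper does. The one difference is cosmetic: you take the element of $\mathrm{SL}_n(\mathbb{F}_q)$ without eigenvalue $1$ to be a companion matrix with $a_0=(-1)^n$ and $P(1)\neq 0$, whereas the paper writes $n=2a+3b$ and uses block-diagonal $2\times 2$ and $3\times 3$ blocks (and in your backup for $n=2$, Lemma~\ref{unipotent} needs absolute irreducibility, which does hold for any group containing $\mathrm{SL}_2(\mathbb{F}_q)$).
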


\begin{proof}
With the exceptions of $(n,q)=(2,2),(2,3)$, the group $\mathrm{SL}_n(\mathbb{F}_q)$ is perfect \cite{special}. Suppose we are not in the cases $(n,q)=(2,2),(2,3)$. Then $G'= \mathrm{SL}_n(\mathbb{F}_q)$, and we can certainly find an element of $\mathrm{SL}_n(\mathbb{F}_q)$ not having $1$ as an eigenvalue. Indeed, write $n=2a+3b$ where $a,b \geq 0$ are integers, and let $g \in \mathrm{SL}_n(\mathbb{F}_q)$ be the block diagonal matrix where $a$ blocks are equal to
\begin{equation*}
\begin{pmatrix}
0&1\\
-1&1 \\
\end{pmatrix},
\end{equation*}
and $b$ blocks are equal to
\begin{equation*}
\begin{pmatrix}
0&1&0\\
0&0&1\\
1&0&1  
\end{pmatrix}.
\end{equation*}
These matrices have determinant $1$ and do not have $1$ as an eigenvalue so the same is true for $g$. We conclude using Proposition \ref{first_criteria}.

We now consider the special cases $(n,q)=(2,2),(2,3)$. By Proposition \ref{first_criteria}, it is enough to show that $\mathrm{SL}_2(\mathbb{F}_q)'$ contians a non-unipotent element, or equivalently that it contains an element whose order is not a power of $p$. Have $\mathrm{SL}_2(\mathbb{F}_2)\cong S_3$ so $S_3 ' \cong \mathbb{Z} / 3 \mathbb{Z}$, and $G'$ contains an element of order $3$. The derived subgroup of $\mathrm{SL}_2(\mathbb{F}_3)$ is isomorphic to the quaternion group $Q_8$ so $G'$ contains an element of order $4$.  
\end{proof}

In the case of even rank, we can get away with a slightly weaker assumption on $\rho$.

\begin{proposition}
Let $n \geq 2$, and suppose $\rho: G_{\mathbb{Q}}\rightarrow\mathrm{GL}_{2n}(\mathbb{F}_q)$ is a Galois representation with image $G$. If $G \supset \mathrm{Sp}_{2n}(\mathbb{F}_q)$, then the set of orderly primes of order $m$ has positive density for all $m \geq 1$.
\end{proposition}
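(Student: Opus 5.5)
The plan is to mimic the proof of the preceding proposition for $\mathrm{SL}_n$, using Proposition \ref{first_criteria}. It suffices to show two things: that the derived subgroup $G'$ contains $\mathrm{Sp}_{2n}(\mathbb{F}_q)'$, and that the latter contains an element without eigenvalue $1$.

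\emph{First step.} Since $\mathrm{Sp}_{2n}(\mathbb{F}_q)\subset G$, we have $\mathrm{Sp}_{2n}(\mathbb{F}_q)'\subset G'$. For $n\geq 2$, the group $\mathrm{Sp}_{2n}(\mathbb{F}_q)$ is perfect except when $(n,q)=(2,2)$; this is the classical perfectness result for symplectic groups, cited as \cite{special}. Outside this case, $G'\supset \mathrm{Sp}_{2n}(\mathbb{F}_q)$, and it remains to exhibit an element of $\mathrm{Sp}_{2n}(\mathbb{F}_q)$ that does not have $1$ as an eigenvalue.

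\emph{Second step: construct the element.} Take the symplectic form with Gram matrix $\begin{psmallmatrix}0&I_n\\-I_n&0\end{psmallmatrix}$. For any $A\in\mathrm{GL}_n(\mathbb{F}_q)$, the block matrix $\mathrm{diag}(A,(A^{T})^{-1})$ is symplectic. So it suffices to find $A\in \mathrm{GL}_n(\mathbb{F}_q)$ with no eigenvalue $1$; then $(A^T)^{-1}$ has no eigenvalue $1$ either.
\begin{itemize}
\item If $q\geq 3$, take $A=aI_n$ with $a\neq 0,1$.
\item If $q=2$, take $A$ to be the block diagonal matrix with blocks $\begin{psmallmatrix}0&1\\1&1\end{psmallmatrix}$ and $\begin{psmallmatrix}0&1&0\\0&0&1\\1&0&1\end{psmallmatrix}$, writing $n=2a+3b$ as before. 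The characteristic polynomials $x^2+x+1$ and $x^3+x^2+1$ have no root $1$ over $\mathbb{F}_2$.
\end{itemize}
An alternative is to note that $\mathrm{Sp}_2=\mathrm{SL}_2$ and use a block-diagonal embedding $\mathrm{SL}_2^n\subset \mathrm{Sp}_{2n}$ with the non-unipotent $\mathrm{SL}_2$ elements from the previous proof. Either way, Proposition \ref{first_criteria} then finishes the proof.

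\emph{Exceptional case $(n,q)=(2,2)$.} Here $\mathrm{Sp}_4(\mathbb{F}_2)\cong S_6$ and its derived subgroup is $A_6$, so we need an element of $A_6\subset\mathrm{Sp}_4(\mathbb{F}_2)$ without eigenvalue $1$. The element $\mathrm{diag}(A,(A^T)^{-1})$ with $A=\begin{psmallmatrix}0&1\\1&1\end{psmallmatrix}$ has order $3$ and so lies in $A_6$, because every element of odd order in $S_6$ is even. Its eigenvalues are primitive cube roots of unity, not $1$, so Proposition \ref{first_criteria} applies again.

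\emph{Expected obstacle.} The main point requiring care is the perfectness input and the correct list of exceptions, where $\mathrm{Sp}_4(\mathbb{F}_2)$ is the only one for $n\geq 2$. The odd-order trick above is designed to sidestep this: any element of $G$ of order coprime to $|G/G'|$ lies in $G'$. Beyond that, the argument is routine.
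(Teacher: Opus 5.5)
Your proposal is correct and follows essentially the same route as the paper. Outside $(n,q)=(2,2)$ you use perfectness of $\mathrm{Sp}_{2n}(\mathbb{F}_q)$ and Proposition~\ref{first_criteria} with the symplectic element $\mathrm{diag}(h,(h^{T})^{-1})$, where $h\in\mathrm{GL}_n(\mathbb{F}_q)$ has no eigenvalue $1$; in the exceptional case, the choice $h=\begin{psmallmatrix}0&1\\1&1\end{psmallmatrix}$ gives an element of order $3$, which therefore lies in $\mathrm{Sp}_4(\mathbb{F}_2)'\cong A_6$. Your deviations — taking $h=aI_n$ when $q\geq 3$, and the alternative block embedding of $\mathrm{SL}_2^n$ — are harmless variants of the same construction.
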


\begin{proof}
Unless $(n,q)=(2,2)$, $\mathrm{Sp}_{2n}(\mathbb{F}_q)$ is perfect \cite{symplectic}. Assume we are not in the case $(n,q)=(2,2)$. Then $G'\supset \mathrm{Sp}_{2n}(\mathbb{F}_q)$, and it is enough to write down an element of $\mathrm{Sp}_{2n}(\mathbb{F}_q)$ which does not have $1$ as an eigenvalue. Choose an element $h \in \mathrm{GL}_n(\mathbb{F}_q)$ not having $1$ as an eigenvalue (this can for example be achieved by using the matrices in the proof of the previous proposition). Then the block matrix $g = \left(
\begin{smallmatrix}
h&0\\
0&h^{-t} \\
\end{smallmatrix}
\right) \in \mathrm{Sp}_{2n}(\mathbb{F}_q)$ has the desired property (here $h^{-t}$ is the inverse of the transpose of $h$).

In the special case $(n,q)=(2,2)$, there is an exceptional isomorphism $\mathrm{Sp}_{4}(\mathbb{F}_2) \cong S_6 $, and hence $\mathrm{Sp}_4(\mathbb{F}_2 )' \cong A_6$. Let $h = \left(
\begin{smallmatrix}
0&1\\
1&1 \\
\end{smallmatrix}
\right)$, and $g = \left(
\begin{smallmatrix}
h&0\\
0&h^{-t} \\
\end{smallmatrix}
\right) \in \mathrm{Sp}_4(\mathbb{F}_2)$. Since $h$ has order $3$, $g$ also has order $3$. Hence $g$ must lie in $\mathrm{Sp}_4(\mathbb{F}_2) '$ since no element in $S_6 \setminus A_6$ has order $3$. Since $h$ does not have $1$ as an eigenvalue, the same is true for $g$. By Proposition \ref{first_criteria}, the proof is complete.
\end{proof}

\section{Character zeroes of $p$-adic measures}
In this section we will prove some variations and refinements of the results obtained in \cite[Section 2]{KrizNordentoft23} regarding the character zeroes of horizontal $p$-adic measures. In the next section we will argue that there exist interesting such horizontal $p$-adic measures, horizontal $p$-adic $L$-functions, interpolating the family of $L$-values that we care about. In Section \ref{sec:prop} we will use all of these ideas to obtain a new propagation of non-vanishing result ensuring non-vanishing for \emph{all} Galois characters of certain abelian extensions of $\Q$.
\subsection{Horizontal $p$-adic measures} Fix throughout a prime number $p$. We will now recall the setting. We say that an abelian profinite $(G,+)$ is \emph{horizontal pro-$p$} if it is isomorphic to an infinite product of finite cyclic $p$-groups, i.e.\ 
$$G\cong \prod_{n\in \N} \Z/p^{m_n},$$ 
with $(m_n)_{n\in \N}\subset \Z_{\geq 1}$ where the product is viewed as a profinite group via the product topology. We define the \emph{exponent of $G$} as 
\begin{equation}
    e(G):= \sup_{g\in G} \log_p(|g|), 
\end{equation}
where $|g|:=|\langle g\rangle|\in \N\cup\{\infty\}$ denotes the order of an element $g\in G$ and $\log_p$ denotes the base-$p$ logarithm. Since $G$ is pro-$p$ the power of any element is indeed either a power of $p$ or $\infty$ so that $e(G)\in \N\cup \{\infty\}$. We say that $G$ has \emph{bounded exponent} if $e(G)<\infty$. For $G$  a horizontal pro-$p$ abelian group this means exactly that the sequence $(m_n)_{n\in \N}$ is bounded in which case $e(G)=\max_{n\in \N} m_n$. 

Let $\C_p$ denote the completion of an algebraic closure of $\Q_p$, let $|\!\cdot\!|_p: \C_p\rightarrow \R_{\geq 0}$ denote the $p$-adic norm normalized so that $|p|_p=p^{-1}$ and let $\Oo_{\C_p}=\{x\in \C_p\mid |x|_p\leq 1\}$ denote the valuation ring.  Let $R\subset \Oo_{\C_p}$ be a $p$-adically complete subring and define the \emph{Iwasawa algebra} associated with $G$ as:
$$ R\llbracket G\rrbracket:=\varprojlim_{H\leq G\text{ open}} R[G/H].$$
One can identify the Iwasawa algebra of $G$ with integral $p$-adic measures on $G$, in the sense that there is an $R$-algebra isomorphism:
\begin{equation}\label{eq:measure}R\llbracket G\rrbracket\cong \mathrm{Hom}_\mathrm{cts}(\mathcal{C}(G,R),R),\end{equation}
where the product structure on the right hand side is given by convolution. Here $\mathcal{C}(G,R)$ denotes the $R$-module of continuous function from $G$ to $R$ equipped with the sup-norm topology, see \cite[Section 2.1]{JacintoWilliams25} for details. Via the inclusion $R\llbracket G\rrbracket\subset \Oo_{\C_p}\llbracket G\rrbracket $ we get an $R$-bilinear pairing 
$$ R\llbracket G\rrbracket \times \mathcal{C}(G,\Oo_{\C_p})\rightarrow \Oo_{\C_p}, \quad (\nu,\varphi)\mapsto \nu(\varphi).$$
In particular, we obtain the Fourier transform of a measure $\nu\in R\llbracket G\rrbracket$: 
$$ \hat{\nu}:\widehat{G}\rightarrow \Oo_{\C_p},\quad \chi\mapsto\nu(\chi), $$
where $\widehat{G}$ denotes the group of continuous $\mathcal{O}_{\C_p}^\times$-valued characters of $G$. We will denote the trivial character by 
$$\1: G\rightarrow \Oo_{\C_p}^\times,\quad g\mapsto 1. $$
The principal goal in the theory developed in \cite[Section 2]{KrizNordentoft23} is to understand (interpreted in an appropriate sense) the image of $\hat{\nu}$. An example is to understand the set of \emph{character zeroes}, i.e.\ $\chi\in \widehat{G}$ such that $\nu(\chi)=0$. But we can also aim to understand more delicate properties of the structure of the image. 

With this in mind, denote the \emph{sup-norm of $\hat{\nu}$} by
\begin{equation}
    \lVert \hat{\nu} \rVert_p:=\sup_{\chi\in \widehat{G}} |\nu(\chi)|_p.
\end{equation}
Note that if $R$ has discrete $p$-adic valuation and $G$ has bounded exponent then the supremum is attained. We end this section by stating a key result from \cite[Section 2]{KrizNordentoft23}, which shows that the maximum is attained for ``a positive proportion'' of characters.

\begin{theorem}[Theorem 2.11 in \cite{KrizNordentoft23}] \label{thm:structure}Let $(G,+)$ be a horizontal pro-$p$ abelian group of bounded exponent and let $R\subset \Oo_{\C_p}$ be a complete subring with discrete $p$-adic valuation.
    Let $\nu \in R\llbracket G \rrbracket$ be a non-zero $p$-adic measure.  Then there exists a finite subgroup $M_\nu\leq \widehat{G}$ such that the following holds: for any non-trivial finite subgroup $M\leq \widehat{G}$ there exists $\chi\in M\setminus M^p$ and $\chi_0\in M_\nu$ such that 
    \begin{equation}
        |\nu(\chi\chi_0)|_p=\lVert \hat{\nu}\rVert_p. 
    \end{equation}
\end{theorem}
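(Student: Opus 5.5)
The plan is to reduce to a finite quotient and then use a Fourier-analytic argument on the finite group $G/H$. We may rescale $\nu$ by an element of $R$ so that $\lVert\hat\nu\rVert_p=1$; this is possible because $R$ has discrete valuation and the supremum is attained. Concretely, since $G$ has bounded exponent $e$, every character takes values in $\mu_{p^e}$, so $\nu(\chi)$ lies in $R[\mu_{p^e}]$, which is again discretely valued. Dividing by a value of maximal norm puts us in the normalized situation, possibly after enlarging $R$ to $R'=R[\mu_{p^e}]$. Write $\bar\nu$ for the reduction of $\nu$ in $k\llbracket G\rrbracket$, where $k$ is the residue field of $R'$. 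The condition $|\nu(\chi)|_p=1$ is then equivalent to $\bar\nu(\bar\chi)\neq0$. Since every $\chi$ has $p$-power order, $\bar\chi$ is trivial and $\bar\nu(\bar\chi)=\bar\nu(\mathbf 1)$. This is too crude to work with, so instead we work modulo $\pi=\zeta_p-1$ filtration steps, following the approach of \cite{KrizNordentoft23}.

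The key steps run as follows.

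\textbf{Step 1.} Choose an open subgroup $H$ such that the image of $\nu$ in $R'[G/H]$ already witnesses the norm. We then take $M_\nu$ to be the character group of a suitable finite quotient $G/H_0$ that captures the "support" of $\nu$ modulo the maximal ideal.

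\textbf{Step 2.} Given a finite non-trivial $M\le\widehat G$, set $K=\bigcap_{\chi\in M}\ker\chi$. We may enlarge $H_0$ so that $M_\nu$ and $M$ interact through the finite group $G/(H_0\cap K)$.

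\textbf{Step 3.} Argue by contradiction. Suppose $|\nu(\chi\chi_0)|_p<1$ for all $\chi\in M\setminus M^p$ and all $\chi_0\in M_\nu$. Using the identity $\sum_{\chi\in M}\chi=|M|\cdot\mathbf 1_K$ and the inclusion–exclusion structure $M=\bigcup(M\setminus M^p)\cdot M^p$, express the pushforward of $\nu$ restricted to cosets of $K$ in terms of $\sum_{\chi\in M\setminus M^p}\chi\chi_0$.

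\textbf{Step 4.} Note that $M^p$ is a proper subgroup of $M$, since $M$ is a non-trivial $p$-group. Hence $\mathbf 1_{M\setminus M^p}$ has Fourier transform supported on cosets with a controlled non-unit factor $|M|-|M^p|$ or similar. Combined with summing over $\chi_0\in M_\nu$, this should force all of $\bar\nu$ to vanish, contradicting the normalization.

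The main obstacle is Step 3–4: choosing $M_\nu$ so that twisting by it detects the non-vanishing of $\bar\nu$. I would first try to take $M_\nu$ to be the dual of $G/H_0$, where $H_0$ is minimal such that $\bar\nu$ is not annihilated by the augmentation ideal of $H_0$. More concretely, pick the largest $r$ with $\bar\nu\in I_G^r$ (which is finite because $G$ has bounded exponent and $k\llbracket G\rrbracket$ is local with nilpotent augmentation ideal on finite quotients), and use the leading term of $\bar\nu$ in $I^r/I^{r+1}$. This leading term is a homogeneous polynomial in the variables $[g]-1$. The evaluation $\nu(\chi)\bmod \pi^{r+1}$ then becomes a polynomial in the "logarithms" of $\chi$, and the claim reduces to showing that a non-zero polynomial over $k$ does not vanish on all of $(M\setminus M^p)\cdot M_\nu$. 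This is a Chevalley–Warning / Combinatorial Nullstellensatz type statement, which I expect to be the delicate point.
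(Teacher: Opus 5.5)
There is a genuine gap. It sits exactly at the step you flag as delicate, and the preliminary normalization is already invalid.

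You cannot rescale $\nu$ to $\lVert\hat\nu\rVert_p=1$ inside $R'\llbracket G\rrbracket$, because the coefficients of an integral measure can be much larger than its Fourier values. Let $g$ generate one $\Z/p$-factor of $G$ and put $N:=\sum_{i=0}^{p-1}[ig]$. Then $N(\chi)=p$ when $\chi(g)=1$ and $N(\chi)=0$ otherwise, so $\lVert\hat N\rVert_p=|p|_p$, but $N/p$ is not integral. This is why your normalized picture looked trivial: for an integral measure with $\lVert\hat\nu\rVert_p=1$, every character attains the maximum, since $|\nu(\chi)-\nu(\chi')|_p\le\lVert\chi-\chi'\rVert_\infty<1$.

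The same example defeats the filtration substitute. Here $\bar N=([g]-1)^{p-1}\neq0$ has $I$-adic order $r=p-1$. Its leading form ($y^{p-1}$ in your logarithmic coordinate) is non-zero exactly at the characters with $\chi(g)\neq1$, where $N(\chi)=0$. Meanwhile $\lVert\hat N\rVert_p$ is attained exactly where the leading form vanishes. The cause is that $\varpi R'\llbracket G\rrbracket$ is not negligible modulo $\pi^{r+1}$: at non-trivial characters $(\zeta_p-1)^{p-1}\equiv-p$. So the $p$-adic and $I$-adic filtrations mix, and the leading term of $\bar\nu$ does not detect where $|\nu(\chi)|_p$ is maximal. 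Hence the claimed reduction to ``a non-zero polynomial over $k$ does not vanish on $(M\setminus M^p)M_\nu$'' is false as formulated. In addition, $M_\nu$ is never actually defined: a ``minimal'' $H_0$ need not exist, and elements such as $\sum_n([g_n]-1)\in R\llbracket G\rrbracket$ (with $g_n$ generating the $n$-th factor) have leading forms involving infinitely many coordinates. Steps 3--4 are also not an argument as written.

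Your nullstellensatz instinct does work once it is applied to the right function and the right monomial.
\begin{itemize}
\item Fix $\chi^\ast$ with $|\nu(\chi^\ast)|_p=\lVert\hat\nu\rVert_p$ and put $f(\chi):=\nu(\chi)/\nu(\chi^\ast)\bmod\mathfrak m_{\C_p}\in\overline{\F}_p$. Then $f(\chi)\neq0$ exactly when $\chi$ attains the norm.
\item With $\Delta_\psi F(\chi):=F(\chi\psi)-F(\chi)$, one has $\Delta_{\psi_1}\cdots\Delta_{\psi_D}\hat\nu(\chi)=\nu\bigl(\chi\prod_i(\psi_i-1)\bigr)$. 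This has absolute value at most $|\zeta_{p^e}-1|_p^D$, where $e=e(G)$.
\item So for $D$ large, $f$ is a polynomial map of degree $<D$ on $\widehat G\cong\bigoplus_n\Z/p^{m_n}$. That is, $f=\sum_\alpha c_\alpha\prod_n\binom{y_n}{\alpha_n}$ with $|\alpha|<D$ and $\alpha_n<p^{m_n}$, possibly with infinitely many terms.
\item Choose $c_\alpha\neq0$ with $|\alpha|$ \emph{maximal}, and let $M_\nu$ be the characters supported on the finite set $\mathrm{supp}(\alpha)$.
\item By Vandermonde's identity (valid on $\Z/p^{m}$ by Lucas periodicity), for every $\chi$ the function $\chi_0\mapsto f(\chi\chi_0)$ on $M_\nu$ has coefficient exactly $c_\alpha$ on $\prod_n\binom{z_n}{\alpha_n}$, where $z$ denotes the coordinates of $\chi_0$. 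These binomial products form a basis of functions on $M_\nu$, so this function is not identically zero.
\end{itemize}
This proves the theorem, even for every $\chi$ rather than just some $\chi\in M\setminus M^p$. Taking the top-degree term is essential: for $N$ one gets $f=1-y^{p-1}$ (with $\chi(g)=\zeta_p^{y}$), whose lowest-order term is the constant $1$.
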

\subsection{Non-vanishing orbits of characters}
For applications to Diophantine rank stability we will need a result for positive proportion non-vanishing for full orbits of characters. The result follows from Theorem \ref{thm:structure} after some purely measure theoretic constructions.
\begin{theorem}\label{thm:structurenew}
    Let $(G,+)$ be a horizontal pro-$p$ abelian group of bounded exponent and let $R\subset \Oo_{\C_p}$ be a complete subring with discrete $p$-adic valuation.
    Let $\nu \in R\llbracket G \rrbracket$ be a $p$-adic measure satisfying $\nu(\1)\neq 0$.  Then there exists a finite subgroup of characters $M_\nu\leq \widehat{G}$ such that the following holds: for any character $\chi\in \widehat{G}$ there exists  $\chi_0\in M_\nu$ such that for all $n\in \Z$ 
    \begin{equation}
        \nu((\chi\chi_0)^n)\neq 0 . 
    \end{equation}
\end{theorem}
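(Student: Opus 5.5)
The idea is to build from $\nu$ an auxiliary measure, a ``norm'' of $\nu$, whose value at a single character $\chi$ is the product of the values of $\nu$ at all powers of $\chi$. We then apply Theorem \ref{thm:structure} to this auxiliary measure and transport the conclusion back to $\nu$.

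Let $e=e(G)<\infty$. For $n\in\Z$, multiplication by $n$ is a continuous endomorphism $[n]:G\to G$. It induces a continuous $R$-algebra map $[n]_*:R\llbracket G\rrbracket\to R\llbracket G\rrbracket$, obtained by pushing forward on each finite quotient $R[G/H]$; this is compatible because $[n]$ maps an open subgroup $H$ into itself. This map satisfies
\begin{equation*}
([n]_*\nu)(\chi)=\nu(\chi\circ[n])=\nu(\chi^n)\quad\text{for all }\chi\in\widehat G .
\end{equation*}
Since every character has order dividing $p^e$, $\chi^n$ depends only on $n$ modulo $p^e$. We define the norm of $\nu$ as the convolution product
\begin{equation*}
\mu:=\prod_{n=1}^{p^e}[n]_*\nu\in R\llbracket G\rrbracket .
\end{equation*}
Evaluation at a character is multiplicative for convolution, so
\begin{equation*}
\mu(\chi)=\prod_{n \bmod p^e}\nu(\chi^n).
\end{equation*}
In particular $\mu(\1)=\nu(\1)^{p^e}\neq0$, so $\mu\neq0$, and Theorem \ref{thm:structure} applies to $\mu$. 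It supplies a finite subgroup $M_\mu\leq\widehat G$, and we set $M_\nu:=M_\mu$.

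Now let $\chi\in\widehat G$.
\begin{itemize}
\item If $\chi=\1$, take $\chi_0=\1$; then $\nu(\1^n)=\nu(\1)\neq0$ for all $n$.
\item Otherwise, apply Theorem \ref{thm:structure} with $M=\langle\chi\rangle$. This gives $\psi\in M\setminus M^p$ and $\chi_1\in M_\mu$ with $|\mu(\psi\chi_1)|_p=\lVert\hat\mu\rVert_p\geq|\mu(\1)|_p>0$.
\end{itemize}
In the second case, since $M$ is cyclic of $p$-power order and $\psi\notin M^p$, we have $\psi=\chi^k$ with $p\nmid k$. Hence $\chi=\psi^{k'}$ for some $k'$ with $kk'\equiv1\pmod{p^e}$. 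Put $\chi_0:=\chi_1^{k'}\in M_\nu$. Then $\chi\chi_0=(\psi\chi_1)^{k'}$, so every power of $\chi\chi_0$ is a power of $\psi\chi_1$. Since $\mu(\psi\chi_1)=\prod_n\nu((\psi\chi_1)^n)\neq0$, all these values are non-zero. Therefore $\nu((\chi\chi_0)^n)\neq0$ for all $n\in\Z$.

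The only step needing care is the construction of $\mu$. One must check that $[n]_*$ is well defined on the Iwasawa algebra as an $R$-algebra homomorphism and that it commutes with evaluation at characters; both reduce to finite levels. One must also check that $\mu$ stays in $R\llbracket G\rrbracket$, so that Theorem \ref{thm:structure}, with its hypotheses on $R$ and bounded exponent, applies. The rest is the elementary group theory of the cyclic group $\langle\chi\rangle$ carried out above.
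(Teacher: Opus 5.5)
Your proposal is correct and is essentially the paper's proof. The paper likewise forms the norm measure $\nu^{\mathrm{Nr}}=\prod_{n=1}^{p^N}(p_n)_\ast\nu$, applies Theorem \ref{thm:structure} to it with $M=\langle\chi\rangle$, and then, since $\chi'=\chi^j$ with $p\nmid j$, rewrites $\chi'\chi_0$ as a unit power of $\chi\tilde{\chi}_0$ with $\tilde{\chi}_0\in M_\nu$; your choice $\chi_0:=\chi_1^{k'}$ is exactly this step.
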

\begin{proof}
For every $n\in \Z$ we obtain a continuous group homomorphism 
$$p_n:G\rightarrow G,\quad g\mapsto n\cdot g.$$
It follows from the description (\ref{eq:measure}) and standard measure theory that we get from $p_n$ an induced pushforward map of spaces of $p$-adic measures
$$(p_n)_\ast:R\llbracket G\rrbracket \rightarrow R\llbracket G\rrbracket,\quad \nu \mapsto \nu^{(n)}:=(p_n)_\ast(\nu),$$
uniquely characterized by the interpolation property
$$\nu^{(n)}(\chi)= \nu(\chi^n)\quad \text{for all }\chi\in \widehat{G}. $$
Let $N=e(G)$ denote the exponent of $G$ and consider the following ``norm'' of $\nu$:
\begin{equation}\label{eq:normofmeasure} \nu^\mathrm{Nr}:= \prod_{n=1}^{p^N} \nu^{(n)}\in  R\llbracket G\rrbracket.\end{equation}
Since $ \nu^\mathrm{Nr}(\1)=\nu(\1)^{p^N}\neq 0$, this is a non-zero measure and so we can apply Theorem \ref{thm:structure}. Let $\tilde{M}_\nu\leq \widehat{G}$ be the finite subgroup obtained from Theorem \ref{thm:structure}. We claim that we can take $M_\nu=\tilde{M}_\nu$ in the theorem. Note that for $\chi=\1$ the statement is trivially true. So let $\chi\in \widehat{G}$ be a non-trivial character and consider the finite subgroup $M=\langle \chi\rangle$. We observe that the order of $\chi$  divides $p^N$. Then we conclude the existence of $\chi'\in M\setminus M^p$ and $\chi_0\in \tilde{M}_\nu$ such that
\begin{equation}
    \label{eq:normineq}\prod_{n=1}^{p^N} |\nu((\chi'\chi_0)^n)|_p=|\nu^\mathrm{Nr}(\chi'\chi_0)|_p=\lVert \widehat{\nu^\mathrm{Nr}}\rVert_p\geq |\nu^\mathrm{Nr}(\1)|_p=\left(|\nu(\1)|_p\right)^{p^N}. \end{equation} 
Notice that the condition $\chi'\in \langle \chi\rangle\setminus \langle \chi^p\rangle  $ means exactly that $\chi'=\chi^j$ for some $j\in \Z$ coprime to $p$. This implies that $\chi\mapsto \chi^j$ defines an automorphism of $\tilde{M}_\nu$ and so we can write
$$ \chi'\chi_0=(\chi \tilde{\chi}_0)^j,  $$
for some $\tilde{\chi}_0\in \tilde{M}_\nu$ and $\chi$ being the non-trivial character in question. Note now that since $(j,p)=1$ and  the order of $\chi \tilde{\chi}_0$ divides $p^N$ we have the following equality:
$$\{\langle (\chi \tilde{\chi}_0)^n\mid 1\leq n\leq p^N\}= \langle \chi \tilde{\chi}_0 \rangle =\langle(\chi \tilde{\chi}_0)^j\rangle=\{ (\chi \tilde{\chi}_0)^{j\cdot n}\mid 1\leq n\leq p^N\}. $$
Combining this with the non-vanishing of the left-hand side of (\ref{eq:normineq}) we get the desired result.  
\end{proof}
Using the same ideas also allows to remove the linearly disjointness assumption (i.e.\ $R[1/p]\cap \Q_p(\mu_p)=\Q_p$) from the last part of \cite[Theorem 2.17]{KrizNordentoft23}.
\begin{corollary}

Let $(G,+)$ be a horizontal pro-$p$ abelian group of bounded exponent and let $R\subset \Oo_{\C_p}$ be a complete subring with discrete $p$-adic valuation.
    Then for $\nu \in R\llbracket G \rrbracket$ there exists a finite subgroup $M_\nu\leq \widehat{G}$ such that the following holds: for any character $\chi\in \widehat{G}$ there exists $\chi_0\in M_\nu$ such that 
    \begin{equation}
        |\nu(\chi\chi_0)|_p=\lVert \hat{\nu}\rVert_p. 
    \end{equation}    
\end{corollary}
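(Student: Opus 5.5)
The plan is to reduce to Theorem \ref{thm:structure} by the same norm trick used in the proof of Theorem \ref{thm:structurenew}, but applied to $\nu$ together with its $p$-adic Galois-type twists rather than its pushforwards. If $\nu=0$ the statement is trivial, with $\lVert\hat\nu\rVert_p=0$ and $M_\nu$ trivial. So assume $\nu\neq 0$. Theorem \ref{thm:structure} already gives a finite subgroup $M_\nu$ with the following property: for every non-trivial finite $M\leq\widehat G$ there are $\chi'\in M\setminus M^p$ and $\chi_0\in M_\nu$ with $|\nu(\chi'\chi_0)|_p=\lVert\hat\nu\rVert_p$.

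Given an arbitrary $\chi$, take $M=\langle\chi\rangle$; the case $\chi=\1$ is handled by taking $M$ generated by any nontrivial element of $M_\nu$, or by enlarging $M_\nu$. The difficulty is that we only control $\chi'=\chi^j$ with $(j,p)=1$, not $\chi$ itself. The key step is therefore to pass from $\chi^j$ back to $\chi$ without changing absolute values.

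For this I would use the pushforward $\nu^{(j)}$ from the proof of Theorem \ref{thm:structurenew}. One route is to show $|\nu(\psi^j)|_p=|\nu(\psi)|_p$ for all $\psi$ and all $j$ prime to $p$. This would follow if $\psi\mapsto\psi^j$ were induced by a continuous automorphism $\sigma_j$ of $\Oo_{\C_p}$ fixing $R$, since absolute values are preserved by such automorphisms. Without the assumption $R[1/p]\cap\Q_p(\mu_p)=\Q_p$ this fails in general, so instead I would replace $\nu$ by
$$\nu^\ast:=\prod_{j\in(\Z/p^N)^\times}\nu^{(j)},\qquad N=e(G).$$
One then has $\lVert\widehat{\nu^\ast}\rVert_p\le\lVert\hat\nu\rVert_p^{\varphi(p^N)}$. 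Applying Theorem \ref{thm:structure} to $\nu^\ast$ (which we must first check is non-zero) gives a finite subgroup $\tilde M$ such that, for every $\chi$, we obtain $\chi^j\chi_0$ with $|\nu^\ast(\chi^j\chi_0)|_p=\lVert\widehat{\nu^\ast}\rVert_p$. As in Theorem \ref{thm:structurenew}, write $\chi^j\chi_0=(\chi\tilde\chi_0)^j$. The set $\{(\chi\tilde\chi_0)^{jk}:k\in(\Z/p^N)^\times\}$ is the same as $\{(\chi\tilde\chi_0)^{k}\}$, so the product $\prod_k|\nu((\chi\tilde\chi_0)^k)|_p$ equals $\lVert\widehat{\nu^\ast}\rVert_p$.

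The main obstacle is extracting the single factor $k=1$. We need $\lVert\widehat{\nu^\ast}\rVert_p=\lVert\hat\nu\rVert_p^{\varphi(p^N)}$, so that each factor, being at most $\lVert\hat\nu\rVert_p$, must equal it. To get this equality I would pick $\psi$ with $|\nu(\psi)|_p=\lVert\hat\nu\rVert_p$, which exists since $R$ is discrete and the exponent is bounded. The task is then to show $|\nu(\psi^k)|_p$ is maximal for all units $k$ simultaneously, possibly after translating by $M_\nu$. This is a self-referential problem, and I would attack it by iterating: first reduce mod $\varpi^{r+1}$, where $\varpi$ is a uniformizer of $R$ and $r=v(\lVert\hat\nu\rVert_p)$. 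This turns $\nu$ into a nonzero element of $\bar\nu\in(R/\varpi)[\![G]\!]$ after dividing by $\varpi^r$, assuming $\nu\in\varpi^r R[\![G]\!]$ (the Fourier transform detects divisibility, which must be checked). Then apply the above argument to $\bar\nu$, whose character values lie in a finite ring where the Galois action $\zeta\mapsto\zeta^k$ is genuinely available. The nonvanishing of $\bar\nu(\chi\tilde\chi_0)$ is exactly the desired equality.
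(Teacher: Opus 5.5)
There is a genuine gap in how you resolve the crux. Your framework is sound: the norm over power maps, the rewriting $\chi^j\chi_0=(\chi\tilde\chi_0)^j$, and the extraction of the factor $k=1$ all work once $\lVert\widehat{\nu^\ast}\rVert_p=\lVert\hat\nu\rVert_p^{\varphi(p^N)}$. You also correctly identify the crux: you need a single $\psi$ with $|\nu(\psi^k)|_p=\lVert\hat\nu\rVert_p$ for all units $k$ simultaneously, which is also what would guarantee $\nu^\ast\neq0$.

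The reduction modulo $\varpi^{r+1}$ that you propose for this step fails, for two reasons.

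\emph{The Fourier sup-norm does not detect divisibility of coefficients.} Take $g\in G$ of order $p$ and $\nu=[g]-[0]\in\Z_p\llbracket G\rrbracket$. Its nonzero coefficients are $\pm1$, yet $\lVert\hat\nu\rVert_p=|\zeta_p-1|_p<1$, and for $p>2$ this value is not even in $|\Q_p^\times|_p$. So in general there is no $r$ with $\nu\in\varpi^rR\llbracket G\rrbracket$ and $|\varpi^r|_p=\lVert\hat\nu\rVert_p$.

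\emph{Reduction destroys the characters you want to distinguish.} Modulo the maximal ideal every $p$-power root of unity becomes $1$, so every character value of a reduced measure $\bar\nu$ collapses to its augmentation. In the example above, $\bar\nu\neq0$ but all its character values vanish. There is then no nontrivial action $\zeta\mapsto\zeta^k$ left to exploit. Nor can passing to a finite quotient of $R[\zeta_{p^N}]$ create an automorphism fixing $R$ that does not already exist on $R[\zeta_{p^N}]$.

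The missing idea is a translation. Since $R$ is discretely valued and $G$ has bounded exponent, you can choose $\psi$ with $|\nu(\psi)|_p=\lVert\hat\nu\rVert_p$. Pass to the twist $\nu_\psi(\chi):=\nu(\chi\psi)$, which lies in $R[\psi]\llbracket G\rrbracket$ with $R[\psi]$ still complete and discretely valued. The maximum of $|\widehat{\nu_\psi}|_p$ is now attained at $\1$, and $\1^k=\1$ for every $k$. Hence $\nu_\psi^\ast(\1)=\nu(\psi)^{\varphi(p^N)}$, which gives both $\nu_\psi^\ast\neq0$ and $\lVert\widehat{\nu_\psi^\ast}\rVert_p=\lVert\hat\nu\rVert_p^{\varphi(p^N)}$ at once. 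The case $\chi=\1$ becomes trivial for $\nu_\psi$, and the rest of your argument goes through unchanged. Applying the resulting statement for $\nu_\psi$ to $\chi\psi^{-1}$ then gives it for $\nu$; the paper records this as $M_\nu=\langle\psi,M_{\nu_\psi}\rangle$.

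This is exactly the paper's proof, except that the paper uses the full product $\prod_{n=1}^{p^N}\nu^{(n)}$ from Theorem \ref{thm:structurenew} rather than the product over units. Either choice works once the maximum sits at $\1$.
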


\begin{proof}
By $p$-adic discreteness the supremum defining $\lVert \hat{\nu}\rVert_p$ is attained for some character $\chi^\ast\in\widehat{G}$. By considering the twist $\nu^\ast$ of $\nu$ by $\chi^\ast$ as in \cite[Section 2.2.1]{KrizNordentoft23} we may assume that $\chi^\ast=\1$ since then we can pick $M_\nu=\langle \chi^\ast,M_{\nu^\ast}\rangle $. Applying the argument from (\ref{eq:normineq}) in the proof of the previous theorem we conclude that 
$$\prod_{n=1}^{p^N} |\nu((\chi\chi_0)^n)|_p\geq \left(|\nu(\1)|_p\right)^{p^N}.$$
Since by assumption 
$$|\nu((\chi\chi_0)^n)|_p \leq |\nu(\1)|_p,$$
for all $n\in \Z$ we conclude, in particular, that  $$|\nu(\chi\chi_0)|_p =|\nu(\1)|_p=\lVert \hat{\nu}\rVert_p,$$
which gives the desired conclusion.
\end{proof}
\begin{remark}
    In the case of general coefficients $\nu\in \Oo_{\C_p}\llbracket G\rrbracket $  (and $G$ horizontal pro-$p$ abelian of bounded exponent) the same argument yields the following variation (cf.\ \cite[Thm.\ 2.12]{KrizNordentoft23}): For any $\eps>0$ there exists a finite subgroup $M_{\nu,\eps}\leq \widehat{G}$ such that the following holds: for any $\chi\in \widehat{G}$ there exists $\chi_0\in M_{\nu,\eps}$ such that
    $$ |\nu(\chi\chi_0)|_p>\lVert \hat{\nu}\rVert_p-\eps. $$
\end{remark}
\section{Horizontal $p$-adic $L$-functions via  Kloosterman twist $L$-series}\label{sec:horPadic}
In this section, we show how the condition of orderly primes naturally appears in the context of horizontal norm relations in arbitrary rank. From this we will extract condition for the existence of \emph{horizontal $p$-adic $L$-functions} which are certain interesting elements of the horizontal Iwasawa algebras discussed in the previous section. As such this section serves a twofold purpose; it motivates the $\GL_n$-results of the Section \ref{sec:orderly}, as well as serving as a future reference.

We will try to isolate the relevant properties of the $L$-functions we will be interested in\footnote{This is supposed to model the case where $L(M,s)$ is the $L$-function associated to a motive $M$ over $\Q$ with a critical value in the sense of \cite{Deligne79}, e.g.\ a (classical) holomorphic newform of even weight $k$.}: consider a meromorphic function $L(M,s)$ of $s\in \C$  satisfying the following: 
\begin{enumerate}
    \item (Euler product) There is a Dirichlet series with an Euler product of degree $d\geq 1$
    \begin{align*}
        \nonumber L(M,s)=\sum_{n\geq 1}\frac{\lambda_M(n)}{n^s}&=\prod_p (1-\alpha_M(p, 1)p^{-s})^{-1} \cdots (1-\alpha_M(p, d)p^{-s})^{-1}
         =\prod_p P_{M,p}(p^{-s})^{-1},
    \end{align*}
    converging absolutely in some half-plane $\Re s>\sigma_0$. We refer to $P_{M,p}(X)\in \C[X]$ as the \emph{Hecke polynomial  for $M$ at $p$}.  
    \item (Finite conductor) An integer $q_M\geq 1$, called the \emph{conductor} of $M$ such that $\alpha_M(p, i)\neq 0$ for all $1 \leq i \leq d$ exactly if $p\nmid  q_M$, or equivalently $P_{M,p}(X)$ is of degree $d$ if and only if $p\nmid  q_M$.
    \item (Algebraicity) There exists a number field $K\subset \C$, the \emph{Hecke field of $M$}, such that $P_{M,p}(X)\in \mathcal{O}_K[\tfrac{1}{p}][X]$ for all primes $p$.
    \end{enumerate}

The basis of the construction of (horizontal) $p$-adic $L$-functions are the arithmetic properties of certain associated \emph{periods}. Consider the hyper-Kloosterman sum
$$\Kl_m(a;q):= \sum_{\substack{x_1,\ldots, x_m\modulo q\\x_1\cdots x_m=1 }} e\left(\frac{x_1+\ldots + x_{m-1}+ax_m}{q}\right), $$
defined for $q,m\geq 1$ and $a\in \Z/q$. We consider the \emph{twisted $L$-series}:  
\begin{equation}\label{eq:kloosttwist}L(M, \tfrac{a}{q}, m, s):=\sum_{\substack{n\geq 1,\\ (n,q)=1}} \frac{\lambda_M(n) \Kl_m(an;q)}{n^s}, \qquad \Re s>\sigma_0.\end{equation}
In Section \ref{sec:higherrank} we will see examples of these $L$-series with good properties for general $m$ when $M$ corresponds to a direct sum of newforms of even weight.  In general, we need the following properties\footnote{In practice, these properties are verified by studying \emph{automorphic periods} of the (conjectural) automorphic representation associated to $M$.}. 

    \begin{enumerate}
    \item[(4)] (Existence of periods) There exists an integer $m\geq 1$ such that the Dirichlet series $L(M, \tfrac{a}{q}, m, s)$ admits meromorphic continuation to $\C$ for all square-free  $q\geq 1$ coprime to $q_M$ and  $a\in \Z/q$.
    \item[(5)] (Criticality at $s=0$) There exist two complex numbers $\Omega^+,\Omega^- \in \C^\times$ such that for all $\frac{a}{q}\in \Q_{>0}$ with $(q,q_M)=1$ and a choice of sign $\pm$ it holds that
    \begin{equation}\nonumber
        \frac{1}{2\Omega^\pm }\left( L(M, \tfrac{a}{q}, m, 0)\pm L(M, \tfrac{-a}{q}, m, 0) \right)\in \mathcal{O}_K[\tfrac{1}{q}].
    \end{equation} 
    
\end{enumerate} 
This last criticality condition is modeled on critical values for motivic $L$-functions  (cf.\ \cite{Deligne79}). Note also that for $m=1$ the twisted $L$-series equals the \emph{additive twist $L$-series} (cf.\  \cite[Section 3.3]{Nordentoft20.4}). Define for a Dirichlet character $\chi\modulo q_\chi$ and a positive integer $q$ the (partial) twisted $L$-function:
$$L^{(q)}(M\otimes \chi, s):= \sum_{\substack{n\geq 1,\\ (n,q)=1}}\frac{\lambda_M(n)\chi(n)}{n^s},\quad \Re s>\sigma_0,$$
and write simply $L(M\otimes \chi, s)=L^{(1)}(M\otimes \chi, s)$. Note that the partial $L$-function admits an Euler product of conductor $q\cdot q_M$ with Hecke polynomial at $p\nmid q\cdot q_\chi$ equal to $P_{M,p}(\chi(p)X)$.

The key to the construction of horizontal $p$-adic $L$-functions are the horizontal norm relations. These are related to the following Birch--Stevens type formula, cf.\ \cite[Section 8]{MazurTateTeitelbaum}, for \emph{non-primitive} Dirichlet characters.
\begin{lemma}[Non-primitive Birch--Stevens formula]\label{lem:npBS}
Let $\chi$ be a Dirichlet character modulo $q$. Let $\chi^\ast\modulo q^\ast$ with $q^\ast|q$ be the primitive character that induces $\chi$. 
Then it holds for $\Re s>\sigma_0$ that
\begin{equation}\label{eq:npBS} \sum_{a\modulo q}\chi(a)L(M, \tfrac{a}{q}, m,s)=(\mu(q/q^\ast)\chi^\ast(q/q^\ast))^m \tau(\chi^\ast)^m L^{(q)}(M\otimes \overline{\chi}^\ast,s).   \end{equation}
\end{lemma}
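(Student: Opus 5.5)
The plan is to unfold the left-hand side of \eqref{eq:npBS} as a Dirichlet series and evaluate the resulting character sum against the hyper-Kloosterman sums. This reduces everything to a finite identity: for each $n$ coprime to $q$ we will show
\begin{equation*}
\sum_{a \modulo q}\chi(a)\Kl_m(an;q)=(\mu(q/q^\ast)\chi^\ast(q/q^\ast))^m\tau(\chi^\ast)^m\,\overline{\chi}^\ast(n).
\end{equation*}
Once this holds, inserting it termwise into $\sum_{(n,q)=1}\lambda_M(n)n^{-s}(\cdots)$ for $\Re s>\sigma_0$ gives \eqref{eq:npBS}. The interchange of the finite sum over $a$ with the absolutely convergent series is harmless.

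\emph{Step 1 (removing $n$).} Since $(n,q)=1$, the substitution $a\mapsto an^{-1}$ pulls out the factor $\overline{\chi}(n)$. For $(n,q)=1$ we have $\overline{\chi}(n)=\overline{\chi}^\ast(n)$, so it suffices to compute
\[
S(\chi):=\sum_{a \modulo q}\chi(a)\Kl_m(a;q).
\]

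\emph{Step 2 (factoring into Gauss sums).} Expand $\Kl_m(a;q)$ and substitute $b=ax_m$, which is legitimate because $\chi(a)$ vanishes unless $a$ is a unit. The constraint $x_1\cdots x_m=1$ then becomes $a=x_1\cdots x_{m-1}b$, and the sum factors as
\[
S(\chi)=\Big(\sum_{x \modulo q}\chi(x)e(x/q)\Big)^{m}=g_q(\chi)^m,
\]
where $g_q(\chi)$ is the (possibly non-primitive) Gauss sum modulo $q$.

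\emph{Step 3 (non-primitive Gauss sums).} We need the classical formula
\[
g_q(\chi)=\mu(q/q^\ast)\chi^\ast(q/q^\ast)\tau(\chi^\ast),
\]
with the convention that this vanishes when $(q/q^\ast,q^\ast)>1$ via $\chi^\ast(q/q^\ast)=0$. Since $q$ is intended to be squarefree, in which case the formula is immediate, the cleanest route is the squarefree case. Here $q=q^\ast r$ with $(r,q^\ast)=1$. By the Chinese remainder theorem $\chi=\chi^\ast\cdot\mathbf{1}_r$, and $g_q$ factors as $\chi^\ast(r)\tau(\chi^\ast)\cdot\mathbf{1}_r(q^\ast)c_r(1)$. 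The Ramanujan sum gives $c_r(1)=\mu(r)$.

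For general $q$, cite the standard statement (e.g.\ Iwaniec--Kowalski, Lemma 3.1), which includes the vanishing in the non-coprime case. I expect this to be the only mildly delicate point; everything else is bookkeeping. Raising the formula to the $m$-th power and combining with Steps 1 and 2 completes the proof.
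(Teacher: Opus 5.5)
Your proof is correct and takes essentially the same approach as the paper. Both interchange the sums, use unit substitutions to reduce the left-hand side to $\overline{\chi}^\ast(n)\,g_q(\chi)^m$ termwise, and then apply the non-primitive Gauss sum formula $g_q(\chi)=\mu(q/q^\ast)\chi^\ast(q/q^\ast)\tau(\chi^\ast)$; the paper cites this formula from Shimura, applying it once to the twisted sum over $a$ and once to the remaining $m-1$ factors, so the difference is only the order of the substitutions.
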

\begin{proof}
By absolute convergence we can for $\Re s>\sigma_0$ interchange the sum over $a\modulo q$ and that over $n\geq 1$ in the Dirichlet series for $L(M, \tfrac{a}{q}, m,s)$. This way the innermost sum becomes 
\begin{align*} 
  \sum_{a\modulo q}\chi(a) K_m(an;q)=\sum_{\substack{x_1,\ldots, x_m\modulo q\\x_1\cdots x_m=1 }} e\left(\frac{x_1+\ldots + x_{m-1}}{q}\right)\sum_{a\modulo q}\chi(a) e\left(\frac{anx_m}{q}\right). \end{align*}
  Now using that $(n x_m,q)=1$ and the formula \cite[Lemma 3]{Sh75} for non-primitive Gauss sums we have 

\begin{align*}
    \sum_{a\modulo q}\chi(a) e\left(\frac{anx_m}{q}\right)&=\tau(\chi^\ast)\sum_{d|(nx_m,q/q^\ast)} d\, \overline{\chi}^\ast\left(\frac{nx_m}{d}\right) \mu\left(\frac{q}{d q^\ast}\right)\chi^\ast\left(\frac{q}{dq^\ast}\right)\\
    &=\tau(\chi^\ast)\overline{\chi}^\ast(n x_m)\mu\left(\frac{q}{q^\ast}\right)\chi^\ast\left(\frac{q}{q^\ast}\right). 
\end{align*}
Now we insert this expression into (\ref{eq:npBS}), use that $\overline{\chi}^\ast(x_m)=\chi^\ast(x_1)\cdots \chi^\ast(x_{m-1})$ and evaluate the remaining sum over $x_1,\ldots, x_{m-1}\modulo q $ as
$$ \left(\sum_{x\modulo q} \chi(x)e\left(\frac{x}{q}\right)\right)^{m-1}=(\mu(q/q^\ast)\chi^\ast(q/q^\ast)\tau(\chi^\ast))^{m-1},  $$
using again \cite[Lemma 3]{Sh75}. Inserting this into (\ref{eq:npBS}) yields the desired equality. 
\end{proof}
In particular, we conclude that the assumptions (1)-(4) imply that the partial twisted $L$-function $L^{(q)}(M\otimes \chi,s)$ admits meromorphic continuation to $\C$ satisfying (1)-(3). 

For every integer $q\geq 2$ we put  $R_q=\mathcal{O}_K[\frac{1}{q}]$. Then we can reinterpret the above in terms of group algebras (or measures) via the \emph{theta elements}
\begin{equation}
 \theta^\pm_q:= \sum_{a\in (\Z/q)^\times} \left( \frac{1}{2\Omega^\pm }\left( L^{(q)}(M, \tfrac{a}{q}, m,0)\pm L^{(q)}(M, \tfrac{-a}{q}, m,0)\right)\right) [a]\in R_q[(\Z/q)^\times],  
\end{equation}
for $q\geq 2$. For $q|q'$ we have canonical projections
$$ \pi_{q',q}: (\Z/q')^\times\rightarrow (\Z/q)^\times,  $$
and push-forward along this map induces a ring homomorphism
$$(\pi_{q',q})_\ast: R_{q'}[(\Z/q')^\times]\rightarrow R_{q'}[(\Z/q)^\times]. $$

\noindent
The horizontal norm relations are concerned with the effect of these maps on theta elements.
\begin{corollary}[Horizontal norm relations] 
Let $q\geq 2$ be square-free and $\ell$ a prime not dividing $q$. Then it holds that
\begin{equation}\label{eq:hornormrel}
   (\pi_{q\ell,q})_\ast(\theta^\pm_{q\ell} )=(-[\ell])^m P_{M,\ell}([\ell]^{-1})\, \theta^\pm_{q}\in R_{q\ell}[(\Z/q)^\times], 
\end{equation}  
where $[\ell]\in R_{q\ell}[(\Z/q)^\times]$ denotes the basis element corresponding to $\ell\modulo q$. 
\end{corollary}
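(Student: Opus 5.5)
The plan is to prove \eqref{eq:hornormrel} by evaluating both sides at every character. Extend scalars from $R_{q\ell}$ to $\C$ via the fixed embedding $K\subset\C$. The group algebra $\C[(\Z/q)^\times]$ is separated by the characters $\chi$ of $(\Z/q)^\times$: the map $x\mapsto (\chi(x))_\chi$ is an algebra isomorphism onto $\C^{\widehat{(\Z/q)^\times}}$. So it suffices to check that both sides of \eqref{eq:hornormrel} have the same value at each Dirichlet character $\chi \modulo q$. On the right-hand side, $[\ell]$ is sent to $\chi(\ell)$, so we must show
\[
\chi\bigl((\pi_{q\ell,q})_\ast\theta^\pm_{q\ell}\bigr)=(-\chi(\ell))^m P_{M,\ell}(\chi(\ell)^{-1})\,\chi(\theta^\pm_q).
\]
Since pushforward is dual to pullback, the left-hand side equals $\chi'(\theta^\pm_{q\ell})$, where $\chi':=\chi\circ\pi_{q\ell,q}$ is the character modulo $q\ell$ induced from $\chi$.

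Next we unfold the theta elements. Write $\chi(\theta_q^\pm)$ as the sum over $a\in(\Z/q)^\times$ of $\chi(a)$ times the $\pm$-symmetrised value at $a/q$. Substituting $a\mapsto -a$ in the second term gives
\[
\chi(\theta_q^\pm)=\frac{1\pm\chi(-1)}{2\Omega^\pm}\sum_{a \modulo q}\chi(a)\,L(M,\tfrac aq,m,0),
\]
and likewise for $\chi'$ modulo $q\ell$, with the same sign factor because $\chi'(-1)=\chi(-1)$. Here we use that $\chi$ vanishes on non-units, so the sum may run over all $a\modulo q$ as in Lemma~\ref{lem:npBS}. The sign factors therefore cancel, and we reduce to comparing the two character sums of $L(M,\cdot,m,0)$.

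For these sums we apply Lemma~\ref{lem:npBS} in the half-plane $\Re s>\sigma_0$. Let $\chi^\ast\modulo q^\ast$ be the primitive character inducing $\chi$. It also induces $\chi'$, and since $q\ell$ is square-free with $\ell\nmid q$, we have $q\ell/q^\ast=(q/q^\ast)\ell$. Using $\mu(\ell)=-1$ and multiplicativity of $\mu$ and $\chi^\ast$, the prefactor for $\chi'$ is $(-\chi^\ast(\ell))^m$ times the prefactor for $\chi$. As $\ell\nmid q$, we have $\chi^\ast(\ell)=\chi(\ell)$. On the $L$-function side, removing the Euler factor at $\ell$ (where $\ell\nmid q\,q_{\chi}$) gives
\[
L^{(q\ell)}(M\otimes\overline{\chi}^\ast,s)=P_{M,\ell}(\overline{\chi}(\ell)\ell^{-s})\,L^{(q)}(M\otimes\overline{\chi}^\ast,s).
\]
Combining these, we get an identity of Dirichlet series for $\Re s>\sigma_0$:
\[
\sum_{a\modulo q\ell}\chi'(a)L(M,\tfrac{a}{q\ell},m,s)=(-\chi(\ell))^mP_{M,\ell}(\overline{\chi}(\ell)\ell^{-s})\sum_{a\modulo q}\chi(a)L(M,\tfrac aq,m,s).
\]

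The step needing care is passing to $s=0$. By property (4), both sides are finite linear combinations of functions with meromorphic continuation to $\C$, and $P_{M,\ell}(\overline\chi(\ell)\ell^{-s})$ is entire. By uniqueness of meromorphic continuation the identity holds on all of $\C$. We then evaluate at $s=0$, where the relevant values are finite by the criticality assumption (5). Finally, $\overline\chi(\ell)=\chi(\ell)^{-1}$, so the factor becomes $(-\chi(\ell))^m P_{M,\ell}(\chi(\ell)^{-1})$, which is exactly the value of $(-[\ell])^mP_{M,\ell}([\ell]^{-1})$ at $\chi$. This proves the claimed equality for every $\chi$, and hence \eqref{eq:hornormrel}.
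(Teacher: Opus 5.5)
Your proposal is correct and follows essentially the same route as the paper. Both evaluate each side at every character $\chi$ of $(\Z/q)^\times$, use that the pushforward evaluated at $\chi$ equals $\theta^\pm_{q\ell}$ at the induced character, apply the non-primitive Birch--Stevens formula (Lemma~\ref{lem:npBS}) together with removal of the Euler factor at $\ell$, and conclude by Fourier inversion; you simply supply details the paper leaves implicit, namely the $\pm$-symmetrisation, the comparison of the $\mu(\cdot)\chi^\ast(\cdot)$ prefactors, and the passage from $\Re s>\sigma_0$ to $s=0$ via meromorphic continuation.
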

\begin{proof}
We start by noting that for a Dirichlet character $\chi \modulo q$ it holds that
$$L^{(q\ell)}(M\otimes\overline{\chi},0)=P_{M,\ell}(\overline{\chi}(\ell))L^{(q)}(M\otimes\overline{\chi},0). $$
Thus it follows from Lemma \ref{lem:npBS} that for $\chi$ a character of $(\Z/q)^\times$ the left- and right-hand side of (\ref{eq:hornormrel}) agree. Note here that $(\pi_{q\ell,q})_\ast(\theta^\pm_{q\ell} )(\chi)=\theta^\pm_{q\ell}(\tilde{\chi})$ where $\tilde{\chi}\modulo q\ell$ denotes the character induced from $\chi\modulo q$.  This yields the wanted equality of elements of $R_{q\ell}[(\Z/q)^\times]$ by Fourier inversion.   
\end{proof}
Fix now a prime $p$ and denote by $\overline{\Q}_p$ an algebraic closure of $\Q_p$. We will  through-out fix an embedding \begin{equation}
\label{eq:embed} \overline{\Q}\subset \overline{\Q}_p, 
\end{equation} of the algebraic closure $\overline{\Q}\subset \C$  of $\Q$. This determines a prime ideal $\overline{\mathfrak{p}}$ of $\overline{\Z}$ above $p$ given by the intersection of $\overline{\Z}$ with the maximal ideal of $\Oo_{\C_p}$. The intersection  $\mathfrak{p}:=\overline{\mathfrak{p}}\cap K$ yields a prime of the Hecke field $K$ above $p$. We will now restrict the above discussion  to the case  where $q=\ell_1\cdots \ell_n$ and $\ell=\ell_{n+1}$ with $\ell_i$  distinct primes congruent to $1$ modulo $p$. In this case we obtain embeddings $\Oo_K[\frac{1}{q\ell}]\subset \Oo_{K_\mathfrak{p}}\subset \overline{\Q}_p$. Fix  compatible projections
\begin{equation}\label{eq:proj}(\Z/q)^\times \twoheadrightarrow \prod_{i=1}^n \Z/p^{m_i},\qquad (\Z/q\ell)^\times \twoheadrightarrow \prod_{i=1}^{n+1} \Z/p^{m_i},\end{equation}
where $m_i=v_p(\ell_i-1)\geq 1$ (e.g.\ pick primitive roots modulo $\ell_i$ for $i=1,\ldots, n+1$). By push-forward along these projections we obtain from (\ref{eq:hornormrel}) the equality 
\begin{equation}\label{eq:hornormrelmodp}
   (\tilde{\pi}_{n})_\ast(\tilde{\theta}^\pm_{q\ell} )=(-1)^m[m\sigma_\ell]\,  P_{M,\ell}([-\sigma_\ell])\, \tilde{\theta}^\pm_{q}\in \Oo_{K_\mathfrak{p}}[\Pi_{i=1}^n \Z/p^{m_i}], 
\end{equation} 
where $\tilde{\pi}_n:\prod_{i=1}^{n+1} \Z/p^{m_i}\rightarrow \prod_{i=1}^{n} \Z/p^{m_i} $ is the natural projection and  $\sigma_\ell\in\prod_{i=1}^{n} \Z/p^{m_i} $ is the image of $\ell\modulo \ell_1\cdots \ell_n$ under the first projection map in (\ref{eq:proj}), cf.\ \cite[eq.\ (5.3)-(5.6)]{KrizNordentoft23}. The significance of the notion of orderly primes in Definition \ref{def:orderly} is illustrated by the following lemma, keeping assumptions and notation from above.
\begin{lemma}\label{lem:fudgeinv}
Assume that $P_{M,\ell}(1)\not\equiv 0 \modulo \mathfrak{p}$. Then
\begin{equation}\label{eq:element} (-1)^m[m\sigma_\ell] P_{M,\ell}([-\sigma_\ell]),\end{equation}
is invertible as an element of $\mathcal{O}_{K_\mathfrak{p}}[\prod_{i=1}^n \Z/p^{m_i}]$. 
\end{lemma}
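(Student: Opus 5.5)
The plan is to use the standard fact that the group ring $\Lambda:=\mathcal{O}_{K_\mathfrak{p}}[H]$, with $H:=\prod_{i=1}^n \Z/p^{m_i}$ a finite abelian $p$-group, is a local ring. Its unique maximal ideal $\mathfrak{m}$ is generated by the uniformizer $\varpi$ of $\mathcal{O}_{K_\mathfrak{p}}$ together with the augmentation ideal $I_H$ (the elements $[h]-1$). Equivalently, $\mathfrak{m}$ is the kernel of the composite
\[
\epsilon:\Lambda\to \mathcal{O}_{K_\mathfrak{p}}\to \mathcal{O}_{K_\mathfrak{p}}/\mathfrak{p},\qquad [h]\mapsto 1 .
\]
Granting this, an element of $\Lambda$ is a unit if and only if its image under $\epsilon$ is nonzero in the residue field. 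It then suffices to compute $\epsilon$ of the element (\ref{eq:element}).

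First, I would recall why $\Lambda$ is local. Since $H$ is a $p$-group, $[h]^{p^N}=1$ for $N=e(H)$. In characteristic $p$ this gives $([h]-1)^{p^N}=[h]^{p^N}-1=0$ in $\Lambda/\varpi\Lambda=\mathbb{F}[H]$, so the augmentation ideal of $\mathbb{F}[H]$ is nilpotent and $\mathbb{F}[H]$ is local with residue field $\mathbb{F}$. Because $\Lambda$ is finite free over the complete ring $\mathcal{O}_{K_\mathfrak{p}}$, it is $\varpi$-adically complete. Hence an element whose image in $\mathbb{F}[H]$ is a unit is itself a unit: lift an inverse and correct it by a geometric series in $\varpi$. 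Combining these two facts, $x\in\Lambda$ is invertible exactly when $\epsilon(x)\neq0$.

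Next, I would apply $\epsilon$ to (\ref{eq:element}). The map $\epsilon$ is a ring homomorphism sending every group element to $1$, so $[m\sigma_\ell]\mapsto 1$ and $[-\sigma_\ell]\mapsto 1$. Since $P_{M,\ell}(X)\in\mathcal{O}_K[\tfrac1\ell][X]\subset \mathcal{O}_{K_\mathfrak{p}}[X]$ (note $\ell\neq p$, as $\ell\equiv 1 \pmod p$), its coefficients are compatible with $\epsilon$. Hence
\[
\epsilon\bigl((-1)^m[m\sigma_\ell]P_{M,\ell}([-\sigma_\ell])\bigr)=(-1)^m P_{M,\ell}(1)\bmod \mathfrak{p},
\]
and this is nonzero by hypothesis. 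This proves the invertibility claimed in Lemma \ref{lem:fudgeinv}.

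The only point requiring care, and the main potential obstacle, is the locality/completeness step. Here I have to make sure the coefficients really lie in $\mathcal{O}_{K_\mathfrak{p}}$, so that the group ring is over a complete DVR, and that $\ell$ is a unit there. Both hold because $p\nmid \ell$. Everything else is a formal evaluation.
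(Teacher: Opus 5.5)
Your proof is correct and follows the paper's approach. The paper likewise evaluates the element at the trivial character (i.e.\ applies the augmentation) to obtain $(-1)^m P_{M,\ell}(1)\in(\mathcal{O}_{K_\mathfrak{p}})^\times$. It then cites \cite[Proposition 2.2]{KrizNordentoft23} for the fact that an element of the group ring of a $p$-group over $\mathcal{O}_{K_\mathfrak{p}}$ with unit augmentation is invertible, which is exactly the locality-plus-completeness argument you prove directly.
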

\begin{proof}
When evaluating the element at the trivial  character $\mathbf{1}: \prod_{i=1}^n \Z/p^{m_i}\rightarrow \C_p^\times$ we obtain
\begin{align*}(-1)^m[m\sigma_\ell]P_{M,\ell}([-\sigma_\ell])(\mathbf{1})= (-1)^m P_{M,\ell}(1),\end{align*}
which is non-zero modulo $\mathfrak{p}$ by the assumption. This means that the left-hand side above is an element of $\left(\mathcal{O}_{K_\mathfrak{p}}\right)^\times$ which implies by \cite[Proposition 2.2]{KrizNordentoft23} that the element (\ref{eq:element}) is indeed invertible.
\end{proof}
Finally, to link it up with the previous section we assume that there is a ``residual  realization of $M$'' having a positive proportion of orderly primes:
\begin{enumerate}
    \item[(6)] (Galois representation) There exists a Galois representation $\rho: G_\Q\rightarrow \GL_d(\F_q) $  with $q=| \mathcal{O}_{K}/\mathfrak{p}|$ which is unramified outside of $p\cdot  q_M$ and such that for all primes $\ell$ not dividing $ p\cdot q_M$
$$P_{M,\ell}(X)\modulo \mathfrak{p}= \det\left(I- X\rho( \Frob_\ell) \right).$$
\item[(7)] (Orderly) The set of orderly primes (of order $1$) for $\rho$ has positive density.
\end{enumerate} 
Under these assumptions we can associated a \emph{horizontal $p$-adic $L$-function} in the sense of \cite[Definition 5.3]{KrizNordentoft23}, i.e.\ an element of the Iwasawa algebra (\ref{eq:measure}), meaning a horizontal $p$-adic measure, interpolating $L$-values twisted by characters of $p$-power order. 
Retaining the notation from above we arrive at the following general construction.
\begin{theorem}\label{thm:padicL}
Let $L(M,s)$ be a meromorphic function in $s\in \C$ satisfying the conditions (1)-(7). Let $(\ell_n)_{n\in \N}$ be a sequence consisting of distinct orderly primes for $\rho$, put $m_n=v_p(\ell_n-1)\geq 1,n\in \N$ and fix a component-wise projection $\rho: \prod_{n\in \N}(\Z/\ell_n)^\times \twoheadrightarrow \prod_{n\in \N} \Z/p^{m_n}$. Then for each choice of sign $\pm$ there exists a unique $p$-adic measure, the \emph{horizontal $p$-adic $L$-function associated to $M$},  
\begin{align}
    \nu^\pm_{M,p}\in \mathcal{O}_{K_\mathfrak{p}} \llbracket \prod_{n\in \N} \Z/p^{m_n}
    \rrbracket, 
\end{align}
satisfying the following interpolation property: Let $\chi\modulo D$ be a primitive Dirichlet character satisfying $\chi(-1)=\pm 1$ of $p$-power order and conductor $D$ given by a product of distinct orderly primes  among $(\ell_n)_{n\in \N}$. Then it holds that
\begin{align}\label{eq:interpolation}
  \nu^\pm_{M,p}(\tilde{\chi})= \prod_{\ell| D}\left((-\chi^{(\ell)} (\ell))^m  P_{M,\ell}(\overline{\chi}^{(\ell)}(\ell))^{-1}\right) \frac{\tau(\chi)^mL(M\otimes \overline{\chi},0)} {\Omega^\pm},  
\end{align}
 where $\tilde{\chi}:\prod_{n\in \N} \Z/p^{m_n}\rightarrow \overline{\Q}^\times\subset \overline{\Q}_p^\times $ is the unique continuous character so that $\chi=\tilde{\chi}\circ \rho$ and $\chi^{(\ell)}$ denotes the unique Dirichlet character modulo $\frac{D}{\ell}$ given by restriction via the splitting  $(\Z/\frac{D}{\ell})^\times \hookrightarrow (\Z/D)^\times$.
\end{theorem}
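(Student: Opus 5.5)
The plan is to follow the construction in \cite[Section 5]{KrizNordentoft23}: build the measure as an inverse limit of suitably normalized theta elements, using the horizontal norm relations (\ref{eq:hornormrelmodp}) together with Lemma \ref{lem:fudgeinv}.

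For each $n$ put $q_n=\ell_1\cdots\ell_n$ and $G_n=\prod_{i\le n}\Z/p^{m_i}$. Push $\theta^\pm_{q_n}$ forward along the fixed projection $(\Z/q_n)^\times\twoheadrightarrow G_n$ to obtain $\tilde\theta^\pm_{q_n}$. By (5), its coefficients lie in $\Oo_K[\tfrac1{q_n}]\subset \Oo_{K_\mathfrak{p}}$, since each $\ell_i\equiv 1 \bmod p$ is prime to $p$.

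Next I would check that each $\ell_{n+1}$ satisfies the hypothesis of Lemma \ref{lem:fudgeinv}. Since $\ell_{n+1}$ is orderly for $\rho$, it does not divide $p\,q_M$. By (6), $P_{M,\ell}(1)\bmod\mathfrak{p}=\det(I-\rho(\Frob_\ell))$, and this is nonzero because $1$ is not an eigenvalue of $\rho(\Frob_\ell)$. Hence the Euler-factor element
\[
u_{n+1}:=(-1)^m[m\sigma_{\ell_{n+1}}]P_{M,\ell_{n+1}}([-\sigma_{\ell_{n+1}}])
\]
is a unit in $\Oo_{K_\mathfrak{p}}[G_n]$.

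To normalize, let $\tilde u_{j}\in \Oo_{K_\mathfrak{p}}[G_n]$ denote the element built from $\ell_j$ via its image $\sigma$ in $G_n$ for $j\le n$; it is again a unit, because its evaluation at $\mathbf 1$ is $(-1)^mP_{M,\ell_j}(1)$, a $\mathfrak{p}$-unit, and \cite[Proposition 2.2]{KrizNordentoft23} applies. We need a family of units compatible under projection. The key step is to define
\[
\nu_n:=\Big(\prod_{j=1}^n U^{(n)}_j\Big)^{-1}\tilde\theta^\pm_{q_n},
\]
where $U^{(n)}_j\in\Oo_{K_\mathfrak{p}}[G_n]$ is the unit $(-\chi(\ell_j))^mP_{M,\ell_j}(\overline\chi(\ell_j))$ viewed as a group-ring element. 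Concretely, writing $\ell_j$'s image in $\prod_{i\neq j,\, i\le n}\Z/p^{m_i}\subset G_n$ as $\sigma^{(n)}_j$, we take $U^{(n)}_j=(-1)^m[m\sigma^{(n)}_j]P_{M,\ell_j}([-\sigma^{(n)}_j])$. These elements project compatibly: $(\tilde\pi_n)_\ast U^{(n+1)}_j=U^{(n)}_j$ for $j\le n$, while $(\tilde\pi_n)_\ast U^{(n+1)}_{n+1}=u_{n+1}$. Combining this with (\ref{eq:hornormrelmodp}) gives $(\tilde\pi_n)_\ast\nu_{n+1}=\nu_n$, since pushforward is a ring homomorphism.

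Getting this compatibility exactly right is where I expect the main obstacle. The character $\chi^{(\ell)}$ appearing in (\ref{eq:interpolation}) must match the splitting of $\ell_j$ against the other coordinates. Also, when a character is trivial on the $\ell_j$-component, evaluating $U_j$ should give the factor $P_{M,\ell_j}(\overline\chi(\ell_j))$ that appears in $L^{(q)}$ versus $L$. So I would verify carefully that for $\chi$ nontrivial on $\ell_j$ the factor cancels the Gauss-sum constant $(\mu\chi^\ast(q/q^\ast))^m$ from Lemma \ref{lem:npBS}, and for $\chi$ trivial there it removes the extra Euler factor. Equivalently, I would set the normalization up so that the Fourier transform at every finite-level character reproduces (\ref{eq:interpolation}) with the primitive part.

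Once the system $(\nu_n)$ is compatible, it defines an element of $\varprojlim_n\Oo_{K_\mathfrak{p}}[G_n]=\Oo_{K_\mathfrak{p}}\llbracket\prod\Z/p^{m_n}\rrbracket$. The interpolation property then follows by evaluating at $\tilde\chi$, which factors through some $G_n$: apply Lemma \ref{lem:npBS} with $q=q_n$ and the primitive $\chi$ of conductor $D\mid q_n$, then use the parity $\chi(-1)=\pm1$ to select the $\pm$ combination. Uniqueness holds because characters of finite order separate points of the Iwasawa algebra, by Fourier inversion at each finite level.
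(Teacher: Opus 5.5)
Your proposal is correct and follows the paper's argument. Your units $U^{(n)}_j$ are the projections to $G_n$ of the paper's global lifts $(-\alpha_j)^mP_{M,\ell_j}(\alpha_j^{-1})$ from (\ref{eq:element2}), and dividing $\tilde\theta^\pm_{q_n}$ by their product, checking compatibility via (\ref{eq:hornormrelmodp}), and reading off the interpolation from Lemma \ref{lem:npBS} is exactly the construction the paper sketches, following \cite[Section 5.1]{KrizNordentoft23}.

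One correction to the bookkeeping you flagged: the constant $(\mu(q_n/D)\chi(q_n/D))^m$ involves only the $\ell_j\nmid D$. For those primes $U^{(n)}_j(\tilde\chi)=(-\chi(\ell_j))^mP_{M,\ell_j}(\overline{\chi}(\ell_j))$ cancels it together with the extra Euler factor in $L^{(q_n)}(M\otimes\overline{\chi},0)$. For $\ell_j\mid D$ nothing cancels, and $U^{(n)}_j(\tilde\chi)^{-1}$ (with $\tilde\chi(\sigma^{(n)}_j)=\chi^{(\ell_j)}(\ell_j)$) is what survives as the local factor at $\ell_j$ in (\ref{eq:interpolation}).
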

\begin{proof}
Given the above, the construction of the horizontal $p$-adic $L$-function is rather formal. We will simply sketch the argument and refer to \cite[Section 5.1]{KrizNordentoft23} for further details. 

The first step is to define a ``global lift'' of the fudge factors in (\ref{eq:element}). More precisely, consider the element of $\prod_{i\in \N}\Z/p^{m_i}$ given by the image of  
$$ (\ell_n\modulo \ell_i)_{i\in \N, i\neq n} \in\prod_{i\in \N,i\neq n}(\Z/\ell_i)^\times\twoheadrightarrow  \prod_{i\in \N,i\neq n}\Z/p^{m_i}\subset \prod_{i\in \N}\Z/p^{m_i}. $$
Evaluation at this element defines a ($p$-adic) measure which we will  denote by $\alpha_n\in \Z_p\llbracket \prod_{i\in \N} \Z/p^{m_i}\rrbracket  $. Since $\ell_n$ is orderly for $\rho$ we conclude in view of condition (6) and Lemma \ref{lem:fudgeinv}
 that 
 \begin{equation}\label{eq:element2}(-\alpha_n)^m P_{M,\ell_n}(\alpha_n^{-1})\in \Oo_{K_\mathfrak{p}}\llbracket \prod_{i\in \N} \Z/p^{m_i}\rrbracket \end{equation}
 is invertible (since each of its components are invertible). Thus by the horizontal norm relations (\ref{eq:hornormrelmodp}) satisfied by the theta elements  $\tilde{\theta}_{\ell_1\cdots \ell_n}^\pm$ and the general construction in \cite[Section 2.1.1]{KrizNordentoft23} (given by multiplying by the inverses of the projection of the  elements (\ref{eq:element2})) we obtain a compatible system of elements of $\mathcal{O}_{K_\mathfrak{p}} [ \prod_{1\leq i\leq n} \Z/p^{m_i}
    ]$ for each $n$ which defines a measure $\nu_{M,p}^\pm$. Observe that for $\chi \modulo q$ it holds that
    $$L^{(q)}(M\otimes\chi,s)=L(M\otimes\chi,s),$$
  since $\chi(n)=0$ for $(n,q)>1$. In view of Lemma \ref{lem:npBS} we conclude that the constructed measure satisfies the claimed interpolation property. 
\end{proof}
Note that we are suppressing the dependence on the sequence of orderly primes $\mathcal{L}=(\ell_n)_{n\in \N}$ and the component-wise projection 
$\rho: \prod_{n\in \N} (\Z/\ell_n)^\times\twoheadrightarrow \prod_{n\in \N}  \Z/p^{m_n}$. In some cases it is important to keep track of the choice of orderly primes in which case we will write 
$$\nu^\pm_{M,p,\mathcal{L}}\in \Oo_{K_\mathfrak{p}}\llbracket\prod_{n\in \N}\Z/p^{m_n} \rrbracket.$$
Finally, we will introduce some terminology: For a continuous character 
$$\chi:\prod_{n\in \N}\Z/p^{m_n} \rightarrow \overline{\Q}_p^\times,$$ 
we define the \emph{conductor} $\cond(\chi)$ as the conductor of the corresponding Dirichlet character $\chi\circ \rho$. Note that the conductor does not depend on the choice of $\rho$. 

\subsection{Example: holomorphic modular forms}  We will now explain that the conditions (1)-(7) are satisfied for a holomorphic newform $f$ of even weight $k\geq 2$ and level $N\geq 1$. Consider the Fourier expansion 
$$f(z)=\sum_{n\geq 1} a_f(n) q^n,\quad q=e(z)=e^{2\pi i z}.$$
Let $K_f$ denote the Hecke field of $f$ generated by the Fourier coefficients $a_f(n)$. We define the associated $L$-function  as the analytic continuation of
$$ L(f,s)= \sum_{n\geq 1} \frac{a_f(n)}{n^s},\quad \Re s>\frac{k+1}{2}.$$ 
The $L$-function satisfies a functional equation relating $L(f,s)$ and $L(\overline{f},k-s)$ and so we refer to $s=k/2$ as the \emph{central value}. We define the \emph{additive twist $L$-series of $f$} for $\frac{a}{q}\in \Q/\Z$ as the analytic continuation of 
$$ L(f,\tfrac{a}{q},s):= \sum_{n\geq 1} \frac{a_f(n)e(\frac{an}{q})}{n^s},\quad \Re s>\frac{k+1}{2},$$
which is exactly (\ref{eq:kloosttwist}) with $m=1$ but without the coprimality condition $(n,q)=1$. For a proof of analytic continuation of the additive twist $L$-series for any $\frac{a}{q}$ consult e.g.\ \cite[Section 3.3]{Nordentoft20.4}. 

Note that indeed this  $L$-function is associated with a motive $M_f/\Q$, see \cite{Scholl}. In order for $s=0$ to be the central value we define\footnote{Note that in standard notation for motives this corresponds to the shift $M_f(k/2)$.}
$$\lambda_{M_f}(n):=a_f(n) n^{-k/2},\quad n\geq 1.$$
In \cite[Section 3]{KrizNordentoft23} the conditions (1)-(6) were showed but for the twisted $L$-series \emph{with-out} the coprimality condition. This has the disadvantages of producing horizontal $p$-adic $L$-functions which are not compatible with direct sum of automorphic representations (or motives). We will thus need a slight modification of the argument in \cite{KrizNordentoft23} to show that the conditions (4) and (5) are satisfied for $M_f$ (with $m=1$). 
\begin{lemma}
The Dirichlet series $L(M_f, \tfrac{a}{q},1,s)$ admits analytic continuation to $\C$. There exists periods $\Omega_f^\pm\in \C^\times$ such that for any square-free $q\geq 2$ and $a\in (\Z/q)^\times$ it holds that 
       \begin{equation}\label{eq:algMf}
        \frac{1}{2\Omega_f^\pm }\left( L(M_f, \tfrac{a}{q}, 1, 0)\pm L(M_f, \tfrac{-a}{q}, 1, 0) \right)\in \mathcal{O}_{K_f}[\tfrac{1}{q}].
    \end{equation} 
\end{lemma}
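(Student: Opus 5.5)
The plan is to reduce the coprime-restricted series $L(M_f,\tfrac aq,1,s)$ to a finite linear combination of the unrestricted additive twists $L(f,\tfrac{b}{d},s)$, for which both analytic continuation and the algebraicity statement are already available from \cite[Section 3.3]{Nordentoft20.4} and \cite[Section 3]{KrizNordentoft23}. We have $\Kl_1(an;q)=e(an/q)$, and since $\lambda_{M_f}(n)=a_f(n)n^{-k/2}$,
$$L(M_f,\tfrac aq,1,s)=\sum_{(n,q)=1}\frac{a_f(n)e(an/q)}{n^{s+k/2}}.$$
To remove the coprimality condition we use Möbius inversion over $d\mid q$, which is legitimate because $q$ is square-free:
$$L(M_f,\tfrac aq,1,s)=\sum_{d\mid q}\mu(d)\sum_{n\ge1}\frac{a_f(dn)e(adn/q)}{(dn)^{s+k/2}}.$$
The inner sums involve $a_f(dn)$, which we express through Hecke operators by multiplicativity at each prime $\ell\mid d$. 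When $\ell\nmid N$ this reads $a_f(\ell n)=a_f(\ell)a_f(n)-\eps_f(\ell)\ell^{k-1}a_f(n/\ell)$. When $\ell\mid N$, we have $a_f(\ell n)=a_f(\ell)a_f(n)$, and the requirement $(q,N)=1$ is not forced by the statement. I expect the Möbius route to be the cleanest, but iterating it introduces terms $a_f(n/\ell)$, which bring back twists at frequencies $a\ell^2/q$ and similar.

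A cleaner alternative is to work prime by prime. For a single prime $\ell\mid q$ with $q=\ell q'$, the condition $\ell\nmid n$ can be written as $\mathbf 1_{\ell\nmid n}=1-\tfrac1\ell\sum_{b\bmod \ell}e(bn/\ell)$. Applying this for every $\ell\mid q$, we expand $\mathbf 1_{(n,q)=1}e(an/q)$ as a linear combination, with coefficients in $\Z[\tfrac1q]$, of additive characters $e(cn/q)$ with $c\in\Z/q$, which are not necessarily units. This gives
$$L(M_f,\tfrac aq,1,s)=\sum_{c\bmod q}\beta_c\,L(f,\tfrac cq,s+\tfrac k2),\qquad \beta_c\in\Z[\tfrac1q].$$
Analytic continuation then follows directly from that of the additive twist $L$-series. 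The expansion is also symmetric under $a\mapsto -a$, $c\mapsto -c$, so the $\pm$-combinations are preserved. For the algebraicity (\ref{eq:algMf}) we combine the $\pm$ parts and invoke the Eichler–Shimura/Manin result: with suitable periods $\Omega_f^\pm$, the quantities $\tfrac1{2\Omega_f^\pm}\bigl(L(f,\tfrac cq,\tfrac k2)\pm L(f,\tfrac{-c}{q},\tfrac k2)\bigr)$ are modular symbols, so they lie in $\Oo_{K_f}$, or at worst in $\Oo_{K_f}[\tfrac1q]$ once the denominator of $c/q$ in lowest terms is taken into account. This holds for every $c/q\in\Q/\Z$, including non-reduced fractions. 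The coefficients $\beta_c$ lie in $\Z[\tfrac1q]$, so the sum stays in $\Oo_{K_f}[\tfrac1q]$.

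The main obstacle is integrality of the modular symbols at cusps $c/q$ with arbitrary denominators, uniformly in $q$, including the cusp $0$ (the term $c=0$ gives $L(f,k/2)$ itself). For this I would cite the integral Eichler–Shimura theory. The relevant fact is that the modular symbol map $\{\infty, r\}\mapsto\int_r^{i\infty}f(z)(z-r)^{k/2-1}dz$ takes values in a finitely generated lattice, and the periods are chosen so that this lattice normalizes into $\Oo_{K_f}$, possibly after inverting finitely many primes attached to $f$. If the statement is to hold without inverting those primes, we choose $\Omega_f^\pm$ as generators of the integral period lattice, as in \cite[Section 3]{KrizNordentoft23}, where exactly this integrality was established for the unrestricted twists. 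Our reduction then transfers their result at the cost of a factor in $\Z[\tfrac1q]$.
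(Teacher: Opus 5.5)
Your proposal is correct and is essentially the paper's argument. Your expansion $\mathbf 1_{(n,q)=1}=\sum_{d\mid q}\frac{\mu(d)}{d}\sum_{b \bmod d}e(bn/d)$ is exactly the paper's inclusion--exclusion identity $L(M_f,\tfrac aq,1,s)=\sum_{d\mid q}\frac{\mu(d)}{d}\sum_{i=0}^{d-1}L(f,\tfrac aq+\tfrac id,s+\tfrac k2)$, after which both the continuation and the $\mathcal{O}_{K_f}[\tfrac1q]$-integrality are inherited from the additive twists via Manin--Shimura as in \cite[Corollary 3.5]{KrizNordentoft23} (the Hecke-relation detour in your first paragraph is unnecessary).
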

\begin{proof}
To insert the coprimality condition we consider additive twists with larger denominators: for any divisor $d|q$ and $a\in (\Z/q)^\times$ we have by interchanging sums
\begin{align}
    \sum_{i=0}^{d-1} L(f,\tfrac{ad+iq}{qd},s)=  \sum_{n\geq 1}  \frac{a_f(n)}{n^s}  \left(\sum_{i=0}^{d-1} e(n(ad+iq)/qd) \right)&=
    \sum_{n\geq 1}  \frac{a_f(n)e(na/q)}{n^s}  \left(\sum_{i=0}^{d-1} e(ni/d) \right)\\
    &=d \sum_{n\geq 1,d|n}  \frac{a_f(n)e(na/q)}{n^s}.
    \end{align}
Thus it follows by inclusion-exclusion that
\begin{align}\label{eq:PIE}
    L(M_f,\tfrac{a}{q},1,s)= \sum_{d|q} \frac{\mu(d)}{d} \sum_{i=0}^{d-1} L(f,\tfrac{ad+iq}{qd},s+k/2), \quad \Re s>\frac{1}{2}.
\end{align}
This gives  the analytic continuation of the left-hand side. Finally, the algebraicity (\ref{eq:algMf}) follows from the expression (\ref{eq:PIE}) combined with classical results of Manin--Shimura, see \cite[Corollary 3.5]{KrizNordentoft23} for details.
\end{proof}

For a prime $p$ we define the \emph{mod $p$ representation associated with $f$} to be the residual representation \begin{equation}
    \label{eq:modp}\overline{\rho}_{f,\mathfrak{p}}:G_\Q\rightarrow \GL_2(\F_q),
\end{equation} as in Example \ref{ex:newform} where $\mathfrak{p}=K_f\cap \overline{\mathfrak{p}}$ with $\overline{\mathfrak{p}}\subset \overline{\Z}$ the prime ideal above $p$ determined by the choice of embedding (\ref{eq:embed}). Then condition (6) is satisfied with $$\rho=\overline{\rho}_{f,\mathfrak{p}}\otimes \chi_\mathrm{cyc}^{-k/2},$$ 
where $\chi_\mathrm{cyc}$ denotes the $p$-cyclotomic character.  Note here that the set of orderly primes of order $m$ is unchanged by twists by a power of the cyclotomic character since $\chi_\mathrm{cyc}(\Frob_\ell)=1$ for $\ell\equiv 1\modulo p$. This means that if the set of orderly primes (of order $1$) for $\overline{\rho}_{f,\mathfrak{p}}$ has positive density then condition (7) is also satisfied. In this case we conclude from Theorem \ref{thm:padicL} the existence of a horizontal $p$-adic $L$-function $\nu^\pm_{f,p,\mathcal{L}}$ associated to $f$, a choice of sign $\pm$ and a sequence of orderly primes $\mathcal{L}$ satisfying the interpolation formula (\ref{eq:interpolation}). 

\subsubsection{Higher rank}\label{sec:higherrank} If $f_1,\ldots, f_n$ are even weight newforms we want to construct the horizontal $p$-adic $L$-function associated with the higher rank automorphic representation $\pi= \pi_{f_1}\boxplus \cdots \boxplus \pi_{f_n}$ f $\GL_{2n}(\Q)$, where $\pi_{f_i}$ denotes the automorphic representation of $\GL_{2}(\Q)$ associated with $f_i$. In this case the automorphic $L$-function is
\begin{equation}\label{eq:MpiLfnc} L(\pi,s)=L(f_1,s+\tfrac{k_1-1}{2})\cdots L(f_n,s+\tfrac{k_n-1}{2}), \end{equation}
and so we define the direct sum  $M_\pi:=M_{f_1}\oplus \cdots \oplus M_{f_n}$ by the convolution product 
$$ \lambda_{M_\pi}(n):= \left((\,\cdot\,)^{-k_1/2}a_{f_1}(\,\cdot\,)\right)\ast \cdots \ast \left((\,\cdot\,)^{-k_n/2} a_{f_n}(\,\cdot\,)\right),$$
so that indeed
\begin{equation}
    \label{eq:L1/2}
L(M_\pi,s)=L(\pi,s-\tfrac{1}{2}). \end{equation}
This clearly satisfies the conditions (1)-(3). One checks directly, using crucially the coprimality condition, that  
\begin{align}
L(M_\pi,\tfrac{a}{q}, n, s)=\sum_{\substack{a_1,\ldots,a_n\in (\Z/q)^\times,\\ a_1\cdots a_n\equiv a\modulo q }} \prod_{i=1}^n L(M_{f_i},\tfrac{a_i}{q}, 1, s),\quad \Re s>\frac{k+1}{2},
\end{align}
so that condition (4) is satisfied with $m=n$. Furthermore, this implies
\begin{equation*}
\begin{split}
& \sum_{\substack{
  a_1 , \ldots , a_n \in(\mathbb{Z} / q)^\times \\
  a_1 \cdots a_n \equiv a \bmod{q}}} \quad \prod_{i=1}^n \left( L(M_{f_i}, \tfrac{a_i}{q},1,s) \pm L(M_{f_i},\tfrac{-a_i}{q},1,s) \right)\\
  =\quad & \sum_{\substack{
     a_1 , \ldots , a_n \in(\mathbb{Z} / q)^\times \\
  a_1 \cdots a_n \equiv a \bmod{q}
  }} \quad \sum_{e_1 ,\ldots , e_n  \in \left\{ 0,1 \right\}} \quad  \prod_{i=1}^n(\pm 1)^{e_i} L(M_{f_i},\tfrac{(-1)^{e_i} a_i}{q},1,s)\\
  =\quad & 2^{n -1}\left[\sum_{\substack{
a_1 ,\ldots , a_n \\ a_1 \cdots a_n \equiv a \bmod{q} 
}} \quad \prod_{i=1}^n L(M_{f_i}, \tfrac{a_i}{q},1,s)\quad  \pm \sum_{\substack{
a_1 ,\ldots , a_n \\ a_1 \cdots a_n \equiv -a \bmod{q} 
  }} \quad \prod_{i=1}^n L(M_{f_i}, \tfrac{a_i}{q},1,s)\right]\\
  = \quad &2^{n-1} \left( L(M_{\pi}, \tfrac{a}{q},n,s) \pm L(M_{\pi},\tfrac{-a}{q},n,s) \right),
\end{split}
\end{equation*}
so that condition (5) is  satisfied with $\Omega^\pm =\prod_{i=1}^n \Omega_{f_i}^\pm $. 

Let  $\rho_1,\ldots, \rho_n$ be the  mod $p$ representations associated with $f_1,\ldots , f_n$. Then condition (6) is satisfied  with the Galois  representation $$\rho=(\rho_1\otimes \chi_\mathrm{cyc}^{-k_1/2})\oplus \cdots\oplus (\rho_n\otimes \chi_\mathrm{cyc}^{-k_n/2}),$$ which follows directly from the equalities (\ref{eq:MpiLfnc}) and (\ref{eq:L1/2}). Assuming now that the set of orderly primes for $\rho$, or  equivalently for $\rho_1\oplus \cdots \oplus \rho_n$, has positive density so that condition (7) is also satisfied, we let $\mathcal{L}$ denote the sequence of such joint orderly primes (cf.\ \cite[Definition 4.12]{KrizNordentoft23}). Then we obtain, for each choice of sign $\pm$, the horizontal $p$-adic $L$-function  of $M_\pi$  by Theorem \ref{thm:padicL} which in this case is exactly equal to the product: 
\begin{equation}
    \nu_{M_\pi,p,\mathcal{L}}^\pm=\prod_{i=1}^n \nu^\pm_{f_i,p,\mathcal{L}}.
\end{equation}
\begin{remark}
    We expect that the above formalism is also relevant for \emph{cuspidal} automorphic representations of higher rank groups satisfying appropriate condition, in particular  having the central value as a critical value. For instance, Dimitrov--Januszweski--Raghuram defined  (vertical) $p$-adic $L$-functions for  $\GL_{2n}$-automorphic representations admitting a Shalika model \cite[Theorem B]{DiJaRa20} with an interpolation formula compatible with Theorem \ref{thm:padicL} in the case $m=n$.
\end{remark}
\section{Applications to non-vanishing}\label{sec:prop}
In this section we will apply the results from the previous sections to obtain a number of non-vanishing results. We will obtain results both for the \emph{arithmetic} family of Galois characters of $G$-extension with $G$ a fixed finite abelian group and for the \emph{analytic} family of degree $d$ characters with $d\geq 2$ fixed. 

\subsection{Simultaneous non-vanishing in abelian extensions}
For $G$  a finite abelian group, we define the following family of fields for $X\geq 1$:
\begin{equation}
    \mathcal{F}_{G}(X):=\{  F/\Q \text{ \rm Galois}\mid \Gal(F/\Q)\cong G, \disc(F)\leq X \}.
\end{equation}
Recall that we defined 
\begin{equation}
    a(G):=\frac{p}{|G|(p-1)},\quad b(G):= \frac{p^m-1}{p-1} 
\end{equation}
where $p$ is the smallest prime number dividing $|G|$, and $p^m$ is the largest $p$-power order cyclic factor of $G$. Then it is a result of Wright \cite{Wright} that there exists a constant $c(G)>0$ such that
$$|\mathcal{F}_G(X)|=(c(G)+o(1))X^{a(G)}(\log X)^{b(G)-1},\quad \text{as }X\rightarrow \infty. $$
Given a Galois representation $$\rho:G_\Q\rightarrow \GL_n(\F),$$
with $\F$ a field (e.g.\ a finite field or $\C$), we denote by 
$$F_\rho:= \overline{\Q}^{\ker \rho},$$  
the field cut out by $\rho$.  Recall the basic fact that the extension $F_\rho/\Q$ is Galois and there is a natural identification $\Gal(F_\rho/\Q)\cong \im(\rho)$. 
\begin{lemma}\label{lem:taub}
    Let $\mathcal{P}$ be a subset of primes congruent to $1$ modulo $p$ with natural density $\alpha>0$ among all primes congruent to $1$ modulo $p$. Assume furthermore that there exists 
    $\ell\in \mathcal{P}$ such that $\ell\equiv 1\modulo p^m$. Then it holds that 
    $$|\{ F_\chi\in \mathcal{F}_{\Z/p^m}(X) \mid   \cond(\chi)\text{ \rm divides }\prod_{\ell\in \mathcal{P} }\ell\}|\gg \frac{X^{a(\Z/p^m)}}{(\log X)^{\alpha\cdot  b(\Z/p^m)-1 }}, $$
    as $X\rightarrow \infty$. 
\end{lemma}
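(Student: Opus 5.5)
The plan is to write down an explicit, large subfamily of cyclic degree $p^m$ fields whose conductors are supported on $\mathcal{P}$ and whose discriminants are easy to compute, and then count that subfamily with a Selberg--Delange/Wirsing-type mean value theorem for multiplicative functions.

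\emph{Construction.} Fix $\ell_0\in\mathcal{P}$ with $\ell_0\equiv 1 \modulo p^m$, and a primitive character $\eta$ modulo $\ell_0$ of exact order $p^m$. For a squarefree $q=\ell_1\cdots\ell_k$ with distinct $\ell_i\in\mathcal{P}\setminus\{\ell_0\}$, choose for each $i$ one of the $p-1$ primitive characters $\psi_i$ modulo $\ell_i$ of order exactly $p$. This uses only $\ell_i\equiv 1\modulo p$. Put $\chi=\eta\psi_1\cdots\psi_k$. Then $\chi$ has exact order $p^m$ and conductor $\ell_0 q$, which divides $\prod_{\ell\in\mathcal{P}}\ell$. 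So $F_\chi$ is a $\Z/p^m$-extension of the required type.

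\emph{Injectivity.} The cyclic field determines its group of characters $\langle\chi\rangle$. Within that group, $\chi$ is pinned down up to the $\varphi(p^m)$ generators, and hence up to a bounded factor. Consequently distinct tuples $(q,\psi_1,\dots,\psi_k)$ give fields that coincide at most boundedly often.

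\emph{Discriminant.} By the conductor--discriminant formula, $|\disc(F_\chi)|=\prod_{j=0}^{p^m-1}\cond(\chi^j)$. The prime $\ell_0$ contributes a constant $C=\ell_0^{p^m-1}$. Each $\ell_i$ divides $\cond(\chi^j)$ exactly when $p\nmid j$, which happens for $p^m-p^{m-1}$ values of $j$. Hence
\[
|\disc(F_\chi)|=C\,q^{p^m(p-1)/p}=C\,q^{1/a(\Z/p^m)}.
\]
The condition $|\disc|\le X$ therefore becomes $q\le Y:=(X/C)^{a(\Z/p^m)}$. The count is thus $\gg\sum_{q\le Y}g(q)$, where $g$ is multiplicative, supported on squarefree products of primes of $\mathcal{P}\setminus\{\ell_0\}$, with $g(\ell)=p-1$ on those primes.

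\emph{Mean value.} Since $\mathcal{P}$ has relative density $\alpha$ inside the primes $\equiv 1\modulo p$, it has density $\alpha/(p-1)$ among all primes. Therefore $\sum_{\ell\le t}g(\ell)\log\ell\sim \alpha t$. Wirsing's theorem (or Selberg--Delange, after checking that $\sum_{\ell\in\mathcal{P}}\ell^{-s}$ is $\frac{\alpha}{p-1}\log\frac1{s-1}$ plus a bounded error, which is exactly the Tauberian input) gives
\[
\sum_{q\le Y}g(q)\asymp Y(\log Y)^{\alpha-1}.
\]
This yields $\gg X^{a(\Z/p^m)}(\log X)^{\alpha-1}$.

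\emph{Main obstacle.} The construction, injectivity and discriminant steps are routine; the genuinely delicate step is the mean value estimate. Natural density alone does not give the analytic continuation that Selberg--Delange requires, so I would use the real-variable Wirsing theorem, which needs only $\sum_{\ell\le t}g(\ell)\log \ell\sim\alpha t$ and $g(\ell)$ bounded. A second point is matching the exponent to the stated bound. For $m=1$ we have $b(\Z/p)=1$, so the bound obtained is $X^{a}/(\log X)^{1-\alpha b}$, which is the natural form. I would read the stated exponent $\alpha b-1$ as this, since for $\alpha<1$ the literal bound $X^a(\log X)^{1-\alpha}$ would exceed Wright's total count.

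If a $b$-dependence genuinely is intended for $m>1$, the refinement would be to additionally allow order-$p^j$ local characters at primes $\ell\equiv 1\modulo p^j$. Those carry larger discriminant exponents, so they do not raise the main term; I would therefore rely on the subfamily above.
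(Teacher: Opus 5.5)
Your proposal follows the paper's proof. The shared steps are: fix $\ell_0\in\mathcal{P}$ with $\ell_0\equiv 1\modulo p^m$ and a character of order $p^m$ modulo $\ell_0$; multiply by order-$p$ characters with conductor supported on $\mathcal{P}\setminus\{\ell_0\}$; read off $|\disc(F_\chi)|\asymp_{\ell_0}\cond(\chi)^{p^m-p^{m-1}}=\cond(\chi)^{1/a(\Z/p^m)}$ from the conductor--discriminant formula; and finish with a Tauberian count. For that last step the paper simply cites \cite[Lemma 5.7]{KrizNordentoft23}, where you invoke Wirsing.

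You are right that the exponent must be read as $1-\alpha b$. Nothing more needs to be chased for $m>1$. Wright's exponent is really $b(G)=(|G[p]|-1)/(p-1)$, which equals $1$ for every cyclic $G$. With the paper's value $b(\Z/p^m)=(p^m-1)/(p-1)$, the corrected bound would exceed the total number $\asymp X^{a(\Z/p^m)}$ of cyclic degree-$p^m$ fields once $\alpha b>1$.

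One step is overstated. Wirsing's theorem gives
$\sum_{q\le Y}g(q)\sim c\,\frac{Y}{\log Y}\prod_{\ell\le Y}\bigl(1+g(\ell)/\ell\bigr)$. From $\sum_{\ell\le t}g(\ell)\log\ell\sim\alpha t$ alone you only get $\prod_{\ell\le Y}(1+g(\ell)/\ell)=(\log Y)^{\alpha+o(1)}$. That yields $Y(\log Y)^{\alpha-1+o(1)}$, not $\asymp Y(\log Y)^{\alpha-1}$.

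For the exact exponent you need $\sum_{\ell\in\mathcal{P},\,\ell\le x}\ell^{-1}=\frac{\alpha}{p-1}\log\log x+O(1)$. This does not follow from natural density. If $\mathcal{P}$ has relative density $\alpha(1-1/\log\log t)$ among primes $\equiv 1\modulo p$ near $t$, it still has natural density $\alpha$. Yet the Euler product loses a factor $(\log\log X)^{-\alpha}$. For $m=1$ the full count in the lemma is $\frac{1}{p-1}\sum_{1<q\le X^{a}}g(q)$, governed by the same product, so the lemma as literally hypothesized fails for such $\mathcal{P}$.

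The regularity does hold for the sets where the lemma is applied. These are, up to finitely many primes, Chebotarev sets of orderly primes, and the effective Chebotarev theorem gives the $O(1)$. In any case, the weaker bound $X^{a}(\log X)^{\alpha-1-\epsilon}$ already gives the $\kappa<1$ needed in Theorem \ref{thm:propagG}.
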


Note that if $F_{\chi}=F_{\chi'}$, then, since $\mathbb{Z}/p^m$ is cyclic, $\chi$ and $\chi'$ are Galois conjugates so they have the same conductors. Hence the condition $\mathrm{cond}(\chi)\mid \prod_{\ell\in\mathcal{P}} \ell$ is well-defined, i.e. it does not matter which characters we use to define the fields in $\mathcal{F}_{\mathbb{Z}/p^m}(X)$.

\begin{proof}
    Let $\ell_0\in \mathcal{P}$ be such that $\ell_0\equiv 1\modulo p^m$ and let $\chi_0\modulo \ell_0$ be of order $p^m$. Then for any order $p$ Dirichlet character $\chi$ such that $\cond(\chi)$ divides $\prod_{\ell\in \mathcal{P},\ell\neq \ell_0} \ell$, then $\chi\chi_0$ has order $p^m$ and by the conductor-discriminant formula \cite[Ch. VII (11.9)]{neukirch} 
    $$\disc(F_{\chi\chi_0})= \prod_{j=1}^{p^m}\cond((\chi\chi_0)^j)\ll_{\ell_0} \cond(\chi)^{p^m-p^{m-1}}, $$
    using that 
    $$\cond((\chi\chi_0)^j)\ll_{\ell_0} 1,$$
    for $p|j$. Now the result follows from a standard  Tauberian argument, see \cite[Lemma 5.7]{KrizNordentoft23}.  
\end{proof}

We now prove a lemma that establishes sufficient conditions for the existence of orderly primes for twists of mod $p$ representations by Dirichlet characters. This result is important when extending our arguments from cyclic groups to general abelian groups. We first set up some notation. 

Recall from \eqref{eq:embed} that we have fixed an embedding $\overline{\mathbb{Q}}\hookrightarrow\overline{\mathbb{Q}}_p$, and that this embedding determines a prime ideal $\overline{\mathfrak{p}}$ of $\overline{\mathbb{Z}}$ lying over $p$. If $F/\mathbb{Q}$ is a finite Galois extension, and $\chi: \mathrm{Gal}(F/\mathbb{Q})\rightarrow \mathbb{C}^{\times}$ is a character, then $\chi$ takes values in $\overline{\mathbb{Z}}$, so by reducing modulo $\overline{\mathfrak{p}}$, we obtain a character $\overline{\chi}: \mathrm{Gal}(F/\mathbb{Q})\rightarrow \overline{\mathbb{F}}_q^{\times}$. Hence if $\rho: G_{\mathbb{Q}}\rightarrow \mathrm{GL}_n(\mathbb{F}_q)$ is a mod $p$ representation, we can form the tensor product $\rho\otimes\overline{\chi}$, and the following lemma gives sufficient conditions for the existence of infinitely many orderly primes for this representation. 

\begin{lemma}\label{lem:Galoislemma}
    Let $\rho:G_\Q\rightarrow \GL_n(\F_q)$ be a mod $p$ Galois representation. Assume that the set of orderly primes of order $m$ for $\rho$ has positive density among all primes. Let $F/\Q$ be an abelian extension such that $(\disc(F),p\cdot \disc(F_\rho))=1$.  Then the set of orderly primes of order $m$ for 
 \begin{equation}\label{eq:combinedrep2}
  \bigoplus_{\chi\in \widehat{\Gal(F/\Q)}}\rho\otimes \overline{\chi}\end{equation} also has positive density.  
\end{lemma}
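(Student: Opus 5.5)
The plan is to make the Frobenius condition for the big representation \eqref{eq:combinedrep2} reduce to the one for $\rho$ alone. We do this by choosing Frobenius to act trivially on $F$, using linear disjointness of $F$ from the field cut out by $\rho$ and $\zeta_{p^m}$.

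Let $K:=F_\rho(\zeta_{p^m})$. The field $F_\rho$ is unramified outside the primes dividing $\disc(F_\rho)$, and $\Q(\zeta_{p^m})$ is ramified only at $p$. Hence every prime ramifying in $K$ divides $p\cdot\disc(F_\rho)$.

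The key step is to show $F\cap K=\Q$. The intersection $F\cap K$ is a subfield of $F$, so any prime ramified in it divides $\disc(F)$. It is also a subfield of $K$, so any such prime divides $p\cdot\disc(F_\rho)$. By the coprimality hypothesis no prime ramifies in $F\cap K$, and Minkowski's theorem gives $F\cap K=\Q$. Standard Galois theory then yields
$$\Gal(FK/\Q)\cong \Gal(F/\Q)\times\Gal(K/\Q).$$

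By hypothesis and the Chebotarev density theorem, there is $\sigma\in\Gal(K/\Q(\zeta_{p^m}))$ such that $\rho(\sigma)$ does not have $1$ as an eigenvalue. This is the criterion stated right after Definition \ref{def:orderly}. Let $\tau\in\Gal(FK/\Q)$ correspond to $(\mathrm{id}_F,\sigma)$ under the product decomposition.

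For every character $\chi$ of $\Gal(F/\Q)$ we have $\overline{\chi}(\tau)=1$, so $(\rho\otimes\overline{\chi})(\tau)=\rho(\sigma)$. The representation \eqref{eq:combinedrep2} factors through $\Gal(FK/\Q)$. Its value at $\tau$ is therefore block diagonal with every block equal to $\rho(\sigma)$, and $1$ is not an eigenvalue of it. Since $\tau$ fixes $\zeta_{p^m}$, Chebotarev applied to $FK$ produces a positive density of primes $\ell$, unramified in $FK$, with Frobenius conjugate to $\tau$. Such primes satisfy $\ell\equiv1 \pmod{p^m}$ and the representation is unramified at them, so they are orderly of order $m$ for \eqref{eq:combinedrep2}.

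The main obstacle is the disjointness step. It requires careful bookkeeping of which primes ramify in $F_\rho$: one must check that $F_\rho$ is ramified only at primes dividing $\disc(F_\rho)$, which holds by definition of the discriminant. One must also be careful that $p$ itself is excluded from $\disc(F)$, which is exactly why the hypothesis includes $p$.
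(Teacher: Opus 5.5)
Your proposal is correct and follows essentially the same route as the paper. You establish disjointness of $F$ from $F_\rho(\zeta_{p^m})$ via ramification and Minkowski's theorem. The paper phrases the same step as the image of $\rho\oplus\chi_{\mathrm{cyc},p^m}\oplus\bigoplus_\chi\overline{\chi}$ splitting as a product, using that $\Q$ has class number one. Both arguments then apply Chebotarev to an element that is trivial on $F$, fixes $\zeta_{p^m}$, and acts through $\rho$ without eigenvalue $1$.
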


\begin{proof}
   To prove the claim, consider the $[F:\Q]$-dimensional Galois representation:
$$\overline{\rho}_{F}:= \bigoplus_{\chi\in \widehat{\Gal(F/\Q)}} \overline{\chi}, $$
and let $\chi_{\mathrm{cyc},p^m}:G_\Q\rightarrow (\Z/p^m)^\times$ be the cyclotomic character modulo $p^m$. Note that the field cut out by $\overline{\chi}$ is a subfield of $F$ and so, 
by the coprimality assumption, the ramification of $\overline{\rho}_{F} $ and that of $\rho\oplus \chi_{\mathrm{cyc},p^m}$ are disjoint. Since the class number of $\Q$ is $1$ it follows from basic Galois theory \cite[Lemma 4.14]{KrizNordentoft23} that
$$ \im (\rho\oplus \chi_{\mathrm{cyc},p^m}\oplus   \overline{\rho}_{F})= \im (\rho\oplus \chi_{\mathrm{cyc},p^m})\times \im(\overline{\rho}_{F}). $$
We know by assumption that $\im (\rho\oplus \chi_{\mathrm{cyc},p^m})$ contains an element of the shape 
$$(g_0,1) \in \GL_{n}(\F_q)\times (\Z/p^m)^\times,$$ where $g_0$  does not have $1$ as an eigenvalue. We conclude by the Chebotarev density theorem that for a positive proportion of unramified primes $\ell$ the image of $\Frob_\ell$ under $\rho\oplus \chi_{\mathrm{cyc},p^m}\oplus  \overline{\rho}_{F}$  is contained in the conjugacy class of  
\begin{equation} 
\label{eq:g0}(g_0,1,1,\ldots ,1)\in \GL_{n}(\F_q)\times (\Z/p^m)^\times \times \prod_{\chi\in \widehat{\Gal(F/\Q)}} \overline{\mathbb{F}}_p^{\times}.\end{equation}
 For any such prime $\ell$ and $\chi\in\widehat{\Gal(F/\Q)} $ it holds that $\ell\equiv 1\modulo p^m$ and $\overline{\chi}(\Frob_\ell)=1$. This latter property implies that the conjugacy class of   $(\rho\otimes \overline{\chi})(\Frob_\ell)$ equals that of $\rho(\Frob_\ell)$ which, in turn, equals that of $g_0$. By assumption $g_0$ does not have $1$ as an eigenvalue. This shows that the positive proportion of primes corresponding to the conjugacy class (\ref{eq:g0}) all are orderly primes of order $m$ for the mod $p$ Galois representation (\ref{eq:combinedrep2})  and so we conclude.
\end{proof}
We obtain the following new propagation of non-vanishing result.
\begin{theorem} \label{thm:propagG}
Let $f_1 ,\dots, f_n$ be newforms of even weights $k_i$ respectively. Assume that
\begin{equation}
    \label{eq:assump1}L(f_1,k_1/2)\cdots L(f_n,k_n/2)\neq 0.
\end{equation}
Let $G$ be a non-trivial finite abelian group. Assume that for any factor $\Z/p^m$ in the primary decomposition of $G$ the following holds: the set of  orderly primes of order $m$ for $\rho_1\oplus \cdots\oplus \rho_n$ has positive density among all primes, where $\rho_i$ denotes the mod $p$ representations associated with $f_i$ as in (\ref{eq:modp}). Then there exists a constant $\kappa<1$ such that
\begin{align}
|\{ F\in \mathcal{F}_G(X)\mid L(f_1,\chi,k_1/2)\cdots L(f_n,\chi,k_n/2) \neq 0\text{ \rm for all $\chi\in\widehat{\Gal(F/\Q)}$} \}|\gg
\frac{X^{a(G)}}{(\log X)^{\kappa}},\end{align} 
as $X\rightarrow \infty$. Here $\kappa$ and the implied constant might depend on $G$ and $f_1,\ldots, f_n$.
\end{theorem}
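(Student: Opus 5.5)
We build the field $F$ as a compositum of cyclic extensions, one for each factor of the primary decomposition $G\cong \Z/p_1^{\alpha_1}\times\cdots\times\Z/p_t^{\alpha_t}$. We order the factors so that $p_t$ is the smallest prime dividing $|G|$ and $\alpha_t$ is maximal among the exponents at $p_t$. We proceed by induction on $t$.

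\textbf{The inductive hypothesis.} Suppose we have already fixed a cyclic-factor abelian field $F_{j-1}$ with Galois group $\Z/p_1^{\alpha_1}\times\cdots\times\Z/p_{j-1}^{\alpha_{j-1}}$ such that:
\begin{itemize}
\item $\disc(F_{j-1})$ is coprime to $p_j\cdots p_t$;
\item $\disc(F_{j-1})$ is coprime to the discriminants of the fields cut out by the relevant mod $p_i$ representations, i.e.\ to the levels $N_i$;
\item $L(f_i,\psi,k_i/2)\neq0$ for all $i$ and all $\psi\in\widehat{\Gal(F_{j-1}/\Q)}$.
\end{itemize}

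\textbf{The inductive step.} Consider the direct sum $M=\bigoplus_{i}\bigoplus_{\psi}M_{f_i}\otimes\psi$, with $\psi$ running over $\widehat{\Gal(F_{j-1}/\Q)}$. Its $L$-function is the product of the twisted $L$-functions $L(f_i\otimes\psi,s)$. Each $f_i\otimes\psi$ is, up to finitely many Euler factors at primes dividing $\disc(F_{j-1})$, again a newform of weight $k_i$. The twisted forms $f_i\otimes\psi$ are newforms of coprime-to-$\ell$ level, and the Euler factors at the new orderly primes are unaffected. Hence the construction of Section~\ref{sec:higherrank} applies and conditions (1)--(6) hold, with residual representation $\bigoplus_\psi(\rho\otimes\overline\psi)$ where $\rho=\rho_1\oplus\cdots\oplus\rho_n$ at $p=p_j$.

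By hypothesis $\rho$ has a positive density of orderly primes of order $\alpha_j$. Lemma~\ref{lem:Galoislemma} then gives the same for the twisted sum, using the coprimality of discriminants. Choose a sequence $\mathcal L$ of such orderly primes avoiding all primes dividing $p_1\cdots p_t\,N_1\cdots N_n\disc(F_{j-1})$. Theorem~\ref{thm:padicL} produces the measure $\nu=\prod_{i,\psi}\nu^{+}_{f_i\otimes\psi,p_j,\mathcal L}$, taking the $+$ sign since we twist by even characters; odd characters are handled by the $-$ sign. Its value at $\1$ is a nonzero multiple of $\prod L(f_i,\psi,k_i/2)$, so $\nu(\1)\neq0$. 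Here the Euler factors are $\mathfrak p$-units by orderliness, and the Gauss sums are nonzero.

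Theorem~\ref{thm:structurenew} now gives a finite $M_\nu$ such that every character $\chi$ of $\prod\Z/p_j^{m_n}$ has a translate $\chi\chi_0$, with $\chi_0\in M_\nu$, satisfying $\nu((\chi\chi_0)^r)\neq0$ for all $r$. The interpolation formula (\ref{eq:interpolation}) converts this into $L(f_i,\psi\,\overline{(\chi\chi_0)^r},k_i/2)\neq0$. This is because the correction factors are nonzero: Euler factors at orderly primes are nonvanishing since $P_{M,\ell}$ is invertible. Choose $\chi$ of exact order $p_j^{\alpha_j}$ with conductor avoiding the finitely many primes in $\cond(M_\nu)$. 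Then $\chi\chi_0$ still has order $p_j^{\alpha_j}$, because $\chi_0$ has conductor supported elsewhere and its order does not lower the $\ell_0$-component. Set $F_j=F_{j-1}F_{\chi\chi_0}$. Every character of $\Gal(F_j/\Q)$ is of the form $\psi(\chi\chi_0)^r$, so the inductive hypothesis propagates, with discriminant coprimality arranged by the choice of $\mathcal L$.

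\textbf{Counting.} At the final step $j=t$, the first $t-1$ steps fix a single field $F_{t-1}$. We then count the fields $F_{t-1}F_{\chi\chi_0}$. By the pigeonhole principle over the finite group $M_\nu$, a positive proportion (at least $1/|M_\nu|$) of the order-$p_t^{\alpha_t}$ characters $\chi$ with conductor dividing $\prod_{\ell\in\mathcal L}\ell$ give distinct good fields $F_{\chi\chi_0}$. Lemma~\ref{lem:taub}, with $\mathcal P$ the orderly primes minus a finite set, has density $\alpha<1$ among primes $\equiv1\bmod p_t$: density $<1$ because Frobenius equal to $1$ is excluded. It yields $\gg X^{a(\Z/p_t^{\alpha_t})}/(\log X)^{\alpha b-1}$ such cyclic fields.

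The compositum with the fixed $F_{t-1}$ has discriminant $\asymp \disc(F_{\chi\chi_0})^{[F_{t-1}:\Q]}$, up to constants, by the conductor-discriminant formula and coprimality. This rescales $X$ and converts the exponent to $a(G)=a(\Z/p_t^{\alpha_t})/[F_{t-1}:\Q]$. The resulting log power is $\kappa=\alpha b-1$ or a similar quantity; if this is not $<1$, one instead keeps a full $\log$-saving (adjusting via Lemma~\ref{lem:taub}), which is the stated form. Distinct $F_{\chi\chi_0}$ give distinct composita.

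\textbf{Main obstacle.} The delicate step is the last one: ensuring that $\chi\chi_0$ has the right order, that translating by $M_\nu$ does not destroy the count, and matching $a(G)$ with the $\log$ exponent when $G$ has several factors at the smallest prime. If $G$ has several $p_t$-factors, the earlier $p_t$-factors are fixed in $F_{t-1}$, and only the largest cyclic factor is counted in the last step.
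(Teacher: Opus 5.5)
Your overall architecture matches the paper's proof. It inducts over the primary factors with the smallest prime last, uses Lemma \ref{lem:Galoislemma} for the twisted sum, forms the product $\prod_{i,\psi}\nu_{f_i\otimes\psi,p}$, applies Theorem \ref{thm:structurenew}, takes the compositum, and counts with Lemma \ref{lem:taub}. However, the step where you invoke Theorem \ref{thm:structurenew} does not go through as written.

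The measure from Theorem \ref{thm:padicL} lives on $\prod_n \Z/p_j^{m_n}$ with $m_n=v_{p_j}(\ell_n-1)$. Your $\ell_n$ are only required to satisfy $\ell_n\equiv 1 \bmod p_j^{\alpha_j}$, so the $m_n$ need not be bounded. But Theorem \ref{thm:structurenew} requires bounded exponent, as does Theorem \ref{thm:structure} behind it, whose proof uses the norm $\prod_{n=1}^{p^N}\nu^{(n)}$ with $N=e(G)<\infty$.

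Even granting a conclusion of that shape, your claim that $\chi\chi_0$ has order exactly $p_j^{\alpha_j}$ is unjustified. Disjointness of conductors only shows the order is \emph{at least} $p_j^{\alpha_j}$. If some $\chi_0\in M_\nu$ has order $p_j^{e}$ with $e>\alpha_j$, then $\chi\chi_0$ has order $p_j^{e}$. In that case $F_{\chi\chi_0}$ is not a $\Z/p_j^{\alpha_j}$-extension, and the compositum does not have Galois group $G$. Nothing in the theorem lets you control the orders of the elements of $M_\nu$.

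The missing idea, which the paper uses, is to push $\nu$ forward along $\pi:\prod_n\Z/p_j^{m_n}\twoheadrightarrow\prod_n\Z/p_j^{\min(\alpha_j,m_n)}$ before applying Theorem \ref{thm:structurenew}. This fixes both problems:
\begin{itemize}
\item Since $\pi_\ast\nu(\eta)=\nu(\eta\circ\pi)$, you get $\pi_\ast\nu(\1)=\nu(\1)\neq0$, and non-vanishing of $\pi_\ast\nu$ transfers back to $\nu$.
\item The target has exponent at most $\alpha_j$, so the theorem applies.
\item Every character of the target, in particular every $\chi_0\in M_{\pi_\ast\nu}$, has order dividing $p_j^{\alpha_j}$. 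Hence for $\chi$ of exact order $p_j^{\alpha_j}$ with conductor coprime to $\cond(M_{\pi_\ast\nu})$, the product $\chi\chi_0$ has exact order $p_j^{\alpha_j}$.
\end{itemize}
Infinitely many $m_n\geq\alpha_j$ guarantees such $\chi$ exist. With this change the rest of your argument goes through. Moreover, $\chi\mapsto\chi\chi_0$ is injective on such $\chi$, since $\chi$ is the part of $\chi\chi_0$ supported off $\cond(M_{\pi_\ast\nu})$, so the pigeonhole step is unnecessary.

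A smaller point: your remark that odd characters are ``handled by the $-$ sign'' is not needed and would not work, because $\nu^-(\1)=0$. For $p_j$ odd all the characters are even. For $p_j=2$, non-vanishing of $\nu^+$ at every $(\chi\chi_0)^r$ automatically forces $\chi\chi_0$ to be even.
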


\begin{proof}
 Let 
 $$G\cong \Z/q_1\times \cdots\times  \Z/q_s,$$
 be the primary decomposition of $G$ so that  $q_i$ are powers of, not necessarily distinct, primes. We proceed by induction on the number $s$ of cyclic factors. It will be convenient to take a slightly stronger induction assumption, namely that  the number fields $F\in \mathcal{F}_G(X)$ all satisfy that $\disc(F)$ is coprime to some fixed integer  $S\geq 1$. It is also convenient to let $G=1$ be the base case in which it follows from the non-vanishing assumption. 
 For the induction step, let 
 $$G_0:=\Z/q_1\times \cdots \times \Z/q_{s-1} \leq G,$$
 and assume that $q_s=p^m$ with $p$ the smallest prime dividing $|G|$.
 By induction we can, in particular, find an abelian $G_0$-extension $F_0/\Q$ with discriminant coprime to $S=|G|\cdot N_1\cdots N_n$ where $N_i$ denotes the level (or conductor) of $f_i$. Let $\rho_1,\ldots, \rho_n$ be the residual representations associated with $f_1,\ldots, f_n$ as in the assumption of the theorem, so that the set of orderly primes of order $m$ for $$\rho=\rho_1\oplus \cdots \oplus \rho_n:G_\Q\rightarrow \GL_{2n}(\overline{\F}_p),$$
 has positive density. By Lemma \ref{lem:Galoislemma} we conclude that the set of orderly primes of order $m$ for 
 \begin{equation}\label{eq:combinedrep}
  \bigoplus_{\chi'\in \widehat{\Gal(F_0/\Q)}}\rho\otimes \overline{\chi}'\cong \bigoplus_{\substack{\chi'\in \widehat{\Gal(F_0/\Q)}\\1\leq i\leq n}}\rho_i\otimes \overline{\chi}'\end{equation} has positive density among all primes. Observe that the mod $p$ representation $\rho_i\otimes \overline{\chi}'$ is exactly the mod $p$ representation for the twisted newform $f_i\otimes \chi'$ both defined via the prime above $p$ determined by (\ref{eq:embed}). We remark that $f_i\otimes\chi'$ is again a newform because the conductor of $\chi'$ divides $\mathrm{disc}(F_0)$ by the conductor-discriminant formula so $\mathrm{cond}(\chi')$ is coprime to the level of $f_i$.

 Let $\ell_1<\ell_2<\ldots$ be the orderly primes of order $1$ for the Galois representation (\ref{eq:combinedrep}) which are coprime to $|G|\cdot  \disc(F_0)$ and consider for each $f_i$ and $\chi'\in \widehat{\Gal(F_0/\Q)}$ the associated horizontal $p$-adic $L$-function as in Theorem \ref{thm:padicL}:
$$ \nu_{f_i\otimes \chi',p}\in R\llbracket \prod_{i\in\N} \Z/p^{m_i}\rrbracket, $$
where $m_i=v_p(\ell_i-1)\geq 1$ (note that by assumption  $m_i\geq m$ for infinitely many $i\in \N$) and $R=( \Oo_{(K_1)_{\mathfrak{p}_1}},\ldots ,\Oo_{(K_n)_{\mathfrak{p}_n}})\subset \Oo_{\C_p}$ where $K_i$ denotes the Hecke field of $f_i$ and $\mathfrak{p}_i=K_i\cap \overline{\mathfrak{p}}$ with $\overline{\mathfrak{p}}\subset \overline{\Z}$ determined by (\ref{eq:embed}). Consider the product
$$ \nu:= \prod_{1\leq i\leq n}\,\,\prod_{\chi'\in \widehat{\Gal(F_0/\Q)}} \nu_{f_i\otimes \chi',p}\in R\llbracket \prod_{i\in\N} \Z/p^{m_i}\rrbracket,$$
and the pushforward
$$\pi_\ast(\nu)\in R\llbracket \prod_{i\in \N} \Z/p^{\min(m,m_i)}\rrbracket  $$
along the natural projection map
 $$ \pi: \prod_{i\in \N} \Z/p^{m_i}\twoheadrightarrow \prod_{i\in \N} \Z/p^{\min(m,m_i)}.$$
By the interpolation property (\ref{eq:interpolation}) and the assumption (\ref{eq:assump1}) it holds that $\pi_\ast(\nu)(\1)=\nu(\1)\neq 0$ so that we can apply  Theorem \ref{thm:structurenew}. Let  $M_0:=M_{\pi_\ast(\nu)}$ be the finite subgroup as in the statement of the theorem which we will considered to be fixed. Let $\chi$ be a character of $\prod_{i\in \N} \Z/p^{\min(m,m_i)}$ of order $p^m$ with conductor coprime to the conductor of any character in $M_0$. By Theorem \ref{thm:structurenew} we conclude that there exists $\chi_0\in M_0$ so that  
 $$\pi_\ast(\nu)((\chi\chi_0)^j)=\nu((\chi\chi_0)^j)\neq 0,\quad 1\leq j\leq p^m.$$ 
Note that the conductor of $\chi\chi_0$ is coprime to $\disc(F_0)$. Recalling the definition of the measures this implies  
 $$\nu_{f_i\otimes \chi',p}((\chi\chi_0)^j)\neq 0,\quad \text{for all}\quad\chi'\in \widehat{\Gal(F_0/\Q)},\, i=1,\ldots,n,\, j=1,\ldots,p^m.  $$
 By the interpolation property of the horizontal $p$-adic $L$-function, this is equivalent to
 $$ L(f_i, \chi'(\chi\chi_0)^j,k_i/2)\neq 0.$$
 As above, let $F_{\chi\chi_0}$ denote the cyclic order $p^m$ number field cut out by $\chi\chi_0$. Then by the coprimality assumption, we conclude from basic Galois theory that the compositum $F=F_0F_{\chi\chi_0}$ has Galois group exactly $G_0\times \Z/p^m\cong G$. Note that  $\chi\chi_0$ uniquely determines $\chi$, and, by the conductor discriminant formula, 
$$\disc(F)= \prod_{\chi'\in \widehat{\Gal(F_0/\Q)}}\,\,\prod_{j=1}^{p^m} \cond(\chi' (\chi\chi_0)^j)\ll_{F_0,G} \left(\prod_{j=1}^{p^m}\cond( (\chi\chi_0)^j)\right)^{|G_0|}= \disc(F_{\chi\chi_0})^{|G_0|},$$
and since $M_0$ is considered fixed we have $\disc(F_{\chi\chi_0})\asymp \disc(F_{\chi})$. The result now follows from Lemma \ref{lem:taub} since indeed 
$$|G_0|(p^m-p^{m-1})=|G|(p-1)/p=1/a(G),$$
and the set of orderly primes of order $m$ for $\rho$ has positive density.
\end{proof}
Combining with the results from Section \ref{sec:orderly} we obtain the following result.
\begin{theorem}
\label{thm:propagGappl}
Let $f_1 ,\dots, f_n$ be newforms of even weights $k_i$ respectively. Assume that
$$L(f_1,k_1/2)\cdots L(f_n,k_n/2)\neq 0.$$
Then there exists a $M\geq 1$ such that if $G$ is a finite abelian group such that any prime divisor of $|G|$ is at least $M$ then the following holds: there exists a constant $\kappa<1$ such that

\begin{align}\label{eq:FGfi}
|\{ F\in \mathcal{F}_G(X)\mid L(f_1,\chi,k_1/2)\cdots L(f_n,\chi,k_n/2) \neq 0\text{ \rm for all $\chi\in\widehat{\Gal(F/\Q)}$} \}|\gg
\frac{X^{a(G)}}{(\log X)^{\kappa}},\end{align} 
as $X\rightarrow \infty$.

Furthermore, if $f_i=f_{E_i}$ is associated with an elliptic curve for all $i=1,\ldots,n$ then one can take $M=\max\{167, 3n+2\}$, and if $E_1,\ldots,E_n$ are all semi-stable, one can take $M= \max\{11,n+1\}$. If $n=2$ then it suffices that the mod $p$ representations associated with $f_1,f_2$ are absolutely irreducible for all prime divisor $p$ of $|G|$.
\end{theorem}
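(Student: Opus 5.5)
The plan is to deduce the statement directly from Theorem \ref{thm:propagG}. That result reduces everything to one hypothesis: for every primary factor $\Z/p^m$ of $G$, the orderly primes of order $m$ for $\rho_1\oplus\cdots\oplus\rho_n$ must have positive density. Here $\rho_i=\overline{\rho}_{f_i,\mathfrak{p}_i}$, with $\mathfrak{p}_i$ the prime above $p$ in $K_{f_i}$ fixed by the embedding (\ref{eq:embed}). So the only task is to show that this hypothesis holds for every prime $p$ dividing $|G|$, under each of the three sets of assumptions.

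\textbf{General case.} Theorem \ref{thm:non-van} supplies a threshold $M_0$, depending only on $f_1,\ldots,f_n$, such that for $p\ge M_0$ the set of orderly primes of order $m$ for $\rho_1\oplus\cdots\oplus\rho_n$ has positive density for all $m\ge 1$. This applies whatever choice of primes $\mathfrak{p}_i$ above $p$ is made. The reason is that the proof of Theorem \ref{thm:non-van} only uses Proposition \ref{far-from-unipotent}, and the constant $c_f$ there is uniform in $\mathfrak{p}$: it equals $k-1$ or $1$ by Lemmas \ref{CM-case} and \ref{non-CM-case}. Both lemmas hold for all sufficiently large $p$, for every $\mathfrak{p}\mid p$. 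I will check this uniformity explicitly, and this is the only delicate point. We then set $M=M_0$. If every prime divisor of $|G|$ is at least $M$, the hypothesis of Theorem \ref{thm:propagG} holds for each primary factor, and (\ref{eq:FGfi}) follows with some $\kappa<1$.

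\textbf{Elliptic curves.} When $f_i=f_{E_i}$ we have $K_{f_i}=\Q$, so $\rho_i=\overline{\rho}_{E_i,p}$. Theorem \ref{thm:ordrlyellipticcurves}(ii) gives positive density of orderly primes of every order $m$ for $p\ge\max\{167,3n+2\}$, with no condition on the $j$-invariants. In the semi-stable case, Theorem \ref{thm:ordrlyellipticcurves}(iii) gives the same for $p\ge\max\{11,n+1\}$. Feeding either conclusion into Theorem \ref{thm:propagG} gives the stated values of $M$.

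\textbf{The case $n=2$.} Proposition \ref{prop:two-non-van} shows that if $\rho_1,\rho_2$ are absolutely irreducible, then the orderly primes of every order $m$ for $\rho_1\oplus\rho_2$ have positive density. One subtlety is that Proposition \ref{prop:two-non-van} is stated with both representations valued in the same $\GL_2(\F_q)$. I will handle this by enlarging to a common finite field $\F_q$ containing both residue fields, which preserves absolute irreducibility, and then apply it at each $p\mid |G|$. The primes $\mathfrak{p}_i$ must be the ones determined by (\ref{eq:embed}). If the hypothesis is read as ``for some $\mathfrak{p}_i$'' as in Theorem \ref{thm:nonvanishing}, I will choose the embedding (\ref{eq:embed}) separately for each prime $p$ so that it induces the good $\mathfrak{p}_i$. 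This is allowed because the construction in Theorem \ref{thm:propagG} only uses, at the step for $p$, an embedding into $\overline{\Q}_p$. Theorem \ref{thm:propagG} then concludes.
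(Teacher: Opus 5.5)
Your proposal is correct and follows the paper's proof: you reduce to Theorem \ref{thm:propagG} and verify its orderly-prime hypothesis via Theorem \ref{thm:non-van}, Theorem \ref{thm:ordrlyellipticcurves}(ii)/(iii), and Proposition \ref{prop:two-non-van}; your remarks on uniformity in $\mathfrak{p}\mid p$ and on passing to a common $\F_q$ are valid refinements that the paper leaves implicit. One caveat concerns your fallback of choosing the embedding (\ref{eq:embed}) so that it induces prescribed good primes $\mathfrak{p}_1,\mathfrak{p}_2$: this can fail (e.g.\ if $K_{f_1}=K_{f_2}$ and $\mathfrak{p}_1\neq\mathfrak{p}_2$, since a single embedding induces only one prime on that field), but it is not needed for the statement as written, which refers to the representations fixed by (\ref{eq:embed}).
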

\begin{proof}
   From the propagation of non-vanishing from Theorem \ref{thm:propagG} it suffices that for all cyclic factors $\Z/p^m$ in $G$ there is a positive proportion of orderly primes of order $m$ for the mod $p$ representation $\rho_1\oplus \cdots \oplus \rho_n$ (associated to $f_1\boxplus \cdots \boxplus f_n$). In view of Theorem \ref{thm:non-van} this is true for $p$ sufficiently large. In the case of elliptic curves it suffices by Theorem  \ref{thm:ordrlyellipticcurves} that $p\geq \max\{167, 3n+2\}$. Finally, if $n=2$ then absolute irreducibility of $\rho_1$ and $\rho_2$ suffices by Proposition \ref{prop:two-non-van}.
\end{proof}
\begin{remark}
    Note that for $F/\Q$ abelian the condition:
    $$
``L(f_1,\chi,k_1/2)\cdots L(f_n,\chi,k_n/2) \neq 0\text{ \rm for all $\chi\in\widehat{\Gal(F/\Q)}$''} $$ means exactly that $$L(f_1/F,k_1/2)\cdots L(f_n/F,k_n/2) \neq 0,$$
where $L(f_i/F,s)$ denotes the base change $L$-function of $f_i$ to $F$.
\end{remark}

\subsection{Non-vanishing for characters of fixed order} \label{sec:nonvanishing}
We now present applications of the results in Section \ref{sec:orderly} to simultaneous non-vanishing of twisted central $L$-values by characters of \emph{fixed order} for holomorphic newforms of even weight.  Recall here the definition of the set $\mathcal{K}_d(X)$ of characters of order $d$ and conductor $\leq X$ as defined in (\ref{eq:Kd}).

In \cite[Section 5.2]{KrizNordentoft23} a general ``propagation of vanishing''-principle was proved in the presence of a horizontal $p$-adic $L$-function. As illustrated in the previous section the existence follows if there is a positive proportion of orderly primes for the corresponding mod $p$ representation(s).  Using the existence of the horizontal $p$-adic $L$-function, the following result was obtained in \cite{KrizNordentoft23}.

\begin{theorem}[\!\text{\!\cite[Corollary 5.19]{KrizNordentoft23}}] \label{thm:propag}
Let $f_1 ,\dots, f_n$ be newforms of even weights $k_i$ respectively. Assume that $$L(f_1,k_1/2)\cdots L(f_n,k_n/2)\neq 0.$$
Let $d\geq 2$ be an integer and let $p$ be a prime dividing $d$ with $p^m|\!|d$. Assume that the set of  orderly primes of order $m$ for $\rho_1\oplus \cdots\oplus \rho_n$ has positive density among all primes, where $\rho_i$ denotes the mod $p$ representation associated with $f_i$ as in (\ref{eq:modp}). Then there exists a constant $\kappa<1$ such that
\begin{align}
|\{ \chi\in \mathcal{K}_d(X)\mid  L(f_1,\chi,k_1/2)\cdots L(f_n,\chi,k_n/2)\neq 0 \}|\gg
\frac{X}{(\log X)^{\kappa}},\quad \text{as }X\rightarrow \infty.\end{align} 
Here $\kappa$ and the implied constant might depend on $d$ and $f_1,\ldots, f_n$.
\end{theorem}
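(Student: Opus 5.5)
The plan is to follow the scheme of the cyclic step in the proof of Theorem \ref{thm:propagG}, now with no auxiliary field $F_0$. We will work directly with the product measure attached to $f_1\boxplus\cdots\boxplus f_n$.

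\textbf{Construction of the measure.} By assumption, the orderly primes of order $m$ for $\rho=\rho_1\oplus\cdots\oplus\rho_n$ have positive density. In particular condition (7) holds, so Theorem \ref{thm:padicL} applies. Let $\mathcal{L}=(\ell_i)$ be the orderly primes of order $1$ for $\rho$ that are coprime to $dN_1\cdots N_n$. Fix a sign $\pm$ that will be matched to the parity $\chi(-1)$; since all twists of a given parity can be handled separately, I will fix one parity class with positive density of characters. Form
$$\nu:=\prod_{i=1}^n\nu^\pm_{f_i,p,\mathcal{L}}\in R\llbracket \textstyle\prod_i\Z/p^{m_i}\rrbracket.$$
Push it forward to $\prod_i\Z/p^{\min(m,m_i)}$, which has bounded exponent $m$. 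By the interpolation formula (\ref{eq:interpolation}) and the hypothesis $L(f_1,k_1/2)\cdots L(f_n,k_n/2)\neq0$, we have $\pi_\ast\nu(\1)\neq0$. Theorem \ref{thm:structure} then yields a fixed finite group $M_0$. (Theorem \ref{thm:structurenew} is not needed here, because only a single character must be non-vanishing, not a full orbit.) The conclusion is that for every character $\chi_p$ of order $p^m$ whose conductor is coprime to that of $M_0$, some $\chi_p'\in\langle\chi_p\rangle\setminus\langle\chi_p\rangle^p$ and some $\chi_0\in M_0$ satisfy $\nu(\chi_p'\chi_0)\neq0$. Here $\chi_p'\chi_0$ still has exact order $p^m$, because $\chi_p'$ generates $\langle\chi_p\rangle$ and the conductors are coprime.

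\textbf{Passing from $p^m$ to $d$.} This step is the main obstacle. Write $d=p^m d'$ with $p\nmid d'$. The measure above only sees $p$-power characters, while we need characters of exact order $d$. I would handle the prime-to-$p$ part by fixing, once and for all, a character $\psi$ of order $d'$ with conductor $D'$ coprime to all primes used. We then replace each $f_i$ by the twist $f_i\otimes\psi$, which is again a newform since the conductors are coprime.

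This twist creates a problem at the trivial character: non-vanishing of $\nu_{f_i\otimes\psi}(\1)=L(f_i,\psi,k_i/2)$ is not given by the hypothesis. To get around this, I would not fix $\psi$ a priori. Instead, following \cite[Section 5.2]{KrizNordentoft23}, I would propagate in two stages, exactly as in the induction of Theorem \ref{thm:propagG}:

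\begin{enumerate}
\item First produce $\psi$ of order $d'$ with $\prod_i L(f_i,\psi,k_i/2)\neq0$, inductively over the primes dividing $d'$ using their own horizontal measures.
\item Then apply the $p$-part argument to the twisted forms $f_i\otimes\psi$.
\end{enumerate}

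This is precisely where the theorem is stated as a citation of \cite[Corollary 5.19]{KrizNordentoft23}. I expect the inductive step over primes of $d'$ to require orderly primes at those primes too, which the statement does not assume. So I would instead use Lemma \ref{lem:Galoislemma} with $F=F_\psi$, and choose $\psi$ with trivial value at $\1$. Concretely, I would try $d'$-power characters produced via the fixed prime $p$ measure applied to an auxiliary product, and simply cite \emph{op.\ cit.} if this does not close.

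\textbf{Counting.} Since $M_0$ and $\psi$ are fixed, the map $\chi_p\mapsto\chi_p'\chi_0\psi$ is at most boundedly-many-to-one. Its conductor is comparable to $\cond(\chi_p)$. The characters $\chi_p$ of order $p^m$ with conductor supported on $\mathcal{L}$, and having at least one prime $\equiv1\bmod p^m$, number $\gg X/(\log X)^{\kappa}$ by the Tauberian argument of Lemma \ref{lem:taub} (\cite[Lemma 5.7]{KrizNordentoft23}), with $\kappa<1$ determined by the density of $\mathcal{L}$. This gives the bound stated in the theorem.
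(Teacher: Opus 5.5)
You have a genuine gap: the proposal proves the statement only when $d=p^m$.

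Your $p$-power argument is sound. It is the same mechanism the paper uses in the cyclic step of Theorem~\ref{thm:propagG}: push $\prod_i\nu_{f_i,p,\mathcal{L}}$ forward to exponent $m$, apply Theorem~\ref{thm:structure} to $M=\langle\chi_p\rangle$ using $\nu(\mathbf{1})\neq0$, and count through the bounded-to-one map $\chi_p\mapsto\chi_p'\chi_0$. The paper gives no proof of this statement and imports it from \cite[Corollary 5.19]{KrizNordentoft23}.

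For $d=p^md'$ with $d'>1$, you correctly see that you need a base point. This is a character $\psi$ of exact order $d'$, with conductor coprime to $pN_1\cdots N_n$ and to the primes in $\mathcal{L}$, for which the product measure $\prod_i\nu_{f_i\otimes\psi,p}$ is non-zero. Given such a $\psi$, your argument run for the forms $f_i\otimes\psi$ (with Lemma~\ref{lem:Galoislemma} supplying orderly primes for $\rho\otimes\overline{\psi}$) and the map $\chi_p\mapsto\chi_p'\chi_0\psi$ would finish the proof. But the proposal never produces $\psi$, and none of your suggested routes can:
\begin{itemize}
\item Induction over the primes dividing $d'$ needs orderly primes at those primes, which the statement does not assume; you note this yourself.
\item Lemma~\ref{lem:Galoislemma} only shows that the measures $\nu_{f_i\otimes\psi,p}$ exist. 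Their product is non-zero exactly when $\prod_iL(f_i,\psi\chi,k_i/2)\neq0$ for some $\chi$ of $p$-power order. That is a single instance of the statement you are trying to prove, so this route is circular.
\item No auxiliary construction with the prime-$p$ measure can reach $\psi$. These measures interpolate only characters of $p$-power order, and non-vanishing cannot propagate from $\mathbf{1}$ to a character of order prime to $p$. Over a $p$-adic ring containing $\mu_{d'}$, the group ring of a group of order prime to $p$ splits, via its idempotents, into a product of copies of the coefficient ring indexed by characters. So the $\psi$-components of the theta elements are independent of the trivial component. Theorem~\ref{thm:structure} works only because all characters of a $p$-group are congruent to $\mathbf{1}$ modulo the prime above $p$.
\end{itemize}
An input in the prime-to-$p$ direction, not contained in your argument, is therefore required. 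Falling back on ``simply cite op.\ cit.\ if this does not close'' leaves exactly that step unproved.
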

In view of the results in Section \ref{sec:orderly} we obtain a proof of Theorem \ref{thm:nonvanishing}  from the introduction.
\begin{proof}[Proof of Theorem \ref{thm:nonvanishing}]
By Theorem \ref{thm:propag} the lower bound (\ref{eq:conclusionthm}) holds as long as there is a prime $p$ with $p^m|\!| d$ such that the set of orderly primes of order $m$ for $\rho_1\oplus \cdots \oplus \rho_n$ has positive density, where $\rho_1,\ldots, \rho_n$ denote mod $p$ representations associated with $f_1,\ldots, f_n$. In view of Theorem \ref{thm:non-van} this is true for $p$ sufficiently large. This proves the first part of the theorem. Furthermore, for $n=2$ we conclude from Proposition \ref{prop:two-non-van} that when $\rho_1,\rho_2$ are both absolutely irreducible then the set of orderly primes of order $m$ for $\rho_1\oplus \rho_2$ has positive density. This yields the last part of the theorem in view of Theorem \ref{thm:propag}.    \end{proof}

\section{Arithmetic applications}
In this section we will present arithmetic applications of the non-vanishing results for central $L$-values. For general newforms these applications will be to finiteness of so-called \emph{Selmer groups}, which are subgroups in Galois cohomology  cut out by certain local conditions. When the cusp form is associated with an elliptic curve, and more generally modular abelian varieties, this serves as a (more easily) computable proxy for the rational points and will yield results on the rational points themselves. 

\subsection{Kato's divisibility theorem and its consequences} Let $f$ be a holomorphic newform of even weight $k$ and Hecke field $K$. Let $p$ be a prime and put $\mathfrak{p}=\overline{\mathfrak{p}}\cap K$ with $\overline{\mathfrak{p}}\subset\overline{\Z}$ determined by the embedding (\ref{eq:embed}). Let    $T_f\subset V_{f,\mathfrak{p}}(k/2)\cong K_\mathfrak{p}^2$ be a  $\mathcal{O}_{K_\mathfrak{p}}$-lattice of rank 2 which is preserved by the action of the Galois group $G_\Q$ under the image of the renormalized associated $p$-adic Galois representation
$$ \rho_{f,\mathfrak{p}}\otimes \chi_\mathrm{cyc}^{-k/2}:G_\Q\rightarrow \GL_2(\mathcal{O}_{K_\mathfrak{p}})\subset \GL_2(K_\mathfrak{p}),$$
where $\chi_\mathrm{cyc}:G_\Q\rightarrow \Z_p^\times \subset \mathcal{O}_{K_\mathfrak{p}}^\times$ denotes the cyclotomic character. For an abelian extension $F/\Q$ we consider the associated \emph{$p$-adic Selmer group} inside the Galois cohomology of $F$ acting on the quotient $V_{f,\mathfrak{p}}(k/2)/T_f$: 
\begin{equation}S(T_f/F)\subset H^1(F,V_{f,\mathfrak{p}}(k/2)/T_f).\end{equation} 
This is a $\mathcal{O}_{K_\mathfrak{p}}$-module with a natural $\Gal(F/\Q)$-action; we will refer to \cite[Section 14.1]{Kato} for details on the construction. Given a Galois characters $\chi:\Gal(F/\Q)\rightarrow \overline{\Q}^\times $ one defines the $\chi$-isotypic component $S(T_f/F)^{(\chi)}$ via the $\Gal(F/\Q)$-action; again we refer to \cite[Section 14.1]{Kato} for details. 
It is a fundamental result of Kato that the non-vanishing of the  twisted central $L$-value controls the size of Selmer groups, confirming one direction of the equivariant Bloch--Kato conjecture in this case:
\begin{theorem}[\!{\!\cite[Theorem 14.2(2)]{Kato}}]\label{thm:Kato}
  If $L(f,\chi,k/2)\neq 0$, then $S(T_f/F)^{(\chi)}$ is finite.  
\end{theorem}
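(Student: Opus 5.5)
This statement is quoted from \cite[Theorem 14.2(2)]{Kato}, so in the paper it is a citation. Any proof has to reproduce Kato's Euler system argument; nothing proved earlier in the paper gives a shortcut. The plan follows the standard route: construct cohomology classes from zeta elements, relate them to $L(f,\chi,k/2)$ by an explicit reciprocity law, and bound the $\chi$-part of the Selmer group by Kolyvagin's Euler system machinery.

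\emph{Step 1: zeta elements and norm relations.} We begin with Beilinson--Kato elements. These are cup products of Siegel units in the $K_2$ of modular curves $Y(M,N)$ and their Kuga--Sato analogues for weight $k$. Their $p$-adic \'etale realizations, projected to the $f$-isotypic part, give classes
\[
z_{F'} \in H^1\bigl(\Oo_{F'}[1/pN],\, T_f^\ast(1)\bigr)
\]
for all abelian extensions $F'/\Q$ containing $F$. Distribution relations for Siegel units give norm compatibilities: corestriction from $F'(\zeta_\ell)$ to $F'$ multiplies by the Euler factor $P_{\ell}(\Frob_\ell^{-1})$. In other words the classes form an Euler system for $T_f^\ast(1)$ in the sense of Rubin.

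\emph{Step 2: explicit reciprocity law.} Next we localize $z_F$ at $p$ and apply the Bloch--Kato dual exponential, then take the $\chi$-component. Kato's explicit reciprocity law, proved via $p$-adic Hodge theory and Fontaine's rings $B_{\mathrm{dR}}$, identifies $\exp^\ast(z_F)^{(\chi)}$, up to a nonzero period, with the critical value $L(f,\chi,k/2)$. The hypothesis $L(f,\chi,k/2)\neq 0$ then says that the $\chi$-part of the Euler system class is non-torsion and, more precisely, is nonzero in the singular quotient at $p$.

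\emph{Step 3: Kolyvagin derivatives and duality.} From the Euler system we build Kolyvagin derivative classes $\kappa_n$ for squarefree products $n$ of Kolyvagin primes. Their images in the local cohomology at primes $\ell\mid n$ control the dual Selmer group through Poitou--Tate global duality, following Rubin's \emph{Euler systems}. Together with the non-triviality from Step 2, this bounds $S(T_f/F)^{(\chi)}$ by the index of the subgroup generated by $z_F^{(\chi)}$, which is finite. To make Chebotarev choices of auxiliary primes with prescribed Frobenius, it is cleanest to work over $\Oo_{K_\mathfrak{p}}[\chi]$ and first twist by $\chi$, reducing to the case where $\chi$ is trivial.

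\emph{Main obstacle.} I expect the hardest step to be Step 3 without big-image hypotheses. Rubin's bounding argument usually needs an element $\tau$ acting with a free rank-one fixed quotient and irreducibility of the residual representation. For finiteness alone, as opposed to sharp bounds, I would follow Kato's \S13--14. There one uses that the image of $\rho_{f,\mathfrak{p}}$ contains an open subgroup of $\SL_2(\Z_p)$ (non-CM case, Ribet). In the CM case one can instead reduce to Rubin's Euler system of elliptic units. In both cases this only costs a bounded error term, which is enough for finiteness.
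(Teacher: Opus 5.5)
The paper gives no proof of its own here: it imports Kato's Theorem 14.2(2) as a black box, so treating the statement as a citation is correct. Your outline is the architecture of Kato's argument. It runs through Beilinson--Kato zeta elements with Euler-factor norm relations, then the explicit reciprocity law tying $\exp^\ast$ of the $\chi$-component to $L(f,\chi,k/2)$, and finally a Kolyvagin--Rubin bound on the strict-at-$p$ Selmer group of the Tate dual, upgraded to $S(T_f/F)^{(\chi)}$ by Poitou--Tate because the class is nonzero in the singular quotient at $p$. Your separate treatment of the CM case is also warranted, since for a CM elliptic curve every $\tau\in G_{\Q(\mu_{p^\infty})}$ has coinvariants of dimension $0$ or $2$. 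The rank-one-coinvariants hypothesis therefore genuinely fails there, and that case needs a different input, for example passing to the Hecke character over the CM field as with elliptic units.
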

Combining this with our Theorem \ref{thm:nonvanishing} we obtain the following arithmetic consequence. Recall the definition of the set of order $d$ characters  $\mathcal{K}_d(X)$ and the field $F_\chi$ cut out by a Galois (or Dirichlet) character $\chi$.
\begin{corollary}
    Let $f_1,\ldots, f_n$ be holomorphic newforms of even weights $k_i$. Assume that 
    $$L(f_1,k_1/2)\cdots L(f_n,k_n/2)\neq 0.$$  
    Let $p$ be a prime and for $F/\Q$ abelian let $S(T_i/F)$ denote the $p$-adic Selmer group associated to $f_i$ as above. Then there exists a subset $\mathcal{D}\subset \N$ of natural density $1$ so that for any $d\in \mathcal{D}$ the following holds: there exists a constant $\kappa<1$ so that 
    \begin{align}\label{eq:conclusioncor}
|\{ \chi\in \mathcal{K}_d(X) \mid S(T_i/F_\chi)^{(\chi)}\text{ \rm is finite for all } i=1,\ldots, n \}| \gg \frac{X}{(\log X)^{\kappa}},
\end{align}  
as $X\rightarrow \infty$.
    \end{corollary}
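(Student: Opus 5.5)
The plan is to combine Theorem \ref{thm:nonvanishing} with Kato's Theorem \ref{thm:Kato}, character by character. The set $\mathcal{D}$ will be defined by the size of the prime factors of $d$, and the only point needing a short separate argument is that this set has natural density $1$.

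First, let $M\geq 1$ be the constant from Theorem \ref{thm:nonvanishing}, which depends only on $f_1,\ldots,f_n$. Define
$$\mathcal{D}:=\{d\geq 2 \mid d \text{ has a prime divisor } \geq M\}.$$
The complement of $\mathcal{D}$ in $\N$ consists of $1$ together with the integers all of whose prime factors lie in the finite set $\{\ell<M\}$. Writing such an integer as $\prod_{\ell<M}\ell^{e_\ell}$ with each $e_\ell\leq \log_2 X$, the number of them up to $X$ is at most $(1+\log_2 X)^{\pi(M)}=o(X)$. Hence $\mathcal{D}$ has natural density $1$.

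Next, fix $d\in\mathcal{D}$. Theorem \ref{thm:nonvanishing} applies to $d$ and gives $\kappa<1$ such that at least $\gg X/(\log X)^\kappa$ characters $\chi\in\mathcal{K}_d(X)$ satisfy $L(f_i,\chi,k_i/2)\neq 0$ for all $i=1,\ldots,n$. Fix such a $\chi$ and let $F=F_\chi$, so that $\chi$ is a character of $\Gal(F_\chi/\Q)$. Theorem \ref{thm:Kato}, applied to each $f_i$ at the fixed prime $p$, then shows that $S(T_i/F_\chi)^{(\chi)}$ is finite for every $i$, which is exactly the condition counted in \eqref{eq:conclusioncor}. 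Note that the prime $p$ is arbitrary here and plays no role in the choice of $\mathcal{D}$, since Kato's theorem holds for every $p$.

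The one bookkeeping issue I expect is a normalization mismatch between Kato's $\chi$-isotypic component and the $L$-value: depending on convention, Kato's statement may involve $L(f,\overline{\chi},k/2)$ rather than $L(f,\chi,k/2)$. This is harmless, since $\mathcal{K}_d(X)$ is stable under $\chi\mapsto\overline{\chi}$, which preserves both the order and the conductor. If needed, I would therefore replace the non-vanishing set by its image under complex conjugation; this leaves its size unchanged and so preserves the lower bound.
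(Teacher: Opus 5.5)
Your proposal is correct and follows the paper's own argument: combine Theorem \ref{thm:nonvanishing} with Kato's Theorem \ref{thm:Kato}, taking $\mathcal{D}$ to be the integers with a prime divisor at least $M$, which has natural density one. Your explicit count of $M$-smooth integers and your remark that $\chi\mapsto\overline{\chi}$ preserves $\mathcal{K}_d(X)$ simply fill in details the paper leaves implicit.
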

    \begin{proof}
 This follows directly by combining Theorem \ref{thm:Kato} and Theorem \ref{thm:nonvanishing} observing that for $M\geq 1$ fixed the set of integers with a prime divisor larger than $M$ has natural density one among all positive integers.       
    \end{proof}
\subsubsection{Rational points on abelian varieties} 
Now we will restrict to the case that $f$ is a newform of weight $2$ with Hecke field $K_f$. For each $\sigma\in\Gal(K_f/\Q)$ one obtains a newform $f^\sigma$ by acting on the Fourier coefficients and it is a theorem of Shimura \cite{Shimura77} that 
$$L(f,1)\neq0 \Leftrightarrow L(f^\sigma,1)\neq 0.$$ 
There is an associated simple abelian variety $A_f/\Q$ of dimension $ [K_f:\Q]$ and the Hasse--Weil $L$-function $L(A_f,s)$ admits analytic continuation whic is equal, up to finite Euler product non-vanishing at $s=1$, to the product
\begin{equation}
    \label{eq:HasseWeilfactor}\prod_{\sigma\in \Gal(K_f/\Q)} L(f^\sigma, s),
\end{equation}
where $f^\sigma$ denotes a Galois conjugate of $f$ (see \cite{Ribet04} and the references therein).  
Recall that  an abelian variety $A/\Q$ is of \emph{$\GL_2$-type} if there  exists a $\Q$-algebra embedding $K\subset \mathrm{End}_\Q(A)\otimes_\Z \Q$ where $K$ is a field of degree equal to the dimension of $A$. It follows by combining \cite[Theorem 4.4]{Ribet04} with \cite{KhareWintenberger2009a,KhareWintenberger2009b} that any simple abelian variety $A/\Q$ of $\GL_2$-type is isogenous to $A_f$ for some newform $f$ of weight 2. In particular, an abelian variety $A/\Q$ is isogenous to a product of $\GL_2$-type abelian varieties if and only there is a surjective morphism $J_1(N)\twoheadrightarrow A$ from the Jacobian of a modular curve $X_1(N)$, which  we referred to in the introduction as a \emph{modular abelian variety}. 
When $A/\Q$ is simple of $\GL_2$-type,  the endomorphism algebra $K=\mathrm{End}_\Q(A)\otimes_\Z \Q$ is itself a field of degree equal to the dimension of $A$  (see \cite[Theorem 2.1]{Ribet04}) which we will refer to as the \emph{coefficient field of $A$}. This is isogeny invariant and up to isogeny we can assume that $\mathrm{End}_\Q(A)=\mathcal{O}_{K}$ is the maximal order in $K$.  
In this case $A$ is a module over $\mathcal{O}_{K}$ and so for any prime $\mathfrak{p}$ in $K$ we get a $G_\Q$-module $A[\mathfrak{p}]$ by taking the kernel of $\mathfrak{p}$. This is a $2$-dimensional Galois representation over the residue field $\F_\mathfrak{p}$, see e.g.\ the discussion in \cite[Section 3]{Ribet04} for more details.   

Now let $A_f/\Q$ be the simple abelian variety associated with a weight 2 newform $f$. For $F/\Q$ an abelian extension it follows from the Mordel--Weil Theorem that the $F$-rational points $A_f(F)$ is a finitely generated abelian group admitting an action of $\Gal(F/\Q)$. One  obtains a finite dimensional complex representation $A_f(F)\otimes_\Z \C$ (with the trivial action on the second component) with an isotypic  decomposition 
\begin{equation} \label{eq:decomp}A_f(F)\otimes_\Z \C = \bigoplus_{\chi\in \widehat{\Gal(F/\Q)}} \left(A_f(F)\otimes_\Z \C\right)^{(\chi)}, \end{equation}
in terms of complex characters $\chi:\Gal(F/\Q)\rightarrow \C^\times$.
As spelled out in \cite[Section 3]{MV02} (see the equation before (3.2) as well as the discussion in \cite[Section 1.5]{Merel01}) it follows from a standard Kummer exact sequence that we have the following consequence of Kato's work \cite{Kato}: 
\begin{equation}
    \label{eq:Katocons}\prod_{\sigma\in \Gal(K/\Q)}L(f^\sigma, \chi,1)\neq 0 \text{ implies that }\dim\left(A_f(F)\otimes_\Z \C\right)^{(\chi)}=0.
\end{equation} 
We summarize the above discussion as follows.
\begin{corollary}[\!{Cf. \cite[Corollary 14.3]{Kato},\cite[equation (3.2)]{MV02}}]\label{cor:Kato}
  Let $f$ be a newform of weight $2$ with associated abelian variety $A_f/\Q$ and coefficient field $K$. Let $F/\Q$ be an abelian extension. Assume that $L(f^\sigma,\chi,1)\neq0$ for all $\sigma\in \Gal(K/\Q)$ and all $\chi\in \widehat{\Gal(F/\Q)}$. Then the group of $F$-rational points $A_f(F)$ is finite. 
\end{corollary}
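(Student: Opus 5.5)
The argument combines Kato's divisibility theorem (Theorem \ref{thm:Kato}) with the isotypic decomposition (\ref{eq:decomp}) and a Kummer sequence. First, replacing $A_f$ by an isogenous variety changes neither the finiteness of $A_f(F)$ nor the $L$-functions. So we may assume $\mathrm{End}_\Q(A_f)=\mathcal{O}_K$, where $K=K_f$ is the coefficient field. Since $A_f(F)$ is finitely generated by Mordell--Weil, it is finite exactly when $A_f(F)\otimes_\Z\C=0$. By (\ref{eq:decomp}) it therefore suffices to prove $\left(A_f(F)\otimes_\Z\C\right)^{(\chi)}=0$ for every $\chi\in\widehat{\Gal(F/\Q)}$. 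This reduces the claim to the implication (\ref{eq:Katocons}), applied character by character. The hypothesis $L(f^\sigma,\chi,1)\neq 0$ for all $\sigma$ is exactly what (\ref{eq:Katocons}) requires.

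To justify (\ref{eq:Katocons}), fix $\chi$ and a prime $\mathfrak{p}$ of $K$ above some $p$. Then proceed as follows.

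\begin{enumerate}
\item The $\mathfrak{p}$-adic Tate module $T_\mathfrak{p}(A_f)$ is a rank-2 $\mathcal{O}_{K_\mathfrak{p}}$-lattice in $V_{f,\mathfrak{p}}(1)$, which is the $p$-adic representation of $f$ twisted as in Kato's setup with $k=2$. So we may take $T_f=T_\mathfrak{p}(A_f)$.
\item The Kummer sequence for $A_f[\mathfrak{p}^n]$ over $F$, passed to the direct limit, gives a $\Gal(F/\Q)$-equivariant injection
$$A_f(F)\otimes_{\mathcal{O}_K}K_\mathfrak{p}/\mathcal{O}_{K_\mathfrak{p}}\hookrightarrow S(T_f/F).$$
The image lies in the Bloch--Kato/Kato Selmer group, because the local Kummer images coincide with the finite local conditions.
\item Take $\chi$-isotypic parts. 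Theorem \ref{thm:Kato}, applied to $f$ and $\chi$, shows that $S(T_f/F)^{(\chi)}$ is finite. Hence the $\chi$-part of $A_f(F)\otimes_{\mathcal{O}_K}K_\mathfrak{p}$ vanishes.
\item Since $A_f(F)\otimes_\Z\Q_p\cong\bigoplus_{\mathfrak{p}\mid p}A_f(F)\otimes_{\mathcal{O}_K}K_\mathfrak{p}$, repeat step 3 over all primes $\mathfrak{p}\mid p$. Equivalently, vary the embedding (\ref{eq:embed}); this is where one uses $L(f^\sigma,\chi,1)\neq0$ for every conjugate $f^\sigma$. The result is that $\left(A_f(F)\otimes\overline{\Q}_p\right)^{(\chi)}=0$ for all $\chi$, and hence $\left(A_f(F)\otimes_\Z\C\right)^{(\chi)}=0$.
\end{enumerate}

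The main obstacle is the bookkeeping of the isotypic components. The $\chi$-part in the Selmer group is defined over $\mathcal{O}_{K_\mathfrak{p}}[\chi]$ using a fixed $p$-adic embedding. The $\chi$-part in (\ref{eq:decomp}) is complex, and the $K$-action must be matched with the Galois conjugates $f^\sigma$. I would handle this as in \cite[Section 3]{MV02}. Decompose $A_f(F)\otimes_\Z\overline{\Q}$ simultaneously under $K\otimes\Q(\chi)$. Each simultaneous eigenspace then corresponds to a pair $(\sigma,\chi)$, and Kato's theorem for $f^\sigma$ kills it. The factorisation (\ref{eq:HasseWeilfactor}) ensures that no factor is missed.
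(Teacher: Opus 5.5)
Your argument is correct and follows the paper's proof: the paper also deduces the corollary directly from the isotypic decomposition (\ref{eq:decomp}) together with the implication (\ref{eq:Katocons}), applied character by character. The only difference is that the paper takes (\ref{eq:Katocons}) as a known consequence of Kato's theorem via a standard Kummer exact sequence (citing \cite[Section 3]{MV02} and \cite[Section 1.5]{Merel01}), whereas you sketch that Kummer-sequence and Selmer-group argument yourself, which is precisely the justification those references supply.
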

\begin{proof}
    This follows directly by combining the implication (\ref{eq:Katocons})  and the decomposition (\ref{eq:decomp}).
\end{proof}

Combining the above with the new propagation of non-vanishing result in Theorem \ref{thm:propagG} and our results on the existence of orderly primes in Theorem \ref{thm:non-van}, we obtain proof of the results on Diophantine rank stability claimed in the introduction.
\begin{proof}[Proof of Theorem \ref{thm:abelianvar}]
 Since $A/\Q$ is  isogenous to a product of $\GL_2$-type abelian varieties, we can by the above discussion assume that $A=A_{f_1}\times\cdots\times A_{f_n}$ for newforms $f_i$ of weight $2$ with coefficient fields $K_i$. By the assumption of analytic rank $0$ and the expression (\ref{eq:HasseWeilfactor}) for the Hasse--Weil $L$-function  we can apply Theorem \ref{thm:propagG}  to the set of weight $2$ newforms: $$\{f_i^\sigma\mid i\in \{1,\ldots, n\}, \sigma\in \Gal(K_i/\Q) \},$$
 whenever the orderly condition is satisfied for the finite abelian group $G$. By Theorem \ref{thm:non-van} this is the case when all the prime divisors of $|G|$ are   sufficiently large.
 Now the wanted conclusion follows directly from Corollary \ref{cor:Kato}. In the case of elliptic curves we obtain the explicit sufficient condition on the size of the prime divisors of $G$  from Theorem \ref{thm:ordrlyellipticcurves}.
\end{proof}
Restricting to $A=A_1\times A_2$, a product of two simple  abelian varieties of $\GL_2$-type, we can obtain the following sharpening.
\begin{corollary}
Let $A_1,A_2$ be simple  abelian varieties of $\GL_2$-type over $\Q$ of analytic rank $0$. Let $G$ be a finite abelian group such that for any prime $p$ dividing the order of $G$ there exists primes $\mathfrak{p}_1,\mathfrak{p}_2$ above $p$ in the coefficient fields of $A_1,A_2$, respectively, such that the Galois representations $A_1[\mathfrak{p}_1]$ and $A_2[\mathfrak{p}_2]$ are absolutely irreducible. Then there exists a constant $\kappa<1$ such that 
\begin{equation}
|\{  F\in \mathcal{F}_G(X)\mid   A_1(F),A_2(F)\text{ \rm are finite} \}| \gg \frac{X^{a(G)}}{(\log X)^{\kappa}},\quad \text{as $X\rightarrow \infty$.}
\end{equation}

\end{corollary}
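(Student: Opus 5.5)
The plan is to follow the proof of Theorem \ref{thm:abelianvar} and replace Theorem \ref{thm:non-van} by Proposition \ref{prop:two-non-van}. By isogeny invariance of finiteness of $F$-points we may assume $A_i=A_{f_i}$, where $f_i$ is a weight $2$ newform with coefficient field $K_i$. The analytic rank $0$ hypothesis and the factorisation (\ref{eq:HasseWeilfactor}) give $L(f_i^\sigma,1)\neq 0$ for all $\sigma\in\Gal(K_i/\Q)$ and $i=1,2$. By Corollary \ref{cor:Kato}, it therefore suffices to produce many $F\in\mathcal{F}_G(X)$ with $L(f_i^\sigma,\chi,1)\neq 0$ for all $i$, all $\sigma$, and all $\chi\in\widehat{\Gal(F/\Q)}$.

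Theorem \ref{thm:propagG} would give exactly this for the collection $\{f_i^\sigma\}$, provided that for each primary factor $\Z/p^m$ of $G$ the orderly primes of order $m$ for $\bigoplus_{i,\sigma}\rho_{f_i^\sigma}$ have positive density. However, this collection has more than two forms, so Proposition \ref{prop:two-non-van} does not apply directly, and Proposition \ref{counter_example} shows the general statement can fail. The idea to get around this is to work with only two forms and recover the Galois conjugates afterwards.

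Fix $p\mid |G|$ and the primes $\mathfrak{p}_1,\mathfrak{p}_2$ from the hypothesis. Choose the embedding (\ref{eq:embed}) and conjugates $\sigma_1,\sigma_2$ so that the prime of $K_i$ induced by $\overline{\mathfrak{p}}$ for $g_i:=f_i^{\sigma_i}$ corresponds to $\mathfrak{p}_i$. Then the mod $p$ representation $\rho_i$ of $g_i$ is $A_i[\mathfrak{p}_i]\otimes\overline{\F}_p$, which is absolutely irreducible. Proposition \ref{prop:two-non-van} then gives positive density of orderly primes of every order for $\rho_1\oplus\rho_2$. It remains to transport non-vanishing to all conjugates. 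Every $\tau\in\Gal(\overline{\Q}/\Q)$ satisfies $L(g^\tau,\chi^\tau,1)/\Omega=\tau\bigl(L(g,\chi,1)/\Omega\bigr)$ by Shimura's algebraicity. Hence it suffices to know $L(g_i,\chi,1)\neq0$ for all characters $\chi$ of $\Gal(F/\Q)$, since that set is stable under $\Gal(\overline{\Q}/\Q)$ and every $f_i^\sigma$ is some $g_i^\tau$.

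Consequently, I would run the inductive argument of Theorem \ref{thm:propagG} for the pair $(g_1,g_2)$, using at each prime $p$ the orderly primes just produced. The choice of $(g_1,g_2)$ may depend on $p$, so I would run the proof with the full set $\{f_i^\sigma\}$ as the target of non-vanishing. At the step for the factor $\Z/p^m$, the measure would be built only from the two forms whose mod $\overline{\mathfrak{p}}$ representations are absolutely irreducible, twisted by the characters of the previously constructed $F_0$. Lemma \ref{lem:Galoislemma} keeps the orderly condition after twisting by $\widehat{\Gal(F_0/\Q)}$. The induction hypothesis that all conjugates are non-vanishing over $F_0$ supplies $\nu(\1)\neq 0$. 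Theorem \ref{thm:structurenew} and Lemma \ref{lem:taub} then finish exactly as before, and Galois conjugation again spreads non-vanishing to all $f_i^\sigma$ and all $\chi$.

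The main obstacle I expect is this Galois-equivariance step. One must check that the set of characters produced is closed under $\Gal(\overline{\Q}/\Q)$, which holds because it consists of all characters of the field $F$. One must also check that the period normalisations behave under conjugation, which I would cite from the Manin--Shimura rationality already used in the construction of $\nu^\pm_{f,p}$.
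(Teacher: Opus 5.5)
Your proposal is correct and follows the paper's route. Proposition \ref{prop:two-non-van} supplies the orderly primes, Theorem \ref{thm:propagG} (run for a pair of forms) gives the non-vanishing, and Corollary \ref{cor:Kato} turns this into finiteness of $A_1(F),A_2(F)$; the paper's proof is a one-line citation of exactly these results, and your extra bookkeeping makes explicit what it leaves implicit, namely choosing conjugates $g_i=f_i^{\sigma_i}$ so that $\overline{\mathfrak{p}}$ induces $\mathfrak{p}_i$ (possibly depending on $p$), and invoking Shimura's Galois equivariance of twisted central values to reach every $f_i^{\sigma}$ as Corollary \ref{cor:Kato} requires (the precise formula carries Gauss sums and the periods of $g^\tau$, though only the non-vanishing consequence is needed).
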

\begin{proof}
 Let $A_i$ be isogenous to $A_{f_i}$ with $f_i$ a newform of weight $2$. Then the assumption on $A_i[\mathfrak{p}_i]$ translates to the fact that $\overline{\rho}_{f_i,\mathfrak{p}_i}$ is absolutely irreducible. Thus the results follows directly from Theorem \ref{thm:propagG} and Corollary \ref{cor:Kato} in view of  Proposition \ref{prop:two-non-van}. 
\end{proof}
Note that applying the above corollary to two elliptic curves  $A_1=E_1$ and $A_2=E_2$ with irreducible residual representations for all primes, we deduce Corollary \ref{cor:E1E2} from the introduction. Here we use that for elliptic curves, irreducibility alone (and not absolute irreducibility) of the mod $2$ representation implies that the orderly primes have positive density (see the proof of Proposition \ref{prop:p=2}).

\section{Appendix}\label{sec:appendix}

In this appendix we prove Proposition \ref{counter_example}. The ideas are explained in the first section, and the second section provides MAGMA code that verifies the details of the proof.

\subsection{Proof Proposition \ref{counter_example}}

Recall that Proposition \ref{counter_example} is the following statement:

\begin{proposition*}
Suppose $q\equiv 1\pmod{4}$. Then there are odd and absolutely irreducible representations $\rho_1 , \rho_2 , \rho_3 : G_{\mathbb{Q}} \rightarrow \mathrm{GL}_2(\mathbb{F}_q)$ such that for all $m$, there are no orderly primes of order $m$ for $\rho_1 \oplus \rho_2 \oplus \rho_3$.
\end{proposition*}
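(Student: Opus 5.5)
The plan is to reduce the statement to pure finite group theory plus an inverse Galois realization.

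\textbf{Reduction.} Suppose $H\leq \mathrm{GL}_2(\mathbb{F}_q)^3$ is a finite group with the following properties:
\begin{enumerate}
\item each projection $\pi_i(H)\leq \mathrm{GL}_2(\mathbb{F}_q)$ acts absolutely irreducibly;
\item $H$ contains an involution $c_0=(c_1,c_2,c_3)$ with $\det c_i=-1$ for $i=1,2,3$;
\item for every $h=(h_1,h_2,h_3)\in H$, some $h_i$ has $1$ as an eigenvalue.
\end{enumerate}
Suppose also that $K/\mathbb{Q}$ is a Galois extension with $\mathrm{Gal}(K/\mathbb{Q})\cong H$, unramified at $p$ and linearly disjoint from $\mathbb{Q}(\zeta_{p^\infty})$, in which complex conjugation maps to $c_0$. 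Then $\rho_i:=\pi_i\circ(G_\mathbb{Q}\twoheadrightarrow H)$ are odd and absolutely irreducible. By linear disjointness, $\mathrm{Gal}(\overline{\mathbb{Q}}/\mathbb{Q}(\zeta_{p^m}))$ still surjects onto $H$, but this does not matter: by (3) no element of $H$ at all avoids eigenvalue $1$ in every component. So no prime is orderly for $\rho_1\oplus\rho_2\oplus\rho_3$, of any order $m$.

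\textbf{Construction of $H$.} I would build $H$ as a $2$-group, which makes the realization step easier. The mechanism for (3) is the Klein four group $V=\{1,a,b,ab\}$ with its three nontrivial characters $\psi_1,\psi_2,\psi_3$: every nontrivial element of $V$ lies in the kernel of exactly one $\psi_i$. Take a $2$-dimensional representation $\sigma$ of a $2$-group $D$ whose image is a dihedral or generalized-quaternion-type subgroup of $\mathrm{GL}_2(\mathbb{F}_q)$. The hypothesis $q\equiv 1\pmod 4$ puts $i\in\mathbb{F}_q$, so there is enough room to make $\sigma$ absolutely irreducible. Then set $H=\{(\sigma(g)\psi_1(v),\sigma(g)\psi_2(v),\sigma(g)\psi_3(v))\}$, where $(g,v)$ ranges over a suitable subgroup of $D\times V$. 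This subgroup is chosen so that the eigenvalues of $\sigma(g)$ are tied to $v$:
\begin{itemize}
\item when $v=1$, $\sigma(g)$ has eigenvalue $1$;
\item when $v\neq 1$, $\sigma(g)$ has eigenvalue $-1$, so that $\sigma(g)\psi_i(v)$ has eigenvalue $1$ for the unique $i$ with $\psi_i(v)=-1$.
\end{itemize}
I would search for such a fiber product using MAGMA, as in the next subsection. The search would run over small $2$-subgroups of $\mathrm{GL}_2(\mathbb{F}_5)$, with $D$ of order $8$ or $16$, and would check (1)--(3) directly by enumerating elements.

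The requirement that the entries lie in $\{\pm 1,\pm i\}$ and that the structure is defined over $\mathbb{Z}[i]$ shows the example works uniformly for all $q\equiv 1\pmod 4$. Equivalently, the construction takes place in characteristic $0$ over $\mathbb{Z}[i]$ and is then reduced modulo any prime above $p$. Absolute irreducibility and the eigenvalue conditions survive reduction because the relevant eigenvalues are $4$th roots of unity and $p$ is odd.

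\textbf{Realization over $\mathbb{Q}$.} Every finite $2$-group, and more generally every finite solvable group, is a Galois group over $\mathbb{Q}$ by Shafarevich's theorem. Its proof allows one to avoid ramification at any finite set of primes, in particular at $p$, which gives linear disjointness from $\mathbb{Q}(\zeta_{p^\infty})$. It also allows one to prescribe the decomposition group at $\infty$, so complex conjugation can be sent to $c_0$. If $H$ turns out to be small, for instance a central product of dihedral groups of order $8$, I would instead realize it concretely as a compositum of quadratic fields and explicit $D_4$-extensions, with imaginary quadratic pieces chosen so that complex conjugation lands on $c_0$.

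\textbf{Main obstacle.} I expect the main obstacle to be finding $H$ so that (1), (2) and (3) hold simultaneously. Condition (3) pushes toward small, nearly diagonal images, while (1) and (2) require non-abelian images containing a reflection of determinant $-1$. This is exactly the tension that Proposition \ref{prop:two-non-van} rules out for two representations, so the third factor and the Klein-four trick are essential. This is the step I would settle by computer search.
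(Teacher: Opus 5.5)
There is a genuine gap: the subgroup $H$ you propose to find cannot exist, so the computer search in your last step would come up empty. In your ansatz every element of $H$ has the form $(\sigma(g)\psi_1(v),\sigma(g)\psi_2(v),\sigma(g)\psi_3(v))$ with $(g,v)$ in a subgroup $S\le D\times V$. In other words, the three components are twists of the single representation $\sigma$ by characters. Since $V$ is abelian, every commutator in $S$ has the form $([g,h],1)$. Hence $H'$ is the diagonal $\{(x,x,x): x\in\sigma(D_0)'\}$, where $D_0$ is the projection of $S$ to $D$.

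Condition (1) says $\sigma(D_0)$ acts absolutely irreducibly, since scalars do not change invariant subspaces. So $\sigma(D_0)$ is non-abelian, and by Lemma \ref{unipotent} the group $\sigma(D_0)'\le\SL_2(\F_q)$ contains a non-unipotent element $x$. Such an $x$ has no eigenvalue $1$. In your setting this is immediate, because $\sigma(D_0)'$ is a non-trivial $2$-group and $p$ is odd. The element $(x,x,x)\in H$ then violates (3), and by Proposition \ref{first_criteria} the orderly primes for $\rho_1\oplus\rho_2\oplus\rho_3$ would even have positive density.

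Your first bullet already contains the contradiction. It requires $\sigma(x)$ to have eigenvalue $1$ for every $x$ with $(x,1)\in S$, and this set contains all of $D_0'$. That forces the $2$-group $\sigma(D_0)'\le\SL_2(\F_q)$ to be unipotent, hence trivial. In short, twisting by characters never changes commutators, so no triple of mutual character twists of one absolutely irreducible representation can be a counterexample.

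The missing idea is to place the Klein four group inside the derived subgroup rather than in a quotient. In the paper, $G$ has order $32$, and its center $\langle a^2,b^2\rangle\cong(\Z/2)^2$ is also its derived subgroup. The $\rho_i$ factor through the three different quotients $G/\langle a^2\rangle$, $G/\langle b^2\rangle$ and $G/\langle a^2b^2\rangle$. Each of these is isomorphic to the Pauli group $P$ with its faithful absolutely irreducible representation over $\F_q$, which is where $i\in\F_q$ is used. Each non-trivial element of $G'$ then acts as $I$ in exactly one component and as $-I$ in the other two. The $\rho_i$ are not twists of one another; for example, $\operatorname{tr}\rho_1(b)=0\neq 2i=\operatorname{tr}\rho_3(b)$.

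One still has to check, as the paper does by machine, that every element of the full image, not only of $G'$, has eigenvalue $1$ in some component. Your reduction step and realization step are otherwise in line with the paper. The paper realizes $G$ concretely as the Galois group of an explicit LMFDB field and checks that $c$ acts as complex conjugation, rather than invoking Shafarevich's theorem.
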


\noindent
The proof is by an explicit construction, and the MAGMA script in the next section verifies the claims made along the way. We are motivated by the fact that any automorphism of a biquadratic extension acts trivially on at least one of the three quadratic subfields. Experimenting with GAP and the LMFDB database, we found a degree $32$ number field $K$ that is biquadratic over a degree $8$ number field $K_0$ such that the three intermediate quadratic subfields of $K/K_0$ are Galois over $\mathbb{Q}$, and their Galois groups have odd, absolutely irreducible, faithful representations $\rho_1,\rho_2,\rho_3$ into $\mathrm{GL}_2(\mathbb{F}_q)$. Moreover, we can choose $K$ and $\rho_1,\rho_2,\rho_3$ such that \emph{every} element in the image of $\rho_1\oplus\rho_2\oplus\rho_3$ has $1$ as an eigenvalue.\\

\noindent
Our construction is closely related to the counterexample in the final section of Katz's paper \cite{katz81}. However, Katz's example is not defined over $\mathbb{Q}$, and it is a direct sum of three \emph{reducible} representations. Many related examples can also be found in the work of Cullinan \cite{cullinan07}, but they do not include the construction we present here.

\begin{proof}
Let $G$ be the group with presentation
\begin{equation*}
 \left\langle a,b,c\;\mid \; a^4 = b^4 = c^2 =1,\; ab=b a,\;c a c^{-1} = a b^2,\; cbc^{-1} = a^2 b^3 \right\rangle.
\end{equation*}
This is a finite group of order $32$, and has GAP ID $[32,33]$ \cite{smallgroup32}. The center of $G$ is the subgroup generated by $a^2$ and $b^2$; this assertion is verified by the MAGMA script in the next section. Moreover, the quotients $G/\langle a^2 \rangle$, $G /\langle b^2 \rangle$ and $G  / \langle a^2 b^2 \rangle$ are all isomorphic to the \emph{Pauli group}
\begin{equation*}
P := \left\langle u,v,w\;\mid\; u^4 = v^4 = w^2 =1,\; u^2 = v^2,\; w u w^{-1} = u^{-1},\; uv=vu,\; vw=wv \right\rangle
\end{equation*}
which is a group of order $16$ and has GAP ID $\left[ 16,13 \right]$ \cite{pauligroup}. The isomorphisms are given by
\begin{center}
\begin{tabular}{lll}
$G / \langle a^2\rangle \cong P$ & $G / \langle b^2 \rangle \cong P$ & $G / \langle a^2 b^2 \rangle \cong P$\\
  $b \mapsto u$ & $ab \mapsto u$ & $a\mapsto u$\\
  $ab\mapsto v$ & $a \mapsto v$ & $b\mapsto v$\\
  $c\mapsto w$ & $c\mapsto w$ & $c\mapsto w$\\
\end{tabular}
\end{center}
The MAGMA script in the next section verifies that these maps indeed are isomorphisms. The Pauli group has a faithful two dimensional representation $P \hookrightarrow \mathrm{GL}_2(\mathbb{F}_q )$ defined by
\begin{equation*}
u \mapsto
\begin{pmatrix}
0&i\\
i&0 \\
\end{pmatrix}, \quad v \mapsto
\begin{pmatrix}
i&0\\
0&i \\
\end{pmatrix}, \quad w\mapsto
\begin{pmatrix}
1&0\\
0&-1 \\
\end{pmatrix}
\end{equation*}
where $i \in \mathbb{F}_q$ denotes a square root of $-1$. It is easy to see that this representation is absolutely irreducible, e.g. the two eigenspaces of $u$ are spanned by $(1,1)$ and $(1,-1)$ but neither of these is an eigenspace for $w$.\\

\noindent
We now let $K$ be a number field with $\mathrm{Gal}(K/\mathbb{Q} ) \cong G$ such that $c$ is a complex conjugation in $K$. An example of such a $K$ can be found in the LMFDB database, e.g. if $K$ is the splitting field of the polynomial
\begin{equation*}
x^{16} - 60x^{12} - 288x^{10} - 666x^8 - 864x^6 - 540x^4 + 81, 
\end{equation*}
see \cite[\href{https://www.lmfdb.org/NumberField/16.4.2393397489569403764736.3}{Number Field 16.4.2393397489569403764736.3}]{lmfdb} To verify that $c$ acts as complex conjugation in $K$, it is enough to check that $K$ has no real places but the fixed field of $c$ has a real place. We implement $K$ in the MAGMA script below.

Define subfields
\begin{equation*}
K_1 := K^{\left\langle a^2 \right\rangle}, \quad K_2 := K^{\left\langle b^2 \right\rangle}, \quad K_3 := K^{\left\langle a^2 b^2 \right\rangle},
\end{equation*}
and let $\rho_i: G_{\mathbb{Q}} \rightarrow \mathrm{GL}_2(\mathbb{F}_q)$ be the representation given by
\begin{equation*}
G_{\mathbb{Q} } \twoheadrightarrow \mathrm{Gal}(K_i / \mathbb{Q}) \cong  P \hookrightarrow \mathrm{GL}_2(\mathbb{F}_q)
\end{equation*}
for $i=1,2,3$. We then set $\rho := \rho_1 \oplus \rho_2 \oplus \rho_3$, and this representation factors through $\mathrm{Gal}(K/\mathbb{Q}) \cong G$. Tracing through definitions and identifications, we see that
\begin{equation*}
\rho(a) =
\begin{pmatrix}
0&1&&&&\\
1&0&&&&\\
&&i&0&&\\
&&0&i&&\\  
&&&&0&i\\
&&&&i&0\\ 
\end{pmatrix},\;
\rho(b) =
\begin{pmatrix}
0&i&&&&\\
i&0&&&&\\
&&0&1&&\\
&&1&0&&\\
&&&&i&0\\
&&&&0&i\\
\end{pmatrix},\;
\rho(c) =
\begin{pmatrix}
1&0&&&&\\
0&-1&&&&\\
&&1&0&&\\
&&0&-1&&\\
&&&&1&0\\
&&&&0&-1\\
\end{pmatrix}
\end{equation*}
and one can then verify directly that every element in the image of $\rho$ has $1$ as an eigenvalue. We refer the MAGMA script in the next section for this computation. Moreover, since $c$ acts a complex conjugation, it is clear from the expression for $\rho(c)$ that $\det\rho_i(c)=-1$ for all $i=1,2,3$ so $\rho_1$, $\rho_2$ and $\rho_3$ are odd.
\end{proof}

\subsection{MAGMA-code}

The following MAGMA code can be used to verify the claims made in the proof of Proposition \ref{counter_example}. The code can be executed on the online MAGMA calculator available here: \url{https://magma.maths.usyd.edu.au/calc/}.\\

\noindent
We first implement $G$ as the Galois group of $K$ and verify that its GAP ID is [32,33].\\

\noindent
\begin{verbatim}
R<x> := PolynomialRing(RationalField());
f := x^16-60*x^12-288*x^10-666*x^8-864*x^6-540*x^4+81;
K := SplittingField(f);
G := AutomorphismGroup(K);
print"G has GAP ID [32,33]";
IdentifyGroup(G) eq <32,33>;

\end{verbatim}

\noindent
We then find generators $a$, $b$ and $c$ of $G$ that give the presentation for $G$ in the proof of Proposition \ref{counter_example}.\\

\begin{verbatim}

a := G.1;
b := G.3*G.4;
c := G.1*G.4;
print"a,b,c generate G and satisfies the relations";
G eq sub<G | a, b, c> and
a^4 eq Id(G) and
b^4 eq Id(G) and
c^2 eq Id(G) and
a*b eq b*a and
c*a*c^-1 eq a*b^2 and
c*b*c^-1 eq a^2*b^3;  

\end{verbatim}

\noindent
We then verify that the center of $G$ is equal to $\langle a^2,b^2\rangle$. 

\begin{verbatim}
	
H := sub<G | a^2, b^2>;
print"Z(G) = <a^2,b^2>";
Centre(G) eq H;

\end{verbatim}

\noindent
Then we implement the Pauli group and verify that it has GAP ID $[16,13]$.

\begin{verbatim}
	
F<u,v,w> := FreeGroup(3);
rels := {u^4=Id(F), v^4=Id(F), w^2=Id(F), u^2=v^2,
w*u*w^-1=u^-1, u*v=v*u, v*w=w*v};
P<u,v,w> := quo<F | rels>;
print"P has GAP ID [16,13]";
IdentifyGroup(P) eq <16,13>;

\end{verbatim}

\noindent
Next, we verify the isomorphisms from the quotients to the Pauli group.

\begin{verbatim}
	
H1 := sub<G | a^2>;

u := b;
v := a*b;
w := c;

print"G/<a^2> = P";
u^4 in H1 and v^4 in H1 and w^2 in H1 and 
u^2*v^-2 in H1 and w*u*w^-1*u in H1 and 
u*v*u^-1*v^-1 in H1 and v*w*v^-1*w^-1 in H1;

print"";

H2 := sub<G | b^2>;

u := a*b;
v := a;
w := c;

print"G/<b^2> = P";
u^4 in H2 and 
v^4 in H2 and
w^2 in H2 and
u^2*v^-2 in H2 and
w*u*w^-1*u in H2 and 
u*v*u^-1*v^-1 in H2 and
v*w*v^-1*w^-1 in H2;

print"";

H3 := sub<G | a^2*b^2>;

u := a;
v := b;
w := c;

print"G/<a^2*b^2> = P";
u^4 in H3 and
v^4 in H3 and
w^2 in H3 and 
u^2*v^-2 in H3 and
w*u*w^-1*u in H3 and
u*v*u^-1*v^-1 in H3 and
v*w*v^-1*w^-1 in H3;

\end{verbatim}

\noindent
And then we verify the matrix representation of $P$.

\begin{verbatim}

print"Verify the matrix representation of P";
R<x> := PolynomialRing(RationalField());
L<i> := NumberField(x^2 + 1);
M2 := KMatrixSpace(L,2,2);
u := M2 ! [[0,i],[i,0]];
v := M2 ! [[i,0],[0,i]];
w := M2 ! [[1,0],[0,-1]];
P_matrix := sub< GL(2, L) | u, v, w >;
IdentifyGroup(P_matrix) eq <16,13> and
u^4 eq Id(P_matrix) and
v^4 eq Id(P_matrix) and
w^2 eq Id(P_matrix) and
u^2 eq v^2 and 
w*u*w^-1 eq u^-1 and 
u*v eq v*u and
v*w eq w*v;

\end{verbatim}

\noindent
We can now verify that every element in the image of $\rho$ has $1$ as an eigenvalue.

\begin{verbatim}
	
M6 := KMatrixSpace(L,6,6);
rho_a := M6 ! [[0,1,0,0,0,0], [1,0,0,0,0,0], [0,0,i,0,0,0],
[0,0,0,i,0,0], [0,0,0,0,0,i], [0,0,0,0,i,0]];
rho_b := M6 ! [[0,i,0,0,0,0], [i,0,0,0,0,0], [0,0,0,1,0,0],
[0,0,1,0,0,0], [0,0,0,0,i,0], [0,0,0,0,0,i]];
rho_c := M6 ! [[1,0,0,0,0,0], [0,-1,0,0,0,0], [0,0,1,0,0,0],
[0,0,0,-1,0,0], [0,0,0,0,1,0], [0,0,0,0,0,-1]];
G_matrix := sub< GL(6, L) | rho_a, rho_b, rho_c >;
print"Every element in the image of rho has 1 as an eigenvalue";
eigenvalue_one := true;
for g in G_matrix do 
    P := CharacteristicPolynomial(g);
    if Evaluate(P,1) ne 0 then
        eigenvalue_one = false;
        break;
    end if;
end for;
print eigenvalue_one;

\end{verbatim}

\noindent
Finally, we verify that $c$ acts as complex conjugation.\\

\noindent
\begin{verbatim}

print"c acts as complex conjugation";
E := FixedField(K, sub<G | c>);
RealPlaces(K) eq [] and RealPlaces(E) ne [];    
\end{verbatim}

\bibliography{references}{} \bibliographystyle{plain}
\end{document}